\documentclass[12pt,oneside,english]{amsart}
\usepackage{babel} 
\usepackage{graphicx}
\usepackage{amssymb}
\usepackage{amsthm}
\usepackage{amsmath}
\usepackage{color}
\usepackage{comment}
\usepackage{hyperref}
\usepackage{mathtools}
\usepackage{url}
\usepackage{tikz}
\usepackage{stix}
\usepackage{nicematrix}
\usepackage{esint}

\providecommand{\underrel}[2]{\mathrel{\mathop{#2}\limits_{#1}}}

\usepackage{geometry}
\newtheorem{theorem}{Theorem}
\newtheorem{definition}{Definition}
\newtheorem{lemma}{Lemma}
\newtheorem{proposition}{Proposition}

\newtheorem{corollary}{Corollary}

\newtheorem{remark}{Remark}

\renewcommand{\S}{\ensuremath{\text{Sig}_d}}
\newcommand{\F}{\ensuremath{\mathscr{F}}}
\newcommand{\J}{\ensuremath{\mathscr{J}}}

\def\Xint#1{\mathchoice
{\XXint\displaystyle\textstyle{#1}}%
{\XXint\textstyle\scriptstyle{#1}}%
{\XXint\scriptstyle\scriptscriptstyle{#1}}%
{\XXint\scriptscriptstyle\scriptscriptstyle{#1}}%
\!\int}
\def\XXint#1#2#3{{\setbox0=\hbox{$#1{#2#3}{\int}$}
\vcenter{\hbox{$#2#3$}}\kern-.5\wd0}}

\def\fint{\Xint-}

\providecommand{\underrel}[2]{\mathrel{\mathop{#2}\limits_{#1}}}
\providecommand{\esc}[2]{\left\langle #1,#2\right\rangle}
\providecommand{\rn}{\mathbb{R}^n}

\newcommand{\norm}[1]{\left\lVert#1\right\rVert}

\providecommand{\integ}[1]{\langle #1\rangle}
\providecommand{\iinteg}[1]{\langle \langle #1\rangle\rangle}

\DeclareMathOperator{\Cn}{{\mathbb{C}^n}}
\DeclarePairedDelimiter{\tnorm}{\lvert\lvert\lvert}{\rvert\rvert\rvert}

\allowdisplaybreaks 

\author{Joshua Isralowitz}
\address{(J.I.) Department of Mathematics and Statistics, University at Albany (New York, USA) }
\email{jisralowitz@albany.edu}
\author{
  Israel P. Rivera-Ríos
}
\address{(I. P. R.-R.) Departamento de Análisis Matemático, Estadística
e Investigación Operativa y Matemática Aplicada. Facultad de Ciencias.
Universidad de Málaga (Málaga, Spain).}
\email{israelpriverarios@uma.es}
\author{
  Francisco Sáez-Rivas 
}

\address{(F. S. R.) Departamento de Análisis Matemático, Estadística
e Investigación Operativa y Matemática Aplicada. Facultad de Ciencias.
Universidad de Málaga (Málaga, Spain).}
\email{fsr034@uma.es}

\keywords{Vector-valued commutator, convex body domination, $BMO$, strong-type estimates}
\title[Convex body dom. of the vector valued multi-symbol commutator]{Convex body domination for the commutator of 
 vector-valued operators with multiple matrix-valued symbols}

\thanks{Joshua Isralowitz was supported by a grant from the Simons Foundation International [SFI-MPS-TSM-00014292, JI].
Francisco Sáez-Rivas was supported by an FPU fellowship (FPU23/02107), from
 the Spanish Ministry of Science and Innovation.
 The second and third authors were partially supported by the Spanish Ministry of Science and Innovation
through the project PID2022-136619NB-I00 funded by MCIN/AEI/10.13039/501100011033/FEDER, UE and by Junta de Andalucía through the project PPRO-FQM354-G-2023 (FQM354-G-FEDER) }
\begin{document}


\maketitle


\bigskip

\begin{abstract}
We provide convex body domination results for the generalized vector-valued
 commutator of those operators
that admit specific forms of convex body domination themselves. We also prove some 
strong-type estimates and other consequences of these results, and we study the 
$BMO$ spaces that appear naturally in this context.
\end{abstract}
\section{Introduction}
We recall that a weight, namely, a nonnegative locally integrable
function $w$ belongs to $A_{p}$ for $1<p<\infty$ if 
\[
[w]_{A_{p}}=\sup_{Q}\fint_{Q}w(x)dx\left(\fint_{Q}w(y)^{-\frac{p'}{p}}dy\right)^{\frac{p}{p'}}<\infty.
\]
The $A_{p}$ class of weights characterizes the $L^{p}(w)$ boundedness
of the maximal function as B. Muckenhoupt established in the 70s.
Since Muckenhoupt's seminal work, a number of authors have devoted
a number of works to studying the connection of $A_{p}$ weights with
other operators such as singular integrals. 

In the last fifteen years the theory of weights has been a fruitful
field due to the study of quantitative estimates. Namely the study
of inequalities tracking the precise dependence on the so called $A_{p}$
constant, namely $[w]_{A_{p}}$. The paradigmatic question in that
line of research was the $A_{2}$ theorem finally established by T.
Hytönen \cite{H}, which in turn motivated the development of the
sparse domination theory which began with A. Lerner in \cite{L}. We will return to sparse domination, but in the vector valued setting, later, since it is a fundamental part of this work.

One of the possible extensions of the classical scalar theory of Calderón-Zygmund
operators, vector valued extensions, have been paid a lot of attention
in the last years. Let $W:\mathbb{R}^{d}\rightarrow\mathbb{C}^{n\times n}$
a matrix weight, namely, a matrix function such that $W(x)$ is positive
definite a.e. Given $\vec{f}:\mathbb{R}^{d}\rightarrow\mathbb{C}^{n}$ and
$1<p<\infty$, we define 
\[
\|\vec{f}\|_{L^{p}(W)}=\left(\int_{\mathbb{R}^{d}}\left|W^{\frac{1}{p}}(x)\vec{f}(x)\right|^{p}dx\right)^{\frac{1}{p}}.
\]
Let $1<p<\infty$. We say that a matrix weight $W$ is an $A_{p}$
weight if 
\[
[W]_{A_{p}}=\sup_{Q}\fint_{Q}\left(\fint_{Q}\left\Vert W^{\frac{1}{p}}(x)W^{-\frac{1}{p}}(y)\right\Vert ^{p'}dy\right)^{\frac{p}{p'}}dx<\infty.
\]

The first appearance of $A_{p}$ weights is due to S. Treil and A.
Volberg in \cite{TV}. In the late 90s M. Goldberg \cite{G}, F. Nazarov
and S. Treil \cite{NT}, and A. Volberg \cite{Volberg-S}, showed that if
$W$ is a matrix $A_{p}$ weight then certain classes of Calderón-Zygumund
operators acting componentwise on $\mathbb{C}^{n}$ valued functions
are bounded on $L^{p}(W)$. The presentation of the matrix $A_{p}$
class that we provided a few lines above is due to S. Roudenko \cite{roudenko}
and is equivalent to the definitions used in the aforementioned works.

Contrary to what happpens in the scalar setting, the $A_{2}$ conjecture
is not true. Let us devote a few words to this. A few years ago, F.
Nazarov, S. Petermichl, S. Treil and A. Volberg \cite{cbdwczo} established
the following quantitative estimate for $W\in A_{2}$, 
\begin{equation}
\|T\vec{f}\|_{L^{2}(W)}\leq c_{n,d,T}[W]_{A_{2}}^{\frac{3}{2}}\|f\|_{L^{2}(W)}\label{eq:A2NPTV}
\end{equation}
where $T$ is a Calderón-Zygmund operator. Very recently K. Domelevo,
S. Petermichl, S. Treil and A. Volberg \cite{DPTV} proved that \eqref{eq:A2NPTV}
is actually sharp. For $p\not=2$ the current record is due to D.
Cruz-Uribe, J. Isralowitz and K. Moen \cite{CUIM} who extended (\ref{eq:A2NPTV})
to every $1<p<\infty$, providing the following estimate 
\[
\|T\vec{f}\|_{L^{p}(W)}\leq c_{n,d,T}\left[W\right]_{A_{p}}^{1+\frac{1}{p-1}-\frac{1}{p}}\|\vec{f}\|_{L^{p}(W)}.
\]
It is not known whether this estimate is sharp for $p\not=2$. At this point it is worth recalling that assuming a stronger condition, namely, $W\in A_{q}$ with $1\leq q<p$, the dependencies on the $A_q$ constant from the scalar setting hold in this setting as well \cite{IPRR}, a fact that highlights even more the fact that \eqref{eq:A2NPTV} is sharp.

Besides the aforementioned results, a number of works have been devoted
to the study of matrix weighted estimates. In the following lines
we review some of them. We apologise in advance for any omission that the reader may consider inconvenient. Quantitative extrapolation and factorization
of $A_{p}$ weights were achieved by Bownik and Cruz-Uribe in \cite{BCU}. This
result was extended in several directions \cite{N,NP,CUS}. A number
of extensions have also been provided, such as  having
spaces of homogeneous type as departure spaces \cite{KNV,claro2025matrixweightedestimatesspaces}, \cite{DKPS} to the multiparametric
setting, or very recently \cite{KN} to the multilinear setting.
Endpoint estimates for the Christ-Goldberg maximal function,
Calderón-Zygmund operators and commutators were provided in  \cite{CUIMPRR} and shown
to be sharp in \cite{LeLiORR} in the case of the maximal function and Calderón-Zygmund operators.

\section{Main Results}
\subsection{Convex Body Domination for higher order commutators}
Before continuing our exposition and presenting our main results we need to fix some notation. For a matrix or vector function $F$, $r\in[1,\infty)$, and a finite measure set $Q$, we will denote by
$\integ{F}_{r,Q}$ the matrix or vector whose components are obtained by applying the norm in $L^r\left(\frac{dx}{|Q|}\right)$ over the ones of $F$. 
We will also employ the notation $\integ{F}_{Q}$ or $\fint_Q F$ to refer to the matrix or vector whose components are the integral averages of those of $F$.

For a scalar valued operator $T$ defined for scalar valued functions, 
$T\otimes I_n $ is the operator such that, for $\vec{g} = (g_1,...,g_n)$,
\begin{equation*}
\begin{split}
    (T\otimes I_n)(\vec{g}) = (T(g_1),...,T(g_n)).
\end{split}
\end{equation*}

Recall as well that the commutator of an operator $\vec{T}$, which maps $n-$ dimensional vector functions in an appropriate 
space into another space of $n$-dimensional vector functions,
 with respect to the $n\times n$ matrix function $B$ (usually referred to as the symbol) 
is defined as 
\begin{equation*}
\begin{split}
    [B,\vec{T}]\vec{f}(x) = B(x)\vec{T}\vec{f} (x)- \vec{T}(B\vec{f})(x).
\end{split}
\end{equation*}
For a vector $\vec{B}=(B_1,...,B_m)$ of $n\times n$ matrices, we are interested in the operator 
\begin{equation*}
    \begin{split}
        \vec{T}_{\vec{B}}\vec{f}(x) = [B_m,[B_{m-1},...,[B_1,\vec{T}]...]]\vec{f}(x).
    \end{split}
    \end{equation*}
     We will refer to this operator as the (generalized) commutator of $\vec{T}$ with respect to 
    the multi-symbol $\vec{B}$.

In \cite{cbdwczo}, the authors introduced an extension of sparse domination called 
convex body domination, which is very useful when working with operators in 
a vector setting. In this form of domination, operators are controlled pointwise by 
a convex body valued sparse operator, given by a sum,
generally over cubes in a sparse family, of symmetric, convex and compact sets indexed
in said cubes.
Precisely, for a measurable set $\Omega\subset \mathbb{R}^d$ with finite measure, $r\in[1,\infty)$,
    and $f\in L^r(\Omega,\mathbb{R}^n)$, the aforementioned sets are given by
    \begin{equation*}
    \begin{split}
        {\iinteg{\vec{f}}_{r,\Omega}} = \left\{ \integ{\phi \vec{f}}_\Omega:\;\phi:\Omega\rightarrow \mathbb{F}, \|\phi\|_{L^{r'}\left(\frac{dx}{|\Omega|}\right)}\leq 1 \right\},
    \end{split}
    \end{equation*}
    and, for a sparse family $\mathcal{S}$, the corresponding operator is given by
    \begin{equation*}
    \begin{split}
        L_{\mathcal{S},r}\vec{f} = \sum_{Q\in \mathcal{S}}\chi_Q\iinteg{\vec{f}}_{r,Q},
    \end{split}
    \end{equation*}
    so that an operator that admits convex body domination in its most basic form will satisfy that
    \begin{equation*}
    \begin{split}
        \vec{T}\vec{f}(x) \in C L_{\mathcal{S}}\vec{f}(x)
    \end{split}
    \end{equation*}
    a.e. on some set.
The symmetry, convexity and compactness of the sets $\iinteg{\vec{f}}_{r,\Omega}$,
for $r\in[1,\infty)$, is not hard to prove (it can be found in 
\cite{pliniohytoli}, for example).

Throughout this article, we will make some convenient modifications to this definition,
to adapt it to the operators we are interested in. The main one will be based on the 
following lemma.
\begin{lemma}\label{lemma:cbdexprbonita}
    Consider $r\in [1,\infty)$, $Q\subset \mathbb{R}^d$ with finite measure, $\vec{f}\in L^r(Q,\mathbb{F}^n)$ and $\vec{g}:\mathbb{R}^d\rightarrow \mathbb{F}^n$
    such that $g(x)\in \iinteg{\vec{f}}_{r,Q}$ a.e. on $\mathbb{R}^d$. Then there exists 
    $K:\mathbb{R}^d\times Q\rightarrow \mathbb{F}$ with 
    $\|K(x,\cdot)\|_{L^{r'}\left(\frac{dy}{|Q|}\right)}\leq 1$ 
     for a.e. $x\in \mathbb{R}^d$
     satisfying that, for a.e. $x\in \mathbb{R}^d$, 
    \begin{equation*}
    \begin{split}
        \vec{g}(x) = \integ{K(x,\cdot)\vec{f}}_Q.
    \end{split}
    \end{equation*}
\end{lemma}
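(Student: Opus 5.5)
The plan is a measurable-selection argument: the pointwise choice of $\phi$ is immediate from the hypothesis, and the work is in making it measurable in $x$ and then jointly measurable in $(x,y)$.

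\textbf{Step 1 (compact metrizable parameter space).} Let $X$ be the closed unit ball of $L^{r'}\bigl(Q,\frac{dy}{|Q|}\bigr)$. Give it the weak topology if $1<r<\infty$ and the weak$^*$ topology if $r=1$, so $r'=\infty$. In both cases $X$ is compact by Banach--Alaoglu. It is also metrizable, because the predual $L^r(Q)$ is separable. Define
\[
\Phi:X\to\mathbb{F}^n,\qquad \Phi(\phi)=\integ{\phi\vec{f}}_Q .
\]
Each component of $\Phi$ is integration against $f_i\in L^r(Q)$. Hence $\Phi$ is continuous on $X$, and by definition $\Phi(X)=\iinteg{\vec{f}}_{r,Q}$.

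\textbf{Step 2 (measurable selection in $x$).} Modify $\vec{g}$ on a null set so that $\vec{g}(x)\in\Phi(X)$ for every $x$. Consider the set-valued map
\[
F(x)=\{\phi\in X:\ \Phi(\phi)=\vec{g}(x)\}.
\]
It has nonempty closed, hence compact, values. For open $U\subset X$,
\[
\{x: F(x)\cap U\neq\emptyset\}=\vec{g}^{-1}(\Phi(U)).
\]
The set $U$ is a countable union of compact sets, since $X$ is compact metric. Therefore $\Phi(U)$ is $\sigma$-compact, in particular Borel, and the set above is measurable. The Kuratowski--Ryll-Nardzewski selection theorem then gives a measurable map $x\mapsto\phi_x\in X$ with $\Phi(\phi_x)=\vec{g}(x)$. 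In particular, for every $h\in L^r(Q)$ the function $x\mapsto\int_Q\phi_x h$ is measurable.

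\textbf{Step 3 (joint measurability; main obstacle).} The selection so far only gives an element $\phi_x$ of $L^{r'}$ for each $x$, not a function $K(x,y)$ that is measurable on $\mathbb{R}^d\times Q$. This is the delicate point, especially for $r=1$, where $L^\infty$ is not separable. I would avoid any separability issue by a martingale construction. Extend functions on $Q$ by zero and let $\mathcal{D}_k$ be the dyadic cubes of side $2^{-k}$. Set
\[
K_k(x,y)=\sum_{P\in\mathcal{D}_k,\ |P\cap Q|>0}\Bigl(\frac{1}{|P\cap Q|}\int_{P\cap Q}\phi_x\Bigr)\chi_{P\cap Q}(y).
\]
Each coefficient is measurable in $x$, by Step 2 with $h=\chi_{P\cap Q}$. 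Each $K_k$ is a countable sum of products of measurable functions, so it is jointly measurable. Define
\[
K=\limsup_k \operatorname{Re}K_k+i\limsup_k \operatorname{Im}K_k,
\]
where the imaginary part is dropped if $\mathbb{F}=\mathbb{R}$. Then $K$ is jointly measurable.

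By the Lebesgue differentiation theorem, for each fixed $x$ we have $K_k(x,\cdot)\to\phi_x$ a.e. on $Q$, so $K(x,\cdot)=\phi_x$ a.e. Consequently
\[
\|K(x,\cdot)\|_{L^{r'}\left(\frac{dy}{|Q|}\right)}=\|\phi_x\|_{L^{r'}\left(\frac{dy}{|Q|}\right)}\le1,
\]
and
\[
\integ{K(x,\cdot)\vec{f}}_Q=\Phi(\phi_x)=\vec{g}(x)
\]
for a.e. $x$, which is the claim. On the exceptional null set of $x$ where $\vec{g}$ was modified, the identity only needs to hold almost everywhere, so nothing more is required there.
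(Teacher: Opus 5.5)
The paper gives no proof of this lemma; it only refers to \cite{cbdwczo,hytonotas,claro2025matrixweightedestimatesspaces}. Your argument is a correct, self-contained proof, and it makes explicit the two measurability issues that the bare citation hides.

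The first is measurability of $x\mapsto\phi_x$. You get it from the Kuratowski--Ryll-Nardzewski theorem on the compact metrizable ball. Your $\sigma$-compactness argument showing that $\Phi(U)$ is Borel is exactly the check of weak measurability that this theorem needs.

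The second is joint measurability of $K$. Your dyadic martingale reconstruction settles this uniformly in $r$, including $r=1$. That case matters because $L^\infty$ is non-separable, so you cannot simply pass through a strongly measurable $L^{r'}$-valued map.

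Three small points deserve a sentence each.
\begin{enumerate}
\item You use that $\vec g$ is measurable. The statement leaves this implicit, but it is needed. It is also forced: once $K$ is jointly measurable, Fubini makes $x\mapsto\integ{K(x,\cdot)\vec f}_Q$ measurable.
\item Your $K_k$ averages over $P\cap Q$ rather than over $P$, so plain Lebesgue differentiation does not directly apply. Instead, invoke the martingale convergence theorem for the filtration generated by $\{P\cap Q: P\in\mathcal{D}_k\}$ on $(Q,dy/|Q|)$. Alternatively, combine dyadic differentiation of $\phi_x\chi_Q$ with $|P_k(y)\cap Q|/|P_k(y)|\to 1$ for a.e.\ $y\in Q$, where $P_k(y)$ is the cube of $\mathcal{D}_k$ containing $y$.
\item The $\limsup$ may be infinite. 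Define $K$ as $\lim_k K_k$ where this limit exists in $\mathbb{F}$, and $0$ elsewhere. The exceptional set is jointly measurable and $dy$-null for every $x$, so your conclusions are unaffected.
\end{enumerate}
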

The proof of this result can be found in \cite{cbdwczo,hytonotas,claro2025matrixweightedestimatesspaces}.
Using this lemma, if we have that
\begin{equation*}
\begin{split}
    \vec{T}\vec{f}(x)\in C L_{\mathcal{S},r}\vec{f}(x)
\end{split}
\end{equation*}
for a.e. $x\in \mathbb{R}^d$, we automatically deduce that 
there exist
functions $k_Q:\mathbb{R}^d\times Q\rightarrow \mathbb{F}$ with 
    $\|k_Q(x,\cdot)\|_{L^{r'}\left(\frac{dy}{|Q|}\right)}\leq 1$ 
    for a.e. $x\in \mathbb{R}^d$
such that
\begin{equation*}
\begin{split}
    \vec{T}\vec{f}(x) = C \sum_{Q\in \mathcal{S}}\chi_Q \integ{k_Q(x,\cdot)\vec{f}}_Q
\end{split}
\end{equation*}
for a.e. $x\in \mathbb{R}^d$.


In \cite{SA1EfVVO}, a convex body domination result for
the commutator of certain vector-valued operators with one scalar symbol was established, by means of an argument that involves
multiplying some well-chosen matrices appropriately.
This reasoning was later adapted in \cite{Cit2SaMWS} to cover the case of matrix symbols.
In this paper, we will further extend these results to commutators of a broader 
class of operators with an arbitrary amount of matrix symbols, employing a refinement of the 
argument that was used in the aforementioned papers. Precisely, we obtain a 
convex body domination result for the commutators of operators which works as long as the 
operators themselves admit a slightly stronger version of convex body domination.

\begin{theorem}\label{th:tocho}
    Let $\Omega$ be a measurable subset of $\mathbb{R}^d$, 
     $\mathbb{F}=\mathbb{R}$ or $\mathbb{C}$,
    $r\in [1,\infty]$, 
    and
    assume $\vec{T}=(T_1,...,T_n)$ is a {linear} operator 
    with the following property:
    \begin{itemize}
        \item[$(P_1)$] For every $\vec{f}\in L_c^r(\Omega,\mathbb{F}^{2^mn})$,
       there exist $C>0$,
        a collection of subsets of $\Omega$, $\mathcal{G}$ and
        functions $K_Q:Q\times Q\rightarrow \mathcal{M}_n(\mathbb{F})$, for $Q\in \mathcal{G}$, 
        with 
        $\|K_Q(x,\cdot)\|_{L^{r'}\left(\frac{dy}{|Q|}\right)}\leq 1$ for a.e $x\in Q$
        such that, for the $2^mn\times 2^m n$ matrix function, given by the $n\times n$ blocks
        \begin{equation*}
        \begin{split}
            \overline{K}_Q = 
\begin{pNiceArray}{cc|cc|cc|cc}
  \quad K_Q &&  {{0}} && \dots && {0}\\
  \hline
  {{0}} && K_Q&& \dots && {0}\\
  \hline
  \vdots && \vdots && \ddots     && \vdots\\
  \hline
  0 && 0 && \dots &&K_Q
\end{pNiceArray}
        \end{split}
        \end{equation*} 
        and the operator $\overline{T}$ given by 
            \begin{equation}\label{Tbarra}
        \begin{split}
            \overline{T}(\vec{g}) = \begin{pmatrix}
            \vec{T}(\vec{g}^1)\\\vec{T}(\vec{g}^2)\\\vdots\\\vec{T}(\vec{g}^{2^m})
            \end{pmatrix},
        \end{split}
        \end{equation}
        for an appropriate $\vec{g} = (g_1,g_2,...,g_{2^mn})$
        ($\vec{g}^k=(g_{(k-1)n+1},g_{(k-1)n+2},...,g_{(k-1)n+n})$),
        the equality
        \begin{equation}\label{eq:pconvT}
        \begin{split}
            \overline{T}\vec{f}(x) = C \sum_{Q\in \mathcal{G}}\chi_Q(x)\integ{\overline{K}_Q(x,\cdot)\vec{f}}_Q
        \end{split}
        \end{equation}
        is satisfied a.e. on $\Omega$.
    \end{itemize}
    
    Then, given any function $\vec{f}\in L_c^r(\Omega,\mathbb{F}^n)$,  
    and $\vec{B}=(B_1,...,B_m)$, a vector with $m$ $n\times n$ matrix functions defined in $\Omega$, 
    such that $\vec{B}_{\sigma^t}\vec{f}\in L_c^r(\Omega,\mathbb{F}^n)$
    for all $\sigma\in C(m)$,
     there exist $C>0$,
    a collection of subsets of $\Omega$, $\mathcal{G}$, 
    and functions $K_Q:Q\times Q\rightarrow \mathcal{M}_n(\mathbb{F})$
    with $\|K_Q(x,\cdot)\|_{L^{r'}\left(\frac{dy}{|Q|}\right)}\leq 1$ a.e. $x\in Q$, for $Q\in \mathcal{G}$ (all provided by 
    $(P_1)$ applied to a function dependent on $\vec{f}$ and $\vec{B}$), such that
    \begin{equation}\label{paux33}
    \begin{split}
        \vec{T}_{\vec{B}}\vec{f}(x)=C \sum_{Q\in \mathcal{G}}\chi_Q(x)\sum_{\sigma\in C(m)}(-1)^{m-|\sigma|}  \vec{B}_{\sigma}(x)\integ{K_Q(x,\cdot)\vec{B}_{(\sigma^c)^t}\vec{f}}_Q
    \end{split}
    \end{equation}
    for a.e. $x\in \Omega$.
\end{theorem}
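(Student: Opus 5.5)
We will first expand the iterated commutator into a signed sum over subsets, and then apply $(P_1)$ once to a single stacked function, so that one common family $\mathcal{G}$ and one common set of kernels $K_Q$ serve every term. We read $C(m)$ as the collection of subsets $\sigma\subseteq\{1,\dots,m\}$, and $\vec{B}_\sigma$ as the ordered product of the $B_j$, $j\in\sigma$. We read $\sigma^t$ as the transposed (reversed) order of that product, and $\sigma^c$ as the complement, as the notation in \eqref{paux33} suggests.

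\textbf{Step 1: expand the commutator.} We prove by induction on $m$ that
\begin{equation*}
\vec{T}_{\vec{B}}\vec{f}(x)=\sum_{\sigma\in C(m)}(-1)^{m-|\sigma|}\vec{B}_{\sigma}(x)\,\vec{T}\big(\vec{B}_{(\sigma^c)^t}\vec{f}\big)(x).
\end{equation*}
For $m=1$ this is just $B_1\vec{T}\vec{f}-\vec{T}(B_1\vec{f})$. For the inductive step we write $\vec{T}_{(B_1,\dots,B_{m})}=[B_m,\vec{T}_{(B_1,\dots,B_{m-1})}]$. Multiplying on the left by $B_m$ contributes the subsets containing $m$, with $B_m$ placed leftmost in $\vec{B}_\sigma$. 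Composing on the right with multiplication by $B_m$, with a minus sign, contributes the subsets not containing $m$. Since each inner product acts after $B_m\vec{f}$, the factor $B_m$ sits rightmost in $\vec{B}_{(\sigma^c)^t}$. The main care here is to fix the ordering conventions so that they match the paper's $\sigma$, $\sigma^t$ notation. This uses only linearity of $\vec{T}$ and the assumption that every $\vec{B}_{\sigma^t}\vec{f}$ lies in $L^r_c$.

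\textbf{Step 2: stack the inputs and apply $(P_1)$ once.} We enumerate $C(m)$ as $\sigma_1,\dots,\sigma_{2^m}$. We define $\vec{F}\in L^r_c(\Omega,\mathbb{F}^{2^mn})$ by setting its $k$-th block to $\vec{F}^k=\vec{B}_{(\sigma_k^c)^t}\vec{f}$. This is admissible because $\sigma\mapsto\sigma^c$ is a bijection of $C(m)$, so each block is one of the functions assumed to lie in $L^r_c$. Applying $(P_1)$ to $\vec{F}$ gives $C$, $\mathcal{G}$ and kernels $K_Q$. Because $\overline{K}_Q$ is block diagonal with the \emph{same} $K_Q$ in every block, reading off the $k$-th block of \eqref{eq:pconvT} gives, for a.e. $x\in\Omega$,
\begin{equation*}
\vec{T}(\vec{F}^k)(x)=C\sum_{Q\in\mathcal{G}}\chi_Q(x)\integ{K_Q(x,\cdot)\vec{F}^k}_Q,\qquad k=1,\dots,2^m.
\end{equation*}
This simultaneous representation with shared kernels is the key point. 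It is exactly why the stronger block-diagonal hypothesis $(P_1)$ is needed, rather than separate convex body dominations for each term.

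\textbf{Step 3: substitute and recombine.} We insert the identities from Step 2 into the expansion from Step 1. Each term gets multiplied on the left by $(-1)^{m-|\sigma|}\vec{B}_\sigma(x)$. The matrix $\vec{B}_\sigma(x)$ can be moved inside $\chi_Q(x)$ and inside the sum over $\mathcal{G}$ pointwise. To do this, the sum over $\mathcal{G}$ must converge at a.e. $x$. This holds because the representation \eqref{eq:pconvT} is an a.e. identity, so the series converges at a.e. $x$, and left multiplication by a fixed matrix preserves convergence. Interchanging the finite sum over $\sigma$ with the sum over $Q$ then yields \eqref{paux33}, with the constant $C$ and the family $\mathcal{G}$ those supplied by $(P_1)$ for $\vec{F}$. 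Intersecting the finitely many exceptional null sets keeps the statement valid a.e.

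I expect the only genuine obstacle to be the combinatorial bookkeeping in Step 1, that is, getting the orders of the noncommuting matrix products consistent with the notation $\vec{B}_\sigma$, $\vec{B}_{(\sigma^c)^t}$. The analytic content is entirely carried by $(P_1)$.
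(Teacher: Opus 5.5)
Your proposal is correct and follows essentially the same route as the paper. The paper likewise expands the commutator by induction (Lemma \ref{lemma:comm_exp}), packaged as $\vec{T}_{\vec{B}}\vec{f}=\Psi\,\overline{T}(\tilde{\Psi}\vec{f})$. It then applies $(P_1)$ once to the stacked function $\tilde{\Psi}\vec{f}$, which is your $\vec{F}$, and multiplies back by $\Psi(x)$; this is exactly your blockwise substitution in Step 3. Your ordering conventions ($B_m$ leftmost in $\vec{B}_\sigma$, rightmost in $\vec{B}_{(\sigma^c)^t}$) match the paper's, and your remark on swapping the finite sum over $\sigma$ with the series over $\mathcal{G}$ makes explicit a point the paper leaves implicit.
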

Here, {$C(m)$ is the set of all increasingly ordered tuples composed of elements of 
$\{1,2,...,m\}$}, and $\vec{B}_{\sigma}$, for $\sigma\in C(m)$, is the product of the 
components of $\vec{B}$ that appear in $\sigma$, in reverse order.
All the notation regarding these tuples will be explained with greater detail in Section \ref{sect:combinatorics}.

When $m=0$, $(P_1)$ is the natural convex body domination condition for operators of the form $\vec{T}$, as it 
takes into consideration
all the components that appear in such operators. 
In Section \ref{sect:apps}, we will show how sparse domination proofs
can be adapted to obtain 
conditions like $(P_1)$ easily.

Let us make some observations on the theorem. If we define $\vec{T}_{Q}\vec{g}(x)=\integ{K_{Q}(x,\cdot)\vec{g}}_{Q}$,
(\ref{paux33}) can be rewritten as 
\[
\begin{split}\vec{T}_{\vec{B}}\vec{f}(x)=C\sum_{Q\in\mathcal{G}}\chi_{Q}(x)(\vec{T}_{Q})_{\vec{B}}\vec{f}(x),\end{split}
\]
so that the commutator of $\vec{T}$ is controlled by a sum of commutators
of integral operators. This can be exploited in many ways. For example,
if for each $Q$, we take $\vec{a}_{Q}=(a_{Q}^{1},...,a_{Q}^{m})\in\mathbb{F}^{m}$,
and $\vec{A}_{Q}=(a_{Q}^{1}I_{n},...,a_{Q}^{m}I_{n})$, the linearity
of the commutator clearly implies that 
\[
(\vec{T}_{Q})_{\vec{B}}\vec{f}=(\vec{T}_{Q})_{\vec{B}-\vec{A}_{Q}}\vec{f}
\]
so that 
\[
\begin{split}\vec{T}_{\vec{B}}\vec{f}(x)=C\sum_{Q\in\mathcal{G}}\chi_{Q}(x)\sum_{\sigma\in C(m)}(-1)^{m-|\sigma|}(\vec{B}-\vec{A_{Q}})_{\sigma}(x)\integ{K_{Q}(x,\cdot)(\vec{B}-\vec{A_{Q}})_{(\sigma^{c})^{t}}\vec{f}}_{Q}.\end{split}
\]
This is particularly interesting if we consider $\vec{b}=(b_{1},...,b_{m})$
a vector of functions defined in $\Omega$ such that $\vec{b}_{\sigma}\vec{f}\in L_{c}^{1}(\Omega,\mathbb{F}^{n})$
for all $\sigma\in\bigcup_{j=0}^{n}C_{j}(m)$, and $\vec{B}=(b_{1}I_{n},...,b_{m}I_{n})$.
Here we could take $\vec{a}_{Q}=\integ{\vec{b}}_{Q}$, and obtain
that 
\[
\begin{split}\vec{T}_{\vec{b}}\vec{f}(x) & =C\sum_{Q\in\mathcal{G}}\chi_{Q}(x)(\vec{T}_{Q})_{\vec{b}-\integ{\vec{b}}_{Q}}\vec{f}(x)\\
 & =C\sum_{Q\in\mathcal{G}}\chi_{Q}(x)\sum_{\sigma\in C(m)}(-1)^{m-|\sigma|}(\vec{b}-\integ{\vec{b}}_{Q})_{\sigma}(x)\integ{K_{Q}(x,\cdot)(\vec{b}-\integ{\vec{b}}_{Q})_{\sigma^{c}}\vec{f}}_{Q},
\end{split}
\]
which extends the sparse domination result obtained in \cite{MR4734976}. \begin{corollary} Let $\Omega$
be a measurable subset of $\mathbb{R}^{d}$, $\mathbb{F}=\mathbb{R}$
or $\mathbb{C}$, $r\in[1,\infty]$ and assume $\vec{T}=(T_{1},...,T_{n})$
is a linear operator as in Theorem \ref{th:tocho}. Then, given any
function $\vec{f}\in L_{c}^{r}(\Omega,\mathbb{F}^{n})$, and $\vec{b}=(b_{1},...,b_{m})$,
a vector of functions defined in $\Omega$, such that $\vec{b}_{\sigma}\vec{f}\in L_{c}^{r}(\Omega,\mathbb{F}^{n})$
for all $\sigma\in C(m)$, 
 there exist $C>0$, a collection of subsets of $\Omega$, $\mathcal{G}$,
and functions $K_{Q}:Q\times Q\rightarrow\mathcal{M}_{n}(\mathbb{F})$
with $\|K_{Q}(x,\cdot)\|_{L^{r'}\left(\frac{dy}{|Q|}\right)}\leq1$
a.e. $x\in Q$, for $Q\in\mathcal{G}$ (all provided by $(P_{1})$
applied to a function dependent on $\vec{f}$ and $\vec{b}$), such
that 
\[
\begin{split}\vec{T}_{\vec{b}}\vec{f}(x)=C\sum_{Q\in\mathcal{G}}\chi_{Q}(x)\sum_{\sigma\in C(m)}(-1)^{m-|\sigma|}(\vec{b}-\integ{\vec{b}}_{Q})_{\sigma}(x)\integ{K_{Q}(x,\cdot)(\vec{b}-\integ{\vec{b}}_{Q})_{\sigma^{c}}\vec{f}}_{Q}\end{split}
\]
for a.e. $x\in\Omega$. \end{corollary}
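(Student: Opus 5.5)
This corollary should follow from Theorem \ref{th:tocho} applied to the scalar-matrix multi-symbol $\vec{B}=(b_1I_n,\dots,b_mI_n)$, combined with invariance of commutators under subtracting constants. The plan has three steps.

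\textbf{Step 1: apply the theorem.} Since $b_jI_n$ is scalar, $\vec{B}_{\sigma^t}\vec{f}=\vec{b}_{\sigma}\vec{f}$ (scalar factors commute, so order is irrelevant). The hypothesis $\vec{b}_\sigma\vec{f}\in L^r_c$ is therefore exactly the hypothesis of Theorem \ref{th:tocho}, and $\vec{T}_{\vec{B}}=\vec{T}_{\vec{b}}$. The theorem yields $C$, $\mathcal{G}$ and kernels $K_Q$ with
\[
\vec{T}_{\vec{b}}\vec{f}(x)=C\sum_{Q\in\mathcal{G}}\chi_Q(x)\sum_{\sigma\in C(m)}(-1)^{m-|\sigma|}\vec{b}_{\sigma}(x)\integ{K_Q(x,\cdot)\vec{b}_{\sigma^c}\vec{f}}_Q .
\]

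\textbf{Step 2: rewrite each inner sum as a commutator.} Fix $Q$ and $x\in Q$, and set $\vec{T}_Q\vec{g}(x)=\integ{K_Q(x,\cdot)\vec{g}}_Q$. I would check by induction on $m$ that for any scalar functions $c_1,\dots,c_m$,
\[
(\vec{T}_Q)_{\vec{c}}\vec{g}(x)=\sum_{\sigma\in C(m)}(-1)^{m-|\sigma|}\vec{c}_\sigma(x)\integ{K_Q(x,\cdot)\vec{c}_{\sigma^c}\vec{g}}_Q .
\]
The inductive step is to expand $[c_m,S]=c_mS-Sc_m$ with $S=(\vec{T}_Q)_{(c_1,\dots,c_{m-1})}$. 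Tuples containing $m$ come from $c_mS$ with the same sign. Tuples not containing $m$ come from $-Sc_m$: putting $m$ into the complement raises the sign exponent $m-|\sigma|$ by one, which produces the minus sign. This is essentially the combinatorics already behind Theorem \ref{th:tocho}.

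\textbf{Step 3: subtract constants.} Commutators are unchanged when a constant is subtracted from a symbol, since $[a,S]=0$ for constant scalar $a$ and each bracket is linear in its symbol. Hence $(\vec{T}_Q)_{\vec{b}}=(\vec{T}_Q)_{\vec{b}-\integ{\vec{b}}_Q}$. Expanding the right-hand side with the formula from Step 2 gives the stated identity.

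The main point needing care is integrability in Step 3. The expansion with $\vec{b}-\integ{\vec{b}}_Q$ requires the products $(\vec{b}-\integ{\vec{b}}_Q)_{\sigma^c}\vec{f}$ to be integrable against $K_Q(x,\cdot)$ on $Q$. Each such product is a finite linear combination of the $\vec{b}_{\tau}\vec{f}$ with $\tau\subseteq\sigma^c$, which lie in $L^r$ by hypothesis. Hölder with $\|K_Q(x,\cdot)\|_{L^{r'}}\le1$ then makes every average finite. It also requires $\integ{b_j}_Q$ to be finite; this follows if $b_j\in L^1_{loc}$, which I would either take as implicit in the statement or add as an assumption.
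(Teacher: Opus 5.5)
Your proposal is correct and follows essentially the paper's own argument: apply Theorem \ref{th:tocho} with $\vec{B}=(b_1I_n,\dots,b_mI_n)$, recognize each inner sum as the commutator $(\vec{T}_Q)_{\vec{b}}$ of the integral operator $\vec{T}_Q\vec{g}(x)=\integ{K_Q(x,\cdot)\vec{g}}_Q$ (your induction is Lemma \ref{lemma:comm_exp}), and use that the commutator is unchanged when the constants $\integ{\vec{b}}_Q$ are subtracted. Your remark that each $b_j$ must be integrable on every $Q\in\mathcal{G}$ for $\integ{\vec{b}}_Q$ to make sense is a fair point that the paper leaves implicit.
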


Note that an analogous result for a matrix $B$ does not necessarily hold
 even if $\vec{T}$ 
is linear over matrices, due to the non-commutativity of matrices.

It should also be observed that, when we state in the theorem that $C$, $\mathcal{G}$ and
 the $K_Q$ are provided by 
    $(P_1)$ applied to a function dependent on $\vec{f}$ and $\vec{B}$, what we mean is that they 
    retain any property independent of those functions (this can be seen in the proof).
    For example, 
    if an operator satisfies $(P_1)$ with $C$ independent 
       of the function $\vec{f}$, the $C$ provided by the theorem will also be independent of $\vec{f}$ and $\vec{B}$.

    More interestingly, if the operator $\vec{T}$ is such that the corresponding
     $K_Q$ in $(P_1)$ satisfy some relationship that does not depend  
     on $\vec{f}$, like if
    \begin{equation*}
    \begin{split}
        K_Q = 
        k_QI_n
    \end{split}
    \end{equation*}
    for some $k_Q:Q\times Q\rightarrow \mathbb{F}$ with $\|k_Q\|_{L^{r'}}\leq1$ and for all $Q$ (which happens when $\vec{T} = T\otimes I_n$ for some $T$),
    then the 
    $K_Q$ provided by the theorem take the form
    \begin{equation*}
    \begin{split}
        K_Q = k_QI_n,
    \end{split}
    \end{equation*}
    with $k_Q$ in the same conditions as before.
This allows us to recover a more direct extension of Theorem 1.8 in \cite{Cit2SaMWS}.

\begin{corollary}\label{cor:Tochodiago}
        Let $\Omega$ be a measurable subset of $\mathbb{R}^d$, 
        $\mathbb{F}=\mathbb{R}$ or $\mathbb{C}$, $r\in [1,\infty]$, and
        assume $T$ is a scalar linear operator defined for scalar functions with the following property:
        \begin{itemize}
            \item[$(P_2)$] For every $\vec{f}\in L_c^r(\Omega,\mathbb{F}^{2^m n})$,
            there exist $C>0$,
            a collection of subsets of $\Omega$, $\mathcal{G}$, and 
            functions $k_Q:Q\times Q\rightarrow \mathbb{F}$, for $Q\in \mathcal{G}$, 
            with $\|k_Q(x,\cdot)\|_{L^{r'}\left(\frac{dy}{|Q|}\right)}\leq 1$ a.e. $x\in Q$,
            such that
            \begin{equation}\label{eq:convT}
            \begin{split}
                T\otimes I_{2^mn}\vec{f}(x) = C \sum_{Q\in \mathcal{G}}\integ{k_Q(x,\cdot)\vec{f}}_Q \chi_Q(x)
            \end{split}
            \end{equation}
            a.e. on $\Omega$.
        \end{itemize}
        Then, given any function $\vec{f}\in L_c^r(\Omega,\mathbb{F}^n)$, 
        and $\vec{B}=(B_1,...,B_m)$, a vector with $m$ $n\times n$ matrix functions defined in $\Omega$, 
        such that $\vec{B}_{\sigma^t}\vec{f}\in L_c^r(\Omega,\mathbb{F}^n)$ for all $\sigma\in C(m)$,
         there exist $C>0$,
        a collection of subsets of $\Omega$, $\mathcal{G}$, 
        and functions $k_Q:Q\times Q\rightarrow \mathbb{F}$ with $\|k(x,\cdot)\|_{L^{r'}\left(\frac{dy}{|Q|}\right)}\leq 1$, for $Q\in \mathcal{G}$ (all provided by 
        $(P_1)$ applied to a function dependent on $\vec{f}$ and $\vec{B}$), such that
        \begin{equation}\label{aux6}
            \begin{split}
                (T\otimes I_n)_{\vec{B}}\vec{f}(x)=C \sum_{Q\in \mathcal{G}}\chi_Q(x) \sum_{\sigma\in C(m)}(-1)^{m-|\sigma|} \vec{B}_{\sigma}(x)\integ{k_Q(x,\cdot)\vec{B}_{(\sigma^c)^t}\vec{f}}_Q
            \end{split}
        \end{equation}
        a.e. on $\Omega$.
\end{corollary}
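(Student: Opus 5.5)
The plan is to derive Corollary \ref{cor:Tochodiago} directly from Theorem \ref{th:tocho} applied to $\vec{T}=T\otimes I_n$. The only real work is to check that $(P_2)$ gives $(P_1)$ for this operator with kernels of the special form $K_Q=k_QI_n$. Then we track that form through the proof of the theorem.

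First, we observe that for $\vec{T}=T\otimes I_n$, the operator $\overline{T}$ defined in \eqref{Tbarra} is exactly $T\otimes I_{2^mn}$. Indeed, each block $\vec{T}(\vec{g}^k)$ is $T$ applied componentwise to the $n$ entries of $\vec{g}^k$. So, given $\vec{f}\in L^r_c(\Omega,\mathbb{F}^{2^mn})$, $(P_2)$ provides $C$, $\mathcal{G}$ and scalar kernels $k_Q$ with $\|k_Q(x,\cdot)\|_{L^{r'}(dy/|Q|)}\le 1$ satisfying \eqref{eq:convT}.

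Next we set $K_Q:=k_QI_n$. Then $\overline{K}_Q=k_QI_{2^mn}$, and
\[
\integ{\overline{K}_Q(x,\cdot)\vec{f}}_Q=\integ{k_Q(x,\cdot)\vec{f}}_Q .
\]
Hence \eqref{eq:convT} is precisely \eqref{eq:pconvT}, and $(P_1)$ holds. We also need the normalization $\|K_Q(x,\cdot)\|_{L^{r'}}\le 1$. Here the $L^{r'}$ norm of a matrix function must be read as the paper intends, namely as the norm of the scalar function $\|K_Q(x,y)\|$ (operator norm). With that reading, $\|k_QI_n\|=|k_Q|$, so the bound is inherited directly from $k_Q$.

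Theorem \ref{th:tocho} then yields \eqref{paux33}, with $C$, $\mathcal{G}$ and $K_Q$ provided by $(P_1)$ applied to an auxiliary function built from $\vec{f}$ and $\vec{B}$. In the theorem's proof, this auxiliary function is the stacked vector of the $\vec{B}_{\sigma^t}\vec{f}$, $\sigma\in C(m)$, which lies in $L^r_c(\Omega,\mathbb{F}^{2^mn})$ by hypothesis. As the remark preceding the corollary explains, the theorem does not modify the kernels: it only uses \eqref{eq:pconvT} for that auxiliary function and then recombines the blocks with the matrices $\vec{B}_\sigma(x)$.

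We therefore take the $k_Q$ from $(P_2)$ applied to this auxiliary function and the corresponding $K_Q=k_QI_n$. Substituting into \eqref{paux33} and pulling the scalar $k_Q(x,y)$ out of the product,
\[
\integ{k_Q(x,\cdot)I_n\,\vec{B}_{(\sigma^c)^t}\vec{f}}_Q=\integ{k_Q(x,\cdot)\vec{B}_{(\sigma^c)^t}\vec{f}}_Q,
\]
which gives exactly \eqref{aux6}.

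The main obstacle is the justification that the $K_Q$ output by Theorem \ref{th:tocho} are literally the $K_Q$ fed into $(P_1)$, rather than some modification of them. If one only has the theorem's statement, this has to be extracted from its proof.

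If that point were not available, I would redo the short algebra of the theorem in the scalar setting. Apply $(P_2)$ to the stacked vector $\vec{F}=(\vec{B}_{\sigma^t}\vec{f})_{\sigma\in C(m)}$. Expand $(T\otimes I_n)_{\vec{B}}\vec{f}$ by induction on $m$ as
\[
\sum_{\sigma\in C(m)}(-1)^{m-|\sigma|}\vec{B}_\sigma\,(T\otimes I_n)\bigl(\vec{B}_{(\sigma^c)^t}\vec{f}\bigr),
\]
and substitute the block identities coming from \eqref{eq:convT}. Since $k_Q$ is scalar, it commutes with the matrices $\vec{B}_\sigma(x)$, so no extra matrix manipulation is needed beyond matching the blocks of $\vec{F}$ with the indices $(\sigma^c)^t$.
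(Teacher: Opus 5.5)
Your proposal is correct and matches the paper's argument. You specialize Theorem \ref{th:tocho} to $\vec{T}=T\otimes I_n$, note that $\overline{T}=T\otimes I_{2^mn}$, so $(P_2)$ is exactly $(P_1)$ with $K_Q=k_QI_n$, and then use that the theorem's proof (Lemma \ref{lemma:comm_exp} together with $(P_1)$ applied to $\tilde{\Psi}\vec{f}$) passes the kernels through unchanged — which is precisely the observation the paper makes right before stating the corollary.
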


It is known that many types of operators satisfy 
property $(P_2)$ for the previous corollary. 
For example, for linear operators $T$ such that 
$T$ and $M_T$ are of weak type $(1,1)$
(Theorem 3.4 in \cite{cbdwczo}) or for 
the Bergman projection in certain domains
(Theorem 3.2 in \cite{WEotBPwMW}). 
Here, $M_T$ is the maximal operator
\begin{equation*}
\begin{split}
    {M}_Tf(x) = \sup_{Q\ni x} \sup_{y\in Q}|T(f \chi_{\rn\setminus 3Q})(y)|.
\end{split}
\end{equation*}
Note that the same reasoning can be applied to
 similar variations of property $(P_1)$ to obtain 
 analogous results.

Some vector-valued operators may not admit convex body domination, like, for example,
rough singular integral operators. For such operators, 
one may attempt to obtain a weaker integral version of convex body domination, that 
generally is good enough to obtain interesting consequences. One such result is the 
following theorem from \cite{muller-rr}.

\begin{theorem}
    Let $1\leq q \leq r$,
     $s\geq 1$, $v$ such that $\frac{1}{v}=\frac{1}{r}+\frac{1}{s}$,
    and ${T}$ be a linear operator of weak type $(q,q)$ such that ${M}_{T}$ maps $L^r\times L^s$ into $L^{v,\infty}$ boundedly.
     Then, for any $\vec{f}$ with compact support and $n$ components such that 
     $|\vec{f}|\in L^r(\mathbb{R}^d)$ and $\vec{g}$ with $n$ components and $|\vec{g}|\in L_{Loc}^s(\mathbb{R}^d)$,
    there exists a $\frac{1}{2\cdot3^n}-$sparse family $\mathcal{S}$ such that 
    \begin{equation*}
    \begin{split}
        \int_{\mathbb{R}^d}&\left\lvert  \esc{{T}\otimes I_n\vec{f}}{\vec{g}}\right\rvert
         \leq     C_{n,d,\nu,q,\varepsilon}
        \left(\|T\|_{L^q\rightarrow L^{q,\infty}}  + \|M_T\|_{L^r\times L^s\rightarrow L^{\nu,\infty}}\right)
        \sum_{Q\in \mathcal{S}}|Q| \iinteg{\vec{f}}_{r,Q_0}\iinteg{\vec{g}}_{s,Q_0}.
    \end{split}
    \end{equation*}
\end{theorem}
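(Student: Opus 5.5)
The plan is to adapt the Lerner--Lorist--Ombrosi bilinear sparse domination scheme for operators $T$ with a weak-type bound and a bilinear bound for $M_T$. Vector-valued data are handled through the convex bodies by a John ellipsoid normalization at each stopping cube, in the spirit of \cite{cbdwczo}. By the three lattice theorem it suffices to work inside a fixed dyadic lattice. We cover $\operatorname{supp}\vec f$ by a cube $Q_0$, with the goal of a recursive estimate
\[
\int_{Q_0}\left|\esc{T\otimes I_n(\vec f\chi_{3Q_0})}{\vec g}\right| \le C\,|Q_0|\,\iinteg{\vec f}_{r,3Q_0}\iinteg{\vec g}_{s,3Q_0}+\sum_j \int_{P_j}\left|\esc{T\otimes I_n(\vec f\chi_{3P_j})}{\vec g}\right|,
\]
where $\{P_j\}$ are disjoint dyadic subcubes of $Q_0$ with $\sum_j|P_j|\le\frac12|Q_0|$. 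Iterating this gives the sparse family. The sparseness constant $\frac{1}{2\cdot 3^n}$ is what one gets after accounting for the passage between $Q_0$ and $3Q_0$ and the covering. Throughout, the product $\iinteg{\vec f}_{r,Q}\iinteg{\vec g}_{s,Q}$ is read as the set $\{\esc{u}{v}\}$ with $u,v$ in the respective bodies, which is controlled by its maximal modulus.

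The key step is a normalization at a single cube $Q$. The body $\iinteg{\vec f}_{r,3Q}$ is symmetric, convex and compact. By John's theorem there is a linear map $A_Q$ such that $A_Q\iinteg{\vec f}_{r,3Q}$ sits between the unit ball and $\sqrt n$ times the unit ball (after restricting to its span if it is degenerate). Since $T\otimes I_n$ commutes with constant matrices,
\[
\esc{T\otimes I_n\vec f}{\vec g}=\esc{T\otimes I_n(A_Q\vec f)}{A_Q^{-*}\vec g}=\sum_{i=1}^n T\big((A_Q\vec f)_i\big)\,\overline{(A_Q^{-*}\vec g)_i}.
\]
Moreover $\iinteg{A_Q\vec f}=A_Q\iinteg{\vec f}$, so each scalar component satisfies $\integ{(A_Q\vec f)_i}_{r,3Q}\lesssim_n 1$. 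Duality then gives $\integ{(A_Q^{-*}\vec g)_i}_{s,3Q}\lesssim \sup\{|\esc{u}{v}|: u\in\iinteg{\vec f}_{r,3Q},\, v\in \iinteg{\vec g}_{s,3Q}\}$. This converts the vector problem on $Q$ into $n$ scalar pairs $(f_i,g_i)=((A_Q\vec f)_i,(A_Q^{-*}\vec g)_i)$, each with averages bounded by the convex-body quantity.

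Next comes the stopping set. With $f_i,g_i$ as above we set
\[
E=\bigcup_{i=1}^n\Big\{x\in Q_0:\ |T(f_i\chi_{3Q_0})(x)|>C\integ{f_i}_{q,3Q_0}\Big\}\cup\Big\{x\in Q_0:\ M_T(f_i\chi_{3Q_0},g_i\chi_{3Q_0})(x)>C\integ{f_i}_{r,3Q_0}\integ{g_i}_{s,3Q_0}\Big\}.
\]
Here $M_T$ is the bilinear maximal operator in the hypothesis. The weak $(q,q)$ bound, the $L^r\times L^s\to L^{v,\infty}$ bound and $q\le r$ show that $|E|\le\frac{1}{2^{d+2}}|Q_0|$ once $C$ is large, with $C$ a multiple of $\|T\|_{L^q\rightarrow L^{q,\infty}}+\|M_T\|_{L^r\times L^s\rightarrow L^{\nu,\infty}}$. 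The $P_j$ are the maximal dyadic cubes of the Calderón--Zygmund decomposition of $\chi_E$ at height $2^{-(d+1)}$.

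On $Q_0\setminus\bigcup_j P_j$ we have $|T(f_i\chi_{3Q_0})\,g_i|$ bounded a.e. by the first part of $E$, and integration gives the main term. On each $P_j$ we split $f_i\chi_{3Q_0}=f_i\chi_{3P_j}+f_i\chi_{3Q_0\setminus 3P_j}$. The far part is controlled by the $M_T$ term, because $\widehat{P_j}\not\subset E$ and the maximal function evaluated at a point of $\widehat{P_j}\setminus E$ dominates the averaged far interaction against $g_i$ over $P_j$. The near part is put back into vector form, undoing $A_{Q_0}$, which is legitimate because the pairing is invariant. This produces the recursive term, to which the normalization is reapplied at $P_j$ with a new $A_{P_j}$.

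I expect the main obstacle to be this last point. The normalization matrix has to be changed cube by cube while the quantity being iterated, $\int|\esc{T\otimes I_n(\vec f\chi_{3P})}{\vec g}|$, stays basis-independent. That requires the stopping condition to be stated in terms of $A_{Q}\vec f$ and $A_Q^{-*}\vec g$ for the current cube only. It also requires checking carefully, via duality of the bodies, that $\max_i\integ{f_i}_{r}\integ{g_i}_{s}$ is genuinely bounded by the convex-body product, losing only dimensional constants.
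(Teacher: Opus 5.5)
The paper gives no proof of this statement. It is quoted from \cite{muller-rr} and used as a black box, so there is no internal proof to compare against.

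Your outline follows the standard route: the Lerner--Lorist--Ombrosi stopping argument for the bilinear $M_T$, combined with the John-ellipsoid normalization of \cite{cbdwczo}. Its core is right. You have $\integ{(A_Q\vec f)_i}_{r,3Q}\le\sqrt n$ because $A_Q\iinteg{\vec f}_{r,3Q}\subset\sqrt n B$, with $B$ the unit ball. You also have $\integ{(A_Q^{-*}\vec g)_i}_{s,3Q}\le\sup\{|\esc{u}{v}|:u\in\iinteg{\vec f}_{r,3Q},\,v\in\iinteg{\vec g}_{s,3Q}\}$, because $e_i=A_Qu$ for some $u\in\iinteg{\vec f}_{r,3Q}$. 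Defining each stopping set in the normalization of the current cube, and passing between cubes through $\esc{A\vec u}{A^{-*}\vec v}=\esc{\vec u}{\vec v}$, correctly handles the obstacle you flag. Two steps still need repair.

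\textbf{The far term.} A point of $\widehat{P_j}\setminus E$ does not suffice. It may lie outside $P_j$, and $M_T$ at that point only tests cubes containing it. Testing on $\widehat{P_j}$ instead leaves $T(f_i\chi_{3\widehat{P_j}\setminus 3P_j})$ uncontrolled. What you need is
$|P_j\cap E|\le|\widehat{P_j}\cap E|\le 2^{-(d+1)}|\widehat{P_j}|=\frac12|P_j|$, so $P_j\setminus E$ has positive measure. For $x$ there, test $M_T(f_i\chi_{3Q_0},g_i\chi_{3Q_0})(x)$ on the cube $P_j$ itself. This gives $\int_{P_j}|T(f_i\chi_{3Q_0\setminus 3P_j})\,g_i|\le C|P_j|\integ{f_i}_{r,3Q_0}\integ{g_i}_{s,3Q_0}$.

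\textbf{The iteration.} Iterating the integral inequality leaves the remainder $\sum_{P\in\mathrm{gen}_k}\int_P|\esc{T\otimes I_n(\vec f\chi_{3P})}{\vec g}|$, where $\mathrm{gen}_k$ is the $k$-th generation of stopping cubes. The fact that $|\bigcup\mathrm{gen}_k|\le 2^{-k}|Q_0|$ does not by itself make this tend to zero, since you have no a priori integrable majorant for $T(\cdot)\,g$.

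Run the recursion pointwise instead. Let $f^P_i,g^P_i$ be the components normalized at $P$. The splitting $\esc{T\otimes I_n(\vec f\chi_{3P})}{\vec g}=\esc{T\otimes I_n(\vec f\chi_{3P_j})}{\vec g}+\sum_i T(f^P_i\chi_{3P\setminus 3P_j})\overline{g^P_i}$ holds a.e. on $P_j$. Almost every $x$ lies in only finitely many stopping cubes. Telescoping along that finite chain bounds the integrand a.e. by a sum of nonnegative local terms, which you integrate only at the end. This also shows the remainder is a tail of the sparse sum.

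With these fixes the argument gives the bound with sparseness $\frac{1}{2\cdot 3^d}$, coming from $P\mapsto 3P$. So the $3^n$ and the $Q_0$ on the right-hand side of the statement should be read as $3^d$ and $Q$.
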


Here,
\begin{equation*}
\begin{split}
    {M}_T(f,g)(x) = \sup_{Q\ni x}\integ{|T(f \chi_{\rn\setminus 3Q})g|}_Q.
\end{split}
\end{equation*}

In order to account for this case of operators, we will also provide an integral convex body  
domination result which works for the commutator of an operator, whenever said operator 
admits a form of integral convex body domination. In this case we just managed to provide a suitable result for operators of the form $T\otimes I_n$, extending \cite[Proposition 7.1]{La}

\begin{theorem}\label{th:tocho3}
    Let $\Omega$ be a measurable subset of $\mathbb{R}^d$,
     $\mathbb{F}=\mathbb{R}$ or $\mathbb{C}$,
    $r,s\in[1,\infty]$, and
    assume ${T}$ is a scalar linear operator with the following property:
    \begin{itemize}
        \item[$(P_2)$] For every $\vec{f}\in L_c^r(\Omega,\mathbb{F}^{2^mn})$
         and $\vec{g}\in L_{Loc}^s(\Omega,\mathbb{F}^{2^mn})$, 
        there exist $C>0$ and
        a collection of subsets of $\Omega$, $\mathcal{G}$ such that
        \begin{equation*}
        \begin{split}
            \int_{\mathbb{R}^d} \left\lvert \esc{{T\otimes I_{2^mn}}\vec{f}}{\vec{g}}\right\rvert 
        &\leq C \sum_{Q\in \mathcal{G}}|Q| 
        \iinteg{\vec{f}}_{r,Q}\iinteg{\vec{g}}_{s,Q}
        \end{split}
        \end{equation*}
    \end{itemize}
   
    Then, given functions $\vec{f}\in L_c^r(\Omega,\mathbb{F}^n)$, $\vec{g}\in L_{Loc}^s(\Omega,\mathbb{F}^n)$ 
    and $\vec{B}=(B_1,...,B_m)$, a vector with $m$ $n\times n$ matrix functions defined in $\Omega$, 
    such that $\vec{B}_{\sigma^t}\vec{f}\in L_c^r(\Omega,\mathbb{F}^n)$ and 
    $\vec{B}_{\sigma}^*\vec{g}\in L_{Loc}^s(\Omega,\mathbb{F}^n)$ for all $\sigma\in C(m)$,
    there exist $C>0$,
    a collection of subsets of $\Omega$, $\mathcal{G}$, 
    (all provided by 
    $(P_2)$ applied to a function dependent on $\vec{f}$, $\vec{g}$ and $\vec{B}$), such that
    \begin{equation*}
    \begin{split}
        \int_{\Omega}\left\lvert \esc{{(T\otimes I_n)}_{\vec{B}}\vec{f}(x)}{\vec{g}(x)}\right\rvert dx
    \leq C\sum_{Q\in \mathcal{G}}|Q| 
    \iinteg{\tilde{\Psi}\vec{f}}_{r,Q}\iinteg{\Psi^*\vec{g}}_{s,Q}.
    \end{split}
    \end{equation*}
\end{theorem}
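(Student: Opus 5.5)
The plan is to reduce the commutator pairing to a single pairing of $T\otimes I_{2^mn}$ with two stacked vector functions, and then apply $(P_2)$ once. I take $\Psi$ to be the $n\times 2^mn$ block row matrix whose $n\times n$ blocks are $\vec{B}_\sigma$, for $\sigma\in C(m)$ in a fixed enumeration. I take $\tilde{\Psi}$ to be the $2^mn\times n$ block column matrix whose blocks are $(-1)^{m-|\sigma|}\vec{B}_{(\sigma^c)^t}$, in the same enumeration. (Presumably these are the $\Psi,\tilde\Psi$ fixed in the paper; if the signs are placed differently, the argument is unchanged.) With these conventions, $\tilde{\Psi}\vec f$ is the stacked vector with blocks $(-1)^{m-|\sigma|}\vec{B}_{(\sigma^c)^t}\vec f$, and $\Psi^*\vec g$ is the stacked vector with blocks $\vec{B}_\sigma^*\vec g$. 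The hypotheses $\vec{B}_{\sigma^t}\vec f\in L^r_c$ and $\vec{B}^*_\sigma\vec g\in L^s_{Loc}$ give exactly $\tilde\Psi\vec f\in L^r_c(\Omega,\mathbb{F}^{2^mn})$ and $\Psi^*\vec g\in L^s_{Loc}(\Omega,\mathbb{F}^{2^mn})$. The map $\sigma\mapsto\sigma^c$ is a bijection of $C(m)$, so the relabelling is harmless.

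The first step is the algebraic expansion
\[
(T\otimes I_n)_{\vec B}\vec f(x)=\sum_{\sigma\in C(m)}(-1)^{m-|\sigma|}\vec{B}_\sigma(x)\,(T\otimes I_n)\big(\vec{B}_{(\sigma^c)^t}\vec f\big)(x).
\]
This is the same identity that underlies \eqref{paux33}, and I would prove it by induction on $m$. The case $m=0$ is trivial. For the inductive step, write $(T\otimes I_n)_{\vec B}=[B_m,X]$, where $X$ is the commutator with $(B_1,\dots,B_{m-1})$. The term $B_mX\vec f$ produces the tuples $\sigma$ containing $m$. Here $B_m$ sits leftmost, which matches $\vec{B}_\sigma$ being the product in reverse order, and the sign is unchanged. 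The term $-X(B_m\vec f)$ produces the tuples $\sigma$ not containing $m$. Here $B_m$ acts first on $\vec f$, which places it rightmost in $\vec{B}_{(\sigma^c)^t}$, and the sign flips consistently with $m-|\sigma|$. The bookkeeping of orders and signs is the only delicate point, and I expect it to be the main obstacle. Everything else is formal, and this bookkeeping should follow from the conventions of Section \ref{sect:combinatorics}.

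Next I pair with $\vec g$ and move each $\vec B_\sigma(x)$ to the other side of the inner product:
\[
\esc{(T\otimes I_n)_{\vec B}\vec f(x)}{\vec g(x)}=\sum_{\sigma}\esc{(T\otimes I_n)\big((-1)^{m-|\sigma|}\vec{B}_{(\sigma^c)^t}\vec f\big)(x)}{\vec{B}_\sigma^*(x)\vec g(x)}.
\]
Since $T\otimes I_{2^mn}$ acts blockwise, the right-hand side equals $\esc{(T\otimes I_{2^mn})(\tilde\Psi\vec f)(x)}{\Psi^*\vec g(x)}$ pointwise a.e. Hence
\[
\int_\Omega\left\lvert\esc{(T\otimes I_n)_{\vec B}\vec f}{\vec g}\right\rvert=\int_\Omega\left\lvert\esc{(T\otimes I_{2^mn})\tilde\Psi\vec f}{\Psi^*\vec g}\right\rvert .
\]
Applying $(P_2)$ to the pair $(\tilde\Psi\vec f,\Psi^*\vec g)$ then yields $C$ and $\mathcal G$ with the claimed bound $C\sum_{Q\in\mathcal G}|Q|\iinteg{\tilde\Psi\vec f}_{r,Q}\iinteg{\Psi^*\vec g}_{s,Q}$. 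These $C$ and $\mathcal G$ are produced by $(P_2)$ from functions depending only on $\vec f,\vec g,\vec B$, as the statement asserts.
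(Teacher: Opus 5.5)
Your proposal is correct and follows essentially the same route as the paper: the expansion you prove by induction is exactly Lemma \ref{lemma:comm_exp}, i.e.\ $(T\otimes I_n)_{\vec B}\vec f=\Psi\,(T\otimes I_{2^mn})(\tilde\Psi\vec f)$, after which the paper moves $\Psi$ across the inner product and applies $(P_2)$ once to the pair $(\tilde\Psi\vec f,\Psi^*\vec g)$. The only difference is that the paper puts the signs $(-1)^{m-|\sigma|}$ in $\Psi$ rather than in $\tilde\Psi$; this is immaterial, since your two stacked vectors are the paper's ones multiplied by the same diagonal matrix with $\pm I_n$ blocks, which leaves both the pairing and the convex-body product unchanged.
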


Here, for a vector of $m$ matrices $\vec{B}$, we use the matrices
\begin{equation}\label{psi}
    \begin{split}
        \Psi(x) = \Psi_{\vec{B}}(x) = \begin{pNiceArray}{cc|cc|cc|cc|cc}
            (-1)^m I_n &&  \dots && (-1)^{m-k_0(j)}\vec{B}_{\tilde{\sigma}_j}(x) && \dots && \vec{B}_{\tilde{\sigma}_{2^m}}(x)
          \end{pNiceArray}
    \end{split}
    \end{equation}
    and
    \begin{equation}\label{tildepsi}
    \begin{split}
        \tilde{\Psi}(x)=\tilde{\Psi}_{\vec{B}}(x)=\begin{pmatrix}
              \vec{B}_{\tilde{\sigma}_{2^m}^t} (x)  \\
            \hline \\
            \vdots \\
            \hline  \\
            \vec{B}_{(\tilde{\sigma}_j^c)^t}(x)\\
            \hline \\
            \vdots\\
            \hline\\
            I_n
        \end{pmatrix},
    \end{split}
    \end{equation}
    where $\left\{ \tilde{\sigma}_j\right\}_{j=1}^{2^m}$ are the tuples of $C(m)$, but written in an 
    orderly manner (see Section \ref{sect:combinatorics} for the precise definition).
    When the index $\vec{B}$ is clear, it will be omitted.




Later, in Section \ref{sect:str}, we will apply
a lemma
 to deduce alternate versions of the previous theorem
  and 
 Corollary \ref{cor:Tochodiago},
  where the domination is written more conveniently.

\subsection{Quantitative weighted estimates}
Convex body domination results, much like sparse domination ones, can be employed to 
obtain estimates for those operators that admit them. For example, 
in Theorem 1 in \cite{muller-rr}, the authors obtain a weighted strong-type estimate for rough
singular integral operators in the vector setting by means of the integral convex
 body domination result we discussed earlier. Such a reasoning can be adapted to study the strong-type 
boundedness of the commutator of vector-valued rough
singular integrals in the two-weight setting, which is another one of the results that 
we present in this paper.

\begin{theorem}\label{th:strTocho3}
    If $\Omega\in L^{\infty}(\mathbb{S}^{d-1})$, $\int_{\mathbb{S}^{d-1}}\Omega=0$, $T=T_{\Omega}$, $1<p<\infty$,
    $\vec{B}$ is a vector of locally integrable matrix functions,
    and $U\in A_{p,\infty}^{sc}$, $V^{-\frac{p'}{p}}\in A_{p',\infty}^{sc}$, $(U,V)\in A_p$,
    \begin{align}\label{estelle}
        { \int_{\Omega}}&{ \left\lvert \esc{V^{\frac{1}{p}}(x){(T\otimes I_n)}_{\vec{B}}(U^{\frac{-1}{p}}\vec{h})(x)}{\vec{g}(x)}\right\rvert}\nonumber
        \\ & { \lesssim \|\Omega\|_{L^{\infty}(\mathbb{S}^{d-1})}  [U,V]_{A_p}^{\frac{1}{p}}
        \left(\sum_{\sigma\in C(m)} \|\vec{B}\|_{BMO^{s \gamma p}_{V^{s \gamma},U^{s \gamma},\sigma,1}}\|\vec{B}\|_{BMO^{{(rp')'},*}_{V^{\frac{(rp')'}{p}},U^{\frac{(rp')'}{p}},\sigma^c,2}}\right) } 
        \\ &  \;\;\;\; {
         \times \;t_1^{\frac{1}{p}}[V^{\frac{-p'}{p}}]^{\frac{1}{p}}_{A^{sc}_{\infty,p'}}t_2^{\frac{1}{p'}}[U]^{\frac{1}{p'}}_{A^{sc}_{\infty,p}}\min \left\{ t_2[U]_{A^{sc}_{\infty,p}},t_1[V^{\frac{-p'}{p}}]_{A^{sc}_{\infty,p'}}\right\}{\|\vec{h}\|_{L^p}\|\vec{g}\|_{L^{p'}}}},\nonumber
    \end{align}
    where  $r= 1+\frac{1}{2^{d+11}t_1[V^{\frac{-p'}{p}}]_{A^{sc}_{p',\infty}}}$,
    $\gamma = 1+\frac{1}{\left(\frac{p'+1}{2}\right)2^{d+11}t_2[U]_{A^{sc}_{p,\infty}}}$,
$s = \left(\frac{p'+1}{2}\frac{1+2^{d+11}t_2[U]_{A_{p,\infty}^{sc}}}{1+\left(\frac{p'+1}{2}\right)2^{d+11}t_2[U]_{p,\infty}^{sc}}\right)$,
  and $t_1\geq1$, $t_2\geq1$ can be chosen arbitrarily.
\end{theorem}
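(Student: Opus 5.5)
We plan to combine the integral convex body domination for commutators, Theorem \ref{th:tocho3}, with the strategy of Theorem 1 in \cite{muller-rr}. First, we check that $T=T_\Omega$ satisfies $(P_2)$ of Theorem \ref{th:tocho3} for every $r>1$ and $s>1$, with constant $C\lesssim \|\Omega\|_{L^\infty}\,r'$. This follows from the theorem of \cite{muller-rr} quoted above: $T_\Omega$ is of weak type $(1,1)$, and the known bounds for $M_{T_\Omega}$ on $L^r\times L^s$ carry a constant of order $r'$. We then apply Theorem \ref{th:tocho3} with $\vec f=U^{-1/p}\vec h$ and with $\vec g$ replaced by $V^{1/p}\vec g$. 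This bounds the left-hand side of \eqref{estelle} by
\[
C\sum_{Q\in\mathcal S}|Q|\,\iinteg{\tilde\Psi U^{-1/p}\vec h}_{r,Q}\,\iinteg{\Psi^*V^{1/p}\vec g}_{s,Q}.
\]
Expanding $\Psi$ and $\tilde\Psi$ blockwise splits the right-hand side into a sum over $\sigma\in C(m)$. In the term indexed by $\sigma$, $\vec B_\sigma^*$ acts on the $\vec g$ side and $\vec B_{(\sigma^c)^t}$ acts on the $\vec h$ side, so it suffices to bound each $\sigma$-term separately.

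For each $Q$ we insert the reducing operators of $U$ and $V$. Write
\[
\iinteg{\vec B_{(\sigma^c)^t}U^{-1/p}\vec h}_{r,Q}\subset \big\|\mathcal U_Q\,\vec B_{(\sigma^c)^t}U^{-1/p}\big\|\,\mathcal U_Q^{-1}\ldots,
\]
and estimate the scalar Minkowski-type quantity $|\iinteg{\cdot}_{r,Q}\cdot\iinteg{\cdot}_{s,Q}|$ by
\[
\langle\, |\mathcal V_Q^{-1}\vec B_\sigma^*V^{1/p}\vec g|\,\rangle_{s,Q}\;\times\;\langle\, |\mathcal V_Q\,\mathcal U_Q^{-1}|\,\rangle\;\times\;\langle\, |\mathcal U_Q\,\vec B_{(\sigma^c)^t}U^{-1/p}\vec h|\,\rangle_{r,Q}.
\]
The middle factor is controlled by $[U,V]_{A_p}^{1/p}$. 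In each outer factor we apply Hölder with exponents $\gamma$ and $\gamma'$ (respectively $r$ and $(rp')'$). This separates a $BMO$-type average of the matrix symbols weighted by $U$ or $V$, which is exactly what the quantities $\|\vec B\|_{BMO^{s\gamma p}_{V^{s\gamma},U^{s\gamma},\sigma,1}}$ and $\|\vec B\|_{BMO^{(rp')',*}_{\ldots,\sigma^c,2}}$ are designed to absorb, from a scalar average of $|\vec h|$ or $|\vec g|$ to a power slightly larger than $1$. The parameters $r,\gamma,s$ are chosen precisely at the sharp reverse Hölder exponents of the scalar $A_\infty$ weights $|V^{-1/p}e|^{p'}$ and $|U^{1/p}e|^{p}$. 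These exponents are $1+1/(2^{d+11}t[\,\cdot\,]_{A^{sc}_{\infty}})$, so the reverse Hölder inequality lets us drop the extra power at the cost of a constant.

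After this we are left with a sparse form
\[
\sum_{Q\in\mathcal S}|Q|\,\langle |\vec h|^{\rho}\rangle_Q^{1/\rho}\,\langle |\vec g|^{\tau}\rangle_Q^{1/\tau},
\]
with $\rho<p$ and $\tau<p'$. Following \cite{muller-rr}, we bound this form using sparseness and the $L^{p/\rho}$, $L^{p'/\tau}$ boundedness of the maximal function. The blow-up $(p/\rho)'\sim t_2[U]$ and $(p'/\tau)'\sim t_1[V^{-p'/p}]$ produces the factors $t_1^{1/p}[\cdot]^{1/p}t_2^{1/p'}[\cdot]^{1/p'}$. The $\min\{\cdot\}$ factor comes from the $r'$ in the constant of $(P_2)$: we can choose to place the larger Hölder exponent on either the $\vec h$ side or the $\vec g$ side.

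The main obstacle is the reducing-operator step in the matrix setting. The $\vec B_\sigma$ factors do not commute with the weights, so the $A_p$ constant and the two $BMO$ norms must be separated in exactly the form encoded by the definitions of those spaces. Tracking the Hölder exponents so that they match the stated $r$, $\gamma$ and $s$ is routine but delicate.
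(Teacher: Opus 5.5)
There is a genuine gap at the step where you expand $\Psi$ and $\tilde\Psi$ blockwise and bound each $\sigma$-term separately. Up to sign, the $\sigma$-th block of the pairing is $\esc{\fint_Q\vec B_{(\sigma^c)^t}U^{-1/p}\vec h\,\phi}{\fint_Q\vec B_\sigma^*V^{1/p}\vec g\,\psi}$. This contains the symbols themselves, whereas every norm on the right-hand side of the statement measures an oscillation.

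Take $\vec B$ constant. Then the commutator is zero, and so is the right-hand side. Indeed, for each $\sigma$ at least one of $\sigma,\sigma^c$ is nonempty. For nonempty $\tau$, both alternating sums $\sum_{\alpha\in C(\tau)}(-1)^{|\tau|-|\alpha|}\vec B_\alpha(m_Q\vec B)_{(\tau-\alpha)^t}$ and $\sum_{\alpha\in C(\tau)}(-1)^{|\tau|-|\alpha|}(m_Q\vec B)_\alpha\vec B_{(\tau-\alpha)^t}$ vanish, since each is an iterated commutator of the identity. The individual blocks, however, do not vanish; they cancel only after summing over all blocks. So no choice of reducing matrices, H\"older exponents or reverse H\"older bumps can bound a single block by these $BMO$ norms.

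You also cannot restore the cancellation by replacing $\vec B$ with $\vec B-m_Q\vec B$ on each cube, as one does for scalar symbols. For matrix symbols and $m\ge2$ the iterated commutator is not invariant under subtracting constant matrices. For example, for $m=2$ the change in the kernel is $[B_1(x)-B_1(y),A_2]$. So the real obstacle is not that $\vec B_\sigma$ fails to commute with the weights, but that $\vec B$ fails to commute with its own averages.

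The missing ingredient is the combinatorial identity of Lemma \ref{lemma:perm2}. It rests on the cancellation Lemma \ref{lemma:tralalero} and is used in the form of Corollary \ref{cor:a}: for every cube $Q$, all admissible $\phi,\psi$, $A_1=U^{-1/p}\vec h$ and $A_2=V^{1/p}\vec g$,
\begin{align*}
\esc{\fint_Q\tilde\Psi A_1\phi}{\fint_Q\Psi^*A_2\psi}
=(-1)^m\sum_{\sigma\in C(m)}\Big\langle&\fint_Q\sum_{\beta\in C(\sigma^c)}(-1)^{m-|\beta|}(m_Q\vec B)_{\beta}\vec B_{(\sigma^c-\beta)^t}A_1\phi\,,\\
&\fint_Q\sum_{\alpha\in C(\sigma)}(-1)^{m-|\alpha|}(m_Q\vec B)^*_{(\sigma-\alpha)^t}\vec B^*_{\alpha}A_2\psi\Big\rangle .
\end{align*}
Here $\sigma$ is not the block index of $\Psi$. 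Each term pairs a mean-subtracted combination of order $|\sigma^c|$ on the $\vec h$ side with one of order $|\sigma|$ on the $\vec g$ side. These are precisely the expressions in $\|\vec B\|_{BMO^{(rp')',*}_{\ldots,\sigma^c,2}}$ and $\|\vec B\|_{BMO^{s\gamma p}_{\ldots,\sigma,1}}$, which is why the statement contains the products of a $\sigma,1$ norm and a $\sigma^c,2$ norm.

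Only after this rewriting may you split over $\sigma$ and insert reducing matrices, as in Lemma \ref{lemma:a}. The reducing matrices must then be crossed. On the $U^{-1/p}\vec h$ side one uses a reducing matrix of $V^{-1/p}$; the paper takes $P_Q=\mathcal R'_{Q,(rp')',V^{(rp')'/p}}$. On the $V^{1/p}\vec g$ side one uses a reducing matrix of $U^{1/p}$, namely $R_Q=\mathcal R_{Q,s\gamma p,U^{s\gamma}}$. Reverse H\"older then gives $\|R_QP_Q\|\lesssim\|\mathcal R_{Q,p,U}\mathcal R'_{Q,p,V}\|\lesssim[U,V]_{A_p}^{1/p}$, and after H\"older the outer factors are exactly the stated $BMO$ norms. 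If your $\mathcal U_Q$ and $\mathcal V_Q$ are the reducing operators of $U$ and $V$ respectively, as the notation suggests, then the middle factor $\|\mathcal V_Q\mathcal U_Q^{-1}\|$ is a Bloom-type ratio that $[U,V]_{A_p}$ does not control.
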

Using this result, we can obtain strong-type weighted
 estimates for the appropriate operators,
employing 
duality and switching $h$ for $U^{\frac{1}{p}}f$.
The $BMO$ and $A_p$ spaces that appear in this result will be defined in later sections.

In order to achieve this estimate, one of the fundamental things that we had to take into 
consideration was the way the symbols and the weights interacted in the computations, which led to 
the definition of suitable $BMO$ spaces. These $BMO$ spaces are reminiscent of 
those that appear in \cite[Corollary 4.7]{Cit2SaMWS}, which provides several characterizations of the standard definition of 
$BMO$ in the case of one matrix symbol, in terms of reducing matrices. We have attempted to obtain an 
analogue for this result that is valid for multiple symbols, but we were not able to do so
in a completely successful way.

In \cite[Lemma 1.3]{Cit2SaMWS}, a matrix analogue of the so called conjugation method (we remit to \cite{brcbmofwe} for a detailed account of that result in the scalar setting) was obtained. Our following result generalizes that result to symbol multilinear commutators.

\begin{theorem}\label{th:strTocho}
Assume $T$ is a linear scalar operator such that
 \begin{align*}
    \|T\otimes I_{2^mn}\|_{L^p(W)\rightarrow L^p(W)}\leq \phi([W]_{A_p}) 
 \end{align*}
for some positive increasing function $\phi$, and every $W\in A_p$.
Then, if $\vec{B}=(B_1,...,B_m)$ is a vector of locally integrable matrices,
then, for any $U,V\in A_p$,
\[\|(T\otimes I_{n})_{\vec{B}}\|_{L^{p}(U)\rightarrow L^{p}(V)}
\leq\min\{\beta_1,\beta_2\}^\frac{m}{p}\phi\left(c_{m,p}([U]_{A_{p}}+[V]_{A_{p}})\right)\]
where
\begin{align*}
    \beta_1 &=\max\left\{\max_{\sigma\in C(m),\sigma\not=\varnothing,\tilde{\sigma}_{2^m}} \|\vec{B}\|_{\widetilde{BMO}_{V,V,\sigma}^{p}}^{\frac{1}{|\sigma|}},\max_{\sigma\in C(m),\sigma\not=\varnothing}\|\vec{B}\|_{\widetilde{BMO}_{V,U,\sigma}^{p}}^{\frac{1}{|\sigma|}}\right\} \\
\beta_2 &=\max\left\{\max_{\sigma\in C(m),\sigma\not=\varnothing,\tilde{\sigma}_{2^m}} \|\vec{B}\|_{\widetilde{BMO}_{U,U,\sigma}^{p}}^{\frac{1}{|\sigma|}},\max_{\sigma\in C(m),\sigma\not=\varnothing}\|\vec{B}\|_{\widetilde{BMO}_{V,U,\sigma}^{p}}^{\frac{1}{|\sigma|}}\right\} 
\end{align*}
\end{theorem}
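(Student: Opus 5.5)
The approach is the matrix conjugation method of \cite[Lemma 1.3]{Cit2SaMWS}, lifted to $2^m n\times 2^m n$ block matrices. The point is to realise $(T\otimes I_n)_{\vec{B}}$ as one block of a conjugated operator $\mathcal{E}^{-1}(T\otimes I_{2^mn})\mathcal{E}$. Here $\mathcal{E}=\mathcal{E}_{\vec{B},t}$ is a block upper-triangular matrix function of the form $I_{2^mn}+N$, where $N$ is nilpotent with blocks $t^{|\sigma|}\vec{B}_\sigma$, indexed by the ordered tuples $\tilde{\sigma}_j$ of Section \ref{sect:combinatorics}, and $t>0$ is a scalar parameter. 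The identity $\mathcal{E}^{-1}\overline{T}\mathcal{E}$, whose top-right $n\times n$ block equals $t^m(T\otimes I_n)_{\vec{B}}$ up to sign, is exactly the kind of bookkeeping encoded in $\Psi$ and $\tilde\Psi$. I would take it as the algebraic heart and check it by induction on $m$, peeling off one symbol at a time in the standard $2\times 2$ block form for a single commutator.

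With the block $k$ fixed, first note that extracting a block and embedding a function are bounded. Given $\vec{f}\in L^p(U)$, place it in the last block and zeros elsewhere. Then
\[
t^m\|(T\otimes I_n)_{\vec{B}}\vec f\|_{L^p(V)}\le \|\mathcal{E}^{-1}\overline{T}\mathcal{E}\|_{L^p(\mathcal{W})\to L^p(\mathcal{W})}\|\vec f\|_{L^p(U)},
\]
where $\mathcal{W}=\operatorname{diag}(V,\dots,V,U)$ is block diagonal. This reduces matters to bounding $T\otimes I_{2^mn}$ on $L^p(\mathcal{W}_t)$ with $\mathcal{W}_t=\mathcal{E}^{*}{}^{-1}\ldots$, more precisely with the weight whose $1/p$ power is $\mathcal{W}^{1/p}\mathcal{E}^{-1}$. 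The hypothesis then gives the bound $\phi([\mathcal{W}_t]_{A_p})$.

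The main step is to prove $[\mathcal{W}_t]_{A_p}\le c_{m,p}([U]_{A_p}+[V]_{A_p})$ once $t$ is chosen suitably. Expanding $\|\mathcal{W}_t^{1/p}(x)\mathcal{W}_t^{-1/p}(y)\|$ produces the diagonal terms, controlled by $[U]_{A_p}$ and $[V]_{A_p}$, plus off-diagonal terms of the form $t^{|\sigma|}V^{1/p}(x)\vec{B}_\sigma(x)(\cdots)V^{-1/p}(y)$ or $\ldots U^{-1/p}(y)$. These off-diagonal terms are essentially the integrands that define $\widetilde{BMO}^p_{V,V,\sigma}$ and $\widetilde{BMO}^p_{V,U,\sigma}$. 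The block $\tilde\sigma_{2^m}$ is excluded from the $V,V$ family because the full product only ever lands in the $U$ column. Choosing $t=\beta_1^{-1}$ makes each $t^{|\sigma|}\|\vec B\|_{\widetilde{BMO}_\sigma}\le 1$, so $[\mathcal{W}_t]_{A_p}\lesssim_{m,p}[U]_{A_p}+[V]_{A_p}$, and dividing by $t^m$ yields a factor $\beta_1^{m}$. To get the exponent $m/p$ as stated, I expect the $\widetilde{BMO}^p$ quantities to be normalised as $p$-th power averages, so that the $t$-scaling enters the $A_p$ integrand with power $p$. Concretely, I would take $t=\beta_1^{-1/p}$ and verify that the norm normalisations match. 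I expect this to be the obstacle, along with correctly tracking which mixed weight pairs occur in each entry. The choice $\mathcal{W}=\operatorname{diag}(U,\dots,U,V)$ with the dual placement, or equivalently running the argument through the adjoint, gives $\beta_2$, and taking the better of the two gives $\min\{\beta_1,\beta_2\}$.

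The remaining steps are routine. Monotonicity of $\phi$ lets us replace $[\mathcal{W}_t]_{A_p}$ by $c_{m,p}([U]_{A_p}+[V]_{A_p})$. Qualitative issues, namely $\vec B_\sigma\vec f$ being only locally integrable, are handled by proving the estimate for bounded compactly supported $\vec f$ and truncated symbols and then passing to the limit by density and Fatou.
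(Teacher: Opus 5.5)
Your $\beta_1$ half is the paper's proof. The paper takes $\Phi_{i,j}=W_i^{1/p}\vec B_{\tilde\sigma_j-\tilde\sigma_i}$ for $\tilde\sigma_i\le\tilde\sigma_j$, with $(W_1,\dots,W_{2^m})=(V,\dots,V,U)$. It reads $V^{1/p}(T\otimes I_n)_{\vec B}U^{-1/p}$ off the top-right block of $\Phi(T\otimes I_{2^mn})\Phi^{-1}$. It then bounds the characteristic of $W=(\Phi^*\Phi)^{p/2}$ blockwise by $[V]_{A_p}$, $[U]_{A_p}$, $\widetilde{BMO}^p_{V,V,\sigma}$ ($\sigma\neq\varnothing,\tilde\sigma_{2^m}$) and $\widetilde{BMO}^p_{V,U,\sigma}$ ($\sigma\neq\varnothing$), and rescales $\vec B\mapsto\vec B/\beta_1^{1/p}$.

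Your two hesitations resolve as you suspected. First, the weight comes from the polar decomposition $\Phi=\mathcal{U}W^{1/p}$ with $\mathcal{U}$ unitary a.e.; $\mathcal{W}^{1/p}\mathcal{E}^{-1}$ itself is not positive, so it is not the $1/p$ power of a weight. The polar decomposition gives $\|W^{1/p}(x)W^{-1/p}(y)\|=\|\Phi(x)\Phi^{-1}(y)\|$ and $\|\Phi\overline{T}\Phi^{-1}\|_{L^p\to L^p}=\|W^{1/p}\overline{T}W^{-1/p}\|_{L^p\to L^p}$. Second, $\|\cdot\|_{\widetilde{BMO}^p_{\cdot,\cdot,\sigma}}$ carries no $p$-th root, so it scales like $t^{p|\sigma|}$. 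Hence $t=\beta_1^{-1/p}$ is the right choice, and $t=\beta_1^{-1}$ should be discarded.

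One bookkeeping caution: because the $B_i$ do not commute, the order inside the blocks of $\mathcal{E}$ must match the side on which you invert. With blocks $\vec B_{\tilde\sigma_j-\tilde\sigma_i}$ the correct conjugation is $\mathcal{E}\overline{T}\mathcal{E}^{-1}$. The other order, $\mathcal{E}^{-1}\overline{T}\mathcal{E}$, produces $(-1)^m[B_1,[B_2,\dots,[B_m,T]\dots]]$ instead.

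The step that does not work as written is the $\beta_2$ bound. With the upper-triangular $\mathcal{E}$ and $\operatorname{diag}(U,\dots,U,V)$, the top-right block is $U^{1/p}(T\otimes I_n)_{\vec B}V^{-1/p}$. That bounds the operator from $L^p(V)$ to $L^p(U)$, in terms of $\widetilde{BMO}_{U,U,\sigma}$ and $\widetilde{BMO}_{U,V,\sigma}$, which is not the statement.

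The adjoint route is not equivalent either. Apply the $\beta_1$ argument to $(T\otimes I_n)_{\vec B}^*=(-1)^m(T^*\otimes I_n)_{\vec B^*}$ from $L^{p'}(V^{-p'/p})$ to $L^{p'}(U^{-p'/p})$, and take adjoints inside the norm. You recover the integrands of $\widetilde{BMO}^p_{U,U,\sigma}$ and $\widetilde{BMO}^p_{V,U,\sigma}$, but averaged in the opposite nesting: the $L^p$ average in the variable carrying $U^{1/p}$ or $V^{1/p}$ is innermost and the $L^{p'}$ average is outermost. For matrix symbols such a swap is not known to be harmless; the paper proves it only for a single scalar symbol (Theorem \ref{StrongHigherJN}).

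The fix, which is what the paper does, is to keep $V$ only in the output slot: take $(W_1,\dots,W_{2^m})=(V,U,\dots,U)$. Then:
the top-right block is still $V^{1/p}(T\otimes I_n)_{\vec B}U^{-1/p}$;
the first row of $\Phi(x)\Phi^{-1}(y)$ gives $\widetilde{BMO}^p_{V,U,\sigma}$ for every $\sigma\neq\varnothing$;
the blocks $(i,j)$ with $2\le i<j$ give $\widetilde{BMO}^p_{U,U,\sigma}$ with $\sigma=\tilde\sigma_j-\tilde\sigma_i\neq\tilde\sigma_{2^m}$, because $\tilde\sigma_i\neq\varnothing$;
the diagonal contributes $[V]_{A_p}$ once and $[U]_{A_p}$ otherwise.
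This is exactly $\beta_2$.

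Finally, your truncation/density step is not part of the paper's formal argument and would itself need proof. It is unclear that truncating matrix symbols keeps the $\widetilde{BMO}$ quantities under control.
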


\begin{remark}
    The conditions on the $BMO$ norms on the theorem above imply that, for every operator and vector of symbols that satisfy the assumptions of the theorem with $\beta_1<\infty$ (resp. $\beta_2<\infty)$, every possible lower order commutator is bounded in $L^p(V)$ (resp. $L^p(U)$) and from $L^p(U)$ to $L^p(V)$. See Remark \ref{rem:allcomm} for additional details.
\end{remark}

\begin{remark}
The argument provided to settle the previous theorem allows to conclude that
\begin{align*}
 & \|(T\otimes I_{n})_{\vec{B}}\|_{L^{p}(U)\rightarrow L^{p}(V)}\\
\leq & \phi\left(4^{{mp}{}}\left((2^{m}-1)[V]_{A_p}+[U]_{A_p}   
        +{4^m\sum_{\underrel{\sigma\neq \varnothing,\tilde{\sigma}_{2^m}}{\sigma\in C(m)}}\|\vec{B}\|_{\widetilde{BMO}^p_{V,V,\sigma}}}    + \sum_{\underrel{\sigma\neq \varnothing}{\sigma\in C(m)}}\|\vec{B}\|_{\widetilde{BMO}^p_{V,U,\sigma}}\right)\right)
\end{align*}
and that 
\begin{align*}
 & \|(T\otimes I_{n})_{\vec{B}}\|_{L^{p}(U)\rightarrow L^{p}(V)}\\
\leq & \phi\left(4^{{mp}{}}\left((2^{m}-1)[V]_{A_p}+[U]_{A_p}   
        +{4^m\sum_{\underrel{\sigma\neq \varnothing,\tilde{\sigma}_{2^m}}{\sigma\in C(m)}}\|\vec{B}\|_{\widetilde{BMO}^p_{U,U,\sigma}}}    + \sum_{\underrel{\sigma\neq \varnothing}{\sigma\in C(m)}}\|\vec{B}\|_{\widetilde{BMO}^p_{V,U,\sigma}}\right)\right)
\end{align*}
which upon rescaling provides the statement above.  See the proof of Theorem \ref{th:strTocho} for details.  
\end{remark}

{ We have obtained as well a result that allows us to obtain estimates in terms of bumped weighted $BMO$ spaces extending results in \cite{Cit2SaMWS}. We remit the reader to Section \ref{sec:Bumped} for further details}

In the special case when $\vec{B} = (b, \ldots, b)$ for a scalar $b$ then we will write $(T \otimes I_n)_b ^m$ instead of $(T\otimes I_{n})_{\vec{B}}$. In this case we can prove a two matrix weighted generalization of Theorem $1.2$ in \cite{HW18}. Namely

\begin{theorem} \label{HWMatrixThm} If $T$ satisfies the same conditions of Theorem \ref{th:strTocho} then \begin{equation*}
\|(T\otimes I_{n})_b ^m\|_{L^{p}(U)\rightarrow L^{p}(V)} \leq  C \|b\|_{\text{BMO}} ^{m-1} \|b\|_{\text{BMO}_{V, U, 1}^p} \end{equation*} 
 where $C$ depends on  $m, p, [U]_{\text{A}_p},$  and $[V]_{\text{A}_p}$.
\end{theorem}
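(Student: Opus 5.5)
We prove Theorem \ref{HWMatrixThm} by the Cauchy integral (conjugation) method, following \cite{HW18}, with the matrix-weighted conjugation used for Theorem \ref{th:strTocho}. For $z\in\mathbb{C}$ set $T_z=e^{zb}Te^{-zb}$ (multiplication by scalar functions). Then
\[
(T\otimes I_n)^m_b=\frac{m!}{2\pi i}\oint_{|z|=\varepsilon}\frac{T_z\otimes I_n}{z^{m+1}}\,dz,
\]
so by Minkowski
\[
\|(T\otimes I_n)^m_b\|_{L^p(U)\to L^p(V)}\le \frac{m!}{\varepsilon^{m}}\sup_{|z|=\varepsilon}\|T\otimes I_n\|_{L^p(U_z)\to L^p(V_z)},
\]
where $U_z=e^{p\operatorname{Re}(z)b}U$ and $V_z=e^{p\operatorname{Re}(z)b}V$. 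Done directly, this gives only $\|b\|_{BMO}^m$. To extract the single factor $\|b\|_{BMO^p_{V,U,1}}$ we do not expand the whole $m$-th commutator at once. Instead we write
\[
(T\otimes I_n)^m_b=[b,(T\otimes I_n)^{m-1}_b],
\]
i.e. a first-order commutator with symbol $b$ of the operator $S=(T\otimes I_n)^{m-1}_b$.

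The key steps, in order, are the following.

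\textbf{Step 1.} Show that $S$ is bounded on $L^p(W)$ for every $W\in A_p$, with norm $\lesssim \|b\|_{BMO}^{m-1}\psi([W]_{A_p})$. This is the one-weight Cauchy integral argument. For $|\operatorname{Re} z|\le \varepsilon\sim c([W]_{A_p})/\|b\|_{BMO}$ we have $e^{p\operatorname{Re}(z)b}W\in A_p$ with comparable constant, by the matrix analogue of the scalar fact that $e^{\delta b}$ perturbs $A_p$ weights (stated for matrix weights via reducing operators, as in \cite{Cit2SaMWS}). The hypothesis $\phi$ of Theorem \ref{th:strTocho} then controls $T\otimes I_n$ on the perturbed weight.

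\textbf{Step 2.} Establish a two-weight first-order estimate for commutators with a scalar symbol, for any operator $S$ that is linear over scalar multiplications and bounded on all $L^p(W)$, $W\in A_p$, with increasing control:
\[
\|[b,S]\|_{L^p(U)\to L^p(V)}\lesssim \|b\|_{BMO^p_{V,U,1}}\cdot \psi(c([U]_{A_p}+[V]_{A_p})).
\]
This is the $m=1$ case of Theorem \ref{th:strTocho}, whose proof uses only weighted boundedness of the underlying operator over the class of block-diagonal matrix weights $\begin{pmatrix} V&0\\0&U\end{pmatrix}$ twisted by the symbol. We therefore rerun that proof with $T\otimes I_{2n}$ replaced by $S\otimes I_2$ (acting on $\mathbb{C}^{2n}$). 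Step 1 supplies the required $\phi$-type bound for $S$, because $S\otimes I_2=(T\otimes I_{2n})^{m-1}_b$ and Step 1 applies equally to $T\otimes I_{2n}$ by hypothesis.

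\textbf{Step 3.} Combine. In the $m=1$ bound of Theorem \ref{th:strTocho} the BMO quantity is $\|b I_n\|_{\widetilde{BMO}^p_{V,U,\{1\}}}$, which for a scalar symbol is equivalent to $\|b\|_{BMO^p_{V,U,1}}$ (assumed definitional or checked via reducing operators). Rescaling $b\mapsto b/\|b\|_{BMO^p_{V,U,1}}$ in the remark after Theorem \ref{th:strTocho} turns the $\phi(\cdots+\|b\|\cdots)$ form into a linear factor $\|b\|_{BMO^p_{V,U,1}}$. Together with Step 1 this gives the claim, with $C$ depending on $m,p,[U]_{A_p},[V]_{A_p}$ (and on $\phi$).

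The main obstacle is Step 2, specifically verifying that the two-weight conjugation matrix weight stays in $A_p$ with a constant independent of $\|b\|_{BMO}$ once rescaled. The conjugation argument needs the block weight
\[
\mathcal{W}_z=\begin{pmatrix} V & \\ & U\end{pmatrix}\ \text{conjugated by}\ \begin{pmatrix} I& zb\\0&I\end{pmatrix}
\]
to lie in $A_p$. Its $A_p$ constant involves $\|b\|_{BMO^p_{V,U,1}}$ only through the off-diagonal term, and this must be checked against $[U]_{A_p},[V]_{A_p}$. We would first try to check it by expanding $\|\mathcal{W}_z^{1/p}(x)\mathcal{W}_z^{-1/p}(y)\|$ blockwise. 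If that fails, the fallback is to run the induction differently: apply Theorem \ref{th:strTocho} with $\vec{B}=(b,\dots,b)$, and bound the $\widetilde{BMO}^p_{V,V,\sigma}$ and $\widetilde{BMO}^p_{V,U,\sigma}$ norms by $\|b\|_{BMO}^{|\sigma|-1}\|b\|_{BMO^p_{V,U,1}}$ via John--Nirenberg and reverse Hölder for $U,V$.
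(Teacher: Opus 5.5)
Your proof is correct. Steps 2--3 are exactly the second half of the paper's proof. The paper also writes the top-order commutator as $[b,S]$, with $S$ one order lower, and conjugates $S\otimes I_2$ by the upper-triangular block matrix with diagonal blocks $V^{1/p},U^{1/p}$ and off-diagonal block $V^{1/p}b$ (as in \cite[Lemma 1.3]{Cit2SaMWS}), then rescales.

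What you call the main obstacle is in fact immediate from the blockwise expansion: $[\widetilde W]_{A_p}\lesssim[U]_{A_p}+[V]_{A_p}+\|b\|_{\widetilde{BMO}^p_{V,U,1}}$. The genuine input is the equivalence you call ``definitional''. It is really $\|b\|_{\widetilde{BMO}^p_{V,U,1}}\approx\|b\|_{BMO^p_{V,U,1}}^{p}$ (note the power $p$, which is harmless for the rescaling), and it is the John--Nirenberg-type Theorem~\ref{StrongHigherJN}, or \cite[Corollary 4.7]{Cit2SaMWS}. Your fallback through Theorem~\ref{th:strTocho} with $\vec B=(b,\dots,b)$ would not give the product form, because there a single rescaling parameter produces $\beta^{m/p}$.

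Where you genuinely differ is Step 1.

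The paper also gets the one-weight bound $\|(T\otimes I_n)^{k}_b\|_{L^p(W)\to L^p(W)}\le C([W]_{A_p})\|b\|_{BMO}^{k}$ by conjugation. It uses the $(k+1)\times(k+1)$ block matrix $\Phi_{ij}=\binom{k+1-i}{j-i}b^{j-i}W^{1/p}$. The binomial theorem gives $[\Phi(x)\Phi^{-1}(y)]_{ij}=\binom{k+1-i}{j-i}(b(x)-b(y))^{j-i}W^{1/p}(x)W^{-1/p}(y)$. Hence the $A_p$ constant of $(\Phi^*\Phi)^{p/2}$ is controlled by $[W]_{A_p}$ and $\|b\|_{\widetilde{BMO}^p_{W,W,j}}\approx\|b\|_{BMO}^{jp}$, and one rescales.

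You instead use the Cauchy integral trick. Its only input is $[e^{\delta b}W]_{A_p}\lesssim[W]_{A_p}$ for $|\delta|\le c([W]_{A_p})/\|b\|_{BMO}$. This is true, but you should prove it rather than attribute it. Since $b$ is scalar, $\|(e^{\delta b}W)^{1/p}(x)(e^{\delta b}W)^{-1/p}(y)\|=e^{\delta(b(x)-b(y))/p}\|W^{1/p}(x)W^{-1/p}(y)\|$. Then H\"older, the reverse H\"older inequality for the scalar weights $|W^{1/p}\vec e|^{p}$ and $|W^{-1/p}\vec e|^{p'}$ (uniform in $\vec e$ via Proposition~\ref{prop:truco_orto}), and John--Nirenberg finish it.

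Both routes rest on the same ingredients. Yours is the more classical one and needs the two-weight John--Nirenberg equivalence only for $j=1$. The paper's stays entirely inside the block-conjugation framework of Theorem~\ref{th:strTocho}. That is why the authors hope (Remark~\ref{LOR19Rem}) to adapt it, through a clever choice of block weights, to the conjectured $\|b\|_{BMO^p_{V,U,m}}$ bound. The Cauchy trick, being intrinsically one-weight, offers no obvious handle on that.
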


We refer the reader to Definition \ref{BMOScalarMatrixDef} for the precise definition of $\|b\|_{BMO_{V, U, 1}^p}$.  Also, we refer to Theorem \ref{StrongHigherJN} for a proof that $\|b\|_{BMO_{V, U, 1}^p} \approx \|b\|_{BMO(\nu)}$ when $V = v I_{n \times n}$ and $U = u I_{n \times n}$ for $u, v$ scalar $A_p$ weights, where $\nu = (u/v)^\frac{1}{p}$ and $\|b\|_{BMO(\nu)}$
is the standard Bloom BMO norm $$\|b\|_{BMO(\nu)} = \sup_Q \frac{1}{\nu(Q)} \int_{Q} |b (x) - \langle b\rangle_Q| \, dx. $$ \noindent In particular, in the special case $U = V$, Theorem $\ref{HWMatrixThm}$ says that $(T\otimes I_{n})_b ^m$ is bounded on $L^p(U)$ for a matrix weight A${}_p$ weight $U$ and any $m \in \mathbb{N}$ if $b \in \text{BMO},$ and in fact, by the proof of Theorem \ref{HWMatrixThm} we will have in this case that $$\|(T\otimes I_{n})_b ^m\|_{L^{p}(U)\rightarrow L^{p}(U)} \leq  C \|b\|_{\text{BMO}} ^m$$ where $C $ depends on $m, p,$ and  $[U]_{\text{A}_p}.$ 

While we are not able to prove it, we will conjecture in fact that \begin{equation} \|(T\otimes I_{n})_b ^m\|_{L^{p}(U)\rightarrow L^{p}(V)} \leq C \|b\|_{BMO_{V, U, m}^p} \label{LOR19Conj}\end{equation}where $C$ depends on $\|U\|_{\text{A}_p}$ and $\|V\|_{\text{A}_p}$.   In particular, if true, this would provide a two matrix weighted generalization of Theorem $1.1 (i)$ in \cite{LOR19} since Theorem \ref{StrongHigherJN} says that $\|b\|_{BMO_{V, U, m}^p} \approx \|b\|_{BMO(\nu^\frac{1}{m})}$   if $V = v I_{n \times n}$ and $U = u I_{n \times n}$ (and again $\nu = (u/v)^\frac{1}{p}$ and $\|b\|_{BMO(\nu^\frac{1}{m})}$
is the standard Bloom BMO norm for $\nu^\frac{1}{m}$.)  See Remark \ref{LOR19Rem} for a disscusion of this as it pertains to the proof of Theorem \ref{HWMatrixThm}. 


The remainder of the paper will be organized as follows. 
In Section \ref{sec:Prelim} we provide some definitions and preliminaries that we will need to build upon.
Section \ref{sec:CombOps} is devoted to settle some key lemmatta on combinatorics, relating symbols and operators.
In Section \ref{sec:BMO} we discuss relations between the BMO classes involved in our main results.
We provide the proofs of our main results in Section \ref{sec:ProfMain}.
We continue giving some further strong-type estimates in terms of bumped weighted $BMO$ classes in Section \ref{sec:Bumped}.
Finally in Section \ref{sect:apps} we gather some applications of Theorem \ref{th:tocho}.

We will end this introduction by noting that related convex body domination results for commutators $T_{\vec{b}}$ when $\vec{b} = (b_1, \ldots, b_m)$ for scalar $b_1, \ldots, b_m$ was proved in \cite{Hyt24}.  Furthermore, it was commented that one can likely extend the results and ideas in \cite{Hyt24}  to the case of matrix valued symbols $\vec{B}$ and commutators $(T\otimes I_n)_{\vec{B}}$ acting on vector valued functions, and it was communicated to us that D. Cruz-Uribe, A. Laukkarinen, and K. Moen are pursuing precisely this in work in progress.   It would be interesting to see how such results would compare to the ones proved in this paper.

\section{Definitions and preliminaries}\label{sec:Prelim}

We devote this section to introducing the main tools and preliminary results that will be vital in the following sections.
\subsection{Matrix weights}

For a matrix $A$, we will denote by $\|A\|$ the operator norm of $A$, that is,
\begin{equation*}
\begin{split}
    \|A\| = \sup_{|\vec{e}|=1}|A\vec{e}| = \sup_{\vec{e}\neq 0}\frac{|A\vec{e}|}{|\vec{e}|},
\end{split}
\end{equation*}
where $|\cdot|$ is the corresponding euclidean norm.
Recall that, for an arbitrary matrix $A$, the identity
\begin{align*}
    \|A\|=\sqrt{\rho(A^*A)}
\end{align*}
is always satisfied, where $\rho(A)$ is the eigenvalue of $A$ with the biggest absolute value.
Observe that 
$\|A\| = \|A^*\|$, where $A^*$ is the conjugate matrix of $A$.

For any positive definite matrix $A$ (which we always assume is self-adjoint), it is 
known that there exists a diagonal matrix with positive diagonal entries, $D$, and
 some orthogonal matrix $P$ such that $A = P D P^*$. Applying this, we 
 may define $A^{\alpha}$, for any $\alpha\in \mathbb{R}$, as 
 $A^\alpha = P D^{\alpha} P^*$, where $D^{\alpha}$ is the diagonal matrix whose 
 diagonal entries are those of $D$ to the power of $\alpha$.
 It can be checked that this is well-defined, that the usual definitions of $A^{-1}$ and $A^{n}$, for $n\in \mathbb{N}$,
  coincide with this one, and that 
 $A^{\alpha + \beta}=A^{\alpha}A^{\beta}$ for any choices of $\alpha$ and $\beta$.

Now we will introduce 
matrix weights, which are simply functions
$W:\mathbb{R}^d\rightarrow \mathcal{M}_n(\mathbb{R})$
such that $W(x)$ is positive definite for a.e $x\in \mathbb{R}^d$.
In the following definition we recall $A_p$ classes following Roudenko's presentation from \cite{roudenko}.

\begin{definition}
    Given two weights $U,V$ in $\mathbb{R}^n$, and $p\in (1,\infty)$, we say 
    that $U\in A_p$ if 
    \begin{equation*}
        \begin{split}
            [U]_{A_p} = \sup_{Q \text{ cube in }\mathbb{R}^n} \fint_Q \left(\fint_Q \|U^{\frac{1}{p}}(x)U^{\frac{-1}{p}}(y)\|^{p'}dy\right)^{\frac{p}{p'}}dx<\infty
        \end{split}
        \end{equation*}
        and that $(U,V)\in A_{p}$ if
    \begin{equation*}
    \begin{split}
        [U,V]_{A_p} = \sup_{Q \text{ cube in }\mathbb{R}^n}\fint_Q\left(\fint_Q \|U^{\frac{1}{p}}(x)V^{\frac{-1}{p}}(y)\|^{p'}dy\right)^{\frac{p}{p'}}dx<\infty.
    \end{split}
    \end{equation*}
\end{definition}

It is known that if $W\in A_p$, then, for every $\vec{e}\neq 0$, 
$|W^{\frac{1}{p}}(x)\vec{e}|^p$
is a scalar $A_p$ weight satisfying 
that $[|W^{\frac{1}{p}}(x)\vec{e}|^p]_{A_p}\lesssim [W]_{A_p}$
(\cite{Volberg-S}, Lemma 5.3).
Also, it is easy to check, using Lemma 5 in \cite{muller-rr} and 
Jensen's Inequality, that if $1<p<q<\infty$, then $[U,V]_{A_q}\leq [U,V]_{A_p}$ for any pair of 
weights $U,V$.

Next, we introduce yet another class of weights.
\begin{definition}
    Given a weight $W$, we say $W\in A_{p,\infty}^{sc}$ if 
    \begin{equation*}
    \begin{split}
        [W]_{A_{p,\infty}^{sc}} = \sup_{e\in \mathbb{F}^n\setminus \{0\}} [|W^{\frac{1}{p}}(\cdot)\vec{e}|^p]_{A_{\infty}}<\infty.
    \end{split}
    \end{equation*}
\end{definition}
This class acts, to some extent, as a substitute for the $A_\infty$
class in the matrix setting. It is clear that $[W]_{A_{p,\infty}^{sc}}\lesssim [W]_{A_p}$,
and 
it can be shown
that the following version of the Reverse Hölder Inequality holds.
\begin{theorem}[Reverse Hölder Inequality for matrices]
    Suppose $W\in A_{p,\infty}^{sc}$ for some $p\in (1,\infty)$, 
    and $A$ is a matrix. Then, if $r\leq 1 + \frac{1}{2^{d+11}[W]_{A_{p,\infty}^{sc}}}$,
    \begin{equation*}
    \begin{split}
        \left(\fint_Q\|W^{\frac{1}{p}}(x)A\|^{rp}dx\right)^{\frac{1}{rp}}\lesssim
        \left(\fint_Q\|W^{\frac{1}{p}}(x)A\|^{p}dx\right)^{\frac{1}{p}}.
    \end{split}
    \end{equation*}
\end{theorem}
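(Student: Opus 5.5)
The plan is to reduce to the scalar reverse Hölder inequality applied to the weights $w_{\vec e}(x)=|W^{\frac1p}(x)\vec e|^p$. These are scalar $A_\infty$ weights with $[w_{\vec e}]_{A_\infty}\le [W]_{A^{sc}_{p,\infty}}$ for every nonzero $\vec e$. The only issue is that the operator norm $\|W^{\frac1p}(x)A\|$ is not of this form; it is a supremum over unit vectors of $|W^{\frac1p}(x)A\vec e|$. We handle this by comparing the operator norm with a finite sum over a fixed basis.

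\textbf{Step 1 (comparison with a basis).} Fix an orthonormal basis $\{\vec e_1,\dots,\vec e_n\}$ of $\mathbb{F}^n$. For any matrix $M$,
\[
\max_j |M\vec e_j|\le \|M\|\le \sum_{j=1}^n |M\vec e_j|\le \sqrt n\,\Big(\sum_j|M\vec e_j|^2\Big)^{1/2}.
\]
So, with constants depending only on $n$ (and $p$),
\[
\|W^{\frac1p}(x)A\|^{q}\simeq \sum_{j=1}^n |W^{\frac1p}(x)A\vec e_j|^{q}, \qquad q\in\{p,rp\}.
\]

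\textbf{Step 2 (scalar reverse Hölder).} For each $j$ with $A\vec e_j\neq0$, the function $w_j(x)=|W^{\frac1p}(x)A\vec e_j|^p$ is one of the weights $w_{\vec e}$ with $\vec e=A\vec e_j$. Hence $[w_j]_{A_\infty}\le [W]_{A^{sc}_{p,\infty}}$. If $A\vec e_j=0$, the corresponding term vanishes and there is nothing to prove.

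We then invoke the sharp scalar reverse Hölder inequality in the form of Hytönen--Pérez (or its variant with explicit constant $2^{d+11}$, which is where the constant in the statement comes from). It states that for $w\in A_\infty$ (Fujii--Wilson constant) and $1\le r\le 1+\frac{1}{2^{d+11}[w]_{A_\infty}}$,
\[
\Big(\fint_Q w^{r}\Big)^{\frac1r}\le 2\fint_Q w .
\]
Since $[w_j]_{A_\infty}\le[W]_{A^{sc}_{p,\infty}}$, the hypothesis $r\le 1+\frac{1}{2^{d+11}[W]_{A^{sc}_{p,\infty}}}$ places $r$ in the admissible range for every $w_j$ simultaneously. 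This uniformity in $j$ (indeed in all $\vec e$) is the whole point of the definition of $A^{sc}_{p,\infty}$.

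\textbf{Step 3 (assembling).} Combining the two steps,
\[
\Big(\fint_Q\|W^{\frac1p}A\|^{rp}\Big)^{\frac1{rp}}\lesssim\sum_j\Big(\fint_Q w_j^{r}\Big)^{\frac1{rp}}\lesssim \sum_j\Big(\fint_Q w_j\Big)^{\frac1p}\le n\Big(\fint_Q\|W^{\frac1p}A\|^{p}\Big)^{\frac1p}.
\]
The last inequality uses $|W^{\frac1p}(x)A\vec e_j|\le\|W^{\frac1p}(x)A\|$.

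\textbf{Main obstacle.} The only delicate point is the constant in the scalar reverse Hölder inequality. We must check that the version with Fujii--Wilson $A_\infty$ constant and threshold $1+\frac{1}{2^{d+11}[w]_{A_\infty}}$ is indeed available, and that the paper's $[\,\cdot\,]_{A_\infty}$ in the definition of $A^{sc}_{p,\infty}$ is the Fujii--Wilson one. If a different $A_\infty$ constant is used, only the numerical threshold changes, up to a dimensional factor.
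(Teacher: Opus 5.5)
Your proposal is correct and follows essentially the paper's own argument. The paper uses the orthonormal-basis comparison of Proposition~\ref{prop:truco_orto} to pass to the scalar weights $|W^{\frac1p}(x)A\vec e_\ell|^p$, whose $A_\infty$ constants are uniformly bounded by $[W]_{A^{sc}_{p,\infty}}$, and then applies the sharp scalar reverse H\"older inequality with threshold $1+\frac{1}{2^{d+11}[w]_{A_\infty}}$, deferring the details to Lemma~2 of \cite{muller-rr}. Your write-up also shows explicitly that positive definiteness of $A$ plays no role (vectors with $A\vec e_j=0$ simply drop out), which is exactly the remark the paper makes.
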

The key to prove this result
is the following proposition, which, 
despite its simplicity, is very handy. 
\begin{proposition}\label{prop:truco_orto}
    Let $A$ be a matrix and $\{\vec{e}_\ell\}$ be an orthonormal basis of $\mathbb{F}^n$. Then 
    \begin{equation*}
    \begin{split}
        \frac{1}{n}\sum_{\ell=1}^n |A\vec{e}_\ell|\leq \|A\| \leq\sum_{\ell=1}^n |A\vec{e}_\ell|.
    \end{split}
    \end{equation*}
\end{proposition}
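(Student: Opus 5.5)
Write $A=(a_1|\dots|a_n)$ in the orthonormal basis only through its action, and use the triangle inequality together with the definition $\|A\|=\sup_{|\vec e|=1}|A\vec e|$. The two inequalities are handled separately, and neither needs anything beyond elementary linear algebra.

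For the left inequality, each $\vec e_\ell$ is a unit vector. Hence $|A\vec e_\ell|\le \|A\|$ for every $\ell$. Summing over $\ell=1,\dots,n$ gives $\sum_\ell |A\vec e_\ell|\le n\|A\|$, which is the claim after dividing by $n$.

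For the right inequality, take an arbitrary unit vector $\vec e$ and expand it in the basis as $\vec e=\sum_\ell c_\ell \vec e_\ell$ with $c_\ell=\langle \vec e,\vec e_\ell\rangle$. Orthonormality gives $\sum_\ell |c_\ell|^2=1$, so in particular $|c_\ell|\le 1$ for each $\ell$. By linearity and the triangle inequality,
\[
|A\vec e|=\Big|\sum_{\ell=1}^n c_\ell A\vec e_\ell\Big|\le \sum_{\ell=1}^n |c_\ell|\,|A\vec e_\ell|\le \sum_{\ell=1}^n |A\vec e_\ell|.
\]
Taking the supremum over unit vectors $\vec e$ yields $\|A\|\le\sum_\ell|A\vec e_\ell|$.

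There is no real obstacle. The only point needing care is the bound $|c_\ell|\le1$, which follows from Parseval's identity in the orthonormal basis, valid over both $\mathbb{F}=\mathbb{R}$ and $\mathbb{F}=\mathbb{C}$. One could sharpen the constant to $\sqrt n$ via Cauchy–Schwarz, but the stated version is all that is needed.
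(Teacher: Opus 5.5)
Your proof is correct and complete. The lower bound holds because each $\vec e_\ell$ is a unit vector. The upper bound follows by expanding an arbitrary unit vector in the basis, using $|c_\ell|\le 1$, and applying the triangle inequality. The argument works verbatim over $\mathbb{R}$ and $\mathbb{C}$, and even for non-square $A$.

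The paper gives no proof of this statement; it only refers to Lemma 3.2 of \cite{roudenko} for a more general version. The citation buys generality, while your argument makes the preliminary self-contained with no extra machinery.

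One correction to your closing remark: neither constant in the stated inequality can be improved. $A=I_n$ gives equality on the left, and the rank-one projection $A\vec x=\langle \vec x,\vec e_1\rangle\vec e_1$ gives equality on the right. What Cauchy--Schwarz actually gives in your upper-bound step is the stronger estimate $\|A\|\le\bigl(\sum_{\ell}|A\vec e_\ell|^2\bigr)^{1/2}$. That is a two-sided comparison of $\|A\|$ with the Hilbert--Schmidt norm with ratio $\sqrt n$, not a sharper version of the $\ell^1$-sum inequality.
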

A general version for this proposition can be found in 
Lemma 3.2 in \cite{roudenko}.
This result is very useful when we would like to have $|A(x)\vec{e}|$, for some 
$\vec{e}\in \mathbb{F}^n$ and some matrix function $A(x)$, instead of $\|A(x)\|$. 
In the particular 
case of the matrix Reverse Hölder Inequality, doing this allows us to exploit the fact that 
$|W^{\frac{1}{p}}e|^{p}\in A_{\infty}$ to use the scalar Reverse Hölder Inequality.
In any case, the proof of the Reverse Hölder Inequality can be found in Lemma 2 in \cite{muller-rr}.
{The only difference is that that result states that $A$ has to be 
positive definite, but this has no influence on the proof.}

In \cite{Basicomatrices} and \cite{roudenko}, it is shown that $W\in A_p$ if and only if 
$W^{-\frac{p'}{p}}\in A_{p'}$, in analogy to what happens in the scalar case.
This implies that if $W\in A_p$, then, in particular, 
$W\in A_{p,\infty}^{sc}$ and $W^{-\frac{p'}{p}}\in A_{p',\infty}^{sc}$.

One of the main tools used in this article to obtain
weighted norm inequalities will be reducing matrices.
This concept was first introduced in 
\cite{GoldbergAp}.

\begin{definition}
    Given a weight $W$, $p\in[1,\infty)$ and a set $Q$ of finite non-zero measure, 
    we say that the positive definite matrix $\mathcal{R}_{Q,p,W}$ is an associated 
    reducing matrix if for every $\vec{e}\in \mathbb{F}^n$,
    \begin{equation*}
    \begin{split}
        |\mathcal{R}_{Q,p,W}\vec{e}|\simeq \left(\fint_Q \left\lvert W^{\frac{1}{p}}\vec{e}\right\rvert^{p} \right)^{\frac{1}{p}}
    \end{split}
    \end{equation*} 
    We will also denote by $\mathcal{R}_{Q,p,W}'$ any positive definite matrix such that, for every $\vec{e}\in \mathbb{F}^n$,
    \begin{equation*}
    \begin{split}
        |\mathcal{R}_{Q,p,W}'\vec{e}|\simeq \left(\fint_Q \left\lvert W^{\frac{-1}{p}}\vec{e}\right\rvert^{p'} \right)^{\frac{1}{p'}}.
    \end{split}
    \end{equation*}
\end{definition}

It should be noticed that reducing matrices are not unique, if they exist. Still, 
we will employ the notations $\mathcal{R}_{Q,p,W}$
and $\mathcal{R}_{Q,p,W}'$ to refer to any corresponding 
reducing matrix.
It is clear that some $\mathcal{R}_{Q,p,W}$ exists if 
$\fint_Q \left\lvert W^{\frac{1}{p}}\vec{e}\right\rvert^{p}$ is finite for all $e\in \mathbb{F}^n$
(and respectively $\mathcal{R}_{Q,p,W}'$ exists if the corresponding integrals are finite),
as it is shown in Proposition 1.2 in \cite{GoldbergAp}, in a more general case.
This condition is guaranteed, for instance, if $W\in A_{p,\infty}^{sc}$ (or, respectively, $W^{-\frac{p'}{p}}\in A_{p',\infty}^{sc}$).
{Reducing matrices are very useful to decouple certain terms in integrals, as we will see in 
section \ref{sect:str}}, for example. 

Observe that any reducing matrix of the form $\mathcal{R}_{Q,p,W}'$
is also of the form $\mathcal{R}_{Q,p',W^{-\frac{p'}{p}}}$ and vice versa. 
Likewise,
"$\mathcal{R}_{Q,p',W^{-\frac{p'}{p}}}' = \mathcal{R}_{Q,p,W}$".

In the context of reducing matrices, Proposition \ref{prop:truco_orto} is again very 
useful, as it shows that one can substitute the vector $\vec{e}$ in the previous definition 
for a matrix, as well as some other tricks.
Some equivalences that one can show using reducing matrices and this idea are the following:
\begin{enumerate}
    \item $|\mathcal{R}_{Q,p,W}\vec{e}|\simeq |m_Q(W^{\frac{1}{p}})\vec{e}|$.
    \item $[W]_{A_p}^{\frac{1}{p}}\simeq  \sup_{Q \text{ cube in }\mathbb{R}^n}  \|\mathcal{R}_{Q,p,W}\mathcal{R}_{Q,p,W}'\|\simeq [W^{-\frac{p'}{p}}]_{A_{p'}}^{\frac{1}{p'}}$.
    \item $[U,V]_{A_p}\simeq \sup_{Q \text{ cube in }\mathbb{R}^n}  \|\mathcal{R}_{Q,p,U}\mathcal{R}_{Q,p,V}'\|^{p}$.
\end{enumerate}

The equivalence in $i)$ is proven in Lemma 2.2 in \cite{paraprods}, while the others can be proven 
easily by means of Proposition \ref{prop:truco_orto}. 
These equivalences will be used 
repeatedly throughout the paper, as they are very useful 
when simplifying many lines of reasoning.
As an example of application, observe that, for weights $U,V$ and any cube $Q$, 
\begin{equation*}
\begin{split}
    \|\mathcal{R}_{Q,p,U}\mathcal{R}_{Q,p,U}'\|
    &\leq \fint_Q \|V^{-\frac{1}{p}}(x)\mathcal{R}_{Q,p,U}\|\|V^{\frac{1}{p}}(x)\mathcal{R}_{Q,p,U}'\|dx
    \\ & \leq \left(\fint_Q \|V^{-\frac{1}{p}}(x)\mathcal{R}_{Q,p,U}\|^{p'}dx\right)^{\frac{1}{p'}}\left(\fint_Q\|V^{\frac{1}{p}}(x)\mathcal{R}_{Q,p,U}'\|^{p}dx\right)^{\frac{1}{p}}
    \\ & \simeq \|\mathcal{R}_{Q,p,U}\mathcal{R}_{Q,p,V}'\| \|\mathcal{R}_{Q,p,V}\mathcal{R}_{Q,p,U}'\|\simeq [U,V]_{A_p}^{\frac{1}{p}}[V,U]_{A_p}^{\frac{1}{p}},
\end{split}
\end{equation*}
so $[U]_{A_p}\lesssim [U,V]_{A_p}[V,U]_{A_p}$.
Also, the equivalences can be used to prove that, if 
$U\in A_{p,\infty}^{sc}$, $V$ is a weight such that 
$V^{-\frac{p'}{p}}\in A_{p',\infty}^{sc}$, and 
$r\leq 1+\frac{1}{2^{d+11}\max\{[U]_{A_{p,\infty}^{sc}},[V^{-\frac{p'}{p}}]_{A_{p',\infty}^{sc}}\}}$,
then  
\begin{align*}
    \fint_Q\left(\fint_Q \|U^{\frac{1}{p}}(x)V^{-\frac{1}{p}}(y)\|^{(pr)'}dy\right)^{\frac{pr}{(pr)'}}dx
    &\leq\fint_Q\left(\fint_Q \|U^{\frac{1}{p}}(x)V^{-\frac{1}{p}}(y)\|^{rp'}dy\right)^{\frac{pr}{rp'}}dx
    \\ & \lesssim \fint_Q\left(\fint_Q \|U^{\frac{1}{p}}(x)V^{-\frac{1}{p}}(y)\|^{p'}dy\right)^{\frac{pr}{p'}}dx
    \\ & \simeq \fint_Q\|U^{\frac{1}{p}}(x)\mathcal{R}_{Q,p,V}'\|^{pr}dx
    \\ & \lesssim \left(\fint_Q\|U^{\frac{1}{p}}(x)\mathcal{R}_{Q,p,V}'\|^{p}dx\right)^{\frac{1}{p}pr}
    \\ & \simeq \|\mathcal{R}_{Q,p,U}\mathcal{R}_{Q,p,V}'\|^{pr}
    \simeq [U,V]_{A_p}^{r},
\end{align*}
so $[U^r,V^r]_{A_{pr}}\lesssim [U,V]_{A_p}^{r}$. In the first inequality 
we applied that $(rp)'\leq rp'$ and Hölder's inequality, and in the second and third ones 
we applied the Reverse Hölder Inequality.

\subsection{Combinatorics}\label{sect:combinatorics}

    In order to write the main results of this work, we will need to introduce a certain set of tuples that 
    will simplify the notation significantly. We shall first define
\begin{equation*}
\begin{split}
    C(m)=\left\{ \text{increasingly ordered tuples composed of different elements of }\left\{ 1,2,...,m\right\}\right\}\cup \{\varnothing\},
\end{split}
\end{equation*}
and, given $j=1,2,...,m$,
$$C_j(m) = \left\{ \text{increasingly ordered }j-\text{tuples composed of different elements of }\left\{ 1,2,...,m\right\}\right\}.$$
If $j=0$, we will consider the empty tuple to be the only element of $C_0(m)$.
As an example, $C(3) = \left\{ \varnothing, (1),(2),(3),(1,2),(1,3),(2,3),(1,2,3)\right\}$
and $C_1(3)=\left\{(1),(2),(3) \right\}$.

Now, given $\alpha ,\beta  \in C(m)$, for some $m$, 
    we will write $\alpha\leq \beta$ to indicate that 
    $\alpha$ is an increasingly ordered tuple whose elements are 
    elements of $\beta$. If $\alpha\leq \beta$ and $\alpha\neq \beta$, we will 
    write $\alpha<\beta$ instead. 
    In addition, $|\alpha|$ will denote the number of elements 
    $\alpha$ has, and if $\alpha\leq \beta$, $\beta-\alpha$ will be 
    the increasingly ordered tuple whose elements are those of 
    $\beta$ that are not in $\alpha$.
    It is clear that $\leq$ is a partial order, and that,
     if $\alpha\leq \beta$, 
    $|\beta-\alpha| = |\beta|-|\alpha|$.
    Also, for a given $\sigma\in C(m)$, for some $m$, and $j$ such that 
    $1\leq j \leq |\sigma|$, $\sigma(j)$ will denote the element in the 
    $j-$th position of $\sigma$.

    We will also consider, for an increasingly ordered tuple $\theta$ of natural numbers, the sets
    \begin{equation*}
    \begin{split}
        C(\theta) = \left\{ \sigma \text{ increasingly ordered tuple such that }\sigma\leq \theta\right\}
    \end{split}
    \end{equation*}
    and, for $j=0,1,...,|\theta|$,
    \begin{equation*}
    \begin{split}
        C_j(\theta) = \left\{ \sigma\in C(\theta):|\sigma|=j\right\}.
    \end{split}
    \end{equation*}
Observe that each $C_j(\theta)$ has $\binom{|\theta|}{j}$ elements, and as a result, there are 
$2^{|\theta|}$ tuples in total in $C(\theta)$.

For convenience, we will name the elements of $C_j(m)$
as follows:
\begin{equation*}
\begin{split}
    C_j(m)=\left\{ \sigma^j_1,...,\sigma^j_{\binom{m}{j}}\right\}.
\end{split}
\end{equation*}
Now we shall define, for a given $j=1,2,...,2^m$, $k_0(j)$ as the only number that satisfies the 
identity 
\begin{equation*}
\begin{split}
    1+\sum_{k=0}^{k_0(j)-1}\binom{m}{k}\leq j \leq \sum_{k=0}^{k_0(j)}\binom{m}{k}.
\end{split}
\end{equation*}
This will allow us to provide a different labeling for the tuples of $C(m)$, 
given by
\begin{equation}
    \begin{split}
        \tilde{\sigma}_j=\sigma_{j-\sum_{k=0}^{k_0(j)-1}\binom{m}{k}}^{k_0(j)},
    \end{split}
    \end{equation}
    which will be useful when working with matrices.
    This notation allows us to put all the tuples together in an orderly manner
    (first the empty tuple, then all the tuples with one element of $\left\{ 1,2,...,m\right\}$, and so on until we arrive at the tuple with all the elements
    of the aforementioned set), so that
    \begin{equation*}\label{aux4}
    \begin{split}
        C_j(m) = \left\{ \tilde{\sigma}_i\right\}_{i = 1+\sum_{k=0}^{j-1}\binom{m}{k}}^{\sum_{k=0}^{j}\binom{m}{k}}.
    \end{split}
    \end{equation*}

Also, for $\sigma\in C(\theta)$, for some $\theta$, $\sigma^c$ will denote the $(|\theta|-|\sigma|)-$tuple whose elements are 
those of $\theta$ that are not present in $\sigma$, increasingly ordered (i.e. $\theta-\sigma$), 
and $\sigma^t$ will denote the tuple with the same elements as $\sigma$, but in reverse order.
Notice that, if $\alpha,\beta\in C(\theta)$, $\alpha\leq \beta $ if and only if $\beta^c\leq \alpha^c$.
As an example, if $m=7$, and $\sigma=(1,3,4,6)\in C(7)$, $\sigma^c = (2,5,7)$, $\sigma^t=(6,4,3,1)$ and $(\sigma^c)^t=(7,5,2)$.
When we use the notation $\sigma^c$, the tuple $\theta$ such that $\sigma\in C(\theta)$ will be clear from the context.


Finally, for $\sigma\in C_j(m)$, and a vector $\vec{B}=(B_1,...,B_m)$ of matrices or numbers,
$\vec{B}^* = (B_1^*,...,B_m^*)$, and  
$${\vec{B}}_\sigma = \prod_{k=1}^j {B}_{\sigma(k)}=B_{\sigma(j)}B_{\sigma(j-1)}...B_{\sigma(1)}.$$
Observe that the order in which the products are computed is relevant in the matrix case.

\section{Key Lemma relating combinatorics, symbols and operators}\label{sec:CombOps}
The main result of this section is the following lemma, which will play a key role in settling some of the strong-type estimates and in studying the $BMO$ classes in the following section.
\begin{lemma}\label{lemma:perm2}\label{prop:commbueno}
    Let $\theta\in C(m)$ for some $m$, and 
    $\vec{B} = (B_1,...,B_m)$, where 
    each $B_j\in M_{n}(\mathbb{F})$ is locally integrable. Then, for a linear operator $T$ and a function $f$, formally,
    \begin{equation*}
    \begin{split}
        (-1)^{|\theta|}&\sum_{\sigma\in C(\theta)}(-1)^{|\theta|-|\sigma|}\vec{B}_{\sigma}(x)(T\otimes I_n)\left(\vec{B}_{(\theta-\sigma)^t}f\right)(y) 
    \\ & = \sum_{\sigma\in C(\theta)}\left(\sum_{\alpha\in C(\sigma)}(-1)^{|\theta|-|\alpha|}\vec{B}_{\alpha}(x)(m_Q\vec{B})_{(\sigma-\alpha)^t}\right)(T\otimes I_n)\left(\sum_{\beta\in C(\sigma^c)}(-1)^{|\theta|-|\beta|}(m_Q\vec{B})_{\beta}\vec{B}_{(\sigma^c-\beta)^t}f\right)(y).
    \end{split}
    \end{equation*}
    In particular, if $T$ is the identity and $f\equiv 1$,  
    \begin{equation*}
        \begin{split}
            (-1)^{|\theta|}\sum_{\sigma\in C(\theta)}&(-1)^{|\theta|-|\sigma|}\vec{B}_{\sigma}(x)\vec{B}_{(\theta-\sigma)^t}(y) 
        \\ & = \sum_{\sigma\in C(\theta)}\left(\sum_{\alpha\in C(\sigma)}(-1)^{|\theta|-|\alpha|}\vec{B}_{\alpha}(x)(m_Q\vec{B})_{(\sigma-\alpha)^t}\right)\left(\sum_{\beta\in C(\sigma^c)}(-1)^{|\theta|-|\beta|}(m_Q\vec{B})_{\beta}\vec{B}_{(\sigma^c-\beta)^t}(y)\right).
        \end{split}
        \end{equation*}
\end{lemma}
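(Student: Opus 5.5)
The plan is to reduce the statement to a purely algebraic cancellation. Write $A_j=m_Q B_j$ for the constant matrices. The key structural fact is that $T\otimes I_n$ commutes with left multiplication by constant matrices: $(T\otimes I_n)(C\vec g)=C\,(T\otimes I_n)\vec g$ for every constant $C\in M_n(\mathbb{F})$, by linearity of $T$ acting componentwise. Both sides of the identity are then finite sums of terms of the form
\[
X(x)\,C\,(T\otimes I_n)\bigl(Y f\bigr)(y),
\]
where $X$ is a product of $B_j(x)$'s, $C$ is a product of $A_j$'s, and $Y$ is a product of $B_j(\cdot)$'s. It therefore suffices to show that, after moving all constants to the left of $T\otimes I_n$, the coefficients of each such monomial agree on both sides. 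This is a formal identity in the tensor product of two free (noncommutative) algebras, and the second statement (the case $T=\mathrm{Id}$, $f\equiv1$) is a special case of the first.

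Next, expand the right-hand side. For fixed $\sigma\in C(\theta)$, the choices $\alpha\in C(\sigma)$ and $\beta\in C(\sigma^c)$ give each index $i\in\theta$ exactly one of four labels:
\begin{itemize}
\item $X$, if $i\in\alpha$ (the factor $B_i(x)$);
\item left constant $L$, if $i\in\sigma-\alpha$;
\item right constant, if $i\in\beta$;
\item $Y$, if $i\in\sigma^c-\beta$ (the factor $B_i$ inside $T$).
\end{itemize}
Conversely, any such labelling determines $\sigma,\alpha,\beta$ uniquely. The sign of the term is $(-1)^{|\theta|-|\alpha|}(-1)^{|\theta|-|\beta|}=(-1)^{|\alpha|+|\beta|}$. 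After pulling $(m_Q\vec B)_\beta$ out of $T\otimes I_n$, the term reads
\[
(-1)^{|\alpha|+|\beta|}\,\vec B_\alpha(x)\,(m_Q\vec B)_{L^t}\,(m_Q\vec B)_\beta\,(T\otimes I_n)\bigl(\vec B_{\varepsilon^t} f\bigr)(y),
\]
with $\varepsilon=\sigma^c-\beta$. Now fix $\alpha$ and $\varepsilon$, and let $R=\theta\setminus(\alpha\cup\varepsilon)$, whose elements are split arbitrarily into $L$ and $\beta$.

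The main step is a sign-reversing involution on these splittings when $R\neq\varnothing$. Let $r=\max R$ and move $r$ between $L$ and $\beta$. With the paper's convention, $(m_Q\vec B)_{L^t}$ is the product of the $A_i$, $i\in L$, in increasing order of indices from left to right, so $A_r$ is its rightmost factor. Similarly, $(m_Q\vec B)_\beta=A_{\beta(k)}\cdots A_{\beta(1)}$ lists indices in decreasing order, so $A_r$ is its leftmost factor. Hence in both placements $A_r$ sits at the junction between the two constant products, and the matrix word is literally identical. Only $|\beta|$ changes, by one, so the two terms cancel. Consequently, only the terms with $R=\varnothing$ survive: $\sigma=\alpha$, $\beta=\varnothing$, $\varepsilon=\theta-\alpha$, with sign $(-1)^{|\alpha|}$. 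These give
\[
\sum_{\alpha\in C(\theta)}(-1)^{|\alpha|}\vec B_\alpha(x)\,(T\otimes I_n)\bigl(\vec B_{(\theta-\alpha)^t}f\bigr)(y),
\]
which equals the left-hand side because $(-1)^{|\theta|}(-1)^{|\theta|-|\alpha|}=(-1)^{|\alpha|}$.

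The step I expect to need the most care is the ordering bookkeeping: checking that the conventions for $\vec B_\sigma$ (reverse order) and for the transposes make the constant products adjacent at exactly the position of the maximal element $r$ of $R$. If the junction turned out instead to involve the minimal element, the involution would be run on $\min R$ rather than $\max R$. A sanity check with $\theta=(1)$ confirms the identity: both sides reduce to $T(Bf)-B(x)Tf$.
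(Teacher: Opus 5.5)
Your argument is correct and is essentially the paper's proof: you expand the right-hand side, pull the constant word $(m_Q\vec B)_\beta$ out of $T\otimes I_n$, group terms by the indices evaluated at $x$ and those inside $T$, and show that the alternating sum over splittings of the remaining indices $R$ between the two constant words vanishes unless $R=\varnothing$, which leaves exactly the terms $\sigma=\alpha$, $\beta=\varnothing$. The only difference is how that vanishing (the paper's Lemma \ref{lemma:perm1}) is proved: you pair terms by toggling $\max R$, which sits at the junction of the increasing word $(m_Q\vec B)_{(\sigma-\alpha)^t}$ and the decreasing word $(m_Q\vec B)_\beta$, so the product is unchanged; the paper instead inducts on $|R|$, peeling off $\min R$, which sits at the two outer ends and gives $-B_{j_1}S+SB_{j_1}$ with $S=0$ by the induction hypothesis, and your involution is a slightly more direct route to the same cancellation.
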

In order to show Lemma \ref{prop:commbueno}, we will first prove the following lemma.
\begin{lemma}\label{lemma:perm1}\label{lemma:tralalero}
    Let $\theta\in C(m)$ for some $m$, and fix 
    $\alpha,\beta\in C(\theta)$ such that 
    $\alpha < \beta^c$. Then, given
    $\vec{B} = (B_{\theta(1)},...,B_{\theta(|\theta|)})$, where 
    each $B_j\in M_{n}(\mathbb{F})$, we have that
    \begin{equation*}
    \begin{split}
        \sum_{\underrel{\alpha\leq \sigma \leq \beta^c}{\sigma\in C(\theta)}}(-1)^{|\sigma|} \vec{B}_{(\sigma-\alpha)^t}(x)\vec{B}_{\sigma^c-\beta}(x) = 0
    \end{split}
    \end{equation*}
    (in particular, this works for a constant $\vec{B}$).
\end{lemma}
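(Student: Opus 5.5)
The plan is to reduce the sum to a sign-alternating sum over subsets of a fixed nonempty tuple, and then cancel the terms in pairs with a sign-reversing involution. Throughout, $x$ is fixed and only the products of the matrices $B_j=B_j(x)$ matter, so commutativity is never used.

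\emph{Step 1: reparametrize.} Set $\gamma=\beta^c-\alpha$, which is a nonempty increasingly ordered tuple because $\alpha<\beta^c$. Every $\sigma\in C(\theta)$ with $\alpha\le\sigma\le\beta^c$ is the increasingly ordered union of $\alpha$ and a unique $\tau\in C(\gamma)$, and $\sigma-\alpha=\tau$. Since $\sigma\le\beta^c$ is equivalent to $\beta\le\sigma^c$, the tuple $\sigma^c-\beta=\theta-\sigma-\beta$ is defined, and it consists exactly of the elements of $\gamma$ not in $\tau$; that is, $\sigma^c-\beta=\gamma-\tau$. Also $|\sigma|=|\alpha|+|\tau|$. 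Hence the left-hand side equals
\[
(-1)^{|\alpha|}\sum_{\tau\in C(\gamma)}(-1)^{|\tau|}\,\vec{B}_{\tau^t}\,\vec{B}_{\gamma-\tau}.
\]

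\emph{Step 2: read off the words.} By the convention $\vec{B}_\sigma=B_{\sigma(j)}\cdots B_{\sigma(1)}$, the factor $\vec{B}_{\tau^t}$ is the product of the $B_i$, $i\in\tau$, with indices increasing from left to right. The factor $\vec{B}_{\gamma-\tau}$ is the product of the $B_i$, $i\in\gamma-\tau$, with indices decreasing from left to right. So each term is a word in the $B_i$ ($i\in\gamma$) whose indices first increase (along $\tau$) and then decrease (along $\gamma-\tau$).

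\emph{Step 3: cancel in pairs.} Let $g$ be the largest element of $\gamma$. Pair each $\tau'\in C(\gamma)$ with $g\notin\tau'$ with $\tau=\tau'\cup\{g\}$.
\begin{itemize}
\item For $\tau$, the increasing word ends in $B_g$, so the term is $[\text{increasing word on }\tau']\,B_g\,[\text{decreasing word on }\gamma-\tau]$.
\item For $\tau'$, the element $g$ lies in $\gamma-\tau'$ and is its maximum, so it sits first in the decreasing word. The term is again $[\text{increasing word on }\tau']\,B_g\,[\text{decreasing word on }\gamma-\tau'-(g)]$, and $\gamma-\tau'-(g)=\gamma-\tau$.
\end{itemize}
The two matrix products therefore coincide, while $|\tau|=|\tau'|+1$ gives them opposite signs. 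Since $\tau'\mapsto\tau'\cup\{g\}$ is a bijection between the subsets of $\gamma$ not containing $g$ and those containing $g$, the whole sum vanishes. The case of a constant $\vec{B}$ is included, since no property of the $B_j$ beyond associativity of matrix multiplication is used.

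\emph{Main obstacle.} The only delicate point is the bookkeeping in Step 1: checking that $\sigma^c-\beta$ is well defined and equals $\gamma-\tau$, and getting the orientation of the products right. The cancellation in Step 3 needs the increasing factor on the left and the decreasing factor on the right, so that the maximal index $g$ sits in the middle in both terms of each pair. I would verify this orientation against the definition of $\vec{B}_\sigma$ before writing out the pairing.
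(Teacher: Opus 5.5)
Your proof is correct, and it takes a different route from the paper's.

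The paper argues by induction on $d=|\beta^c|-|\alpha|$, checking $d=1$ directly. In the inductive step it splits the sum according to whether the \emph{smallest} element $j_1$ of $\beta^c-\alpha$ lies in $\sigma$ or in $\sigma^c$. In that case $j_1$ sits at the outer end of the product: leftmost in $\vec{B}_{(\sigma-\alpha)^t}$, rightmost in $\vec{B}_{\sigma^c-\beta}$. So the two halves do not cancel term by term. Instead the sum becomes $-B_{j_1}S+SB_{j_1}$, where $S$ is the same kind of sum over $C(\theta-(j_1))$ with difference $d-1$, and $S=0$ by the induction hypothesis.

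You single out the \emph{largest} element $g$ of $\gamma$ instead. It always sits at the junction between the increasing word and the decreasing word. Hence the two members of each pair $\tau'\leftrightarrow\tau'\cup\{g\}$ are literally the same matrix product with opposite signs, and the sum vanishes in one step by a sign-reversing involution.

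What your route buys is termwise cancellation with no induction and no separate base case. Your Step 1 also makes explicit the bookkeeping $\sigma^c-\beta=\gamma-\tau$ that the paper uses implicitly. The paper's induction instead exhibits the sum as a commutator of $B_{j_1}$ with a lower-order sum of the same type. However, only the vanishing of the sum is used later (in Lemma~\ref{lemma:perm2} and in the inversion of $\Phi$ in the proof of Theorem~\ref{th:strTocho}), so your argument fully suffices.
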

\begin{proof}
    To prove the result, we shall proceed by induction over the 
    difference $d=|\beta^c|-|\alpha|$. If $d=1$,
    then $\beta^c-\alpha=(j)$, and the only possible choices of $\sigma$ are 
    $\alpha$ and $\beta^c$, so that 
    \begin{equation*}
        \begin{split}
            \sum_{\underrel{\alpha\leq \sigma \leq \beta^c}{\sigma\in C(\theta)}}
            (-1)^{|\sigma|} \vec{B}_{(\sigma-\alpha)^t}(x)\vec{B}_{\sigma^c-\beta}(x)
            = (-1)^{|\alpha|}\vec{B}_{j}(x) + (-1)^{|\alpha|+1} \vec{B}_{j}(x)=0.
        \end{split}
        \end{equation*}

        If we assume that the result is true for any choice of $\theta,\alpha, \beta$ and $\vec{B}$ in the conditions of 
        the statement, for a given 
        $d=n$. Then for $d=n+1$, we have that 
        $\beta^c-\alpha = (j_1,...,j_{n+1})$, and that, in particular, 
        $j_1$ belongs to either $\sigma$ or $\sigma^c$. In the first case, 
        we have that $j_1$ belongs to $\sigma-\alpha$, and, in fact, it is the 
        first element of said tuple, since $\sigma-\alpha \leq \beta^c-\alpha=(j_1,...,j_{n+1})$. 
        Likewise, if $j_1$ belongs to $\sigma^c$, then $j_1$ also takes the first position
        of $\sigma^c-\beta$. As a result, 
        \begin{equation*}
            \begin{split}
                \sum_{\underrel{\alpha\leq \sigma \leq \beta^c}{\sigma\in C(\theta)}}
                (-1)^{|\sigma|}& \vec{B}_{(\sigma-\alpha)^t}(x)\vec{B}_{\sigma^c-\beta}(x)
                = \sum_{\underrel{j_1\in \sigma}{\underrel{\alpha\leq \sigma \leq \beta^c}{\sigma\in C(\theta)}}}
                (-1)^{|\sigma|} \vec{B}_{(\sigma-\alpha)^t}(x)\vec{B}_{\sigma^c-\beta}(x)
                 + \sum_{\underrel{j_1\in \sigma^c}{\underrel{\alpha\leq \sigma \leq \beta^c}{\sigma\in C(\theta)}}}
                (-1)^{|\sigma|} \vec{B}_{(\sigma-\alpha)^t}(x)\vec{B}_{\sigma^c-\beta}(x)
                \\ & =B_{j_1}(x)\sum_{\underrel{j_1\in \sigma}{\underrel{\alpha\leq \sigma \leq \beta^c}{\sigma\in C(\theta)}}}
                (-1)^{|\sigma-(j_1)|+1} \vec{B}_{((\sigma-(j_1))-\alpha)^t}(x)\vec{B}_{((\theta-(j_1))-(\sigma-(j_1)))-\beta}(x)
                \\ & + \left(\sum_{\underrel{j_1\in \sigma^c}{\underrel{\alpha\leq \sigma \leq \beta^c}{\sigma\in C(\theta)}}}
                (-1)^{|\sigma|} \vec{B}_{(\sigma-\alpha)^t}(x)\vec{B}_{((\theta-(j_1))-\sigma)-\beta}(x)\right)B_{j_1}(x)
                \\ & =-B_{j_1}(x)\sum_{{\underrel{\alpha\leq \sigma \leq (\theta-(j_1))-\beta}{\sigma\in C(\theta-(j_1))}}}
                (-1)^{|\sigma|} \vec{B}_{(\sigma-\alpha)^t}(x)\vec{B}_{((\theta-(j_1))-\sigma)-\beta}(x)
                \\ & + \left(\sum_{{\underrel{\alpha\leq \sigma \leq (\theta-(j_1))-\beta}{\sigma\in C(\theta-(j_1))}}}
                (-1)^{|\sigma|} \vec{B}_{(\sigma-\alpha)^t}(x)\vec{B}_{((\theta-(j_1))-\sigma)-\beta}(x)\right)B_{j_1}(x)
                =0,
            \end{split}
            \end{equation*}
            where in the last equality we used the induction hypothesis. This concludes the result.
\end{proof}

Now we are ready to prove Lemma \ref{prop:commbueno}.

\begin{proof}[Proof of Lemma \ref{lemma:perm2}]
    The proof follows from some computations and the fact that an operator 
    of the form $T\otimes I_n$, where $T$ is linear, is linear over matrices. In particular,
    \begin{align*}
        \sum_{\sigma\in C(\theta)}
        &\left(\sum_{\alpha\in C(\sigma)} (-1)^{|\theta|-|\alpha|}\vec{B}_{\alpha}(x)(m_Q\vec{B})_{(\sigma-\alpha)^t}\right)
        (T\otimes I_n){\left(\sum_{\beta\in C(\sigma^c)} (-1)^{|\theta|-|\beta|} (m_Q\vec{B})_{\beta}\vec{B}_{(\sigma^c-\beta)^t}f\right)}(y)
        \\ & = \sum_{\sigma\in C(\theta)}
        \left(\sum_{\alpha\in C(\sigma)} (-1)^{|\theta|-|\alpha|}\vec{B}_{\alpha}(x)(m_Q\vec{B})_{(\sigma-\alpha)^t}\right)
        (T\otimes I_n){\left(\sum_{\beta\in C(\sigma^c)} (-1)^{-|\sigma|-|\beta|}( m_Q\vec{B})_{\sigma^c-\beta}\vec{B}_{\beta^t}f\right)}(y)
        \\ & = \sum_{\sigma\in C(\theta)}
        \sum_{\underrel{\beta\in C(\sigma^c)}{\alpha\in C(\sigma)}}(-1)^{|\theta|-|\alpha|-|\sigma|-|\beta|}
        {  \vec{B}_{\alpha}(x)(m_Q\vec{B})_{(\sigma-\alpha)^t}(m_Q\vec{B})_{\sigma^c-\beta}(T\otimes I_n)\left(\vec{B}_{\beta^t}f\right)(y)}
        { }
        \\ & = \sum_{{\alpha,\beta\in C(\theta)}}\sum_{\underrel{\alpha\leq \sigma\leq \beta^c}{\sigma\in C(\theta)}}(-1)^{{|\theta|-|\alpha|-|\sigma|-|\beta|}}
        { \vec{B}_{\alpha}(x)(m_Q\vec{B})_{(\sigma-\alpha)^t}(m_Q\vec{B})_{\sigma^c-\beta}(T\otimes I_n)\left(\vec{B}_{\beta^t}f\right)(y)}
        { }
        \\ & = \sum_{\underrel{\alpha<\beta^c}{\alpha,\beta\in C(\theta)}}\vec{B}_{\alpha}(x)
        {\left(\sum_{\underrel{\alpha\leq \sigma\leq \beta^c}{\sigma\in C(\theta)}}(-1)^{{|\theta|-|\alpha|-|\sigma|-|\beta|}} (m_Q\vec{B})_{(\sigma-\alpha)^t}(m_Q\vec{B})_{\sigma^c-\beta}\right)(T\otimes I_n)\left(\vec{B}_{\beta^t}f\right)(y)}
        { }
        \\ & +\sum_{{\sigma\in C(\theta)}}(-1)^{{|\sigma|}}
        {  \vec{B}_{\sigma}(x)(T\otimes I_n)\left(\vec{B}_{(\sigma^c)^t}f\right)(y)}
        {}
        \\ & =
        (-1)^{|\theta|}\sum_{{\sigma\in C(\theta)}}(-1)^{{|\theta|-|\sigma|}}
        {  \vec{B}_{\sigma}(x)(T\otimes I_n)\left(\vec{B}_{(\sigma^c)^t}f\right)(y)}.
    \end{align*}
    In the first equality, we made the change of variable 
    $\beta'=\sigma^c-\beta$, and in the last equality, we applied the previous lemma, which guarantees that 
    the first term in the last member of the upper term 
    vanishes. We thus arrive at the desired conclusion.
\end{proof}

\section{$BMO$ spaces}\label{sec:BMO}
In {\cite{Cit2SaMWS}}, the $BMO$ norm of one matrix symbol $B$ with
respect to two weights $U,V$ was defined as 
\begin{equation}
\begin{split}\|{B}\|_{BMO_{V,U}^{p}} & =\sup_{Q}\left(\fint_{Q}\|(m_{Q}V^{\frac{1}{p}})\left({B}(x)-m_{Q}{B}\right)(m_{Q}U^{\frac{1}{p}})^{-1}\|dx\right),\end{split}
\label{def_BMO_unidim}
\end{equation}
where $m_{Q}$ denotes the integral mean $\fint_{Q}$. In the setting
we will be working on, where we have $\vec{B}=(B_{1},...,B_{m})$
a vector of locally integrable matrices, and $\sigma\in C(m)$, we
will consider two natural extensions of (\ref{def_BMO_unidim}) given
by 
\begin{equation}
\begin{split}\|\vec{B}\|_{BMO_{V,U,\sigma}^{p,av}} & =\sup_{Q}\left(\fint_{Q}\|(m_{Q}V^{\frac{1}{p}})\left(\sum_{\alpha\in C(\sigma)}(-1)^{|\sigma|-|\alpha|}\vec{B}_{\alpha}(x)(m_{Q}\vec{B})_{(\sigma-\alpha)^{t}}\right)(m_{Q}U^{\frac{1}{p}})^{-1}\|^{{\frac{1}{|\sigma|}}}dx\right),\\
\|\vec{B}\|_{BMO_{V,U,\sigma}^{p,av,*}} & =\sup_{Q}\left(\fint_{Q}\|(m_{Q}U^{\frac{1}{p}})^{-1}\left(\sum_{\alpha\in C(\sigma)}(-1)^{|\sigma|-|\alpha|}(m_{Q}\vec{B})_{\alpha}\vec{B}_{(\sigma-\alpha)^{t}}(x)\right)^{*}(m_{Q}V^{\frac{1}{p}})\|^{{\frac{1}{|\sigma|}}}dx\right).
\end{split}
\label{def:BMO}
\end{equation}
Notice that the only change between these two norms is the order in
which the symbols and their integral means appear in the corresponding
expressions, which is irrelevant in the case $m=1$. In the matrix
weighted setting, it is more suitable to work with definitions in
terms of reducing matrices. Hence we define
\begin{equation}
\begin{split}\|\vec{B}\|_{BMO_{V,U,\sigma}^{p}} & =\sup_{Q}\left(\fint_{Q}\|\mathcal{R}_{Q,p,V}\left(\sum_{\alpha\in C(\sigma)}(-1)^{|\sigma|-|\alpha|}\vec{B}_{\alpha}(x)(m_{Q}\vec{B})_{(\sigma-\alpha)^{t}}\right)\mathcal{R}_{Q,p,U}^{-1}\|^{{\frac{1}{|\sigma|}}}dx\right)\\
\|\vec{B}\|_{BMO_{V,U,\sigma}^{p,*}} & =\sup_{Q}\left(\fint_{Q}\|\mathcal{R}_{Q,p,U}^{-1}\left(\sum_{\alpha\in C(\sigma)}(-1)^{|\sigma|-|\alpha|}(m_{Q}\vec{B})_{\alpha}\vec{B}_{(\sigma-\alpha)^{t}}(x)\right)^{*}\mathcal{R}_{Q,p,V}\|^{{\frac{1}{|\sigma|}}}dx\right)
\end{split}
\label{def_BMO_red}
\end{equation}

\begin{proposition}
Let $U,V\in A_{p}$. Then we have that 
\begin{enumerate}
\item ${\displaystyle \frac{1}{[U]_{A_{p}}^{\frac{n}{{|\sigma|}{p}}}}}\|\vec{B}\|_{BMO_{V,U,\sigma}^{p,av}}\leq\|\vec{B}\|_{BMO_{V,U,\sigma}^{p}}\leq[V]_{A_{p}}^{\frac{n}{{|\sigma|}p}}\|\vec{B}\|_{BMO_{V,U,\sigma}^{p,av}}$, 
\item ${\displaystyle \frac{1}{[U]_{A_{p}}^{\frac{n}{{|\sigma|}p}}}}\|\vec{B}\|_{BMO_{V,U,\sigma}^{p,av,*}}\leq\|\vec{B}\|_{BMO_{V,U,\sigma}^{p,*}}\leq[V]_{A_{p}}^{\frac{n}{{|\sigma|}p}}\|\vec{B}\|_{BMO_{V,U,\sigma}^{p,av,*}}$. 
\end{enumerate}
\end{proposition}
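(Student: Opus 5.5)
The plan is to compare the two norms cube by cube, using a pointwise matrix inequality that relates $m_Q W^{\frac1p}$ to the reducing matrix $\mathcal{R}_{Q,p,W}$. Fix a cube $Q$ and write
\[
M_\sigma(x)=\sum_{\alpha\in C(\sigma)}(-1)^{|\sigma|-|\alpha|}\vec{B}_{\alpha}(x)(m_Q\vec{B})_{(\sigma-\alpha)^t}.
\]
We have the factorizations
\[
\mathcal{R}_{Q,p,V}M_\sigma(x)\mathcal{R}_{Q,p,U}^{-1}=\bigl(\mathcal{R}_{Q,p,V}(m_QV^{\frac1p})^{-1}\bigr)\,\bigl((m_QV^{\frac1p})M_\sigma(x)(m_QU^{\frac1p})^{-1}\bigr)\,\bigl((m_QU^{\frac1p})\mathcal{R}_{Q,p,U}^{-1}\bigr)
\]
and the analogous one in the reverse direction. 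Taking operator norms and raising to the power $\frac{1}{|\sigma|}$ pulls the two outer factors out of the average with that exponent. Each inequality in (1) and (2) then reduces to uniform bounds on $\|\mathcal{R}_{Q,p,W}(m_QW^{\frac1p})^{-1}\|$ and $\|(m_QW^{\frac1p})\mathcal{R}_{Q,p,W}^{-1}\|$.

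\textbf{Key matrix estimate.} I need two bounds for $W\in A_p$:
\[
\|(m_QW^{\frac1p})\mathcal{R}_{Q,p,W}^{-1}\|\lesssim 1,\qquad \|\mathcal{R}_{Q,p,W}(m_QW^{\frac1p})^{-1}\|\lesssim [W]_{A_p}^{\frac{n}{p}}.
\]
The first is just Minkowski: $|m_QW^{\frac1p}\vec e|\le \fint_Q|W^{\frac1p}\vec e|\le(\fint_Q|W^{\frac1p}\vec e|^p)^{\frac1p}\simeq|\mathcal{R}_{Q,p,W}\vec e|$.

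For the second, the plan is to invoke equivalence (1) of the preliminaries, $|\mathcal{R}_{Q,p,W}\vec e|\simeq|m_Q(W^{\frac1p})\vec e|$. This is stated without the explicit constant, and tracking it is the main obstacle. I would reprove it through a determinant argument. Both matrices are positive definite, and $m_QW^{\frac1p}\le C\mathcal{R}_{Q,p,W}$ in the sense of the vector bound just proved. Comparing singular values, it suffices to show
\[
\det \mathcal{R}_{Q,p,W}\lesssim[W]_{A_p}^{\frac1p}\det m_QW^{\frac1p}.
\]
Then the largest singular value of $\mathcal{R}(m_QW^{\frac1p})^{-1}$ is controlled by $\det(\mathcal{R}(m_QW^{\frac1p})^{-1})$ times a product of the $n-1$ bounded reciprocal singular values, which yields the power $n/p$ up to dimensional constants.

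\textbf{Determinant bound.} The determinant bound should follow from $\|\mathcal{R}_{Q,p,W}\mathcal{R}'_{Q,p,W}\|\simeq[W]_{A_p}^{\frac1p}$ (equivalence (2)) together with the estimate $\det\mathcal{R}'_{Q,p,W}\det m_QW^{\frac1p}\gtrsim1$. That estimate comes from $\vec e\cdot\vec e\le \fint_Q|W^{\frac1p}\vec e||W^{-\frac1p}\vec e|$ combined with Hölder, which gives $|(m_QW^{\frac1p})\vec{e}|$-type duality against $\mathcal{R}'$. If the determinant bookkeeping yields a worse exponent, I would still expect a power $[W]_{A_p}^{n/p}$ at most, matching the statement's form.

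\textbf{Assembling (1) and (2).} Inserting the two bounds into the factorization, and using the first bound for $U$, gives the right inequality of (1) with $[V]_{A_p}^{\frac{n}{|\sigma|p}}$. The reverse factorization
\[
(m_QV^{\frac1p})M_\sigma(m_QU^{\frac1p})^{-1}=\bigl((m_QV^{\frac1p})\mathcal{R}_{Q,p,V}^{-1}\bigr)\,\bigl(\mathcal{R}_{Q,p,V}M_\sigma\mathcal{R}_{Q,p,U}^{-1}\bigr)\,\bigl(\mathcal{R}_{Q,p,U}(m_QU^{\frac1p})^{-1}\bigr)
\]
gives the left inequality with $[U]_{A_p}^{\frac{n}{|\sigma|p}}$. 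Part (2) is identical after taking adjoints, since $\|A^*\|=\|A\|$ and the weight factors are self-adjoint; only their order in the product changes.
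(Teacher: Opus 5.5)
Your skeleton is the paper's own proof. You fix $Q$ and sandwich the averaged integrand between the constant matrices $\mathcal{R}_{Q,p,V}(m_QV^{1/p})^{-1}$ and $(m_QU^{1/p})\mathcal{R}_{Q,p,U}^{-1}$, or their inverses in the other direction. You pull these out with exponent $1/|\sigma|$, and you get (2) by taking adjoints. The paper does not reprove the matrix comparison. It quotes Lemma 2.2 of \cite{paraprods}: for $W\in A_p$, $|m_Q(W^{1/p})\vec e|\le|\mathcal{R}_{Q,p,W}\vec e|\le[W]_{A_p}^{n/p}|m_Q(W^{1/p})\vec e|$. With that citation your argument is complete. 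The part you reprove yourself, however, has a gap.

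\textbf{The gap.} The estimate $\det\mathcal{R}'_{Q,p,W}\,\det m_QW^{1/p}\gtrsim 1$ does not follow from the route you indicate.

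\emph{Why your argument fails.} Combining $|\vec e|^2\le\fint_Q|W^{1/p}\vec e||W^{-1/p}\vec e|$ with H\"older gives $|\vec e|^2\lesssim|\mathcal{R}_{Q,p,W}\vec e||\mathcal{R}'_{Q,p,W}\vec e|$. That relates $\mathcal{R}$ to $\mathcal{R}'$, not $m_QW^{1/p}$ to $\mathcal{R}'$. A lower bound on $m_QW^{1/p}$ is exactly what is at stake: the vectors $W^{1/p}(x)\vec e$ can cancel under averaging, so $|m_QW^{1/p}\vec e|$ may be much smaller than $\fint_Q|W^{1/p}\vec e|$. Replacing $\mathcal{R}$ by $m_QW^{1/p}$ in that inequality would assume the comparison you are proving.

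\emph{The missing ingredient is a convexity argument.} Operator convexity of $P\mapsto P^{-1}$ gives $(m_QW^{1/p})^{-1}\le m_Q(W^{-1/p})$ in the Loewner order. Hence $\det m_Q(W^{1/p})\,\det m_Q(W^{-1/p})\ge 1$; concavity of $\log\det$ gives the same thing. Minkowski and H\"older, exactly as in your first bound, give $\|m_Q(W^{-1/p})(\mathcal{R}'_{Q,p,W})^{-1}\|\lesssim 1$, and so $\det m_Q(W^{-1/p})\lesssim\det\mathcal{R}'_{Q,p,W}$. Together these yield the claim.

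\textbf{The exponent bookkeeping is inconsistent.}
\begin{itemize}
\item From $\|\mathcal{R}\mathcal{R}'\|\simeq[W]_{A_p}^{1/p}$ you only get $\det\mathcal{R}\,\det\mathcal{R}'\le\|\mathcal{R}\mathcal{R}'\|^n\lesssim[W]_{A_p}^{n/p}$. So the determinant bound you can actually prove is $\det\mathcal{R}\lesssim[W]_{A_p}^{n/p}\det m_QW^{1/p}$, not the version with $[W]_{A_p}^{1/p}$.
\item Your singular-value step, $\|\mathcal{R}(m_QW^{1/p})^{-1}\|\le C^{n-1}\det\mathcal{R}/\det m_QW^{1/p}$, is correct and loses only a dimensional constant.
\item Therefore the exponent $n/p$ in the statement comes from this $n$-th power. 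A $[W]_{A_p}^{1/p}$ determinant bound, as you wrote it, would have given $[W]_{A_p}^{1/p}$ for the norm, which contradicts your own conclusion.
\end{itemize}
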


\begin{proof}
We begin recalling that it follows from \cite[Lemma 2.2]{paraprods},
that if $W\in A_{p}$, then for every $\vec{e}\in\mathbb{F}^{n}$
\[
|m_{Q}(W^{\frac{1}{p}})\vec{e}|\leq|\mathcal{R}_{Q,p,W}\vec{e}|\leq[W]_{A_{p}}^{\frac{n}{p}}|m_{Q}(W^{\frac{1}{p}})\vec{e}|.
\]
Bearing this in mind we settle $i)$ since $ii)$ is analogous. Note
that 
\begin{align*}
 & \fint_{Q}\|(m_{Q}V^{\frac{1}{p}})\left(\sum_{\alpha\in C(\sigma)}(-1)^{|\sigma|-|\alpha|}\vec{B}_{\alpha}(x)(m_{Q}\vec{B})_{(\sigma-\alpha)^{t}}\right)(m_{Q}U^{\frac{1}{p}})^{-1}\|^{{\frac{1}{|\sigma|}}}dx\\
\leq & \|(m_{Q}V^{\frac{1}{p}})\mathcal{R}_{Q,p,V}^{-1}\|^{{\frac{1}{|\sigma|}}}\|\mathcal{R}_{Q,p,U}(m_{Q}U^{\frac{1}{p}})^{-1}\|^{{\frac{1}{|\sigma|}}}\|\vec{B}\|_{BMO_{V,U,\sigma}^{p}}\\
\leq & [U]_{A_{p}}^{\frac{n}{{|\sigma|}p}}\|\vec{B}\|_{BMO_{V,U,\sigma}^{p}}
\end{align*}
from which the leftmost estimate readily follows. For the rightmost
estimate
\begin{align*}
 & \fint_{Q}\|\mathcal{R}_{Q,p,V}\left(\sum_{\alpha\in C(\sigma)}(-1)^{|\sigma|-|\alpha|}\vec{B}_{\alpha}(x)(m_{Q}\vec{B})_{(\sigma-\alpha)^{t}}\right)\mathcal{R}_{Q,p,U}^{-1}\|^{{\frac{1}{|\sigma|}}}dx\\
\leq & \|\mathcal{R}_{Q,p,V}(m_{Q}V^{\frac{1}{p}})^{-1}\|^{{\frac{1}{|\sigma|}}}\|(m_{Q}U^{\frac{1}{p}})\mathcal{R}_{Q,p,U}^{-1}\|^{{\frac{1}{|\sigma|}}}\|\vec{B}\|_{BMO_{V,U,\sigma}^{p,av}}\\
\leq & [V]_{A_{p}}^{\frac{n}{{|\sigma|}p}}\|\vec{B}\|_{BMO_{V,U,\sigma}^{p,av}}
\end{align*}
from which the desired inequality follows.
\end{proof}

The next proposition establishes an interesting relationship between the norms in \eqref{def_BMO_red}.

\begin{proposition}\label{prop:relBMO-BMO*}
Let $1<p<\infty$. 
\begin{enumerate}
\item If $V\in A_{p}$, then $\|\vec{B}\|_{BMO_{V,U,\sigma}^{p}}\lesssim[V]_{A_{p}}^{\frac{1}{{|\sigma|}p}}\|\vec{B}^{*}\|_{BMO_{U^{\frac{-p'}{p}},V^{\frac{-p'}{p}},\sigma}^{p',*}}$.
\item If $U\in A_{p}$ then $\|\vec{B}^{*}\|_{BMO_{U^{\frac{-p'}{p}},V^{\frac{-p'}{p}},\sigma}^{p',*}}\lesssim[U]_{A_{p}}^{\frac{1}{{|\sigma|}p}}\|\vec{B}\|_{BMO_{V,U,\sigma}^{p}}$.
\item If $U,V\in A_{p}$, then 
\[
\frac{1}{[U]_{A_{p}}^{\frac{1}{{|\sigma|}p}}}\|\vec{B}^{*}\|_{BMO_{U^{\frac{-p'}{p}},V^{\frac{-p'}{p}},\sigma}^{p',*}}\lesssim\|\vec{B}\|_{BMO_{V,U,\sigma}^{p}}\lesssim[V]_{A_{p}}^{\frac{1}{{|\sigma|}p}}\|\vec{B}^{*}\|_{BMO_{U^{\frac{-p'}{p}},V^{\frac{-p'}{p}},\sigma}^{p',*}}.
\]
\end{enumerate}
\end{proposition}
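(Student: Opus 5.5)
The plan is to show that the two quantities being compared are, cube by cube and point by point, the norm of the \emph{same} matrix sandwiched between different reducing matrices. The comparison then reduces to the equivalence $[W]_{A_p}^{\frac1p}\simeq\sup_Q\|\mathcal{R}_{Q,p,W}\mathcal{R}'_{Q,p,W}\|$ together with a dimensional lower bound $\|(\mathcal{R}_{Q,p,W}\mathcal{R}'_{Q,p,W})^{-1}\|\lesssim 1$. Part $iii)$ is simply $i)$ and $ii)$ combined, so only $i)$ and $ii)$ need proof.

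\textbf{Step 1 (algebraic identification).} Fix $Q$ and write
\[
M(x)=\sum_{\alpha\in C(\sigma)}(-1)^{|\sigma|-|\alpha|}\vec{B}_{\alpha}(x)(m_Q\vec{B})_{(\sigma-\alpha)^t}.
\]
I will compute the adjoint of the matrix appearing in the starred norm of $\vec{B}^*$, namely $\sum_{\alpha}(-1)^{|\sigma|-|\alpha|}(m_Q\vec{B}^*)_{\alpha}\vec{B}^*_{(\sigma-\alpha)^t}(x)$. Since $\vec{B}_\tau$ is the product in reverse order, one checks directly that $(\vec{B}^*_{\tau^t})^*=\vec{B}_{\tau}$ and $((m_Q\vec{B}^*)_{\alpha})^*=(m_Q\vec{B})_{\alpha^t}$. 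Hence this adjoint equals $\sum_\alpha(-1)^{|\sigma|-|\alpha|}\vec{B}_{\sigma-\alpha}(x)(m_Q\vec{B})_{\alpha^t}$. Reindexing by $\alpha'=\sigma-\alpha$, so that $\alpha=\sigma-\alpha'$, turns this into $(-1)^{|\sigma|}M(x)$. The global sign is irrelevant inside a norm.

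Next I substitute the weights. Replacing $(V,U,p)$ by $(U^{-p'/p},V^{-p'/p},p')$ and using $\mathcal{R}_{Q,p',W^{-p'/p}}=\mathcal{R}'_{Q,p,W}$ shows that the integrand of $\|\vec{B}^*\|_{BMO^{p',*}_{U^{-p'/p},V^{-p'/p},\sigma}}$ is
\[
\|(\mathcal{R}'_{Q,p,V})^{-1}M(x)\mathcal{R}'_{Q,p,U}\|^{\frac1{|\sigma|}}.
\]
The integrand of $\|\vec{B}\|_{BMO^p_{V,U,\sigma}}$ is $\|\mathcal{R}_{Q,p,V}M(x)\mathcal{R}_{Q,p,U}^{-1}\|^{\frac1{|\sigma|}}$.

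\textbf{Step 2 (a universal bound).} For any weight $W$ with finite reducing matrices, take $\vec e,\vec f\in\mathbb{F}^n$. Then by Hölder,
\[
|\langle \vec e,\vec f\rangle|\le\fint_Q|W^{\frac1p}\vec e||W^{-\frac1p}\vec f|\le \Big(\fint_Q|W^{\frac1p}\vec e|^p\Big)^{\frac1p}\Big(\fint_Q|W^{-\frac1p}\vec f|^{p'}\Big)^{\frac1{p'}}\lesssim|\mathcal{R}_{Q,p,W}\vec e||\mathcal{R}'_{Q,p,W}\vec f|.
\]
Applying this with $\vec e=\mathcal{R}_{Q,p,W}^{-1}\vec u$ and $\vec f=(\mathcal{R}'_{Q,p,W})^{-1}\vec v$, and using that both reducing matrices are self-adjoint, gives
\[
\|\mathcal{R}_{Q,p,W}^{-1}(\mathcal{R}'_{Q,p,W})^{-1}\|=\|(\mathcal{R}'_{Q,p,W})^{-1}\mathcal{R}_{Q,p,W}^{-1}\|\lesssim 1,
\]
with no $A_p$ assumption needed.

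\textbf{Step 3 (inserting factors).} For $i)$, I factor and use submultiplicativity:
\[
\|\mathcal{R}_{Q,p,V}M\mathcal{R}_{Q,p,U}^{-1}\|\le\|\mathcal{R}_{Q,p,V}\mathcal{R}'_{Q,p,V}\|\,\|(\mathcal{R}'_{Q,p,V})^{-1}M\mathcal{R}'_{Q,p,U}\|\,\|(\mathcal{R}'_{Q,p,U})^{-1}\mathcal{R}_{Q,p,U}^{-1}\|.
\]
The first factor is $\lesssim[V]_{A_p}^{\frac1p}$ by equivalence $ii)$ in the list of reducing-matrix equivalences, and the third factor is $\lesssim1$ by Step 2. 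Raising to the power $\frac1{|\sigma|}$, averaging over $Q$, and taking the supremum gives $i)$.

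For $ii)$, I factor the other way:
\[
\|(\mathcal{R}'_{Q,p,V})^{-1}M\mathcal{R}'_{Q,p,U}\|\le\|(\mathcal{R}'_{Q,p,V})^{-1}\mathcal{R}_{Q,p,V}^{-1}\|\,\|\mathcal{R}_{Q,p,V}M\mathcal{R}_{Q,p,U}^{-1}\|\,\|\mathcal{R}_{Q,p,U}\mathcal{R}'_{Q,p,U}\|.
\]
Here the first factor is $\lesssim1$ by Step 2 and the last is $\lesssim[U]_{A_p}^{\frac1p}$. Only $U\in A_p$ is used, which matches the hypothesis of $ii)$.

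The only step requiring care is Step 1: tracking the reversed products and adjoints precisely, and checking that the reindexing $\alpha\mapsto\sigma-\alpha$ matches the pattern of $\vec{B}_{\alpha}(x)(m_Q\vec{B})_{(\sigma-\alpha)^t}$ up to a global sign. The rest is bookkeeping with reducing matrices.
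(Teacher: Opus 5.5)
Your proof is correct and follows the paper's argument. The paper also inserts $\mathcal{R}'_{Q,p,V}$ to pay $\|\mathcal{R}_{Q,p,V}\mathcal{R}'_{Q,p,V}\|\lesssim[V]_{A_p}^{1/p}$, then replaces $\mathcal{R}_{Q,p,U}^{-1}$ by $\mathcal{R}'_{Q,p,U}$ by inserting $U^{-1/p}(y)U^{1/p}(y)$ and applying H\"older in $y$; this is your Step 2, which the paper carries out inside the $1/|\sigma|$ power at the cost of an extra Jensen step. It finally identifies the result with the starred norm of $\vec{B}^*$, and your Step 1 spells out the adjoint-and-reindexing computation behind that identification, which the paper states without detail.
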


\begin{proof}
It suffices to settle $i)$, since $ii)$ is analogous and $iii)$ is a
direct consequence of the combination of $i)$ and $ii)$. We follow
ideas in the proof of Corollary 4.7 in \cite{Cit2SaMWS}.
\begin{align*}
\fint_{Q} & \|\mathcal{R}_{Q,p,V}\left(\sum_{\alpha\in C(\sigma)}(-1)^{|\sigma|-|\alpha|}\vec{B}_{\alpha}(x)(m_{Q}\vec{B})_{(\sigma-\alpha)^{t}}\right)\mathcal{R}_{Q,p,U}^{-1}\|^{{\frac{1}{|\sigma|}}}dx\\
 & \leq\fint_{Q}\|\mathcal{R}_{Q,p,V}\mathcal{R}_{Q,p,V}'\|^{{\frac{1}{|\sigma|}}}\|(\mathcal{R}_{Q,p,V}')^{-1}\left(\sum_{\alpha\in C(\sigma)}(-1)^{|\sigma|-|\alpha|}\vec{B}_{\alpha}(x)(m_{Q}\vec{B})_{(\sigma-\alpha)^{t}}\right)\mathcal{R}_{Q,p,U}^{-1}\|^{{\frac{1}{|\sigma|}}}dx\\
 & \lesssim[V]_{A_{p}}^{\frac{1}{{|\sigma|}p}}\fint_{Q}\fint_{Q}\|(\mathcal{R}_{Q,p,V}')^{-1}\left(\sum_{\alpha\in C(\sigma)}(-1)^{|\sigma|-|\alpha|}\vec{B}_{\alpha}(x)(m_{Q}\vec{B})_{(\sigma-\alpha)^{t}}\right)U^{\frac{-1}{p}}(y)U^{\frac{1}{p}}(y)\mathcal{R}_{Q,p,U}^{-1}\|^{{\frac{1}{|\sigma|}}}dydx%
\\
 & {\leq}[V]_{A_{p}}^{\frac{1}{{|\sigma|}p}}\fint_{Q}\left(\fint_{Q}\|(\mathcal{R}_{Q,p,V}')^{-1}\left(\sum_{\alpha\in C(\sigma)}(-1)^{|\sigma|-|\alpha|}\vec{B}_{\alpha}(x)(m_{Q}\vec{B})_{(\sigma-\alpha)^{t}}\right)U^{\frac{-1}{p}}(y)\|^{\frac{p'}{{|\sigma|}}}dy\right)^{\frac{1}{p'}}\\
 & \;\;\;\;\times\left(\fint_{Q}\|U^{\frac{1}{p}}(y)\mathcal{R}_{Q,p,U}^{-1}\|^{\frac{p}{{|\sigma|}}}dy\right)^{\frac{1}{p}}dx.
 \end{align*}
 {We may now apply Jensen's inequality to ensure that}
 \begin{align*}
 \fint_{Q} & \|\mathcal{R}_{Q,p,V}\left(\sum_{\alpha\in C(\sigma)}(-1)^{|\sigma|-|\alpha|}\vec{B}_{\alpha}(x)(m_{Q}\vec{B})_{(\sigma-\alpha)^{t}}\right)\mathcal{R}_{Q,p,U}^{-1}\|^{{\frac{1}{|\sigma|}}}dx\\
 & \lesssim [V]_{A_{p}}^{\frac{1}{{|\sigma|}p}}\fint_{Q}\left(\fint_{Q}\|(\mathcal{R}_{Q,p,V}')^{-1}\left(\sum_{\alpha\in C(\sigma)}(-1)^{|\sigma|-|\alpha|}\vec{B}_{\alpha}(x)(m_{Q}\vec{B})_{(\sigma-\alpha)^{t}}\right)U^{\frac{-1}{p}}(y)\|^{{p'}}dy\right)^{\frac{1}{{|\sigma|}p'}}\\
 & \;\;\;\;\times\left(\fint_{Q}\|U^{\frac{1}{p}}(y)\mathcal{R}_{Q,p,U}^{-1}\|^{p}dy\right)^{\frac{1}{{|\sigma|}p}}dx.
 \\
 & \lesssim[V]_{A_{p}}^{\frac{1}{{|\sigma|}p}}\fint_{Q}\|(\mathcal{R}_{Q,p,V}')^{-1}\left(\sum_{\alpha\in C(\sigma)}(-1)^{|\sigma|-|\alpha|}\vec{B}_{\alpha}(x)(m_{Q}\vec{B})_{(\sigma-\alpha)^{t}}\right)\mathcal{R}_{Q,p,U}'\|^{{\frac{1}{{|\sigma|}}}}dx\\
 & =[V]_{A_{p}}^{\frac{1}{{|\sigma|}p}}\fint_{Q}\|\mathcal{R}_{Q,p',U^{\frac{-p'}{p}}}\left(\sum_{\alpha\in C(\sigma)}(-1)^{|\sigma|-|\alpha|}(m_{Q}(\vec{B}^{*}))_{\alpha}\vec{B}_{(\sigma-\alpha)^{t}}^{*}(x)\right)(\mathcal{R}_{Q,p',V^{\frac{-p'}{p}}})^{-1}\|^{{\frac{1}{|\sigma|}}}dx,
\end{align*}
so that $\|\vec{B}\|_{BMO_{V,U,\sigma}^{p}}\lesssim[V]_{A_{p}}^{\frac{1}{{|\sigma|}p}}\|\vec{B}^{*}\|_{BMO_{U^{\frac{-p'}{p}},V^{\frac{-p'}{p}},\sigma}^{p',*}}$.
\end{proof}

Equivalent expressions for the $BMO$ norm (\ref{def_BMO_unidim}),
which employ reducing matrices instead of weights, were introduced
in \cite{Cit2SaMWS} (Corollary 4.7). Such expressions were very useful
in obtaining estimates for certain operators, since working with reducing
matrices is much easier than working with matrix functions. In order
to emulate this, we introduce the following norms: 
\begin{definition}
Let $U,V$ be matrix weights, $\vec{B}$ a vector of locally integrable
matrices, and $\sigma\in C(m)$. We define 
\begin{align*}
\|\vec{B}\|_{BMO_{V,U,\sigma,1}^{p}} & =\sup_{Q}\left(\fint_{Q}\left\Vert V^{\frac{1}{p}}(y)\left(\sum_{\alpha\in C(\sigma)}(-1)^{|\sigma|-|\alpha|}\vec{B}_{\alpha}(y)(m_{Q}\vec{B})_{(\sigma-\alpha)^{t}}\right)\mathcal{R}_{Q,p,U}^{-1}\right\Vert ^{p}dy\right)^{\frac{1}{p}}\\
\|\vec{B}\|_{BMO_{V,U,\sigma,2}^{p}} & =\sup_{Q}\left(\fint_{Q}\left\Vert U^{\frac{-1}{p}}(y)\left(\sum_{\alpha\in C(\sigma)}(-1)^{|\sigma|-|\alpha|}\vec{B}_{\alpha}(y)(m_{Q}\vec{B})_{(\sigma-\alpha)^{t}}\right)^{*}(\mathcal{R}_{Q,p,V}')^{-1}\right\Vert ^{p'}dy\right)^{\frac{1}{p'}}\\
\|\vec{B}\|_{BMO_{V,U,\sigma,1}^{p,*}} & =\sup_{Q}\left(\fint_{Q}\left\Vert V^{\frac{1}{p}}(y)\left(\sum_{\alpha\in C(\sigma)}(-1)^{|\sigma|-|\alpha|}(m_{Q}\vec{B})_{\alpha}\vec{B}_{(\sigma-\alpha)^{t}}(y)\right)\mathcal{R}_{Q,p,U}^{-1}\right\Vert ^{p}dy\right)^{\frac{1}{p}}\\
\|\vec{B}\|_{BMO_{V,U,\sigma,2}^{p,*}} & =\sup_{Q}\left(\fint_{Q}\left\Vert U^{\frac{-1}{p}}(y)\left(\sum_{\alpha\in C(\sigma)}(-1)^{|\sigma|-|\alpha|}(m_{Q}\vec{B})_{\alpha}\vec{B}_{(\sigma-\alpha)^{t}}(y)\right)^{*}(\mathcal{R}_{Q,p,V}')^{-1}\right\Vert ^{p'}dy\right)^{\frac{1}{p'}}\\
\|\vec{B}\|_{\widetilde{BMO}_{V,U,\sigma}^{p}} & =\sup_{Q}\fint_{Q}\left(\fint_{Q}\|V^{\frac{1}{p}}(x)\left(\sum_{\alpha\in C(\sigma)}(-1)^{|\sigma|-|\alpha|}\vec{B}_{\alpha}(x)\vec{B}_{(\sigma-\alpha)^{t}}(y)\right)U^{\frac{-1}{p}}(y)\|^{p'}dy\right)^{\frac{p}{p'}}dx\\
\|\vec{B}\|_{\widetilde{BMO}_{V,U,\sigma}^{p,*}} & =\sup_{Q}\fint_{Q}\left(\fint_{Q}\left\Vert U^{\frac{-1}{p}}(y)\left(\sum_{\alpha\in C(\sigma)}(-1)^{|\sigma|-|\alpha|}\vec{B}_{\alpha}(y)\vec{B}_{(\sigma-\alpha)^{t}}(x)\right)^{*}V^{\frac{1}{p}}(x)\right\Vert ^{p'} \, dy\right)^{\frac{p}{p'}} \, dx.
\end{align*}
 For $\sigma=\widetilde{\sigma}_{2^{m}}$, we will typically write
$m$ instead of $\widetilde{\sigma}_{2^{m}}$ in the previous definition
and in \eqref{def:BMO} and \eqref{def_BMO_red}.
\end{definition}

The aforementioned corollary from \cite{Cit2SaMWS} shows that, for
$m=1$, the ${BMO}_{V,U,\sigma}^{p}$, $BMO_{V,U,\sigma,1}^{p}$,
$BMO_{V,U,\sigma,2}^{p}$ and $\widetilde{BMO}_{V,U,\sigma}^{p}$
norms are equivalent, with constants possibly dependent on the $A_{p}$
norms of the corresponding weights. It therefore seems logical that
an equivalence, at least to some extent, is also satisfied for each
case in the general scenario. We were, however, unsuccessful in obtaining
a full positive result. In the following lines we gather some the
relationships that we have managed to obtain.

\begin{proposition}\label{prop:relBMO1-BMO2}
If $U,V$ are matrix weights, and $1<p<\infty$, then 
\[
\|\vec{B}\|_{BMO_{V,U,\sigma,1}^{p}}=\|\vec{B}^{*}\|_{BMO_{U^{\frac{-p'}{p}},V^{\frac{-p'}{p}},\sigma,2}^{p',*}}\qquad\text{and}\qquad\|\vec{B}\|_{BMO_{V,U,\sigma,2}^{p}}=\|\vec{B}^{*}\|_{BMO_{U^{\frac{-p'}{p}},V^{\frac{-p'}{p}},\sigma,1}^{p',*}.}
\]
\end{proposition}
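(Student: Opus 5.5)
This is a direct unwinding of the definitions; no estimates are needed, only exact identities between matrix norms. The plan is to write out $\|\vec{B}^{*}\|_{BMO^{p',*}_{U^{-p'/p},V^{-p'/p},\sigma,2}}$ explicitly and transform it term by term into $\|\vec{B}\|_{BMO^{p}_{V,U,\sigma,1}}$, cube by cube. Then take the supremum over $Q$.

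First, identify the weights and reducing matrices. In the $BMO_2^{p',*}$ definition with exponent $p'$ and weights $V'=U^{-p'/p}$ (first slot) and $U'=V^{-p'/p}$ (second slot), the pointwise factor is
\[
(U')^{-1/p'}=(V^{-p'/p})^{-1/p'}=V^{1/p}.
\]
The integrability exponent is $(p')'=p$. The reducing matrix appearing is $\mathcal{R}'_{Q,p',U^{-p'/p}}$. By the remark in Section 3.1 this is an admissible choice of $\mathcal{R}_{Q,p,U}$, since
\[
|\mathcal{R}'_{Q,p',U^{-p'/p}}\vec e|\simeq\Big(\fint_Q |U^{1/p}\vec e|^{p}\Big)^{1/p}.
\]
Because reducing matrices are only defined up to equivalence, the claimed equality is to be read with this consistent choice, which is how the paper treats such identities.

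Second, handle the matrix expression. With $\vec{B}^{*}$ in place of $\vec{B}$, the inner sum is
\[
\sum_{\alpha\in C(\sigma)}(-1)^{|\sigma|-|\alpha|}(m_Q\vec{B}^{*})_{\alpha}\,\vec{B}^{*}_{(\sigma-\alpha)^t}(y).
\]
Taking adjoints reverses products, so
\[
\big((m_Q\vec{B}^{*})_{\alpha}\big)^{*}=(m_Q\vec{B})_{\alpha^t} \quad\text{and}\quad \big(\vec{B}^{*}_{(\sigma-\alpha)^t}\big)^{*}=\vec{B}_{\sigma-\alpha}.
\]
The adjoint of the product $XY$ is $Y^{*}X^{*}$, so the adjoint of the whole sum is
\[
\sum_{\alpha\in C(\sigma)}(-1)^{|\sigma|-|\alpha|}\vec{B}_{\sigma-\alpha}(y)\,(m_Q\vec{B})_{\alpha^t}.
\]
Here the notation convention must be tracked carefully: $\vec{B}_{\tau}$ multiplies in reverse order of $\tau$. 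Reindex with $\alpha'=\sigma-\alpha$, which is a bijection of $C(\sigma)$ preserving the sign, since $|\sigma|-|\alpha|=|\alpha'|$ versus the target's $|\sigma|-|\alpha'|$. The result is exactly $\vec{B}_{\alpha'}(y)(m_Q\vec{B})_{(\sigma-\alpha')^t}$, up to the global sign $(-1)^{|\sigma|}$, which is irrelevant inside a norm. The sign bookkeeping is the one delicate point. I would verify it in the case $|\sigma|=2$ to make sure the ordering conventions (reverse product versus $^t$) match exactly, since this is where an error could hide.

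Finally, the $BMO_2^{p',*}$ norm has the form
\[
\big\|(U')^{-1/p'}(y)\,M(y)^{*}\,(\mathcal{R}'_{Q,p',V'})^{-1}\big\|^{p}.
\]
Substituting the above, this becomes
\[
\big\|V^{1/p}(y)\Big(\sum_{\alpha}\pm\vec{B}_{\alpha}(y)(m_Q\vec{B})_{(\sigma-\alpha)^t}\Big)\mathcal{R}_{Q,p,U}^{-1}\big\|^{p},
\]
whose average over $Q$ to the power $1/p$ is the $BMO_1^{p}$ expression. The second identity follows in the same way: either repeat the computation, or apply the first identity with $\vec{B}^{*}$ and the dual weights and use $(W^{-p'/p})^{-p/p'}=W$.
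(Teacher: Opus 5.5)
Your proposal is correct and is essentially the paper's proof. Both arguments unwind the definitions cube by cube, using that adjoints reverse products, that $(V^{-p'/p})^{-1/p'}=V^{1/p}$, and that $\mathcal{R}'_{Q,p',U^{-p'/p}}$ is an admissible choice of $\mathcal{R}_{Q,p,U}$. Your tracking of the global factor $(-1)^{|\sigma|}$ is actually more careful than the paper's displayed identity, which silently drops it; this is harmless inside the norm.

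The one slip is your alternative shortcut for the second identity. Applying the first identity to the data $(\vec{B}^{*},U^{-p'/p},V^{-p'/p},p')$ only yields $\|\vec{B}^{*}\|_{BMO^{p'}_{U^{-p'/p},V^{-p'/p},\sigma,1}}=\|\vec{B}\|_{BMO^{p,*}_{V,U,\sigma,2}}$. That is another instance of the first identity (unstarred $BMO_1$ against starred $BMO_2$), never unstarred $BMO_2$ against starred $BMO_1$. So you do need to repeat the computation, as the paper does.
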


\begin{proof}
Note that
\begin{align*}
 & \left(\fint_{Q}\left\Vert V^{\frac{1}{p}}(y)\left(\sum_{\alpha\in C(\sigma)}(-1)^{|\sigma|-|\alpha|}\vec{B}_{\alpha}(y)(m_{Q}\vec{B})_{(\sigma-\alpha)^{t}}\right)\mathcal{R}_{Q,p,U}^{-1}\right\Vert ^{p}dy\right)^{\frac{1}{p}}\\
 & =\left(\fint_{Q}\left\Vert (V^{\frac{-p'}{p}})^{\frac{-1}{p'}}(y)\left(\sum_{\alpha\in C(\sigma)}(-1)^{|\sigma|-|\alpha|}\vec{B}_{\alpha}(y)(m_{Q}\vec{B})_{(\sigma-\alpha)^{t}}\right)^{**}(\mathcal{R}'_{Q,p',U^{\frac{-p'}{p}}})^{-1}\right\Vert ^{(p')'}dy\right)^{\frac{1}{(p')'}}\\
 & =\left(\fint_{Q}\left\Vert (V^{\frac{-p'}{p}})^{\frac{-1}{p'}}(y)\left(\sum_{\alpha\in C(\sigma)}(-1)^{|\sigma|-|\alpha|}(m_{Q}\vec{B}^{*})_{\alpha}(\vec{B}^{*})_{(\sigma-\alpha)^{t}}(y)\right)^{*}(\mathcal{R}'_{Q,p',U^{\frac{-p'}{p}}})^{-1}\right\Vert ^{(p')'}dy\right)^{\frac{1}{(p')'}},
\end{align*}
so $\|\vec{B}\|_{BMO_{V,U,\sigma,1}^{p}}=\|\vec{B}^{*}\|_{BMO_{U^{\frac{-p'}{p}},V^{\frac{-p'}{p}},\sigma,2}^{p',*}}$.
The other case is analogous.
\end{proof}
A direct consequence from propositions \ref{prop:relBMO-BMO*} and \ref{prop:relBMO1-BMO2} is that showing the
equivalence between $BMO_{V,U,\sigma}^{p}$ and $BMO_{V,U,\sigma,2}^{p}$
is equivalent to showing the equivalence between $BMO_{V,U,\sigma}^{p,*}$
and $BMO_{V,U,\sigma,1}^{p,*}$, and the same applies to $BMO_{V,U,\sigma}^{p}$,
$BMO_{V,U,\sigma,1}^{p}$ and $BMO_{V,U,\sigma}^{p,*}$,$BMO_{V,U,\sigma,2}^{p,*}$.

The next result will provide two inequalities that relate the $\widetilde{BMO}_{V,U,\sigma}^{p}$
and a sum of untilded $BMO$ norms.
\begin{proposition}
\label{cor:BmotildeBMO} If $\vec{B}$ is a vector of locally integrable
matrices, and $\theta\in C(m)$, then:
\begin{enumerate}
\item If $U,V,(U,V)\in A_{p}$, 
\[
\begin{split}\|\vec{B}\|_{\widetilde{BMO}_{V,U,\theta}^{p}}\lesssim[V]_{A_{p}}^{\frac{1}{p}}{\|\vec{B}\|_{BMO_{V,U,\theta,2}^{p,*}}^{{p}}}+\sum_{\underrel{\sigma\neq\varnothing,\theta}{\sigma\in C(\theta)}}[U,V]_{A_{p}}^{\frac{1}{p}}\|\vec{B}\|_{BMO_{V,U,\sigma,1}^{p}}^{{p}}{\|\vec{B}\|_{BMO_{V,U,\theta-\sigma,2}^{p,*}}^{{p}}}+\|\vec{B}\|_{{BMO}_{V,U,\theta,1}^{p}}^{{p}}[U]_{A_{p}}^{\frac{1}{p}}.\end{split}
\]
\item If $(U,V)\in A_{p}$, 
\[
\begin{split}\|\vec{B}\|_{\widetilde{BMO}_{V,U,\theta}^{p}}\lesssim[U,V]_{A_{p}}^{\frac{1}{p}}\sum_{{\sigma\in C(\theta)}}\|\vec{B}\|_{BMO_{V,U,\sigma,1}^{p}}^{{p}}\|\vec{B}\|_{BMO_{V,U,\theta-\sigma,2}^{p,*}}^{{p}}.\end{split}
\]
\end{enumerate}
\end{proposition}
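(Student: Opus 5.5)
The plan is to expand the kernel of the $\widetilde{BMO}$ norm using the identity-operator case of Lemma \ref{prop:commbueno} with $f\equiv1$ and $\theta$ in place of the index set. Fix a cube $Q$. The lemma rewrites
\[
\sum_{\alpha\in C(\theta)}(-1)^{|\theta|-|\alpha|}\vec{B}_{\alpha}(x)\vec{B}_{(\theta-\alpha)^{t}}(y)
\]
(up to the global sign $(-1)^{|\theta|}$, which is irrelevant inside norms) as
\[
\sum_{\sigma\in C(\theta)} P_\sigma(x)\,Q_{\sigma^c}(y).
\]
Here
\[
P_\sigma(x)=\sum_{\alpha\in C(\sigma)}(-1)^{|\sigma|-|\alpha|}\vec{B}_\alpha(x)(m_Q\vec{B})_{(\sigma-\alpha)^t}
\]
is exactly the kernel appearing in $BMO^p_{V,U,\sigma,1}$. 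Likewise
\[
Q_{\sigma^c}(y)=\sum_{\beta\in C(\sigma^c)}(-1)^{|\sigma^c|-|\beta|}(m_Q\vec{B})_\beta\vec{B}_{(\sigma^c-\beta)^t}(y)
\]
is the kernel in $BMO^{p,*}_{V,U,\theta-\sigma,2}$. The signs $(-1)^{|\theta|-|\alpha|}$ versus $(-1)^{|\sigma|-|\alpha|}$ only differ by a global factor per $\sigma$.

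The endpoint terms are simple: $P_\varnothing=I_n$ and $Q_\varnothing=I_n$. Then, by the triangle inequality and $(a_1+\dots+a_{2^{|\theta|}})^{p}\lesssim\sum a_j^{p}$, the quantity $\fint_Q(\fint_Q\|V^{1/p}(x)[\cdots]U^{-1/p}(y)\|^{p'}dy)^{p/p'}dx$ is bounded by the sum over $\sigma\in C(\theta)$ of the same expression with kernel $P_\sigma(x)Q_{\sigma^c}(y)$.

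For each such term, insert reducing matrices to decouple $x$ and $y$. Write
\[
V^{1/p}(x)P_\sigma(x)Q_{\sigma^c}(y)U^{-1/p}(y)=\bigl[V^{1/p}(x)P_\sigma(x)\mathcal{R}_{Q,p,U}^{-1}\bigr]\bigl[\mathcal{R}_{Q,p,U}\mathcal{R}'_{Q,p,V}\bigr]\bigl[(\mathcal{R}'_{Q,p,V})^{-1}Q_{\sigma^c}(y)U^{-1/p}(y)\bigr].
\]
Taking norms, the $y$-integral gives
\[
\Bigl(\fint_Q\|U^{-1/p}(y)Q_{\sigma^c}(y)^*(\mathcal{R}'_{Q,p,V})^{-1}\|^{p'}dy\Bigr)^{p/p'}\le\|\vec{B}\|^{p}_{BMO^{p,*}_{V,U,\theta-\sigma,2}},
\]
using $\|A\|=\|A^*\|$ and self-adjointness of reducing matrices. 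The $x$-integral gives $\|\vec{B}\|^p_{BMO^p_{V,U,\sigma,1}}$. The middle factor satisfies $\|\mathcal{R}_{Q,p,U}\mathcal{R}'_{Q,p,V}\|^p\simeq[U,V]_{A_p}$, by the equivalences listed after the definition of reducing matrices. This yields part (ii) directly. One exponent bookkeeping remark: the statement pairs $[U,V]^{1/p}$ with $p$-th powers of the BMO norms. I would track exactly which power of $[U,V]_{A_p}$ arises, and if needed state it as the power produced by the decoupling. The implicit constant absorbs $2^{|\theta|p}$. For $\sigma=\varnothing$ or $\theta$ one of the two BMO factors is the identity kernel, with norm $\|V^{1/p}\mathcal{R}^{-1}_{Q,p,U}\|$-type averages, which is again controlled through $[U,V]$.

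For part (i), treat the two endpoint terms differently. When $\sigma=\varnothing$ the term is $\fint_Q(\fint_Q\|V^{1/p}(x)Q_\theta(y)U^{-1/p}(y)\|^{p'}dy)^{p/p'}dx$. Here I would instead insert $\mathcal{R}'_{Q,p,V}$ on the $x$ side and write
\[
V^{1/p}(x)=V^{1/p}(x)\mathcal{R}'_{Q,p,V}(\mathcal{R}'_{Q,p,V})^{-1}.
\]
Then $\fint_Q\|V^{1/p}(x)\mathcal{R}'_{Q,p,V}\|^pdx\simeq\|\mathcal{R}_{Q,p,V}\mathcal{R}'_{Q,p,V}\|^p\simeq[V]_{A_p}$, which gives the $[V]_{A_p}\|\vec{B}\|^p_{BMO^{p,*}_{V,U,\theta,2}}$ term. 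Symmetrically, for $\sigma=\theta$, insert $\mathcal{R}_{Q,p,U}$ on the $y$ side, so that $\fint_Q\|\mathcal{R}_{Q,p,U}U^{-1/p}(y)\|^{p'}dy\simeq\|\mathcal{R}_{Q,p,U}\mathcal{R}'_{Q,p,U}\|^{p'}$ produces a power of $[U]_{A_p}$. The middle terms are handled as in (ii).

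The main obstacle is the careful bookkeeping: matching signs and orderings in Lemma \ref{prop:commbueno} with the definitions of the $\sigma,1$ and $\theta-\sigma,2,*$ kernels (including the adjoint and the reversed products), and tracking the exact exponents of the weight characteristics. The analytic content is only Hölder-free decoupling via reducing matrices.
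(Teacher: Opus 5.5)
Your proposal is the paper's proof in all essentials: the same expansion via Lemma \ref{lemma:perm2} with $T$ the identity and $f\equiv 1$, the same split over $\sigma\in C(\theta)$, the same insertion of $\mathcal{R}_{Q,p,U}^{-1}\mathcal{R}_{Q,p,U}\mathcal{R}'_{Q,p,V}(\mathcal{R}'_{Q,p,V})^{-1}$ for the middle terms, and the same use of $\mathcal{R}'_{Q,p,V}$ (for $\sigma=\varnothing$) and $\mathcal{R}_{Q,p,U}$ (for $\sigma=\theta$) in part (i). Your exponent caution is justified: this decoupling, exactly like the paper's own bounds for $C_\varnothing$, $C_\sigma$, $C_\theta$, yields $[V]_{A_p}$, $[U,V]_{A_p}$, $[U]_{A_p}$ to the first power, because $\widetilde{BMO}$ carries no outer root; do, however, drop your aside that the identity-kernel factors for $\sigma\in\{\varnothing,\theta\}$ in (ii) are controlled by $[U,V]_{A_p}$ (they are not, e.g.\ for $U=\varepsilon V$), since the statement simply keeps them as $\|\vec{B}\|_{BMO^{p}_{V,U,\varnothing,1}}$ and $\|\vec{B}\|_{BMO^{p,*}_{V,U,\varnothing,2}}$.
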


\begin{proof}
Observe that, for any cube $Q$, applying Lemma \ref{lemma:perm2}
in the first equality, 
\begin{align*}
 & {\fint_{Q}\left[\fint_{Q}\|V^{\frac{1}{p}}(x)\left(\sum_{\sigma\in C(\theta)}(-1)^{|\theta|-|\sigma|}\vec{B}_{\sigma}(x)\vec{B}_{(\theta-\sigma)^{t}}(y)\right)U^{\frac{-1}{p}}(y)\|^{p'}dy\right]^{\frac{p}{p'}}dx}\\
 & {=\fint_{Q}\left[\fint_{Q}\|V^{\frac{1}{p}}(x)\left(\sum_{\sigma\in C(\theta)}\left(\sum_{\alpha\in C(\sigma)}(-1)^{|\theta|-|\alpha|}\vec{B}_{\alpha}(x)(m_{Q}\vec{B})_{(\sigma-\alpha)^{t}}\right)\right.\right.}\\
 & \;\;\;\;{\times\;\left.\left.\left(\sum_{\beta\in C(\sigma^{c})}(-1)^{|\theta|-|\beta|}(m_{Q}\vec{B})_{\beta}\vec{B}_{(\sigma^{c}-\beta)^{t}}(y)\right)\right)U^{\frac{-1}{p}}(y)\|^{p'}dy\right]^{\frac{p}{p'}}dx}\\
 & {\leq\sum_{\underrel{\sigma\neq\varnothing,\theta}{\sigma\in C(\theta)}}\fint_{Q}\left[\fint_{Q}\|V^{\frac{1}{p}}(x)\left(\sum_{\alpha\in C(\sigma)}(-1)^{|\theta|-|\alpha|}\vec{B}_{\alpha}(x)(m_{Q}\vec{B})_{(\sigma-\alpha)^{t}}\right)\mathcal{R}_{Q,p,U}^{-1}\|^{p'}\|\mathcal{R}_{Q,p,U}\mathcal{R}_{Q,p,V}'\|^{p'}\right.}\\
 & \;\;\;\;{\times\;\left.\|(\mathcal{R}_{Q,p,V}')^{-1}\left(\sum_{\beta\in C(\sigma^{c})}(-1)^{|\theta|-|\beta|}(m_{Q}\vec{B})_{\beta}\vec{B}_{(\sigma^{c}-\beta)^{t}}(y)\right)U^{\frac{-1}{p}}(y)\|^{p'}dy\right]^{\frac{p}{p'}}dx}\\
 & {\;\;\;\;+\fint_{Q}\left[\fint_{Q}\|V^{\frac{1}{p}}(x)\left(\sum_{\beta\in C(\theta)}(-1)^{|\theta|-|\beta|}(m_{Q}\vec{B})_{\beta}\vec{B}_{(\theta-\beta)^{t}}(y)\right)U^{\frac{-1}{p}}(y)\|^{p'}dy\right]^{\frac{p}{p'}}dx}\\
 & {\;\;\;\;+\fint_{Q}\left[\fint_{Q}\|V^{\frac{1}{p}}(x)\left(\sum_{\alpha\in C(\theta)}(-1)^{|\theta|-|\alpha|}\vec{B}_{\alpha}(x)(m_{Q}\vec{B})_{(\theta-\alpha)^{t}}\right)U^{\frac{-1}{p}}(y)\|^{p'}dy\right]^{\frac{p}{p'}}dx}\\
 & {=\sum_{\underrel{\sigma\neq\varnothing,\theta}{\sigma\in C(\theta)}}C_{\sigma}+C_{\varnothing}+C_{\theta},}
\end{align*}
where in the first equality we applied Lemma \ref{lemma:perm2}. 
We will now control all the $C_{\sigma}$ separately. First, we will
observe that 
\begin{align*}
C_{\theta} & \leq\fint_{Q}\left(\fint_{Q}\|V^{\frac{1}{p}}(x)\left(\sum_{\alpha\in C(\theta)}(-1)^{|\theta|-|\alpha|}\vec{B}_{\alpha}(x)(m_{Q}\vec{B})_{(\theta-\alpha)^{t}}\right)\mathcal{R}_{Q,p,U}^{-1}\|^{p'}\|\mathcal{R}_{Q,p,U}U^{\frac{-1}{p}}(y)\|^{p'}dy\right)^{\frac{p}{p'}}dx\\
 & =\fint_{Q}\|V^{\frac{1}{p}}(x)\left(\sum_{\alpha\in C(\theta)}(-1)^{|\theta|-|\alpha|}\vec{B}_{\alpha}(x)(m_{Q}\vec{B})_{(\theta-\alpha)^{t}}\right)\mathcal{R}_{Q,p,U}^{-1}\|^{p}dx\left(\fint_{Q}\|\mathcal{R}_{Q,p,U}U^{\frac{-1}{p}}(y)\|^{p'}dy\right)^{\frac{p}{p'}}\\
 & \lesssim\|\vec{B}\|_{{BMO}_{V,U,\theta,1}^{p}}^{p}[U]_{A_{p}}
\end{align*}
Similarly, 
\[
\begin{split}C_{\varnothing}\leq[V]_{A_{p}}{\|\vec{B}\|_{BMO_{V,U,\theta,2}^{p,*}}^{p}}\end{split}
\]

As for the rest of the $C_{\sigma}$, we have that 
\begin{align*}
{\fint_{Q}} & {\left(\fint_{Q}\|V^{\frac{1}{p}}(x)\left(\sum_{\alpha\in C(\sigma)}(-1)^{|\theta|-|\alpha|}\vec{B}_{\alpha}(x)(m_{Q}\vec{B})_{(\sigma-\alpha)^{t}}\right)\mathcal{R}_{Q,p,U}^{-1}\|^{p'}\|\mathcal{R}_{Q,p,U}\mathcal{R}_{Q,p,V}'\|^{p'}\right.}\\
 & \;\;\;\;{\times\;\left.\|(\mathcal{R}_{Q,p,V}')^{-1}\left(\sum_{\beta\in C(\sigma^{c})}(-1)^{|\theta|-|\beta|}(m_{Q}\vec{B})_{\beta}\vec{B}_{(\sigma^{c}-\beta)^{t}}(y)\right)U^{\frac{-1}{p}}(y)\|^{p'}dy\right)^{\frac{p}{p'}}dx}\\
 & {=\|\mathcal{R}_{Q,p,U}\mathcal{R}_{Q,p,V}'\|^{p}\fint_{Q}\|V^{\frac{1}{p}}(x)\left(\sum_{\alpha\in C(\sigma)}(-1)^{|\theta|-|\alpha|}\vec{B}_{\alpha}(x)(m_{Q}\vec{B})_{(\sigma-\alpha)^{t}}\right)\mathcal{R}_{Q,p,U}^{-1}\|^{p}dx}\\
 & \;\;\;\;{\times\;\left(\fint_{Q}\|(\mathcal{R}_{Q,p,V}')^{-1}\left(\sum_{\beta\in C(\sigma^{c})}(-1)^{|\theta|-|\beta|}(m_{Q}\vec{B})_{\beta}\vec{B}_{(\sigma^{c}-\beta)^{t}}(y)\right)U^{\frac{-1}{p}}(y)\|^{p'}dy\right)^{\frac{p}{p'}}}\\
 & {\simeq[U,V]_{A_{p}}\|\vec{B}\|_{BMO_{V,U,\sigma,1}^{p}}^{p}{\|\vec{B}\|_{BMO_{V,U,\sigma^{c},2}^{p,*}}^{p}}}.
\end{align*}
Note this is also true for $\sigma=\varnothing$, $\theta$.

Once this is done, standard arguments allow us to conclude that 
\[
\begin{split}\|\vec{B}\|_{\widetilde{BMO}_{V,U,\theta}^{p}}\lesssim[V]_{A_{p}}^{\frac{1}{p}}{\|\vec{B}\|_{BMO_{V,U,\theta,2}^{p,*}}^{{p}}}+\sum_{\underrel{\sigma\neq\varnothing,\theta}{\sigma\in C(\theta)}}[U,V]_{A_{p}}^{\frac{1}{p}}\|\vec{B}\|_{BMO_{V,U,\sigma,1}^{p}}^{{p}}{\|\vec{B}\|_{BMO_{V,U,\sigma^{c},2}^{p,*}}^{{p}}}+\|\vec{B}\|_{{BMO}_{V,U,\theta,1}^{p}}^{{p}}[U]_{A_{p}}^{\frac{1}{p}}\end{split}
\]
and 
\[
\begin{split}\|\vec{B}\|_{\widetilde{BMO}_{V,U,\theta}^{p}}\lesssim[U,V]_{A_{p}}^{\frac{1}{p}}\sum_{{\sigma\in C(\theta)}}\|\vec{B}\|_{BMO_{V,U,\sigma,1}^{p}}^{{p}}{\|\vec{B}\|_{BMO_{V,U,\sigma^{c},2}^{p,*}}^{{p}}},\end{split}
\]
so the desired inequalities are proven. 
\end{proof}

Notice that the first inequality in this proposition
is more precise than the second one, but the latter is more readily
applicable. These inequalities are not completely satisfactory,
but they suggest that a potential equivalence between the proposed
$BMO$ norms may have to involve terms with tuples of lower order.

Lastly, let us just observe that, if $(U,V)\in A_{p}$, it is easy
to show that $\|\vec{B}\|_{\widetilde{BMO}_{V,V,\sigma}^{p}},\|\vec{B}\|_{\widetilde{BMO}_{U,U,\sigma}^{p}}\lesssim[U,V]_{A_{p}}^{\frac{1}{p}}\|\vec{B}\|_{\widetilde{BMO}_{V,U,\sigma}^{p}}$
and $\|\vec{B}\|_{\widetilde{BMO}_{U,V,\sigma}^{p}}\lesssim[U,V]_{A_{p}}^{\frac{2}{p}}\|\vec{B}\|_{\widetilde{BMO}_{V,U,\sigma}^{p}}$. These inequalities will be useful to prove Theorem \ref{th:strTocho}. 

\subsection{A particular case of interest}

In the first part of this section we devoted our efforts to studying
the relations between several $BMO$ classes in the most general scenario.
However, even though we were able to control the $\widetilde{BMO}$ norms by $BMO_{1,2}$ norms, we were unable to obtain a reciprocal control, nor could we relate those norms to the $BMO$ norm we began defining.
In order to provide a positive result relating those norms, we shall restrict ourselves to a particular case, namely, the case $\vec{B}=b(I_{n},\dots,I_{n})$
where $b$ is a scalar function. As we will soon see, in this case
we will be able to relate each couple of definitions. For this purpose,
we begin noting that following the notation above, for $\vec{B}=b(I_{n},\dots,I_{n})$, it is
not hard to check that 
\begin{align*}
a) \, \|\vec{B}\|_{BMO_{V,U,\sigma}^{p}} & \simeq\sup_{Q}\fint_{Q}\|\mathcal{R}_{Q,p,V}\mathcal{R}_{Q,p,U}^{-1}\|^{{\frac{1}{|\sigma|}}}|b(x)-b_{Q}|\,dx,\\
b) \, \|\vec{B}\|_{BMO_{V,U,\sigma,1}^{p}} & =\left(\sup_{Q}\fint_{Q}\|V^{1/p}(x)\mathcal{R}_{Q,p,U}^{-1}\|^{p}|b(x)-b_{Q}|^{p|\sigma|}\,dx\right)^{1/p},\\
c) \, \|\vec{B}\|_{BMO_{V,U,\sigma,2}^{p}} & =\left(\sup_{Q}\fint_{Q}\|U^{-1/p}(x)(\mathcal{R}'_{Q,p,V})^{-1}\|^{p'}|b(x)-b_{Q}|^{p'|\sigma|}\,dx\right)^{1/p'} ,\\
d) \, \|\vec{B}\|_{\widetilde{BMO}_{V,U,\sigma}^{p}} & =\sup_{Q}\fint_{Q}\left(\fint_{Q}\|V^{1/p}(x)U^{-1/p}(y)\|^{p'}|b(x)-b(y)|^{p|\sigma|}\,dy\right)^{p'/p} \, dx. \\ 
\end{align*}
Note that the $*$ variants are not needed anymore, since, the matrices
involved are self-adjoint and the function $b$ is scalar. What we
have just written motivates providing the following definition for the sake of simplifying the notation.
\begin{definition} \label{BMOScalarMatrixDef}
Let $j\in\mathbb{N}$, and $U,V$ matrix weights. We define
\begin{align*}
\|b\|_{BMO_{V,U,j}^{p}} & =\sup_{Q}\fint_{Q}\|\mathcal{R}_{Q,p,V}\mathcal{R}_{Q,p,U}^{-1}\|^{{\frac{1}{j}}}|b(x)-b_{Q}|\,dx,\\
\|b\|_{BMO_{V,U,j,1}^{p}} & =\left(\sup_{Q}\fint_{Q}\|V^{1/p}(x)\mathcal{R}_{Q,p,U}^{-1}\|^{p}|b(x)-b_{Q}|^{pj}\,dx\right)^{1/p},\\
\|b\|_{BMO_{V,U,j,2}^{p}} & =\left(\sup_{Q}\fint_{Q}\|U^{-1/p}(x)(\mathcal{R}'_{Q,p,V})^{-1}\|^{p'}|b(x)-b_{Q}|^{p'j}\,dx\right)^{1/p'},\\
\|b\|_{\widetilde{BMO}_{V,U,j}^{p}} & =\sup_{Q}\fint_{Q}\left(\fint_{Q}\|V^{1/p}(x)U^{-1/p}(y)\|^{p'}|b(x)-b(y)|^{p'j}\,dy\right)^{p/p'}dx \\
\|b\|_{\widetilde{BMO}_{V,U,j,2}^{p}} & = \sup_Q \fint_{Q}\left(\fint_{Q}\|V^{\frac{1}{p}}(x)U^{-\frac{1}{p}}(y)\|^{p}|b(x)-b(y)|^{jp}\,dx\right)^{\frac{p'}{p}}\,dy
\end{align*}
\end{definition}
The following results shows that all the definitions above give rise to the same $BMO$ type space.
\begin{theorem}
\label{StrongHigherJN} If $U,V$ are matrix weights A$_{p}$ weights
and $j\in\mathbb{N}$ then $$\|b\|_{BMO_{V,U,j}^{p}}^{jp} \approx \|b\|_{BMO_{V,U,j,1}^{p}} ^p \approx \|b\|_{BMO_{V,U,j,2}^{p}} ^{p'} 
\approx \|b\|_{\widetilde{BMO}_{V,U,j}^{p}}  \approx \|b\|_{\widetilde{BMO}_{V,U,j, 2}^{p'}} $$
\end{theorem}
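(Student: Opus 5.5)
The plan is to prove the chain of equivalences in Theorem \ref{StrongHigherJN} by passing everything through scalar quantities. The basic device is Proposition \ref{prop:truco_orto}. For a fixed cube $Q$ and an orthonormal basis $\{\vec{e}_\ell\}$, write
\[
w_Q(x)=\|V^{\frac1p}(x)\mathcal{R}_{Q,p,U}^{-1}\|^{p}\simeq \sum_{\ell}|V^{\frac1p}(x)\mathcal{R}_{Q,p,U}^{-1}\vec{e}_\ell|^p .
\]
Each summand is a scalar $A_p$ weight with constant $\lesssim [V]_{A_p}$ (Lemma 5.3 in \cite{Volberg-S}), and by the reducing matrix equivalences $\fint_Q w_Q\simeq \|\mathcal{R}_{Q,p,V}\mathcal{R}_{Q,p,U}^{-1}\|^p=:\lambda_Q^p$. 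The dual quantity $\|U^{-\frac1p}(x)(\mathcal{R}'_{Q,p,V})^{-1}\|^{p'}$ is handled in the same way, with $U^{-p'/p}\in A_{p'}$ and average $\simeq \|(\mathcal{R}'_{Q,p,V})^{-1}\mathcal{R}'_{Q,p,U}\|^{p'}\simeq\lambda_Q^{p'}$, up to powers of $[U]_{A_p},[V]_{A_p}$. In this language the norms read as follows: $\|b\|_{BMO^p_{V,U,j}}$ is $\sup_Q \lambda_Q^{1/j}\fint_Q|b-b_Q|$, while $\|b\|^p_{BMO^p_{V,U,j,1}}$ is $\sup_Q\fint_Q w_Q|b-b_Q|^{pj}$. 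So the task is a two-weight, higher-order John--Nirenberg inequality adapted to the family $\{w_Q\}$.

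The easy directions come first. The bounds of $\|b\|^{jp}_{BMO^p_{V,U,j}}$ by the $1$- and $2$-norms follow from one chain of inequalities. Take $\lambda_Q\lesssim \fint_Q\|V^{\frac1p}(x)\mathcal{R}^{-1}_{Q,p,U}\|$ (Jensen and the equivalences), insert this under $\fint_Q|b-b_Q|$, and apply H\"older with exponents $pj$ and $(pj)'$, using $\fint_Q w_Q\simeq\lambda_Q^p$ to absorb the leftover factor. The tilde norms dominate $\|b\|_{BMO^p_{V,U,j}}^{jp}$ by a similar argument: bound $|b(x)-b_Q|\le\fint_Q|b(x)-b(y)|dy$, then use H\"older in $y$ against $\|V^{\frac1p}(x)U^{-\frac1p}(y)\|$ and the identity $\fint_Q\|V^{\frac1p}(x)U^{-\frac1p}(y)\|^{p'}dy\simeq \|V^{\frac1p}(x)\mathcal{R}'_{Q,p,U}\|^{p'}$. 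Conversely, the $1$- and $2$-norms control the tilde norms by mimicking Proposition \ref{cor:BmotildeBMO}. Expand $(b(x)-b(y))^j=\sum_k\binom{j}{k}(b(x)-b_Q)^k(b_Q-b(y))^{j-k}$ and insert $\mathcal{R}_{Q,p,U}^{-1}\mathcal{R}_{Q,p,U}\mathcal{R}'_{Q,p,V}(\mathcal{R}'_{Q,p,V})^{-1}$. The intermediate orders $k<j$ are reduced to order $j$ by H\"older:
\[
\fint_Q w_Q|b-b_Q|^{pk}\le\Big(\fint_Q w_Q|b-b_Q|^{pj}\Big)^{k/j}\Big(\fint_Q w_Q\Big)^{1-k/j}.
\]
The powers of $\lambda_Q$ then combine with $\|\mathcal{R}_{Q,p,U}\mathcal{R}'_{Q,p,V}\|\simeq\lambda_Q^{-1}$-type factors, and this bookkeeping of the powers of $\lambda_Q$ has to be checked, since $\lambda_Q$ need not be $\ge1$. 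The second tilde norm is treated symmetrically by exchanging the roles of $x,y$ and of $p,p'$.

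The main obstacle is the remaining direction: showing $\|b\|^{p}_{BMO^p_{V,U,j,1}}\lesssim\|b\|^{jp}_{BMO^p_{V,U,j}}$, and analogously for the $2$-norm. The approach is a sparse John--Nirenberg argument. By Lerner's local oscillation formula there is a sparse family $\mathcal{S}\subset\mathcal{D}(Q)$ with
\[
|b-b_Q|\lesssim\sum_{P\in\mathcal{S}}\chi_P\fint_P|b-b_P|\le \|b\|_{BMO^p_{V,U,j}}\sum_{P\in\mathcal{S}}\chi_P\lambda_P^{-1/j}.
\]
Raising this to the power $pj$ and integrating against $w_Q$, I will factor
\[
\|V^{\frac1p}(x)\mathcal{R}^{-1}_{Q,p,U}\|\le\|V^{\frac1p}(x)\mathcal{R}^{-1}_{P,p,V}\|\,\lambda_P\,\|\mathcal{R}_{P,p,U}\mathcal{R}^{-1}_{Q,p,U}\|.
\]
Here the factor $\lambda_P^{p}$ exactly cancels $(\lambda_P^{-1/j})^{pj}$, and the first factor averages to $\lesssim1$ on $P$. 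What remains is to control $\|\mathcal{R}_{P,p,U}\mathcal{R}^{-1}_{Q,p,U}\|^p$ by something summable over the sparse tree. I will try to do this by writing it, via $\mathcal{R}'$, as a ratio of averages of the scalar $A_p$ weights $|U^{\frac1p}\vec{e}|^p$. The hope is a bound of the form $(|Q|/|P|)^{\delta}$ with small $\delta$ from reverse H\"older, which would be beaten by the geometric decay $\sum_{P\in\mathcal{S},\,\ell(P)=2^{-k}\ell(Q)}|P|\lesssim 2^{-\epsilon k}|Q|$ coming from sparseness, together with the scalar $A_\infty$ decay of the weight $w_Q$ on sparse sets. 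The $pj$-th power of the sparse sum will be expanded using the standard estimate $\big(\sum_P\chi_P a_P\big)^{s}\lesssim\sum_P\chi_P a_P\big(\sum_{P'\supseteq P}a_{P'}\big)^{s-1}$. If the plain reverse H\"older exponent turns out to be insufficient, I will fall back to a good-$\lambda$/iteration scheme in the spirit of the classical John--Nirenberg proof, applied to the family of scalar weights obtained from the orthonormal-basis decomposition.
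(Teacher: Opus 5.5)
Your easy directions are essentially the paper's, apart from one issue noted at the end. The hard direction $\|b\|^{p}_{BMO^p_{V,U,j,1}}\lesssim\|b\|^{jp}_{BMO^p_{V,U,j}}$ (and its $2$-norm analogue), which is the real content of the theorem, is not established by your plan.

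\textbf{The gap in the hard direction.} After Lerner's bound you must show $\fint_Q w_Q\big(\sum_{P\in\mathcal{S}}\chi_P\lambda_P^{-1/j}\big)^{pj}\lesssim 1$. The ``exact cancellation'' of $\lambda_P^p$ against $(\lambda_P^{-1/j})^{pj}$ only treats one cube at a time. With your expansion, the term attached to $P$ is $w_Q(P)\,\lambda_P^{-1/j}\big(\sum_{P'\in\mathcal{S},\,P'\supseteq P}\lambda_{P'}^{-1/j}\big)^{pj-1}$. After factoring $w_Q(P)\lesssim|P|\,\lambda_P^p\,\|\mathcal{R}_{P,p,U}\mathcal{R}_{Q,p,U}^{-1}\|^p$, you are left with $\lambda_P^{p-1/j}\big(\sum_{P'\supseteq P}\lambda_{P'}^{-1/j}\big)^{pj-1}$. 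This is $O(1)$ only if $\lambda_{P'}\gtrsim\lambda_P$ for ancestors $P'$. That fails: already for scalar weights $\lambda_P\approx\langle\nu^{1/j}\rangle_P^{-j}$, and $\lambda_P/\lambda_{P'}$ can grow like a power of $|P'|/|P|$ (take $u\equiv1$, $v=|x|^{-\alpha}$).

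Your remedy targets a different factor and rests on a false premise. Sparse families have no geometric decay in scale: $\{Q\}\cup\mathcal{D}_k(Q)$ is $\tfrac12$-sparse with $\sum_{\ell(P)=2^{-k}\ell(Q)}|P|=|Q|$ for every $k$. Decay holds in the depth of the sparse tree, not in $|Q|/|P|$. Meanwhile the factor $\|\mathcal{R}_{P,p,U}\mathcal{R}_{Q,p,U}^{-1}\|^p$ needs no decay at all: $\sum_{P\in\mathcal{S}}|P|\,\|\mathcal{R}_{P,p,U}\mathcal{R}_{Q,p,U}^{-1}\|^p\simeq\sum_P u_Q(P)\lesssim u_Q(Q)\simeq|Q|$, by sparseness and $A_\infty$ for $u_Q=\|U^{1/p}\mathcal{R}_{Q,p,U}^{-1}\|^p$. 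The good-$\lambda$ fallback is unspecified, so the key step is missing.

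The paper proves this direction by entirely different means:
\begin{itemize}
\item Haar coefficients;
\item the matrix Triebel--Lizorkin embedding and the Carleson embedding of Theorem \ref{embedthm}, applied to the rescaled weights $V_I$;
\item a two-weight stopping time;
\item a truncation/self-improvement argument (Lemmas \ref{HigherJN1}--\ref{HigherEqnLem}, Remark \ref{JNRem}).
\end{itemize}

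\textbf{How to repair the sparse route.} The repair makes it much shorter than the paper's route. Fix $Q$ and put $\vec a_\ell=\mathcal{R}_{Q,p,U}^{-1}\vec e_\ell$, $v_\ell=|V^{1/p}\vec a_\ell|^p$, $u_\ell=|U^{1/p}\vec a_\ell|^p$. These are scalar $A_p$ weights with $w_Q\lesssim\sum_\ell v_\ell$ and $\langle u_\ell\rangle_Q\simeq1$. Testing $\mathcal{R}_{P,p,V}\mathcal{R}^{-1}_{P,p,U}$ on $\mathcal{R}_{P,p,U}\vec a_\ell$ gives $\lambda_P\gtrsim(\langle v_\ell\rangle_P/\langle u_\ell\rangle_P)^{1/p}$. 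Reverse Jensen for $u_\ell\in A_\infty$ and Jensen for $v_\ell$ then give $\lambda_P^{-1/j}\lesssim\langle g_\ell\rangle_P$ with $g_\ell=(u_\ell/v_\ell)^{1/(jp)}\chi_Q$. So the sparse sum is dominated by the sparse operator applied to $g_\ell$. Its boundedness on $L^{pj}(v_\ell)$ (since $v_\ell\in A_p\subset A_{pj}$) yields $\int_Q v_\ell\big(\sum_P\chi_P\lambda_P^{-1/j}\big)^{pj}\lesssim\int_Q u_\ell\simeq|Q|$. The $2$-norm follows with $(U^{-p'/p},V^{-p'/p},p')$ in place of $(V,U,p)$.

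\textbf{A smaller issue in the tilde direction.} The intermediate binomial terms do not close, because $\|\mathcal{R}_{Q,p,U}\mathcal{R}'_{Q,p,V}\|$ is not comparable to $\lambda_Q^{-1}$ for matrices. For constant $U=I$, $V=\mathrm{diag}(t^p,t^{-p})$ one has $\|\mathcal{R}_{Q,p,U}\mathcal{R}'_{Q,p,V}\|\,\lambda_Q\simeq t^2$, while $[U]_{A_p}=[V]_{A_p}=1$. This is exactly why Proposition \ref{cor:BmotildeBMO} carries $[U,V]_{A_p}$, which is not assumed here. Use $|b(x)-b(y)|^j\le 2^{j-1}\big(|b(x)-b_Q|^j+|b(y)-b_Q|^j\big)$, as the paper does. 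Then only $\|\mathcal{R}_{Q,p,U}\mathcal{R}'_{Q,p,U}\|$ and $\|\mathcal{R}_{Q,p,V}\mathcal{R}'_{Q,p,V}\|$ appear.
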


Note that in the case $V = v I_{n \times n}$ and $U = u I_{n \times n}$ for $u, v $ being scalar A${}_p$ weights, we have that $\|b\|_{BMO_{V,U,j}^{p}} = \|b\|_{BMO(\nu^\frac{1}{j})}$ where $\nu=u^{\frac{1}{p}}v^{-\frac{1}{p}}$ and $BMO(\nu^\frac{1}{j})$ is the by now standard Bloom BMO (see \cite{HW18, LOR19} for example) since 
if $u,v\in\text{A}_{p}$ then $u,v\in A_{jp}$ and thus 
\[
\|\mathcal{R}_{Q,p,V}\mathcal{R}_{Q,p,U}^{-1}\|^{\frac{1}{j}}=\langle v\rangle_{Q}^{\frac{1}{jp}}\langle u\rangle_{Q}^{-\frac{1}{jp}}\approx\left\{ \langle u\rangle_{Q}^{\frac{1}{jp}}\langle v^{-\frac{(pj)'}{pj}}\rangle_{Q}^{\frac{1}{(jp)'}}\right\} ^{-1}\approx\frac{1}{\langle\nu^{\frac{1}{j}}\rangle_{Q}}.
\]

The proof is similar to the proof of the John-Nirenberg
Theorem from \cite{Cit2SaMWS} but also clarifies and simplifies some arguments from \cite{Cit2SaMWS}. The proof of Theorem \ref{StrongHigherJN}
will require a sequence of preliminary results, the first being interesting
on its own right and is a $p\neq2$ generalization of Lemma $3.5$
in \cite{TV}.
\begin{proposition}
    
\label{TVLem} Let $W$ be an $n\times n$ matrix A$_{p}$ weight
and $A$ be any constant invertible $n\times n$ matrix. If 
\[
\mathcal{W}=(A^{*}W^{\frac{2}{p}}A)^{\frac{p}{2}},
\]
then $\mathcal{W}$ is a matrix A$_{p}$ weight with $[\mathcal{W}]_{\text{A}_{p}}=[W]_{\text{A}_{p}}$. 
\end{proposition}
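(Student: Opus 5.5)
The plan is to show that the integrand in Roudenko's $A_p$ characteristic is literally preserved, up to a change of unitary factors, when $W$ is replaced by $\mathcal{W}$. The key observation is the polar decomposition. Since $A$ is invertible, the matrix $W^{\frac1p}(x)A$ is invertible. Moreover,
\[
(W^{\frac1p}(x)A)^{*}(W^{\frac1p}(x)A)=A^{*}W^{\frac2p}(x)A=\mathcal{W}^{\frac2p}(x).
\]
Hence $|W^{\frac1p}(x)A\vec e|=|\mathcal{W}^{\frac1p}(x)\vec e|$ for every $\vec e$. Equivalently, there is a unitary matrix $U_x$ with
\[
W^{\frac1p}(x)A=U_x\,\mathcal{W}^{\frac1p}(x),
\]
namely $U_x=W^{\frac1p}(x)A\,\mathcal{W}^{-\frac1p}(x)$. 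Unitarity is checked directly from the identity above: $U_x^*U_x=\mathcal{W}^{-\frac1p}\mathcal{W}^{\frac2p}\mathcal{W}^{-\frac1p}=I$, and $U_x$ is square and invertible. Measurability of $x\mapsto U_x$ is clear, since it is a composition of continuous matrix functions with $W$. We should also note that $\mathcal{W}(x)$ is positive definite a.e., so $\mathcal{W}$ is a genuine matrix weight.

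Next, invert the relation: $\mathcal{W}^{-\frac1p}(y)=(W^{\frac1p}(y)A)^{-1}U_y=A^{-1}W^{-\frac1p}(y)U_y$. Then for a.e. $x,y$,
\[
\mathcal{W}^{\frac1p}(x)\mathcal{W}^{-\frac1p}(y)=U_x^{*}W^{\frac1p}(x)A\,A^{-1}W^{-\frac1p}(y)U_y=U_x^{*}\,W^{\frac1p}(x)W^{-\frac1p}(y)\,U_y .
\]
Since the operator norm is invariant under multiplication by unitaries on either side, this gives
\[
\|\mathcal{W}^{\frac1p}(x)\mathcal{W}^{-\frac1p}(y)\|=\|W^{\frac1p}(x)W^{-\frac1p}(y)\| \quad \text{pointwise a.e.}
\]
Plugging this into the definition of $[\cdot]_{A_p}$ and taking the supremum over cubes yields $[\mathcal{W}]_{A_p}=[W]_{A_p}$ exactly, with no loss of constants. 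In particular $\mathcal{W}\in A_p$, and local integrability of $\mathcal{W}$ follows since $\|\mathcal{W}(x)\|=\|W^{\frac1p}(x)A\|^{p}\le\|A\|^p\|W(x)\|$.

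The only delicate point is the polar-decomposition step, that is, identifying $\mathcal{W}^{\frac1p}$ as the modulus $|W^{\frac1p}A|$. This is where the exponent $\frac2p$ inside and $\frac p2$ outside is used: it gives $(\mathcal{W}^{\frac1p})^2=A^*W^{\frac2p}A$ via the functional calculus $(M^{\frac p2})^{\frac1p}=M^{\frac12}$ for positive definite $M$. I expect nothing else to be an obstacle.
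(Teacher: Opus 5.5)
Your proposal is correct and is essentially the paper's proof: the polar decomposition $W^{\frac1p}(x)A=U_x\mathcal{W}^{\frac1p}(x)$ with $U_x$ unitary gives $\|\mathcal{W}^{\frac1p}(x)\mathcal{W}^{-\frac1p}(y)\|=\|W^{\frac1p}(x)W^{-\frac1p}(y)\|$ pointwise, and hence $[\mathcal{W}]_{A_p}=[W]_{A_p}$ exactly. Your version is a bit more careful than the paper's. You place the adjoint $U_x^{*}$ on the left of the product, and you check unitarity and local integrability of $\mathcal{W}$ explicitly.
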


\begin{proof}
Clearly 
\[
\mathcal{W}^{\frac{1}{p}}=(A^{*}W^{\frac{2}{p}}A)^{\frac{1}{2}}=\left[(W^{\frac{1}{p}}A)^{*}(W^{\frac{1}{p}}A)\right]^{\frac{1}{2}}
\]
so that by the polar decomposition there exists a unitary $\mathcal{U}$
with $\mathcal{U}\mathcal{W}^{\frac{1}{p}}=W^{\frac{1}{p}}A$. Thus, we have
that 
\[
\|\mathcal{W}^{\frac{1}{p}}(x)\mathcal{W}^{-\frac{1}{p}}(y)\|=\|(\mathcal{U}(x)W^{\frac{1}{p}}(x)A)(A^{-1}W^{-\frac{1}{p}}(y)\mathcal{U}(y))\|=\|W^{\frac{1}{p}}(x)W^{-\frac{1}{p}}(y)\|.
\]
\end{proof}
We will need the following simple result that is a special case of
Theorem $2.2$ in \cite{I}.
\begin{theorem}
\label{embedthm} Let $U$ be a matrix A$_{p}$ weight and let $\{a_{Q}\}_{Q\in\mathcal{D}}$
be a nonnegative Carleson sequence, meaning that 
\[
\|\{a_{Q}\}\|^{2}=\sup_{J\in\mathcal{D}}\frac{1}{|J|}\sum_{Q\in\mathcal{D}(J)}a_{Q}^{2}<\infty.
\]
Then for any $\vec{f}\in L^{p}$ we have 
\begin{equation}
\int_{\mathbb{R}^{d}}\left(\sum_{Q\in\mathcal{D}}\frac{a_{Q}^{2}}{|Q|}\langle|\mathcal{R}_{Q,p,U}U^{-\frac{1}{p}}\vec{f}|\rangle_{Q}^{2}\chi_{Q}(t)\right)^{\frac{p}{2}}\,dt\lesssim\|\{a_{Q}\}\|^{p}\|\vec{f}\|_{L^{p}}^{p}.\label{embedineq}
\end{equation}
\end{theorem}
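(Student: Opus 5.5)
The plan is to reduce \eqref{embedineq} to a purely scalar Carleson-type square function estimate. The matrix weight enters only through a maximal operator that is already known to be bounded. Consider the auxiliary maximal operator of Goldberg type
\[
\widetilde{M}_U\vec{f}(t)=\sup_{Q\ni t,\,Q\in\mathcal{D}}\langle|\mathcal{R}_{Q,p,U}U^{-\frac{1}{p}}\vec{f}|\rangle_{Q}.
\]
It is bounded on $L^p$ whenever $U\in A_p$; this is Goldberg's theorem \cite{GoldbergAp}, which we use as a known input. For every $Q$ we then have $\langle|\mathcal{R}_{Q,p,U}U^{-\frac{1}{p}}\vec{f}|\rangle_{Q}\le \inf_{Q}\widetilde{M}_U\vec{f}$. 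So with $g=\widetilde{M}_U\vec{f}$, it suffices to prove the scalar inequality
\[
\int_{\mathbb{R}^d}\Big(\sum_{Q\in\mathcal{D}}\frac{a_Q^2}{|Q|}(\inf_Q g)^2\chi_Q(t)\Big)^{\frac p2}dt\lesssim \|\{a_Q\}\|^p\|g\|_{L^p}^p
\]
for every nonnegative $g$. Combined with $\|g\|_{L^p}\lesssim\|\vec f\|_{L^p}$, this gives the theorem.

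The basic tool is the elementary Carleson estimate
\[
\sum_Q a_Q^2\inf_Q F\le\|\{a_Q\}\|^2\|F\|_{L^1},
\qquad F\ge0.
\]
To see it, write $\inf_Q F=\int_0^\infty\chi_{\{\inf_Q F>\lambda\}}\,d\lambda$. If $\inf_Q F>\lambda$ then $Q\subset\{F>\lambda\}$, and so $Q$ lies inside one of the maximal dyadic cubes of the open set $\{F>\lambda\}$ (or use Whitney-type maximal cubes). The Carleson condition then bounds $\sum_{Q\subset\{F>\lambda\}}a_Q^2$ by $\|\{a_Q\}\|^2|\{F>\lambda\}|$, and integrating in $\lambda$ gives the claim.

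For $p\ge2$, dualize the $L^{p/2}$ norm against $h\ge0$ with $\|h\|_{L^{(p/2)'}}\le1$. This gives
\[
\sum_Q a_Q^2(\inf_Q g)^2\langle h\rangle_Q\le\sum_Q a_Q^2\inf_Q\big(g^2 Mh\big)\le\|\{a_Q\}\|^2\int g^2 Mh .
\]
By Hölder and the boundedness of $M$ on $L^{(p/2)'}$ (note $(p/2)'>1$ when $p>2$; the case $p=2$ needs no maximal function), this is at most $\|\{a_Q\}\|^2\|g\|_{L^p}^2$.

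For $1<p<2$, I expect the main obstacle, since duality is unavailable. I would use level sets $E_k=\{g>2^k\}$ and split cubes according to $2^k<\inf_Q g\le 2^{k+1}$, so that $Q\subset E_k$. By concavity of $s\mapsto s^{p/2}$,
\[
\Big(\sum_Q\cdots\Big)^{\frac p2}\le\sum_k 2^{(k+1)p}S_k^{\frac p2},
\qquad S_k=\sum_{Q\subset E_k}\frac{a_Q^2}{|Q|}\chi_Q .
\]
Here $S_k$ is supported in $E_k$, and $\int S_k\le\|\{a_Q\}\|^2|E_k|$ by the Carleson condition applied on the maximal cubes of $E_k$. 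Hölder on $E_k$ then gives $\int S_k^{p/2}\le |E_k|^{1-\frac p2}(\|\{a_Q\}\|^2|E_k|)^{\frac p2}=\|\{a_Q\}\|^p|E_k|$. Summing, $\sum_k 2^{kp}|E_k|\lesssim\|g\|_{L^p}^p$, which closes the argument.

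So in fact the only delicate input is the $L^p$ boundedness of $\widetilde{M}_U$ for $U\in A_p$. If one prefers to avoid citing it, that is the step where the $A_p$ property of $U$ is really used, through the reducing matrices $\mathcal{R}_{Q,p,U}$.
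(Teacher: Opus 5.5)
Your argument is correct. It is also genuinely different from what the paper does, because the paper gives no proof of its own: it invokes the statement as a special case of Theorem 2.2 in \cite{I}, a more general matrix-weighted Carleson embedding.

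Your route splits the estimate into a matrix part and a scalar part. The pointwise bound $\langle|\mathcal{R}_{Q,p,U}U^{-\frac{1}{p}}\vec{f}|\rangle_{Q}\le\inf_{Q}\widetilde{M}_U\vec{f}$ isolates the only place where $U\in A_p$ enters. What remains is a scalar Carleson square-function embedding with $\inf_Q g$ in place of averages, and your proof of it holds up:
\begin{itemize}
\item for $p\ge 2$, duality combined with $\sum_Q a_Q^2\inf_Q F\le\|\{a_Q\}\|^2\|F\|_{L^1}$;
\item for $1<p<2$, a decomposition over $E_k=\{g>2^k\}$ together with subadditivity of $s\mapsto s^{p/2}$.
\end{itemize}
The small technical points are also handled correctly. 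You use the pointwise infimum, so $\inf_Q F>\lambda$ really gives $Q\subset\{F>\lambda\}$. Maximal dyadic cubes exist because $|\{F>\lambda\}|<\infty$ when $F\in L^1$, and $|E_k|<\infty$ because $g\in L^p$.

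You do not even need to cite Goldberg for the boundedness of $\widetilde{M}_U$. Apply the matrix Reverse H\"older inequality stated in the paper to $U^{-\frac{p'}{p}}$, which lies in $A_{p'}$ and hence in $A^{sc}_{p',\infty}$. This gives $\langle|\mathcal{R}_{Q,p,U}U^{-\frac{1}{p}}\vec{f}|\rangle_{Q}\lesssim\|\mathcal{R}_{Q,p,U}\mathcal{R}'_{Q,p,U}\|\,\langle|\vec{f}|^{(rp')'}\rangle_Q^{1/(rp')'}\lesssim[U]_{A_p}^{\frac{1}{p}}\inf_Q M_{(rp')'}\vec{f}$ with $(rp')'<p$. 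This is the same device the paper uses for the maximal operators in the proof of Theorem \ref{th:strTocho3}, so your proof becomes self-contained within the paper's preliminaries.

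As for the trade-off: the citation buys generality, since the paper's own phrasing says this is only a special case of the result in \cite{I}, though the present paper does not need that generality. Your route buys a short, transparent proof. It shows that this special case is just a scalar Carleson embedding composed with a maximal operator that is bounded precisely because $U\in A_p$.
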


\begin{lemma} \label{VIlem} For a fixed cube $I$ and $V_{I}(x)$ defined
by 
\[
V_{I}(x)=\left(\mathcal{R}_{I,p,U}^{-1}V^{\frac{2}{p}}(x)\mathcal{R}_{I,p,U}^{-1}\right)^{\frac{p}{2}},
\]
we have that $V_{I}$ is a matrix $A_{p}$ weight with the same $A_{p}$
characteristic as $V$, and, for any $q>1$, we have 
\[
\|\mathcal{R}_{Q,q,V_{I}}\|{\approx}\|\mathcal{R}_{Q,p,V}\mathcal{R}_{I,p,U}^{-1}\|^{\frac{p}{q}}.
\]
\end{lemma}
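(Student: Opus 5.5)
The first claim follows from Proposition \ref{TVLem} with $W=V$ and $A=\mathcal{R}_{I,p,U}^{-1}$. This matrix is positive definite, hence self-adjoint and invertible, so $A^{*}V^{\frac{2}{p}}A=\mathcal{R}_{I,p,U}^{-1}V^{\frac{2}{p}}\mathcal{R}_{I,p,U}^{-1}$ and $\mathcal{W}=V_I$. The proposition then gives $[V_I]_{A_p}=[V]_{A_p}$. I will also use the identity produced in its proof: by polar decomposition there is a unitary $\mathcal{U}(x)$ with $\mathcal{U}(x)V_I^{\frac{1}{p}}(x)=V^{\frac{1}{p}}(x)\mathcal{R}_{I,p,U}^{-1}$. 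Consequently, for every $\vec e$,
\[
|V_I^{\frac1p}(x)\vec e|=|V^{\frac1p}(x)\mathcal{R}_{I,p,U}^{-1}\vec e|.
\]

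For the norm equivalence I reduce $\|\mathcal{R}_{Q,q,V_I}\|$ to integrals using Proposition \ref{prop:truco_orto}. With $\{\vec e_\ell\}$ orthonormal,
\[
\|\mathcal{R}_{Q,q,V_I}\|\approx\sum_\ell|\mathcal{R}_{Q,q,V_I}\vec e_\ell|\approx\sum_\ell\Big(\fint_Q|V_I^{\frac1q}\vec e_\ell|^{q}\Big)^{\frac1q}.
\]
The key is to rewrite $|V_I^{\frac1q}(x)\vec e|$ pointwise. Since $V_I^{\frac1q}=(V_I^{\frac1p})^{\frac pq}$, I will use $\|V_I^{\frac1q}(x)\|=\|V_I^{\frac1p}(x)\|^{\frac pq}=\|V^{\frac1p}(x)\mathcal{R}_{I,p,U}^{-1}\|^{\frac pq}$. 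It is therefore better to work with operator norms throughout, via $\sum_\ell(\fint_Q|V_I^{\frac1q}\vec e_\ell|^q)^{1/q}\approx(\fint_Q\|V_I^{\frac1q}(x)\|^q dx)^{1/q}$. This equivalence follows from Proposition \ref{prop:truco_orto} applied pointwise, together with the equivalence of $\ell^1$ and $\ell^q$ norms in dimension $n$. We then obtain
\[
\|\mathcal{R}_{Q,q,V_I}\|\approx\Big(\fint_Q\|V^{\frac1p}(x)\mathcal{R}_{I,p,U}^{-1}\|^{p}dx\Big)^{\frac1q}.
\]
Applying the same two equivalences in reverse, now for the exponent $p$, the inner average is $\approx\|\mathcal{R}_{Q,p,V}\mathcal{R}_{I,p,U}^{-1}\|^{p}$. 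Here I use that $\fint_Q|V^{\frac1p}A\vec e_\ell|^p\approx|\mathcal{R}_{Q,p,V}A\vec e_\ell|^p$ for the constant matrix $A=\mathcal{R}_{I,p,U}^{-1}$, and then Proposition \ref{prop:truco_orto} once more to return to the operator norm. Raising to $\frac1q$ yields $\|\mathcal{R}_{Q,p,V}\mathcal{R}_{I,p,U}^{-1}\|^{\frac pq}$.

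The main point to check is the existence of $\mathcal{R}_{Q,q,V_I}$. This needs $\fint_Q\|V_I^{\frac1q}\|^q<\infty$, which holds because that average equals $\fint_Q\|V^{\frac1p}\mathcal{R}_{I,p,U}^{-1}\|^p$, and this is finite since $V\in A_p$ is locally integrable. I also need to make sure the implicit constants depend only on $n,p,q$, and not on $[V]_{A_p}$, so that the $\approx$ is uniform in $I$ and $Q$. The only delicate step is the identity $\|V_I^{1/q}\|=\|V_I^{1/p}\|^{p/q}$. It holds because $V_I^{1/p}$ is positive definite, so its norm is its largest eigenvalue, and taking a power commutes with taking the largest eigenvalue.
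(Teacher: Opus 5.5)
Your proof is correct and follows essentially the same route as the paper. You obtain the first claim from Proposition \ref{TVLem} with the self-adjoint matrix $A=\mathcal{R}_{I,p,U}^{-1}$. You then use the chain of equivalences from Proposition \ref{prop:truco_orto} to reduce $\|\mathcal{R}_{Q,q,V_I}\|$ to $(\fint_Q\|V^{\frac1p}\mathcal{R}_{I,p,U}^{-1}\|^p)^{\frac1q}$ and back to $\|\mathcal{R}_{Q,p,V}\mathcal{R}_{I,p,U}^{-1}\|^{\frac pq}$. The only cosmetic difference is that the paper passes through $\|V_I^{\frac2p}\|^{\frac p2}=\|\mathcal{R}_{I,p,U}^{-1}V^{\frac2p}\mathcal{R}_{I,p,U}^{-1}\|^{\frac p2}$ where you use the polar-decomposition identity, which amounts to the same computation.
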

\begin{proof}
The first part follows immediately from the Proposition \ref{TVLem},
since $\mathcal{R}_{I,p,U}$ is self-adjoint. On the other hand, for
any orthonormal basis $\{\vec{e_{\ell}}\}_{\ell=1}^{n}$ we have 
\begin{align*}
\|\mathcal{R}_{Q,q,V_{I}}\| & \approx\sum_{\ell=1}^{n}|\mathcal{R}_{Q,q,V_{I}}\vec{e_{\ell}}|\approx\sum_{\ell=1}^{n}\left(\fint_{Q}|V_{I}^{\frac{1}{q}}(x)\vec{e_{\ell}}|^{q}\,dx\right)^{\frac{1}{q}}\mathrel{\approx}\mathinner{\left(\fint_{Q}\|V_{I}^{\frac{2}{p}}(x)\|^{\frac{p}{2}}\,dx\right)^{\frac{1}{q}}}\\
 & =\left(\fint_{Q}\|\mathcal{R}_{I,p,U}^{-1}V^{\frac{2}{p}}(x)\mathcal{R}_{I,p,U}^{-1}\|^{\frac{p}{2}}\right)^{\frac{1}{q}}\approx\left(\fint_{Q}\|V^{\frac{1}{p}}(x)\mathcal{R}_{I,p,U}^{-1}\|^{p}\,dx\right)^{\frac{1}{q}}\approx\|\mathcal{R}_{Q,p,V}\mathcal{R}_{I,p,U}^{-1}\|^{\frac{p}{q}}.
\end{align*}
\end{proof}
In what follows $\{h_{Q}^{\varepsilon}:Q\in\mathcal{D},\varepsilon\in\S\}$
refers to any Haar basis for $L^{2}(\mathbb{R}^{d})$ with $|\S|=2^{d-1}$.
Note that the ``Triebel-Lizorkin imbedding theorem'' (i.e. matrix
weighted dyadic Littlewood-Paley Theory, see \cite{isralowitz2015,NT,Volberg-S}) states
that if $U$ is a matrix $A_{p}$ weight and $\vec{f}_{Q}^{\varepsilon}$
is the Haar coefficient of $\vec{f}$ then 
\begin{equation}
\|\vec{f}\|_{L^{p}(U)}\approx\left(\int_{\mathbb{R}^{d}}\left(\sum_{\substack{Q\in\mathcal{D}(I)\\
\varepsilon\in\S
}
}\frac{|\mathcal{R}_{Q,p,U}\vec{f}_{Q}^{\varepsilon}|^{2}\chi_Q(x)}{|Q|}\right)^{\frac{p}{2}}\,dx\right)^{\frac{1}{p}}.\label{TriebLizThm}
\end{equation}

\begin{lemma} \label{HigherJN1} For any cube $J$ we have 
\begin{equation}
\sup_{I\in\mathcal{D}(J)}\,\fint_{I}\|V^{\frac{1}{p}}(x)\mathcal{R}_{I,p,U}^{-1}\|^{p}|b(y)-\langle b\rangle_{I}|^{jp}\,dy\lesssim\sup_{I\in\mathcal{D}(J)}\,\frac{1}{|I|}\sum_{\substack{Q\in\mathcal{D}(I)\\
\varepsilon\in\S
}
}\|\mathcal{R}_{Q,p,V}\mathcal{R}_{Q,p,U}^{-1}\|^{\frac{2}{j}}|b_{Q}^{\epsilon}|^{2}.\label{HigherJN1Ineq}
\end{equation}
\end{lemma}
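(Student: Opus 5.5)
The plan is to fix a cube $I\in\mathcal{D}(J)$ and show that the left-hand average over $I$ is bounded by $\|\{a_Q\}\|^{p}$, where $a_Q=\|\mathcal{R}_{Q,p,V}\mathcal{R}_{Q,p,U}^{-1}\|^{\frac1j}|b_Q^{\varepsilon}|$ (summed over $\varepsilon$). Recall that the Carleson norm is $\|\{a_Q\}\|^{2}=\sup_{I}\frac{1}{|I|}\sum_{Q\subseteq I}a_Q^{2}$, so this is exactly the right-hand side of \eqref{HigherJN1Ineq}. The idea is to absorb $\mathcal{R}_{I,p,U}^{-1}$ into a new weight, trade the power $jp$ for the exponent $q:=jp$, and then apply matrix weighted Littlewood–Paley theory followed by the Carleson embedding.

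\emph{Step 1 (new weights).} Let $V_I$ be as in Lemma \ref{VIlem}, and define analogously $U_I=(\mathcal{R}_{I,p,U}^{-1}U^{\frac2p}\mathcal{R}_{I,p,U}^{-1})^{\frac p2}$. By the polar decomposition argument of Proposition \ref{TVLem}, $V_I^{\frac1p}(y)=\mathcal{U}(y)V^{\frac1p}(y)\mathcal{R}_{I,p,U}^{-1}$ with $\mathcal{U}(y)$ unitary. Hence
\[
\|V^{\frac1p}(y)\mathcal{R}_{I,p,U}^{-1}\|^{p}=\|V_I(y)\|=\|V_I^{\frac1q}(y)\|^{q}\approx\sum_{\ell=1}^n|V_I^{\frac1q}(y)\vec e_\ell|^{q},
\]
where the last step uses Proposition \ref{prop:truco_orto}. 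The left side of \eqref{HigherJN1Ineq} for this $I$ is therefore comparable to $\frac1{|I|}\sum_\ell\|(b-\langle b\rangle_I)\chi_I\vec e_\ell\|_{L^{q}(V_I)}^{q}$, where $V_I$ is now regarded as a weight for the exponent $q$. Since $V_I\in A_p$ with $[V_I]_{A_p}=[V]_{A_p}$ and $q\ge p$, I will use that $V_I\in A_q$ with controlled constant, via the monotonicity of the $A_p$ classes recalled in Section \ref{sec:Prelim}. The same holds for $U_I$.

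\emph{Step 2 (Littlewood–Paley).} The function $(b-\langle b\rangle_I)\chi_I\vec e_\ell$ has Haar coefficients $b_Q^{\varepsilon}\vec e_\ell$ for $Q\in\mathcal{D}(I)$. Applying \eqref{TriebLizThm} in $L^q(V_I)$ bounds its $q$-th power norm by
\[
\int\Big(\sum_{Q\subseteq I,\varepsilon}\|\mathcal{R}_{Q,q,V_I}\|^2|b_Q^\varepsilon|^2\tfrac{\chi_Q}{|Q|}\Big)^{q/2}.
\]
By Lemma \ref{VIlem}, $\|\mathcal{R}_{Q,q,V_I}\|\approx\|\mathcal{R}_{Q,p,V}\mathcal{R}_{I,p,U}^{-1}\|^{\frac1j}$. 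I then factor
\[
\mathcal{R}_{Q,p,V}\mathcal{R}_{I,p,U}^{-1}=(\mathcal{R}_{Q,p,V}\mathcal{R}_{Q,p,U}^{-1})(\mathcal{R}_{Q,p,U}\mathcal{R}_{I,p,U}^{-1}).
\]
Applying Lemma \ref{VIlem} once more, now to $U$ (with $U$ in place of $V$), gives $\|\mathcal{R}_{Q,p,U}\mathcal{R}_{I,p,U}^{-1}\|^{\frac1j}\approx\|\mathcal{R}_{Q,q,U_I}\|$. Combining these, $\|\mathcal{R}_{Q,q,V_I}\|\,|b_Q^\varepsilon|\lesssim a_Q\,\|\mathcal{R}_{Q,q,U_I}\|$.

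\emph{Step 3 (Carleson embedding).} Using Proposition \ref{prop:truco_orto} again, $\|\mathcal{R}_{Q,q,U_I}\|\lesssim\sum_k|\mathcal{R}_{Q,q,U_I}\vec e_k|=\sum_k\langle|\mathcal{R}_{Q,q,U_I}U_I^{-\frac1q}\vec f_k|\rangle_Q$ for $Q\subseteq I$, where $\vec f_k=U_I^{\frac1q}\vec e_k\chi_I$. Apply Theorem \ref{embedthm} with weight $U_I$, exponent $q$, this Carleson sequence, and $\vec f_k$. Since the Carleson norm of $\{a_Q\}$ restricted to $\mathcal{D}(I)$ is at most the supremum on the right of \eqref{HigherJN1Ineq}, this yields the bound $\|\{a_Q\}\|^{q}\|\vec f_k\|_{L^q}^q$. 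Finally,
\[
\|\vec f_k\|_{L^q}^q=\int_I|U_I^{\frac1q}\vec e_k|^q\le\int_I\|U^{\frac1p}\mathcal{R}_{I,p,U}^{-1}\|^p\approx|I|\,\|\mathcal{R}_{I,p,U}\mathcal{R}_{I,p,U}^{-1}\|^p=|I|.
\]
Dividing by $|I|$ gives a bound by $\|\{a_Q\}\|^{q}$.

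\emph{Main obstacle.} The crude output of Step 3 is $\|\{a_Q\}\|^{q}=\|\{a_Q\}\|^{jp}$, whereas \eqref{HigherJN1Ineq} asks for the right-hand side itself, which is $\|\{a_Q\}\|^{2}$. The homogeneities of the two sides must be matched, and I expect this bookkeeping to be the delicate point. Both sides are homogeneous in $b$, of degrees $jp$ and $2$ respectively, so on its own this is a normalization issue. The real work is in justifying the two uses of Lemma \ref{VIlem}, namely the identification of $\|\mathcal{R}_{Q,q,U_I}\|$ with a $\frac1j$-power of $\|\mathcal{R}_{Q,p,U}\mathcal{R}_{I,p,U}^{-1}\|$, and in the $A_q$ membership of $V_I$ and $U_I$ needed to invoke \eqref{TriebLizThm} and Theorem \ref{embedthm} at exponent $q$. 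If the inclusion $A_p\subseteq A_q$ turns out to be unavailable at this level of generality, I would instead run Steps 2–3 at exponent $p$ and extract the power $j$ by a good-$\lambda$ or John–Nirenberg iteration.
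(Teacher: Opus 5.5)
Your proof is correct and is essentially the paper's own argument. It runs through the same steps: Triebel--Lizorkin for $V_I$ at exponent $jp$ (using $V_I\in A_p\subseteq A_{jp}$ uniformly in $I$, which the paper quotes from Goldberg and which does hold, so your fallback is unnecessary), Lemma \ref{VIlem} applied to both $V$ and $U$, the factorization through $\mathcal{R}_{Q,p,U}$, and Theorem \ref{embedthm} with the weight $U_I$ together with $\int_I\|U_I\|\approx|I|$.

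The homogeneity mismatch you flag is not a normalization issue to resolve but a misprint in the displayed statement. The correct (indeed the only scale-consistent) conclusion is exactly your bound $\|\{a_Q\}\|^{jp}$, i.e.\ the left side is $\lesssim$ the right side raised to the power $\frac{jp}{2}$; equivalently, this is the $\frac{1}{jp}$ and $\frac12$ roots form used in Lemma \ref{eJNHigherLem}, and the paper's own last step likewise silently drops this exponent.
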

\begin{proof}
Let $U_{I}(x)=\left(\mathcal{R}_{I,p,U}^{-1}U^{\frac{2}{p}}(x)\mathcal{R}_{I,p,U}^{-1}\right)^{\frac{p}{2}}$
and notice that 
\begin{align}
\int_{\mathbb{R}^{d}}\|U_{I}^{\frac{1}{jp}}(y)\chi_{{I}}(y)\|^{jp}\,dy & =\int_{I}\|U_{I}^{\frac{1}{p}}(y)\|^{p}\,dy=\int_{I}\|\mathcal{R}_{I,p,U}^{-1}U^{\frac{2}{p}}(y)\mathcal{R}_{I,p,U}^{-1}\|^{\frac{p}{2}}\,dy\nonumber \\
 & =\int_{I}\|U^{\frac{1}{p}}(y)\mathcal{R}_{I,p,U}^{-1}\|^{p}\,dy\approx|I|.\label{HigherJN1comp}
\end{align}

Similarly let $V_{I}(x)=\left(\mathcal{R}_{I,p,U}^{-1}V^{\frac{2}{p}}(x)\mathcal{R}_{I,p,U}^{-1}\right)^{\frac{p}{2}}$
and note that $\|V^{\frac{1}{p}}(y)\mathcal{R}_{I,p,U}^{-1}\|^{p}=\|V_{I}(y)\|$ 
and $V_{I}\in A_{p}\subseteq A_{jp}$ with $A_{jp}$ characteristic
independent of $I$ (see Proposition $5.5$ in \cite{G}). Thus, by the
matrix weighted Triebel-Lizorkin imbedding theorem and Theorem \ref{embedthm}
with $a_{Q}=\|\mathcal{R}_{Q,p,V}\mathcal{R}_{Q,p,U}^{-1}\|^{\frac{1}{j}}|b_{Q}^{\varepsilon}|$
and $U=U_{I}$ with $I\in\mathcal{D}(J)$ fixed, we have that
\begin{align*}
\fint_{I} & \|V^{\frac{1}{p}}(y)\mathcal{R}_{I,p,U}\|^{p}|b(y)-\langle b\rangle_{I}|^{jp}\,dy\\
 & \approx\sum_{\ell=1}^{n}\fint_{I}|V_{I}^{\frac{1}{jp}}(y)(b(y)-\langle b\rangle_{I})\vec{e}_{\ell}|^{jp}\,dy\\
 & \lesssim\sum_{\ell=1}^{n}\frac{1}{|I|}\int_{\mathbb{R}^{d}}\left(\sum_{\substack{Q\in\mathcal{D}(I)\\
\varepsilon\in\S
}
}\frac{|\mathcal{R}_{Q,jp,V_{I}}b_{Q}^{\varepsilon}\vec{e}_{\ell}|^{2}}{|Q|}\chi_{{Q}}(y)\right)^{\frac{jp}{2}}\,dy\\
 & \approx\frac{1}{|I|}\int_{\mathbb{R}^{d}}\left(\sum_{\substack{Q\in\mathcal{D}(I)\\
\varepsilon\in\S
}
}\frac{\|\mathcal{R}_{Q,p,V}\mathcal{R}_{I,p,U}^{-1}\|^{\frac{2}{j}}|b_{Q}^{\varepsilon}|^{2}}{|Q|}\chi_{{Q}}(y)\right)^{\frac{jp}{2}}\,dy\\
 & \lesssim\frac{1}{|I|}\int_{\mathbb{R}^{d}}\left(\sum_{\substack{Q\in\mathcal{D}(I)\\
\varepsilon\in\S
}
}\frac{(\|\mathcal{R}_{Q,p,V}\mathcal{R}_{Q,p,U}^{-1}\|^{\frac{1}{j}}|b_{Q}^{\varepsilon}|)^{2}\|\mathcal{R}_{Q,p,U}^{-1}\mathcal{R}_{I,p,U}\|^{\frac{2}{j}}}{|Q|}\chi_{{Q}}(y)\right)^{\frac{jp}{2}}\,dy\\
 & \lesssim \sum_{\ell=1}^{n} \frac{1}{|I|}\int_{\mathbb{R}^{d}}\left(\sum_{\substack{Q\in\mathcal{D}(I)\\
\varepsilon\in\S
}
}\frac{(\|\mathcal{R}_{Q,p,V}\mathcal{R}_{Q,p,U}^{-1}\|^{\frac{1}{j}}|b_{Q}^{\varepsilon}|)^{2}}{|Q|}\langle\|[(\mathcal{R}_{Q,jp,U_{I}}U_{I}^{-\frac{1}{jp}})U_{I}^{\frac{1}{jp}} \vec{e}_\ell \chi_{{I}}]\|\rangle_{Q}^{2}\chi_{{Q}}(y)\right)^{\frac{jp}{2}}\,dy.\\
 & \leq\left(\frac{1}{|I|}\int_{\mathbb{R}^{d}}\|U_{I}^{\frac{1}{jp}}(y)\chi_{{I}}(y)\|^{jp}\,dy\right)\sup_{I\in\mathcal{D}(J)}\,\frac{1}{|I|}\sum_{\substack{Q\in\mathcal{D}(I)\\
\varepsilon\in\S
}
}\|\mathcal{R}_{Q,p,V}\mathcal{R}_{Q,p,U}^{-1}\|^{\frac{2}{j}}|b_{Q}^{\epsilon}|^{2}\\
 & \lesssim\sup_{I\in\mathcal{D}(J)}\,\frac{1}{|I|}\sum_{\substack{Q\in\mathcal{D}(I)\\
\varepsilon\in\S
}
}\|\mathcal{R}_{Q,p,V}\mathcal{R}_{Q,p,U}^{-1}\|^{\frac{2}{j}}|b_{Q}^{\epsilon}|^{2}
\end{align*}
thanks to \eqref{HigherJN1comp}.
\end{proof}
\begin{lemma} \label{HigherJN2} For $\epsilon'>0$ small enough we
have that 
\[
\left(\fint_{I}\|\mathcal{R}_{I,jp,V_{I}}(b(y)-\langle b\rangle_{I})\|^{1+\epsilon'}\,dy\right)^{\frac{1}{1+\epsilon'}}\lesssim\left(\fint_{I}\|V^{\frac{1}{p}}(y)\mathcal{R}_{I,p,U}^{-1}\|^{p}|b(y)-\langle b\rangle_{I}|^{jp}\,dy\right)^{\frac{1}{jp}}.
\]
\end{lemma}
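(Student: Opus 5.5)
Since $b$ is scalar and $\mathcal{R}_{I,jp,V_I}$ is a constant matrix, the left-hand side equals $\|\mathcal{R}_{I,jp,V_I}\|\left(\fint_I|b-\langle b\rangle_I|^{1+\epsilon'}\right)^{\frac{1}{1+\epsilon'}}$. The plan is to reduce everything to scalar weights $w_{\vec e}$ attached to single directions $\vec e$, apply a Hölder plus reverse Hölder argument for each such weight, and then reassemble the matrix norms using Proposition \ref{prop:truco_orto}.

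\emph{Step 1 (rewriting the right-hand side).} Since $\mathcal{R}_{I,p,U}$ is self-adjoint, $\|V^{\frac1p}(y)\mathcal{R}_{I,p,U}^{-1}\|^p=\|V_I(y)\|$, as already noted in the proof of Lemma \ref{HigherJN1}. Since $\|V_I(y)\|=\|V_I^{\frac{1}{jp}}(y)\|^{jp}$, Proposition \ref{prop:truco_orto} applied with an orthonormal basis $\{\vec e_\ell\}$ gives
\[
\left(\fint_I\|V_I\||b-\langle b\rangle_I|^{jp}\right)^{\frac1{jp}}\approx\sum_{\ell=1}^n\left(\fint_I|V_I^{\frac1{jp}}\vec e_\ell|^{jp}|b-\langle b\rangle_I|^{jp}\right)^{\frac1{jp}} .
\]
Similarly, $\|\mathcal{R}_{I,jp,V_I}\|\approx\sum_\ell|\mathcal{R}_{I,jp,V_I}\vec e_\ell|\approx\sum_\ell\left(\fint_I w_\ell\right)^{\frac1{jp}}$, where $w_\ell=|V_I^{\frac1{jp}}\vec e_\ell|^{jp}$. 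It therefore suffices to prove, for each fixed unit vector $\vec e$ and $w=|V_I^{\frac1{jp}}\vec e|^{jp}$,
\[
\left(\fint_I w\right)^{\frac1{jp}}\left(\fint_I|b-\langle b\rangle_I|^{s}\right)^{\frac1s}\lesssim\left(\fint_I w\,|b-\langle b\rangle_I|^{jp}\right)^{\frac1{jp}},\qquad s=1+\epsilon'.
\]

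\emph{Step 2 (uniform scalar $A_{jp}$ control).} By Lemma \ref{VIlem}, $[V_I]_{A_p}=[V]_{A_p}$. By Proposition 5.5 in \cite{G}, $V_I\in A_{jp}$ with characteristic controlled only by $[V]_{A_p}$. By Lemma 5.3 in \cite{Volberg-S}, each $w=|V_I^{\frac1{jp}}\vec e|^{jp}$ is then a scalar $A_{jp}$ weight with $[w]_{A_{jp}}\lesssim[V_I]_{A_{jp}}$, uniformly in $I$ and $\vec e$. Consequently the dual weight $\sigma=w^{-\frac1{jp-1}}\in A_{(jp)'}$ satisfies a reverse Hölder inequality $\fint_I\sigma^{r}\lesssim(\fint_I\sigma)^r$ for all $1<r\le r_0$, where $r_0$ depends only on $[V]_{A_p}$, $j$, $p$ and $d$.

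\emph{Step 3 (Hölder plus reverse Hölder).} Write $|b-\langle b\rangle_I|^s=\left(|b-\langle b\rangle_I|^sw^{\frac s{jp}}\right)w^{-\frac s{jp}}$ and apply Hölder with exponents $\frac{jp}{s}$ and $\frac{jp}{jp-s}$:
\[
\left(\fint_I|b-\langle b\rangle_I|^s\right)^{\frac1s}\le\left(\fint_I|b-\langle b\rangle_I|^{jp}w\right)^{\frac1{jp}}\left(\fint_I\sigma^{r}\right)^{\frac{jp-s}{jp\,s}},\qquad r=\frac{s(jp-1)}{jp-s}.
\]
As $\epsilon'\to0$ we have $r\to1$, so choosing $\epsilon'$ small enough that $r\le r_0$, Step 2 bounds the last factor by $\left(\fint_I\sigma\right)^{\frac{jp-1}{jp}}$. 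The $A_{jp}$ condition gives $\left(\fint_I\sigma\right)^{jp-1}\fint_I w\le[w]_{A_{jp}}$, hence $\left(\fint_I\sigma\right)^{\frac{jp-1}{jp}}\lesssim\left(\fint_I w\right)^{-\frac1{jp}}$. Multiplying through by $\left(\fint_I w\right)^{\frac1{jp}}$ yields the scalar inequality of Step 1, and summing over $\ell$ concludes.

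The main obstacle is Step 2: ensuring that $\epsilon'$ and all implicit constants are independent of $I$ and $\vec e$. This is exactly where the invariance $[V_I]_{A_p}=[V]_{A_p}$ from Lemma \ref{VIlem} (via Proposition \ref{TVLem}) and the embedding $A_p\subseteq A_{jp}$ with controlled characteristic are needed. The remaining steps are routine.
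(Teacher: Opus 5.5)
Your proof is correct. It runs on the same mechanism as the paper's: Hölder with exponents $\frac{jp}{1+\epsilon'}$ and $\frac{jp}{jp-1-\epsilon'}$, then reverse Hölder to bring the dual exponent down to $(jp)'$, then the $A_{jp}$ condition. The difference is that you carry it out at the scalar level rather than the matrix level.

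The paper never factors out $\|\mathcal{R}_{I,jp,V_I}\|$. It inserts $V_I^{-\frac{1}{jp}}(y)V_I^{\frac{1}{jp}}(y)$ and bounds
\[
\|\mathcal{R}_{I,jp,V_I}(b(y)-\langle b\rangle_I)\|\le\|\mathcal{R}_{I,jp,V_I}V_I^{-\frac{1}{jp}}(y)\|\,\|V_I^{\frac{1}{jp}}(y)(b(y)-\langle b\rangle_I)\|.
\]
It then applies Hölder and controls $\bigl(\fint_I\|\mathcal{R}_{I,jp,V_I}V_I^{-\frac{1}{jp}}\|^{\frac{jp(1+\epsilon')}{jp-1-\epsilon'}}\bigr)^{\frac{jp-1-\epsilon'}{jp(1+\epsilon')}}$ in two steps: the matrix reverse Hölder inequality for $V_I^{-\frac{(jp)'}{jp}}\in A^{sc}_{(jp)',\infty}$, and the bound $\|\mathcal{R}_{I,jp,V_I}\mathcal{R}'_{I,jp,V_I}\|\lesssim[V_I]_{A_{jp}}^{\frac{1}{jp}}$.

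You instead use that $b$ is scalar to split off $\|\mathcal{R}_{I,jp,V_I}\|$, expand it along an orthonormal basis via Proposition \ref{prop:truco_orto}, and apply scalar duality separately to each directional weight $w=|V_I^{\frac{1}{jp}}\vec e|^{jp}$. Here are the trade-offs.
\begin{itemize}
\item Your route needs only two facts: that these $w$ are uniformly scalar $A_{jp}$ (the Volberg lemma quoted in the preliminaries), and classical scalar reverse Hölder for $w^{-\frac{1}{jp-1}}$. It avoids the matrix reverse Hölder inequality and the reducing-matrix bookkeeping.
\item The paper's route is shorter and does not depend on $b$ being scalar. Its core estimate, with $\|V_I^{\frac{1}{jp}}(B-\langle B\rangle_I)\|$ kept on the right, would go through verbatim for a matrix-valued symbol.
\end{itemize}
In both arguments, $\epsilon'$ and the implicit constants depend only on $[V]_{A_p}$, $j$, $p$, $d$ and $n$, uniformly in $I$.
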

\begin{proof}
As was mentioned before, 
\[
\fint_{I}\|V^{\frac{1}{p}}(y)\mathcal{R}_{I,p,U}^{-1}\|^{p}|b(y)-\langle b\rangle_{I}|^{jp}\,dy\approx\fint_{I}\|V_{I}^{\frac{1}{jp}}(y)(b(y)-\langle b\rangle_{I})\|^{jp}\,dy.
\]
To that end, since again $V_{I}\in A_{p}\subseteq A_{jp}$ with constant independent
of $I$ we have by the reverse Hölder inequality 
\begin{align*}
 & \left(\fint_{I}\|\mathcal{R}_{I,jp,V_{I}}(b(y)-\langle b\rangle_{I})\|^{1+\epsilon'}\,dy\right)^{\frac{1}{1+\epsilon'}}\\
 & =\left(\fint_{I}\|\mathcal{R}_{I,jp,V_{I}}V_{I}^{-\frac{1}{jp}}(y)\|^{1+\epsilon'}\|V_{I}^{\frac{1}{jp}}(y)(b(y)-\langle b\rangle_{I})\|^{1+\epsilon'}\,dy\right)^{\frac{1}{1+\epsilon'}}\\
 & \leq\left(\fint_{I}\|\mathcal{R}_{I,jp,V_{I}}V_{I}^{-\frac{1}{jp}}(y)\|^{\frac{jp(1+\epsilon')}{jp-1-\epsilon'}}\right)^{\frac{jp-1-\epsilon'}{jp(1+\epsilon')}}\left(\fint_{I}\|V_{I}^{\frac{1}{jp}}(y)(b(y)-\langle b\rangle_{I})\|^{jp}\,dy\right)^{\frac{1}{jp}}\\
 & \lesssim\left(\fint_{I}\|V_{I}^{\frac{1}{jp}}(y)(b(y)-\langle b\rangle_{I})\|^{jp}\,dy\right)^{\frac{1}{jp}}.
\end{align*}
\end{proof}
For the next proof we need to introduce a stopping time from \cite{I}
which is a modification of the one from \cite{paraprods}. Assume that $U,V$
are a matrix $A_{p}$ weights and that $\lambda$ is large. For any
cube $I\in\mathcal{D}$, let $\J(I)$ be the collection of maximal $J\in\mathcal{D}(I)$
such that either of the two conditions 
\[
\|\mathcal{R}_{J,p,U}\mathcal{R}_{I,p,U}^{-1}\|>\lambda\ \text{ or }\ \|\mathcal{R}_{J,p,U}^{-1}\mathcal{R}_{I,p,U}\|>\lambda,
\]
or either of the two conditions 
\begin{equation}
\|\mathcal{R}_{J,p,V}\mathcal{R}_{I,p,V}^{-1}\|>\lambda\ \text{ or }\ \|\mathcal{R}_{I,p,V}\mathcal{R}_{J,p,V}^{-1}\|>\lambda
\end{equation}
are true. Also, let $\F(I)$ be the collection of dyadic subcubes
of $I$ not contained in any cube $J\in\J(I)$, so that clearly $J\in\F(J)$
for any $J\in\mathcal{D}$.

Let $\J^{0}(I):=\{I\}$ and inductively define $\J^{j}(I)$ and $\F^{j}(I)$
for $j\geq1$ by 
\[
\J^{j}(I):=\{R\in\J(Q):Q\in\J_{j-1}(I)\}
\]
and $\F^{j}(I)=\{J'\in\F(J):J\in\J^{j-1}(I)\}$. Clearly the cubes
in $\J^{j}(I)$ for $j>0$ are pairwise disjoint. Furthermore, since
$J\in\F(J)$ for any $J\in\mathcal{D}(I)$, we have that $\mathcal{D}(I)=\bigcup_{j=1}^{\infty}\mathcal{F}^{j}(I)$.
We will slightly abuse notation and write $\bigcup\J(I)$ for the
set $\bigcup_{J\in\J(I)}J$ and write $|\bigcup\J(I)|$ for $|\bigcup_{J\in\J(I)}J|$.
By easy arguments we can pick $\lambda$ depending on $U$ and $V$
so that 
\begin{equation}
\left|\bigcup\J^{j}(I)\right|\leq2^{-j}|I|\label{decay}
\end{equation}
for every $I\in\mathcal{D}$.

\begin{lemma} \label{HigherJN3} For any $0<\epsilon'\leq1$ and fixed
$J\in\mathcal{D}$ there exists $C>0$ independent of $J$ and $b$
where 
\[
\sup_{I\in\mathcal{D}(J)}\,\left(\frac{1}{|I|}\sum_{\substack{Q\in\mathcal{D}(I)\\
\varepsilon\in\S
}
}\|\mathcal{R}_{Q,p,V}\mathcal{R}_{Q,p,U}^{-1}\|^{\frac{2}{j}}|b_{Q}^{\epsilon}|^{2}\right)^{\frac{1}{2}}\leq C\sup_{I\in\mathcal{D}(J)}\left(\fint_{I}\|\mathcal{R}_{I,jp,V_{I}}(b(y)-\langle b\rangle_{I})\|^{1+\epsilon'}\,dy\right)^{\frac{1}{1+\epsilon'}}
\]
\end{lemma}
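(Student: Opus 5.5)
The approach is the stopping-time argument of \cite{I,paraprods}, adapted to the weighted Carleson sum on the left. Fix $I\in\mathcal{D}(J)$ and write $\mathcal{D}(I)=\bigcup_{k\geq1}\F^{k}(I)$. We estimate the sum over each layer $\F(K)$, $K\in\J^{k-1}(I)$, and then add up using the decay \eqref{decay}. Denote by $\Lambda$ the right-hand side supremum, that is,
\[
\Lambda=\sup_{I\in\mathcal{D}(J)}\left(\fint_{I}\|\mathcal{R}_{I,jp,V_{I}}(b(y)-\langle b\rangle_{I})\|^{1+\epsilon'}\,dy\right)^{\frac{1}{1+\epsilon'}}.
\]

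\textbf{Step 1 (freezing the weights on a stopping layer).} For $Q\in\F(K)$, the stopping conditions give $\|\mathcal{R}_{Q,p,U}^{-1}\mathcal{R}_{K,p,U}\|\leq\lambda$ and $\|\mathcal{R}_{Q,p,V}\mathcal{R}_{K,p,V}^{-1}\|\leq\lambda$. Hence
\[
\|\mathcal{R}_{Q,p,V}\mathcal{R}_{Q,p,U}^{-1}\|\leq\lambda^{2}\|\mathcal{R}_{K,p,V}\mathcal{R}_{K,p,U}^{-1}\|,
\]
and by Lemma \ref{VIlem} (with $q=jp$, cube $K$) the right-hand side is comparable to $\lambda^2\|\mathcal{R}_{K,jp,V_{K}}\|^{j}$. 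It is cleaner to keep the vector structure: since $\|\mathcal{R}_{Q,p,V}\mathcal{R}_{K,p,U}^{-1}\|^{1/j}\approx\|\mathcal{R}_{Q,jp,V_K}\|$, with the same comparison controlling $\|\mathcal{R}_{Q,jp,V_K}\mathcal{R}_{K,jp,V_K}^{-1}\|$ by a power of $\lambda$, we obtain
\[
\|\mathcal{R}_{Q,p,V}\mathcal{R}_{Q,p,U}^{-1}\|^{\frac{2}{j}}|b_{Q}^{\varepsilon}|^{2}\lesssim_{\lambda}\sum_{\ell}|\mathcal{R}_{K,jp,V_{K}}\vec{e}_{\ell}\,b_{Q}^{\varepsilon}|^{2}.
\]
Therefore
\[
\sum_{Q\in\F(K),\varepsilon}\|\mathcal{R}_{Q,p,V}\mathcal{R}_{Q,p,U}^{-1}\|^{\frac{2}{j}}|b_{Q}^{\varepsilon}|^{2}\lesssim\sum_{Q\in\mathcal{D}(K),\varepsilon}|(\mathcal{R}_{K,jp,V_{K}}(b-\langle b\rangle_{K})\chi_{K})_{Q}^{\varepsilon}|^{2},
\]
where the Haar coefficients of $b\chi_K$ and $(b-\langle b\rangle_K)\chi_K$ agree for $Q\subseteq K$.

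\textbf{Step 2 (an $L^{1+\epsilon'}$ bound, not $L^2$).} Plain orthogonality would require an $L^2$ average, but the right-hand side only controls an $L^{1+\epsilon'}$ average. I would apply the (unweighted, vector) Littlewood–Paley square function bound in the weak $L^{1+\epsilon'}$ form, via the standard John–Nirenberg-type bootstrapping, rather than at $L^2$. Set $g_K=\mathcal{R}_{K,jp,V_K}(b-\langle b\rangle_K)\chi_K$. I would first show that $\sup_K \frac{1}{|K|}\sum_{Q\subseteq K}|(g_K)^\varepsilon_Q|^2$ is controlled by the corresponding supremum over all subcubes of an $L^{1+\epsilon'}$ oscillation. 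This is the dyadic statement that Carleson norm $\approx$ $BMO_{1+\epsilon'}$, proven by the John–Nirenberg inequality for the dyadic square function. The complication is that $g_K$ depends on $K$ through $\mathcal{R}_{K,jp,V_K}$. On subcubes $I'\in\F(K)$, however, $\mathcal{R}_{K,jp,V_K}$ and $\mathcal{R}_{I',jp,V_{I'}}$ are comparable up to $\lambda$-powers; this again follows from Lemma \ref{VIlem}, since $\|\mathcal{R}_{I',p,V}\mathcal{R}_{I',p,U}^{-1}\|$ versus $\|\mathcal{R}_{K,p,V}\mathcal{R}_{K,p,U}^{-1}\|$ is controlled by the stopping rule. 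This lets the supremum over $I'$ be dominated by $\Lambda$.

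\textbf{Step 3 (summing the layers).} Combining Steps 1 and 2, the sum over $\F(K)$ is $\lesssim\Lambda^{2}|K|$. Summing over $K\in\J^{k-1}(I)$ and then over $k$, \eqref{decay} gives
\[
\sum_{k}2^{-(k-1)}|I|\,\Lambda^{2}\lesssim|I|\,\Lambda^{2},
\]
and taking square roots and the supremum over $I$ finishes the proof.

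The \textbf{main obstacle} is Step 2: passing from $L^{1+\epsilon'}$ oscillation to the quadratic Carleson quantity while the normalizing matrix changes with the cube. If the John–Nirenberg route proves awkward, I would instead truncate, considering finitely many levels so that the left-hand supremum $S$ is finite. I would then run a good-$\lambda$ / Chebyshev argument of the form $S^2\leq C\Lambda^2+\tfrac12 S^2$ on the stopping sets, which closes because $S<\infty$.
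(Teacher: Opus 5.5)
There is a genuine gap in Steps 1--2, and the quantity they reduce to is not controlled by $\Lambda$ at all. Write $c_Q=\|\mathcal{R}_{Q,p,V}\mathcal{R}_{Q,p,U}^{-1}\|^{1/j}\approx\|\mathcal{R}_{Q,jp,V_Q}\|$.

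Freezing $c_Q\lesssim_\lambda c_K$ on $\F(K)$ is fine. But you then enlarge $\F(K)$ to all of $\mathcal{D}(K)$, so you need $c_K^2\fint_K|b-\langle b\rangle_K|^2\lesssim\Lambda^2$. John--Nirenberg for $g_K$ would require $c_K(\fint_{I'}|b-\langle b\rangle_{I'}|^{1+\epsilon'})^{1/(1+\epsilon')}\lesssim\Lambda$ for \emph{every} $I'\in\mathcal{D}(K)$, i.e.\ $c_K\lesssim c_{I'}$. The stopping rule gives this only for $I'\in\F(K)$. Inside a stopping cube, $c_{I'}$ can be much smaller than $c_K$; that is exactly why the cube stopped.

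The bound you need is in fact false. Take $d=1$, $j=1$, $1<p<2$, $\tfrac12\le a<\tfrac1p$, $V\equiv I_n$, $U=|x|^{-ap}I_n\in A_p$, $b=|x|^{-a}$, and $K=I=J=[0,1)$. Then $\|\mathcal{R}_{I',p,V_{I'}}\|\approx\langle b^p\rangle_{I'}^{-1/p}$, so Hölder gives $\Lambda\lesssim 1$ whenever $1+\epsilon'\le p$. On the other hand $\fint_K|b-\langle b\rangle_K|^2=\infty$, so already the first layer $K=I$ of your Step 1 bound is infinite. The good-$\lambda$ fallback does not identify a mechanism that avoids this.

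The missing idea is to build the varying factors $c_Q$ into the function to which John--Nirenberg is applied, and to run the stopping time only at the $L^{1+\epsilon'}$ level.

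The paper does this as follows.
\begin{enumerate}
\item It applies dyadic John--Nirenberg and unweighted Littlewood--Paley theory to $F_J=\sum_{Q\subseteq J,\varepsilon}c_Q\, b_Q^\varepsilon h_Q^\varepsilon$. The dyadic $BMO_2$ norm of $F_J$ is exactly the left-hand side, so the lemma reduces to $L^{1+\epsilon'}$ averages of the square function $(\sum_{Q\subseteq I}c_Q^2|b_Q^\varepsilon|^2\chi_Q/|Q|)^{1/2}$.
\item Inside that integral it freezes $c_Q\to c_K$ on $\F(K)$. Since $(1+\epsilon')/2\le 1$, which is where $\epsilon'\le1$ is used, it splits the integrand subadditively over the layers $K\in\J^{k-1}(I)$.
\item At this level, enlarging $\F(K)$ to $\mathcal{D}(K)$ is harmless: $c_K^{1+\epsilon'}\fint_K(\sum_{Q\subseteq K}|b_Q^\varepsilon|^2\chi_Q/|Q|)^{(1+\epsilon')/2}\approx\|\mathcal{R}_{K,jp,V_K}\|^{1+\epsilon'}\fint_K|b-\langle b\rangle_K|^{1+\epsilon'}\le\Lambda^{1+\epsilon'}$ directly. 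Then \eqref{decay} sums the layers.
\end{enumerate}

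Alternatively, your ordering survives if you do not enlarge. Apply John--Nirenberg to $\tilde g_K=c_K\sum_{Q\in\F(K),\varepsilon}b_Q^\varepsilon h_Q^\varepsilon$ instead of $g_K$.
\begin{itemize}
\item $\tilde g_K$ is constant on each cube of $\J(K)$.
\item On each $I'\in\F(K)$, $\tilde g_K-\langle\tilde g_K\rangle_{I'}=c_K\,(E b-\langle b\rangle_{I'})$, where $E$ replaces $b$ by its averages on the stopping cubes inside $I'$. This $E$ is a contraction on $L^{1+\epsilon'}$.
\item Combined with $c_K\lesssim_\lambda c_{I'}$ for $I'\in\F(K)$, the dyadic $BMO_{1+\epsilon'}$ norm of $\tilde g_K$ is $\lesssim_\lambda\Lambda$.
\end{itemize}
This gives $c_K^2\sum_{Q\in\F(K),\varepsilon}|b_Q^\varepsilon|^2\lesssim\Lambda^2|K|$, and your Step 3 then goes through.
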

\begin{proof}
Fix $I\in\mathcal{D}(J)$. 

By the John-Nirenberg lemma and unweighted dyadic Littlewood-Paley theory, it is enough to 
prove that 

\begin{align}
\sup_{I\in\mathcal{D}(J)} & \left(\fint_{I}\left(\sum_{\substack{Q\in\mathcal{D}(I)\\
\varepsilon\in\S
}
}\frac{\|\mathcal{R}_{Q,p,V}\mathcal{R}_{Q,p,U}^{-1}\|^{\frac{2}{j}}|b_{Q}^{\epsilon}|^{2}}{|Q|}\chi_{Q}(x)\right)^{\frac{1+\epsilon'}{2}}\,dx\right)^{\frac{1}{1+\epsilon'}}\\
 & {\color{red}\mathrel{\normalcolor \leq}{\normalcolor C\sup_{I\in\mathcal{D}(J)}}\mathinner{\normalcolor \left(\fint_{I}\left(\sum_{\substack{Q\in\mathcal{D}(I)\\
\ \varepsilon\in\S
}
}\frac{\|\mathcal{R}_{I,p,V}\mathcal{R}_{I,p,U}^{-1}\|^{\frac{2}{j}}|b_{Q}^{\varepsilon}|^{2}}{|Q|}\chi_{Q}(x)\right)^{\frac{1+\epsilon'}{2}}\,dx\right)^{\frac{1}{1+\epsilon'}}}} \label{MatrixJNIneq}
\end{align} for $I\in\mathcal{D}(J)$ where $C$ is independent of $I,J$ and
$B$.

Namely, if $$F_J = \sum_{\substack{Q\in\mathcal{D}\\ \varepsilon\in\S}} \|\mathcal{R}_{Q,p,V}\mathcal{R}_{Q,p,U}^{-1}\|^{\frac{2}{j}}|\tilde{b}_{Q}^{\epsilon}|^{2} h_Q ^{\epsilon}  $$ where 

\[ \tilde{b}_{Q}^{\epsilon}  = \begin{cases} 
      {b}_{Q}^{\epsilon}  & \text{ if } Q \subseteq J\\
     0 & \text{ otherwise }       
   \end{cases}
\] then 
\begin{align*} 
& \left(\sup_{I\in\mathcal{D}(J)}\, \frac{1}{|I|}\sum_{\substack{Q\in\mathcal{D}(I)\\
\varepsilon\in\S
}
}\|\mathcal{R}_{Q,p,V}\mathcal{R}_{Q,p,U}^{-1}\|^{\frac{2}{j}}|b_{Q}^{\epsilon}|^{2} \right)^\frac12 = \left(\sup_{I\in\mathcal{D}}\, \frac{1}{|I|}\sum_{\substack{Q\in\mathcal{D}(I)\\
\varepsilon\in\S
}
}\|\mathcal{R}_{Q,p,V}\mathcal{R}_{Q,p,U}^{-1}\|^{\frac{2}{j}}|\tilde{b}_{Q}^{\epsilon}|^{2} \right)^\frac12
\\ & =\sup_{I\in\mathcal{D}}\,  \left(\fint_I |F_J (x)  - \langle F_J \rangle _I |^2 \, dx \right)^\frac12 \approx 
\sup_{I\in\mathcal{D}}\,  \left(\fint_I |F_J (x)  - \langle F_J \rangle _I |^{1+\epsilon'} \, dx \right)^\frac{1}{1+\epsilon'}
\\ & \approx \sup_{I\in\mathcal{D}}  \left(\fint_{I}\left(\sum_{\substack{Q\in\mathcal{D}(I)\\
\varepsilon\in\S
}
}\frac{\|\mathcal{R}_{Q,p,V}\mathcal{R}_{Q,p,U}^{-1}\|^{\frac{2}{j}}|\tilde{b}_{Q}^{\epsilon}|^{2}}{|Q|}\chi_{Q}(x)\right)^{\frac{1+\epsilon'}{2}}\,dx\right)^{\frac{1}{1+\epsilon'}}
\\ & =  \sup_{I\in\mathcal{D}(J)}  \left(\fint_{I}\left(\sum_{\substack{Q\in\mathcal{D}(I)\\
\varepsilon\in\S
}
}\frac{\|\mathcal{R}_{Q,p,V}\mathcal{R}_{Q,p,U}^{-1}\|^{\frac{2}{j}}|b_{Q}^{\epsilon}|^{2}}{|Q|}\chi_{Q}(x)\right)^{\frac{1+\epsilon'}{2}}\,dx\right)^{\frac{1}{1+\epsilon'}}
\\ & \leq C\sup_{I\in\mathcal{D}(J)} \left(\fint_{I}\left(\sum_{\substack{Q\in\mathcal{D}(I)\\
\ \varepsilon\in\S
}
}\frac{\|\mathcal{R}_{I,p,V}\mathcal{R}_{I,p,U}^{-1}\|^{\frac{2}{j}}|b_{Q}^{\varepsilon}|^{2}}{|Q|}\chi_{Q}(x)\right)^{\frac{1+\epsilon'}{2}}\,dx\right)^{\frac{1}{1+\epsilon'}} \\ & = C \sup_{I\in\mathcal{D}(J)}\left(\fint_{I}\|\mathcal{R}_{I,jp,V_{I}}(b(y)-\langle b\rangle_{I})\|^{1+\epsilon'}\,dy\right)^{\frac{1}{1+\epsilon'}}
\end{align*} \noindent as desired.

Let $C_J$ denote the right hand side of \eqref{MatrixJNIneq}.  Using the stopping time discussed above, we have for any $I \in \mathcal{D}(J)$ that 
\begin{align*}
\fint_{I} & \left(\sum_{\substack{Q\in\mathcal{D}(I)\\
\varepsilon\in\S
}
}\frac{\|\mathcal{R}_{Q,p,V}\mathcal{R}_{Q,p,U}^{-1}\|^{\frac{2}{j}}|b_{Q}^{\epsilon}|^{2}}{|Q|}\chi_{Q}(x)\right)^{\frac{1+\epsilon'}{2}}\,dx\\
 & \leq\fint_{I}\left(\sum_{j=1}^{\infty}\sum_{K\in\J^{j-1}(I)}\sum_{\substack{Q\in\mathcal{F}(K)\\
\varepsilon\in\S
}
}\frac{\|\mathcal{R}_{Q,p,V}\mathcal{R}_{K,p,V}^{-1}\|^{\frac{2} {j}} \|\mathcal{R}_{K,p,V}\mathcal{R}_{K,p,U}^{-1}\|^{\frac{2} {j}}|b_{Q}^{\epsilon}|^{2}\|\mathcal{R}_{K,p,U}\mathcal{R}_{Q,p,U}^{-1}\|^{\frac{2}{j}}}{|Q|}\chi_{Q}(x)\right)^{\frac{1+\epsilon'}{2}}\,dx\\
 & \lesssim\fint_{I}\left(\sum_{j=1}^{\infty}\sum_{K\in\J^{j-1}(I)}\sum_{\substack{Q\in\mathcal{F}(K)\\
\varepsilon\in\S
}
}\frac{\|\mathcal{R}_{K,p,V}\mathcal{R}_{K,p,U}^{-1}\|^{\frac{2} {j}}|b_{Q}^{\epsilon}|^{2}}{|Q|}\chi_{Q}(x)\right)^{\frac{1+\epsilon'}{2}}\,dx\\
  & \leq\frac{1}{|I|}\sum_{j=1}^{\infty}\sum_{K\in\mathcal{J}^{j-1}(I)}\int_{K}\left(\sum_{\substack{Q\in\mathcal{F}(K)\\
\varepsilon\in\S
}
}\frac{\|\mathcal{R}_{K,p,V}\mathcal{R}_{K,p,U}^{-1}\|^{\frac{2} {j}}|b_{Q}^{\epsilon}|^{2}}{|Q|}\chi_{Q}(x)\right)^{\frac{1+\epsilon'}{2}}\,dx\\
 & \leq C_J ^{1+\epsilon'} \frac{1}{|I|}\sum_{j=1}^{\infty}\sum_{K\in\mathcal{J}^{j-1}(I)} |K| \\
 &  =  C_J ^{1+\epsilon'} \frac{1}{|I|}\sum_{j=1}^{\infty} \left|\bigcup\J^{j}(I)\right|
 \\ & \leq  C_J ^{1+\epsilon'}
\end{align*}
\end{proof}
The next lemma follows immediately from the last three proved. \begin{lemma}\label{eJNHigherLem}
If $U$ and $V$ are matrix $A_{p}$ weights then there exists $\epsilon>0$
small enough where for any $0<\epsilon'<\epsilon$ we have 
\[
\sup_{I\in\mathcal{D}(J)}\left(\fint_{I}\|V^{\frac{1}{p}}(y)\mathcal{R}_{I,p,U}^{-1}\|^{p}|b(y)-\langle b\rangle_{I}|^{jp}\,dy\right)^{\frac{1}{jp}}\leq C\sup_{I\in\mathcal{D}(J)}\left(\fint_{I}\|\mathcal{R}_{I,jp,V_{I}}(b(y)-\langle b\rangle_{I})\|^{1+\epsilon'}\,dy\right)^{\frac{1}{1+\epsilon'}}
\]
and 
\[
\sup_{I\in\mathcal{D}(J)}\left(\fint_{I}\|\mathcal{R}_{I,jp,V_{I}}(b(y)-\langle b\rangle_{I})\|^{1+\epsilon}\,dy\right)^{\frac{1}{1+\epsilon}}\leq C\sup_{I\in\mathcal{D}(J)}\left(\fint_{I}\|V^{\frac{1}{p}}(x)\mathcal{R}_{I,p,U}^{-1}\|^{p}|b(y)-\langle b\rangle_{I}|^{jp}\,dy\right)^{\frac{1}{jp}}.
\]
where $C$ is independent of $J$ (but depends on $\epsilon'$.) \end{lemma}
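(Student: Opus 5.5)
The lemma is obtained by chaining Lemmas \ref{HigherJN1}, \ref{HigherJN2} and \ref{HigherJN3}.

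\textbf{First inequality.} Fix $J\in\mathcal{D}$ and let $0<\epsilon'<\epsilon\le 1$. Taking the supremum over $I\in\mathcal{D}(J)$ in Lemma \ref{HigherJN1}, the left-hand side raised to the power $jp$ is controlled by
\[
\sup_{I\in\mathcal{D}(J)}\frac{1}{|I|}\sum_{\substack{Q\in\mathcal{D}(I)\\ \varepsilon\in\S}}\|\mathcal{R}_{Q,p,V}\mathcal{R}_{Q,p,U}^{-1}\|^{\frac{2}{j}}|b_Q^{\varepsilon}|^{2}.
\]
Lemma \ref{HigherJN3}, applied with exponent $\epsilon'$, bounds the square root of this Carleson-type quantity by
\[
C\sup_{I\in\mathcal{D}(J)}\left(\fint_I\|\mathcal{R}_{I,jp,V_I}(b-\langle b\rangle_I)\|^{1+\epsilon'}\right)^{\frac{1}{1+\epsilon'}}.
\]
Chaining the two bounds relates the $jp$-th root of the left-hand side to the square root of the Carleson quantity. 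This is the step I expect to need the most care: the powers do not obviously match. The way I would check it is that in the proof of Lemma \ref{HigherJN1} the Triebel--Lizorkin step in $L^{jp}(V_I)$ actually produces the $L^{jp}$ norm of a square function. One should then keep that square function, rather than bounding it crudely by the sup of the Carleson sums, and apply the $L^{jp}$ John--Nirenberg equivalence for the dyadic function $F_J$ from Lemma \ref{HigherJN3}. That equivalence makes the $jp$-th root of the left-hand side comparable to the dyadic BMO norm of $F_J$, which is the square root of the Carleson sup. So the exponents match after the John--Nirenberg step, and $C$ depends only on $\epsilon'$, $p$, $j$, $n$, $d$ and the $A_p$ constants. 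Independence of $J$ follows because none of Lemmas \ref{HigherJN1} and \ref{HigherJN3} uses properties of $J$ beyond localization.

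\textbf{Second inequality.} This is immediate from Lemma \ref{HigherJN2}. Choose $\epsilon>0$ small enough that the reverse Hölder inequality there applies. This is possible because $V_I\in A_{jp}$ with constant independent of $I$ (Lemma \ref{VIlem} together with the inclusion $A_p\subseteq A_{jp}$). Then take the supremum over $I\in\mathcal{D}(J)$ on both sides; no further work is needed.

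Finally, one should check that the same $\epsilon$ works for both statements. To do this, take $\epsilon$ to be the reverse Hölder threshold from Lemma \ref{HigherJN2}, and for the first inequality use any $\epsilon'\in(0,\epsilon)$, since Lemma \ref{HigherJN3} holds for all $0<\epsilon'\le1$.
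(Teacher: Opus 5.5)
Your plan is the paper's own: it says the lemma ``follows immediately from the last three proved''. Your handling of the second inequality and of the choice of $\epsilon,\epsilon'$ is exactly right.

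The problem is the step you flagged in the first inequality. The mismatch is real, but it is only a dropped power in the statement of Lemma~\ref{HigherJN1}. As printed that lemma is not homogeneous: $b\mapsto\lambda b$ multiplies the left side by $\lambda^{jp}$ and the right side by $\lambda^{2}$. Its proof ends with the Carleson embedding, Theorem~\ref{embedthm}, used in $L^{jp}$ with weight $U_I$, $a_Q=\|\mathcal{R}_{Q,p,V}\mathcal{R}_{Q,p,U}^{-1}\|^{1/j}|b_Q^{\varepsilon}|$ and $\vec f=U_I^{1/(jp)}\vec e_\ell\chi_I$. This gives $\|\{a_Q\}\|^{jp}\|\vec f\|_{L^{jp}}^{jp}\lesssim|I|\,\|\{a_Q\}\|^{jp}$, i.e.\ the Carleson supremum raised to the power $jp/2$. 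Taking $jp$-th roots, the left side of the first inequality is bounded by the square root of the Carleson supremum. That is exactly the left side of Lemma~\ref{HigherJN3}, so the chain closes with no extra John--Nirenberg step.

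The workaround you propose instead would not go through. The square function produced by the Triebel--Lizorkin step has coefficients $\|\mathcal{R}_{Q,p,V}\mathcal{R}_{I,p,U}^{-1}\|^{1/j}|b_Q^{\varepsilon}|$, anchored at the fixed cube $I$. The square function of $F_J$ (the one whose dyadic BMO norm is the square root of the Carleson supremum) instead carries $\|\mathcal{R}_{Q,p,V}\mathcal{R}_{Q,p,U}^{-1}\|^{1/j}$. Converting one into the other costs the factor $\|\mathcal{R}_{Q,p,U}\mathcal{R}_{I,p,U}^{-1}\|^{1/j}\approx\|\mathcal{R}_{Q,jp,U_I}\|$. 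This factor is not uniformly bounded over $Q\subseteq I$: already for scalar $u(x)=|x|^{\alpha}$ with $-d<\alpha<0$, taking $Q=[0,2^{-k})^d\subseteq I=[0,1)^d$ gives $\langle u\rangle_Q/\langle u\rangle_I=2^{-k\alpha}\to\infty$. So you cannot identify that square function with the one of $F_J$ and apply John--Nirenberg. Absorbing precisely this unbounded factor is what the matrix-weighted Carleson embedding does, so keep that step and read off the correct power.
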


\begin{lemma} \label{HigherEqnLem}\ Let $\mathcal{D}$ be a fixed
dyadic grid. If $U,V$ are matrix $A_{p}$ weights and $\epsilon'>0$
is small enough then 
\begin{equation}
\sup_{I\in\mathcal{D}}\left(\fint_{I}\|\mathcal{R}_{I,jp,V_{I}}(b(y)-\langle b\rangle_{I})\|^{1+\epsilon'}\,dy\right)^{\frac{1}{1+\epsilon'}}\lesssim\sup_{I\in\mathcal{D}}\fint_{I}\|\mathcal{R}_{I,jp,V_{I}}(b(y)-\langle b\rangle_{I})\|\,dy.\label{HigherEqn}
\end{equation}
\end{lemma}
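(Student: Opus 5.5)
Since $b$ is scalar and $\mathcal{R}_{I,jp,V_I}$ is a constant matrix on $I$, we have $\|\mathcal{R}_{I,jp,V_I}(b(y)-\langle b\rangle_I)\|=\|\mathcal{R}_{I,jp,V_I}\|\,|b(y)-\langle b\rangle_I|$. By Lemma \ref{VIlem} (with $q=jp$), $\|\mathcal{R}_{I,jp,V_I}\|\approx c_I$, where $c_I:=\|\mathcal{R}_{I,p,V}\mathcal{R}_{I,p,U}^{-1}\|^{\frac1j}$. So \eqref{HigherEqn} becomes a Bloom-type John--Nirenberg inequality. Writing $A$ for the right-hand side of \eqref{HigherEqn}, the hypothesis reads $\langle|b-\langle b\rangle_Q|\rangle_Q\lesssim A\,c_Q^{-1}$ for every $Q\in\mathcal{D}$. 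The goal is then
\[
\Big(\fint_I (c_I|b-\langle b\rangle_I|)^{1+\epsilon'}\Big)^{\frac1{1+\epsilon'}}\lesssim A
\]
for every $I\in\mathcal{D}$, with a constant independent of $I$.

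\emph{Step 1: localize $c_Q$ relative to $c_I$ with the stopping time.} I would fix $I$ and use the stopping families $\J^k(I),\F^k(I)$ from before Lemma \ref{HigherJN3}, which satisfy the decay \eqref{decay}. Let $Q\in\F(K)$ with $K\in\J^{k-1}(I)$. The stopping conditions give
\[
c_Q\ge \lambda^{-\frac2j}c_K .
\]
Chaining the same bound through the ancestors of $K$ gives $c_K\ge\lambda^{-\frac{2(k-1)}{j}}c_I$. Hence $c_Q^{-1}\le \lambda^{\frac{2k}{j}}c_I^{-1}$.

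\emph{Step 2: pointwise sparse control of the oscillation.} I would apply Lerner's local oscillation formula, or equivalently an iterated Calder\'on--Zygmund decomposition of $b-\langle b\rangle_I$ on $I$. This gives a sparse family $\mathcal{S}\subset\mathcal{D}(I)$ with
\[
|b-\langle b\rangle_I|\chi_I\lesssim\sum_{Q\in\mathcal{S}}\langle|b-\langle b\rangle_Q|\rangle_Q\chi_Q\le A\sum_{Q\in\mathcal{S}}c_Q^{-1}\chi_Q .
\]
Splitting $\mathcal{S}$ according to the generation $k$ of the stopping cube containing each $Q$, and using Step 1, yields
\[
c_I|b-\langle b\rangle_I|\lesssim A\sum_{k\ge1}\lambda^{\frac{2k}{j}}N_k,\qquad N_k=\sum_{Q\in\mathcal{S}\cap\F^k(I)}\chi_Q .
\]
Each $N_k$ is supported in $\bigcup\J^{k-1}(I)$. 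Each $N_k$ is also a sparse counting function and so has exponential integrability, $\int_K e^{cN_k}\lesssim|K|$. Therefore $\|N_k\|_{L^{1+\epsilon'}(I,dx/|I|)}\lesssim_\delta |\bigcup\J^{k-1}(I)|^{\frac1{1+\epsilon'}-\delta}|I|^{-\frac1{1+\epsilon'}+\delta}$.

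\emph{Step 3: summation, the main obstacle.} Step 2 leaves a series $\sum_k\lambda^{2k/j}\eta^{k(\frac1{1+\epsilon'}-\delta)}$, where $\eta$ is the decay ratio of the stopping cubes. With the bare decay $\eta=\tfrac12$ from \eqref{decay} and $\lambda$ large, this series diverges, so this balance is the crux. I would first fix it by quantifying the stopping time more precisely. Using Proposition \ref{prop:truco_orto} and the scalar weights $|U^{\frac1p}\vec e_\ell|^p,|V^{\frac1p}\vec e_\ell|^p\in A_p$, one expects $|\bigcup\J(I)|\lesssim\lambda^{-\gamma}|I|$ for some $\gamma>0$ depending on $[U]_{A_p},[V]_{A_p}$. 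The trouble is that the growth $\lambda^{2/j}$ per generation is tied to the same $\lambda$, so this only helps if $\gamma>2/j$.

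If that fails, my fallback is to replace the crude chaining in Step 1 by a power-type comparison. The matrix reverse H\"older inequality and the $A_\infty$ property of the scalar weights above should give
\[
\|\mathcal{R}_{Q,p,W}\mathcal{R}_{K,p,W}^{-1}\|\lesssim(|K|/|Q|)^{\beta}
\]
for $W=U,V$ and $Q\subset K$, with $\beta<\frac1p$. I would combine this with the exponential integrability in Step 2 and take $\epsilon'$ small, so that the resulting geometric factors become summable.
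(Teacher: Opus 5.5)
Your Steps 1--2 are sound, up to one slip noted below. The proposal stops exactly where the content of the lemma lies: Step 3 is not proved, and neither remedy you offer closes it as stated.

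\emph{The single-threshold remedy.} With one $\lambda$ in all four stopping conditions, weak-type bounds for the dyadic maximal function applied to $\|U^{\frac1p}\mathcal{R}_{K,p,U}^{-1}\|^{p}$ and $\|V^{-\frac1p}\mathcal{R}_{K,p,V}\|^{p'}$ give only $\gamma=\min(p,p')\le 2$. Even a reverse H\"older gain only gives $(1+\epsilon)\min(p,p')$, which exceeds $2$ only for $p$ near $2$. So your requirement $\gamma>2/j$ fails precisely at $j=1$, the case used for $\|b\|_{BMO^p_{V,U,1}}$ in Theorem \ref{HWMatrixThm}.

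\emph{The power-type fallback.} This is far too lossy. It produces factors $(|I|/|Q|)^{(1-\delta)/j}$, hence sums $\fint_I\sum_{Q\in\mathcal{S}}(|I|/|Q|)^{(1-\delta)/j}\chi_Q=\sum_{Q\in\mathcal{S}}(|Q|/|I|)^{1-(1-\delta)/j}$. These are unbounded over $\frac12$-sparse families: take $I$ together with all its dyadic subcubes of one generation $k$, and let $k\to\infty$. The same problem persists if you use it only between generations, since stopping cubes can be arbitrarily small and numerous.

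\emph{The slip in Step 1.} A stopping cube $K\in\J(K')$ \emph{violates} the stopping inequalities relative to $K'$, so you cannot chain ``the same bound''. You must pass through its dyadic parent $\hat K\in\F(K')$ and use doubling of reducing matrices of $A_p$ weights. This costs a constant $C(d,p,[U]_{A_p},[V]_{A_p})$ per generation.

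Your route can be completed, and it would then differ genuinely from the paper's. Keep only the two one-sided conditions you actually use: $\|\mathcal{R}_{K,p,V}\mathcal{R}_{J,p,V}^{-1}\|>\lambda_V$ and $\|\mathcal{R}_{J,p,U}\mathcal{R}_{K,p,U}^{-1}\|>\lambda_U$, with balanced thresholds $\lambda_V^{p'}=\lambda_U^{p}=t$. The growth per generation is then $(Ct)^{1/j}$. By Proposition \ref{prop:truco_orto}, the weights above are comparable to sums of the scalar weights $|U^{\frac1p}\vec v|^p\in A_p$ and $|V^{-\frac1p}\vec v|^{p'}\in A_{p'}$, uniformly in $\vec v$. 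Their reverse H\"older inequality and the $L^{1+\epsilon}$ bound for the dyadic maximal function give $|\bigcup\J(K)|\lesssim t^{-(1+\epsilon)}|K|$. Since $\int_K\big(\sum_{Q\in\mathcal{S},\,Q\subseteq K}\chi_Q\big)^{1+\epsilon'}\lesssim|K|$, no $\delta$ is lost. The per-generation factor is then $\lesssim t^{\frac1j-\frac{1+\epsilon}{1+\epsilon'}}$, which is small for $\epsilon'<\epsilon$ and $t$ large. For $j=1$ you need both the balancing and the reverse H\"older gain; without the latter the exponent is $1-\frac{1}{1+\epsilon'}>0$.

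The paper never has to fight this geometric growth. In Lemma \ref{HigherJN3} the stopping time only bounds $c_Q$ from \emph{above} by $c_K$, where $Q\in\F(K)$, inside a square function. Generations then decouple via subadditivity of $s\mapsto s^{(1+\epsilon')/2}$, and the crude decay \eqref{decay} suffices. With Lemmas \ref{HigherJN1} and \ref{HigherJN2} this gives the self-improvement $\sup_Q\|F_Q\|_{L^{1+\epsilon}(d\mu_Q)}\lesssim\sup_Q\|F_Q\|_{L^{1+\epsilon'}(d\mu_Q)}$, where $F_Q=\chi_Q\mathcal{R}_{Q,jp,V_Q}(b-\langle b\rangle_Q)$ and $d\mu_Q=|Q|^{-1}\chi_Q\,dx$. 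The lemma then follows from log-convexity of $L^q$ norms. This requires the supremum to be finite a priori, which the paper secures by dyadic projections, the truncated weights of Remark \ref{JNRem}, and Fatou. A completed version of your argument would bypass the matrix Triebel--Lizorkin theory, the Carleson embedding and the truncation altogether.
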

\begin{proof}
Without loss of generality we may assume $U$ and $V^{-1}$ are bounded. For more details, see Remark \ref{JNRem}. Let $\|b\|_{*}$ be the right hand side of \eqref{HigherEqn}.  For
fixed $R\in\mathbb{N}$ let $P_{R}$ be the canonical projection operator
\[
P_{R}b(x)=\sum_{|I|=2^{-R}}\chi_{{I}}\langle b\rangle_{I}.
\]
For notational ease let 
\[
F_{Q}(x)=\chi_Q(x)\mathcal{R}_{Q,jp,V_{Q}}(b(x)-\langle b\rangle_{Q}),
\]
let 
\[
F_{Q}^{R}(x)= P_{R} F_Q (x)  ,
\]
and let $d\mu_{Q}(x)=|Q|^{-1}\chi_{Q}(x)\,dx$. Assume that $\tnorm{U^{-1}}_{L^\infty} < \infty$ and $\tnorm{V}_{L^\infty} < \infty$, where, for a measurable matrix function $A$, $\tnorm{A}_{L^\infty} = \|\,\|A(x)\|\,\|_{L^\infty(dx)}$. Clearly we have  for any $J \in \mathcal{D}$ that 
\begin{align*}
\sup_{Q \in \mathcal{D}(J)}& \|F_{Q}^{R}\|_{L^{1+\epsilon'}(d\mu_{Q})}\,dx \leq \sup_{Q \in \mathcal{D}(J)} \|F_{Q}^{R}\|_{L^{\infty}} \leq 2^{R+1}  \sup_{Q \in \mathcal{D}(J)}  \|\mathcal{R}_{Q,jp,V_{Q}}\|\int_Q |b|   
\\ & {\lesssim} 2^R \tnorm{U^{-1}}_{L^\infty} ^{jp} \tnorm{V}_{L^\infty} ^{jp} \int_J |b| <  \infty.
\end{align*}
Also, clearly 
\[
\sup_{Q\in\mathcal{D}}\|F_{Q}^{R}\|_{L^{1}(d\mu_{Q})}\lesssim\|b\|_{*}
\]
independent of $R$ by the boundedness of $P_{R}$ on $L^{1}(\mathbb{R}^{d})$
independent of $R$. On the other hand, it is easy to check  by direct calculation (and in particular looking at the two separate cases $|Q| \leq 2^{-R}$ and $|Q| > 2^{-R}$) that $$ F_Q ^R (x)   = \chi_Q(x)\mathcal{R}_{Q,jp,V_{Q}}(P_R b (x)-\langle P_R b\rangle_{Q})$$

Let $\epsilon>0$ be from Lemma \ref{eJNHigherLem} . Also for $0<\epsilon'<\epsilon$
let $p_{1}=1+\epsilon',\ p_{2}=1+\epsilon,\alpha=\frac{p_{2}-p_{1}}{p_{2}-1},\ \beta=\frac{p_{1}-\alpha}{\alpha}$,
and let $C$ be the constant in Lemma \ref{eJNHigherLem}. Then by 
a use of Hölder's inequality and  Lemma \ref{eJNHigherLem} applied to $F_Q ^R$,  we have for any $J \in \mathcal{D}$ that 
\begin{align*}
\sup_{Q \in \mathcal{D}(J)} \|F_{Q}^{R}\|_{L^{p_{1}}(d\mu_{Q})}^{p_{1}} & \leq \sup_{Q \in \mathcal{D}(J)} \|F_{Q}^{R}\|_{L^{1}(d\mu_{Q})}^{\alpha}\|F_{Q}^{R}\|_{L^{p_{2}}(d\mu_{Q})}^{p_{1}-\alpha}\\
 & \leq\|b\|_{*}^{\alpha}C^{p_{1}-\alpha}\sup_{Q \in \mathcal{D}(J)} \|F_{Q}^{R}\|_{L^{p_{1}}(d\mu_{Q})}^{p_{1}-\alpha}
\end{align*}
which says that 
\[
 \|F_{Q}^{R}\|_{L^{p_{1}}(d\mu_{Q})}\leq\|b\|_{*}C^{\beta}.
\]
for any $Q \in \mathcal{D}(J)$.  Letting $R\rightarrow\infty$ first (using Fatou's lemma) and then
taking the supremum over all $Q\in\mathcal{D}(J)$ and then $J\in\mathcal{D}$
completes the proof under the assumption that $\tnorm{U^{-1}}_{L^\infty} < \infty$ and ${\tnorm{V}}_{L^\infty} < \infty$. 
\end{proof}
\begin{remark} \label{JNRem}
    The proof of Lemma \ref{HigherEqnLem} only handles weights $U$ and $V$ where $U^{-1}$ is bounded and $V$ is bounded. To reduce to this case, we modify the truncation arguments in \cite[Remark 3.3]{BPW}.  Namely, let $V(x) = \sum_{j = 1}^n \lambda_j(x) \langle \cdot,  e_j (x){\rangle}_{\Cn} e_j(x)$ where $\{e_j(x)\}_{j = 1}^n$ is an orthonormal eigenbasis of $V(x)$, and $\{\lambda_j(x)\}_{j = 1}^n$ are the corresponding eigenvalues.
    For $k \in \mathbb{N}$, let us consider the sets $N_1 ^k =  \{j \in \{1, \ldots, n\} : \lambda_j(x) < k\}$ and 
    $  N_2^k =   \{j \in \{1, \ldots, n\} : \lambda_j(x) \geq k\}$  and define $ V_k(x) = \widetilde{V}_k(x) + k P_{N_2 ^k} (x)$ where 
$$ \widetilde{V}_k(x) =   \sum_{j \in N_1 ^k} \lambda_j(x) \langle \cdot,  e_j (x){\rangle}_{\Cn} e_j(x) \text{ and } P_{N_2 ^k} (x) =  \sum_{j \in N_2 ^k}  \langle \cdot,  e_j (x){\rangle}_{\Cn} e_j(x).$$ Then   $ V_k ^{-1}(y) = \widetilde{V}_k ^{-1} (y) + k^{-1} P_{N_2 ^k} (y)$ where $$ \widetilde{V} ^{-1} _k(y) =   \sum_{j \in N_1 ^k} \lambda_j^{-1}(y) \langle \cdot,  e_j (y){\rangle}_{\Cn} e_j(y).$$ Clearly $V_k ^q (x)  \leq V^q (x)$ for any $q > 0$ while also $V_k ^q (x) \leq k ^q \, Id_{n \times n}$ and $ V_k ^{-q}(y) = \widetilde{V}_j ^{-q} (y) + k^{-q} P_{N_2 ^k} (y)$ with $\widetilde{V}_k ^{-q} (y) \leq {V} ^{-q} (y) $.    We now show that $[{V_k}]_{\text{A}_p} {\lesssim} [{V}]_{\text{A}_p}$ independent of $k$.

To this end, \begin{align} & \norm{V_k ^\frac{1}{p} (x) V_k ^{-\frac{1}{p}} (y)}^2 = \norm{V_k ^\frac{1}{p} (x) V_k ^{-\frac{2}{p}} (y)V_k ^\frac{1}{p} (x)}  {\leq} \text{tr }\left( V_k ^\frac{1}{p} (x) V_k ^{-\frac{2}{p}} (y) V_k ^\frac{1}{p} (x)\right) = \text{tr } \left(V_k ^\frac{2}{p} (x) V_k ^{-\frac{2}{p}} (y)\right)
\nonumber \\ & = \text{tr } \left(V_k ^\frac{2}{p} (x) \widetilde{V} _k ^{-\frac{2}{p}} (y)\right) + k^{-\frac{2}{p}} \text{tr } \left(V_k ^\frac{2}{p} (x) P_{N_2^k} (y)\right) \leq \text{tr } \left(V ^\frac{2}{p} (x) V ^{-\frac{2}{p}} (y)\right)  + 1 \approx \norm{V ^\frac{1}{p} (x) V ^{-\frac{1}{p}} (y)}^2 + 1 \label{TruncNormEst}
\end{align} \noindent which clearly shows that $[{V_k}]_{\text{A}_p} {\lesssim}[V]_{\text{A}_p}$.  Here, we have used the elementary and easily proved fact that if $A_1, B_1, A_2, B_2 \geq 0$ and $A_i \leq B_i$ for $i = 1, 2$ then $\text{tr} \, (A_1 A_2) \leq \text{tr} \, (B_1 B_2).  $ 

Regarding $U$, again let $U(x) = \sum_{j = 1}^n \lambda_j(x) \langle \cdot,  e_j (x){\rangle}_{\Cn} e_j(x)$ where $\{e_j(x)\}_{j = 1}^n$ is an orthonormal eigenbasis of $U(x)$ with corresponding eigenvalues $\{\lambda_j(x)\}_{j = 1}^n$.  For $k \in \mathbb{N}$, let us define the sets $N_3 ^k =  \{j \in \{1, \ldots, n\} : \lambda_j(x) < k^{-1}\}$ and 
    $  N_4^k =   \{j \in \{1, \ldots, n\} : \lambda_j(x) \geq k^{-1}\}$, and $ U_k(x) = \widetilde{U}_k(x) + k^{-1} P_{N_3 ^k} (x)$ where 
$$ \widetilde{U}_k(x) =   \sum_{j \in N_4 ^k} \lambda_j(x) \langle \cdot,  e_j (x){\rangle}_{\Cn} e_j(x) \text{ and } P_{N_3 ^k} (x) =  \sum_{j \in N_3 ^k}  \langle \cdot,  e_j (x){\rangle}_{\Cn} e_j(x).$$ Then again  $ U_k ^{-1}(y) = \widetilde{U}_k ^{-1} (y) + k P_{N_3 ^k} (y)$ where $$ \widetilde{U} ^{-q} _k(y) =   \sum_{j \in N_4 ^k} \lambda_j ^{-1}(y) \langle \cdot,  e_j (y){\rangle}_{\Cn} e_j(y).$$ Clearly $U_k ^{-q} (y)  \leq U^{-q} (y)$ for any $q > 0$ while also $U_k ^{-q} (y) \leq k ^{q} \, Id_{n \times n}$ and $ U_k ^{q}(y) = \widetilde{U}_k ^{q} (x) + k^{-q} P_{N_3 ^k} (x)$ with $\widetilde{U}_k ^{q} (x) \leq {U} ^{q} (x) $.     Thus, $$ \text{tr } \left(U_k ^\frac{2}{p} (x) U_k ^{-\frac{2}{p}} (y)\right)  = \text{tr }\left( \widetilde{U}_k ^{\frac{2}{p}} (x) U_k ^{-\frac{2}{p}} (y)\right) + k^{-\frac{2}{p}} \text{tr } \left(P_{N_3 ^k}(x) U_k ^{-\frac{2}{p}} (y) \right) \leq    \text{tr } \left({U} ^{\frac{2}{p}} (x) U ^{-\frac{2}{p}} (y)\right)  + 1,  $$ which shows that $[U_k]_{\text{A}_p} {\lesssim}[{U}]_{\text{A}_p}$ independent of $k$, arguing as before.

Now define $V_{I, k}(x)$ 
by 
\[
V_{I, k}(x)=\left(\mathcal{R}_{I,p,U_k}^{-1}V_k ^{\frac{2}{p}}(x)\mathcal{R}_{I,p,U_k}^{-1}\right)^{\frac{p}{2}},
\]

Applying Fatou's Lemma and\eqref{HigherEqn} applied to $V_k$ and $U_k$, we have for fixed $I' \in \mathcal{D}$ that for $0 < \epsilon' < \epsilon$

\begin{align*}
 & \left( \fint_{I'}\|\mathcal{R}_{I',jp,V_{I'}}(b(y) - \langle b\rangle_{I'}) \|^{1+\epsilon'} \, dy \right)^\frac{1}{1+\epsilon'} \leq \liminf_{k \rightarrow \infty }\left(\fint_{I'}  \|\mathcal{R}_{I',jp,V_{I', k}}(b(y) - \langle b\rangle_{I'}) \|^{1+\epsilon'} \, dy \right)^\frac{1}{1+\epsilon'} \\ & \lesssim \liminf_{k \rightarrow \infty } \sup_{I \in \mathcal{D}}  \fint_I\|\mathcal{R}_{I,jp,V_{I, k}}(b(y) - \langle b\rangle_{I} ) \| \, dy
\end{align*}

However, recall that by Lemma \ref{VIlem} we have $\|\mathcal{R}_{I,jp,V_{I, k}}\| =    \|\mathcal{R}_{I,p,V_k}\mathcal{R}_{I,p,U_k}^{-1}\|^{\frac{1}{jp}} $.  Furthermore, arguing as in \eqref{TruncNormEst} and using the fact that $V_k^\frac{1}{p}(x) \leq V^\frac{1}{p}(x)$ and $U_k^{-\frac{1}{p}} (y) \leq U^{-\frac{1}{p}} (y)$ we have 

\begin{align*}& \|\mathcal{R}_{I,p,V_k}\mathcal{R}_{I,p,U_k}^{-1}\|^p \lesssim \|\mathcal{R}_{I,p,V_k}\mathcal{R}_{I,p,U_k} '\| ^p \approx \fint_I \left(\fint_I \| V_k ^\frac{1}{p} (x) U_k ^{-\frac{1}{p}} (y)\|^{p'} \, dy \right)^\frac{p}{p'} \, dx  \\ & \lesssim \fint_I \left(\fint_I \| V ^\frac{1}{p} (x) U ^{-\frac{1}{p}} (y)\|^{p'} \, dy \right)^\frac{p}{p'} \, dx \approx \|\mathcal{R}_{I,p,V}\mathcal{R}_{I,p,U} '\| ^p \leq [U]_{\text{A}_p}  \|\mathcal{R}_{I,p,V}\mathcal{R}_{I,p,U} ^{-1} \| ^p 
\end{align*}

which says that $\|\mathcal{R}_{I,jp,V_{I, k}}\| \lesssim [U]_{\text{A}_p} \|\mathcal{R}_{I,jp,V_{I}}\| $ and clearly completes the proof.

\end{remark}

At this point we are in the position to settle Theorem \ref{StrongHigherJN}.
\begin{proof}[Proof of Theorem \ref{StrongHigherJN}]
Without loss of generality, we can assume that the supremums are
taken over some fixed dyadic grid and that the estimates do not depend
on the dyadic grid used. Let us see first that the supremum in $a)$
and $b)$ are equivalent, namely $\|b\|_{BMO_{V,U,j}^{p}}^{jp}\simeq\|b\|_{BMO_{V,U,j,1}^{p}}^{p}$.
We begin showing that 
\[
\|b\|_{BMO_{V,U,j}^{p}}^{jp}\lesssim\|b\|_{BMO_{V,U,j,1}^{p}}^{p}.
\]
Let us fix a cube $Q$. Then we have that
\begin{align*}
 & \left(\fint_Q\|\mathcal{R}_{Q,p,V}\mathcal{R}_{Q,p,U}^{-1}\|^{\frac{1}{j}}|b(x)-\langle b\rangle_{Q}|\,dx\right)^{jp}\\
= & \left(\fint_Q\|\mathcal{R}_{Q,p,V}V^{-\frac{1}{p}}(x)V^{\frac{1}{p}}(x)\mathcal{R}_{Q,p,U}^{-1}\|^{\frac{1}{j}}|b(x)-\langle b\rangle_{Q}|\,dx\right)^{jp}\\
\leq & \left(\fint_Q\|V^{\frac{1}{p}}(x)\mathcal{R}_{Q,p,U}^{-1}\|^{p}|b(x)-\langle b\rangle_{Q}|^{jp}\,dx\right)\left(\fint_Q\|\mathcal{R}_{Q,p,V}V^{-\frac{1}{p}}(x)\|^{\frac{(jp)'}{j}}\,dx\right)^{\frac{jp}{(jp)'}}\\
\leq & \|b\|_{BMO_{V,U,j,1}^{p}}^{p}\left(\fint_Q\|\mathcal{R}_{Q,p,V}V^{-\frac{1}{p}}(x)\|^{\frac{(jp)'}{j}}\,dx\right)^{\frac{jp}{(jp)'}}.
\end{align*}
Now, since $\frac{(jp)'}{j}\leq p'$, 
\begin{align*}
 & \|b\|_{BMO_{V,U,j,1}^{p}}^{p}\left(\fint_Q\|\mathcal{R}_{Q,p,V}V^{-\frac{1}{p}}(x)\|^{\frac{(jp)'}{j}}\,dx\right)^{\frac{jp}{(jp)'}}\\
\leq & \|b\|_{BMO_{V,U,j,1}^{p}}^{p}\left(\fint_{Q}\|\mathcal{R}_{Q,p,V}V^{-\frac{1}{p}}(x)\|^{p'}\,dx\right)^{\frac{p}{p'}}\\
\simeq & \|b\|_{BMO_{V,U,j,1}^{p}}^{p}\|\mathcal{R}_{Q,p,V}\mathcal{R}_{Q,p,V}^{'}\|^{p}\simeq\|b\|_{BMO_{V,U,j,1}^{p}}^{p}.
\end{align*}
Combining the estimates above, 
\[
\|b\|_{BMO_{V,U,j}^{p}}^{jp}\lesssim\|b\|_{BMO_{V,U,j,1}^{p}}^{p}.
\]
Let us show now that 
\[
\|b\|_{BMO_{V,U,j,1}^{p}}^{p}\lesssim\|b\|_{BMO_{V,U,j}^{p}}^{jp}.
\]
Again we fix a cube $Q$. We have that with the notation in 
Lemma
\ref{VIlem} and using Lemma \ref{eJNHigherLem}, 
\begin{align*}
 & \left(\fint_{Q}\|V^{\frac{1}{p}}(x)\mathcal{R}_{Q,p,U}^{-1}\|^{p}|b(x)-\langle b\rangle_{Q}|^{jp}\,dx\right)^{\frac{1}{jp}}\\
\leq & \sup_{I\in\mathcal{D}(J)}\left(\fint_{I}\|V^{\frac{1}{p}}(y)\mathcal{R}_{I,p,U}^{-1}\|^{p}|b(y)-\langle b\rangle_{I}|^{jp}\,dy\right)^{\frac{1}{jp}}\\
\lesssim & \sup_{I\in\mathcal{D}(J)}\left(\fint_{I}\|\mathcal{R}_{I,jp,V_{I}}(b(y)-\langle b\rangle_{I})\|^{1+\epsilon'}\,dy\right)^{\frac{1}{1+\epsilon'}}.
\end{align*}
Now, by Lemma \ref{HigherEqnLem},
\begin{align*}
 & \sup_{I\in\mathcal{D}(J)}\left(\fint_{I}\|\mathcal{R}_{I,jp,V_{I}}(b(y)-\langle b\rangle_{I})\|^{1+\epsilon'}\,dy\right)^{\frac{1}{1+\epsilon'}}\\
\leq & \sup_{I\in\mathcal{D}}\left(\fint_{I}\|\mathcal{R}_{I,jp,V_{I}}(b(y)-\langle b\rangle_{I})\|^{1+\epsilon'}\,dy\right)^{\frac{1}{1+\epsilon'}}\\
\lesssim & \sup_{I\in\mathcal{D}}\fint_{I}\|\mathcal{R}_{I,jp,V_{I}}(b(y)-\langle b\rangle_{I})\|\,dy.
\end{align*}
At this point, since by Lemma \ref{VIlem}, for every cube $P$, $\|\mathcal{R}_{P,jp,V_{I}}\|\simeq\|\mathcal{R}_{P,p,V}\mathcal{R}_{Q,p,U}^{-1}\|^{\frac{1}{j}}$,
we have that
\begin{align*}
 & \sup_{I\in\mathcal{D}}\fint_{I}\|\mathcal{R}_{I,jp,V_{I}}(b(y)-\langle b\rangle_{I})\|\,dy\lesssim\sup_{I\in\mathcal{D}}\fint_{I}\|\mathcal{R}_{I,jp,V_{I}}\||b(y)-\langle b\rangle_{I}|\,dy\\
\simeq & \sup_{I\in\mathcal{D}}\fint_{I}\|\mathcal{R}_{I,p,V}\mathcal{R}_{Q,p,U}^{-1}\|^{\frac{1}{j}}|b(y)-\langle b\rangle_{I}|\,dy\leq\|b\|_{BMO_{V,U,j}^{p}}^{jp}.
\end{align*}
Gathering the estimates above, it readily follows that $\|b\|_{BMO_{V,U,j,1}^{p}}^{p}\lesssim\|b\|_{BMO_{V,U,j}^{p}}^{jp}$.

The equivalence between $a)$ and $c)$, namely, $\|b\|_{BMO_{V,U,j}^{p}}^{jp}\simeq\|b\|_{BMO_{V,U,j,1}^{p}}^{p'}$
can be settled arguing analogously as we have just done above.

Next we continue showing that the conditions in $b)$ and $c)$ are equivalent.
Indeed, observe that for any cube $Q$, 
\begin{align*}
\fint_{Q} & \|V^{\frac{1}{p}}(x)\mathcal{R}_{Q,p,U}^{-1}\|^{p}|b(x)-\langle b\rangle_{Q}|^{jp}\,dx\\
 & \leq\fint_{Q}\|V^{\frac{1}{p}}(x)\mathcal{R}_{Q,p,U}^{-1}\|^{p}\left(\fint_{Q}|b(x)-b(y)|\right)^{jp}\,dx\\
 & \leq\fint_{Q}\left(\fint_{Q}\|U^{\frac{1}{p}}(y)\mathcal{R}_{Q,p,U}^{-1}\|\|U^{-\frac{1}{p}}(y)V^{\frac{1}{p}}(x)\||b(x)-b(y)|^{j}\,dy\right)^{p}\,dx\\
 & \leq\fint_{Q}\left(\fint_{Q}\|U^{\frac{1}{p}}(y)\mathcal{R}_{Q,p,U}^{-1}\|^{p}dy\right)^{\frac{p}{p}}\left(\fint_{Q}\|V^{\frac{1}{p}}(x)U^{-\frac{1}{p}}(y)\|^{p'}|b(x)-b(y)|^{jp'}\,dy\right)^{\frac{p}{p'}}\,dx\\
  & \lesssim\fint_{Q}\left(\fint_{Q}\|V^{\frac{1}{p}}(x)U^{-\frac{1}{p}}(y)\|^{p'}|b(x)-b(y)|^{jp'}\,dy\right)^{\frac{p}{p'}}\,dx.
\end{align*}
From this it readily follows that $\|b\|_{BMO_{V,U,j,1}^{p}}^{p}\lesssim\|b\|_{\widetilde{BMO}_{V,U,j}^{p}}.$
Reciprocally
\begin{align*}
 & \fint_{Q}\left(\fint_{Q}\|V^{\frac{1}{p}}(x)U^{-\frac{1}{p}}(y)\|^{p'}|b(x)-b(y)|^{jp'}\,dy\right)^{\frac{p}{p'}}\,dx\\
 & \lesssim\fint_{Q}\left(\fint_{Q}\|V^{\frac{1}{p}}(x)U^{-\frac{1}{p}}(y)\|^{p'}(|b(x)-\langle b\rangle_{Q}|^{jp'}+|b(y)-\langle b\rangle_{Q}|^{jp'})\,dy\right)^{\frac{p}{p'}}\,dx\\
 & \lesssim\fint_{Q}|b(x)-\langle b\rangle_{Q}|^{jp}\left(\fint_{Q}\|V^{\frac{1}{p}}(x)U^{-\frac{1}{p}}(y)\|^{p'}\,dy\right)^{\frac{p}{p'}}\,dx\\
 & +\fint_{Q}\left(\fint_{Q}\|V^{\frac{1}{p}}(x)U^{-\frac{1}{p}}(y)\|^{p'}|b(y)-\langle b\rangle_{Q}|^{jp'}\,dy\right)^{\frac{p}{p'}}\,dx\\
 & \lesssim\fint_{Q}|b(x)-\langle b\rangle_{Q}|^{jp}\|V^{\frac{1}{p}}(x)\mathcal{R}_{Q,p,U}^{'}\|^{p}\,dx\\
 & +\left(\fint_{Q}\|V^{\frac{1}{p}}(x)\mathcal{R}_{Q,p,V}^{'}\|^{p}dx\right)\left(\fint_{Q}\|(\mathcal{R}_{Q,p,V}^{'})^{-1}U^{-\frac{1}{p}}(y)\|^{p'}|b(y)-\langle b\rangle_{Q}|^{jp'}\,dy\right)^{\frac{p}{p'}}\\
 & \lesssim\|\mathcal{R}_{Q,p,U}\mathcal{R}_{Q,p,U}^{'}\|^{p}\fint_{Q}|b(x)-\langle b\rangle_{Q}|^{jp}\|V^{\frac{1}{p}}(x)\mathcal{R}_{Q,p,U}^{-1}\|^{p'}\,dx\\
 & +\|\mathcal{R}_{Q,p,V}\mathcal{R}_{Q,p,V}^{'}\|^{p}\left(\fint_{Q}\|U^{-\frac{1}{p}}(y)(\mathcal{R}_{Q,p,V}^{'})^{-1}\|^{p'}|b(y)-\langle b\rangle_{Q}|^{jp'}\,dy\right)^{\frac{p}{p'}}\\
 & \lesssim\|b\|_{BMO_{V,U,j,1}^{p}}^{p}+\|b\|_{BMO_{V,U,j,2}^{p}}^{p}
\end{align*}
and since, as we showed above, $\|b\|_{BMO_{V,U,j,2}^{p}}^{p}\simeq\|b\|_{BMO_{V,U,j,1}^{p}}^{p}$,
this implies the desired inequality. The equivalence of $e)$ with
$a)-c)$ can be obtained arguing analogously as above.
\end{proof}

We end this section noting that an analogue of \eqref{StrongHigherJN} in the more general situation than a single scalar symbol $b$ would be quite interesting.  In particular,  we believe that ``suitable combinations'' of the norms in more general situations may give rise to the same $BMO$ class. However, as we have already noted, we are still unable to prove such a claim.


\section{Proof of the main results}\label{sec:ProfMain}

In this section we will provide proofs for all the results mentioned in the previous sections.

\subsection{Proof of Theorems \ref{th:tocho} and \ref{th:tocho3}}\label{sect:demotocho}

In order to prove the two theorems, we are going to show that two more general analogous results,
which are stronger, but more abstract, are satisfied.  
The proof of these two results is going to be a very simple application of 
the following lemma.

\begin{lemma}\label{lemma:comm_exp}
    Formally, 
    \begin{equation*}
        \begin{split}
            \vec{T}_{\vec{B}}\vec{f}(x) = \sum_{\sigma\in C(m)} (-1)^{m-|\sigma|}\vec{B}_{\sigma}(x) \vec{T}(\vec{B}_{(\sigma^c)^t}\vec{f})(x).
        \end{split}
        \end{equation*}
        
        In particular, if we define $\Psi$, $\tilde{\Psi}$ and $\overline{T}$ as in 
        \eqref{psi}, \eqref{tildepsi} and \eqref{Tbarra}, respectively, 
        we may write 
        \begin{equation*}
        \begin{split}
            \vec{T}_{\vec{B}}\vec{f}(x) = \Psi(x)\overline{T}(\tilde{\Psi}\vec{f})(x).
        \end{split}
        \end{equation*}
\end{lemma}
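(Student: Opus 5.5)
We will argue by induction on $m$, the number of symbols. This gives the first identity; the block form is then a direct rewriting. The plan is as follows.

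\emph{Base case $m=1$.} Here $C(1)=\{\varnothing,(1)\}$. The claimed formula reads $-\vec{T}(B_1\vec{f})+B_1\vec{T}\vec{f}$, which is exactly $[B_1,\vec{T}]\vec{f}$.

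\emph{Inductive step.} Assume the formula holds for $m-1$ symbols, with $C(m-1)$ viewed as the tuples not containing $m$. Then, using only the linearity of $\vec{T}$,
\begin{align*}
\vec{T}_{\vec{B}}\vec{f}(x)&=B_m(x)\,\vec{T}_{(B_1,\dots,B_{m-1})}\vec{f}(x)-\vec{T}_{(B_1,\dots,B_{m-1})}(B_m\vec{f})(x)\\
&=\sum_{\sigma\in C(m-1)}(-1)^{m-1-|\sigma|}\Big(B_m(x)\vec{B}_{\sigma}(x)\vec{T}(\vec{B}_{(\sigma^c)^t}\vec{f})(x)-\vec{B}_{\sigma}(x)\vec{T}(\vec{B}_{(\sigma^c)^t}B_m\vec{f})(x)\Big),
\end{align*}
where $\sigma^c$ is taken in $(1,\dots,m-1)$.

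\emph{Bookkeeping of tuples.} Every $\tau\in C(m)$ is of exactly one of two forms:
\begin{itemize}
\item $\tau=(\sigma,m)$ with $\sigma\in C(m-1)$. Since $\vec{B}_\tau$ is the product in reverse order, $\vec{B}_\tau=B_m\vec{B}_\sigma$. The complement of $\tau$ in $(1,\dots,m)$ equals $\sigma^c$, and the sign $(-1)^{m-|\tau|}=(-1)^{m-1-|\sigma|}$ matches. These are the first terms in the sum above.
\item $\tau=\sigma\in C(m-1)$, now regarded in $C(m)$. Its complement in $(1,\dots,m)$ is $(\sigma^c,m)$, whose transpose is $(m,\dots)$. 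Hence $\vec{B}_{((\sigma^c,m))^t}=\vec{B}_{(\sigma^c)^t}B_m$, because the transpose-product multiplies in the original order with $B_m$ last. The sign $(-1)^{m-|\sigma|}=-(-1)^{m-1-|\sigma|}$ matches. These are the second terms.
\end{itemize}
The main care point is exactly this ordering convention: $\vec{B}_\sigma$ multiplies in reverse order, while $\vec{B}_{\sigma^t}$ multiplies in increasing order. Matrices do not commute, so the two must be checked separately as above. Summing over both forms gives the formula for $m$.

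\emph{Block form.} By definition, $\tilde{\Psi}\vec{f}$ is the $2^mn$-vector whose $j$-th block, read bottom to top, is $\vec{B}_{(\tilde\sigma_j^c)^t}\vec{f}$. Consequently $\overline{T}(\tilde\Psi\vec f)$ has corresponding blocks $\vec{T}(\vec{B}_{(\tilde\sigma_j^c)^t}\vec f)$. Multiplying on the left by the row of blocks $\Psi(x)$, whose $j$-th block is $(-1)^{m-k_0(j)}\vec{B}_{\tilde\sigma_j}(x)$ with $k_0(j)=|\tilde\sigma_j|$, produces
\[
\sum_{j=1}^{2^m}(-1)^{m-|\tilde\sigma_j|}\vec{B}_{\tilde\sigma_j}(x)\,\vec{T}(\vec{B}_{(\tilde\sigma_j^c)^t}\vec f)(x).
\]
Since $\{\tilde\sigma_j\}$ enumerates $C(m)$, this is the first identity, provided the block pairing is correct. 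The only verification needed is that the block of $\tilde\Psi$ in position $j$ is indexed by $\tilde\sigma_j^c$, as displayed in \eqref{tildepsi}. This matching of indices is routine.
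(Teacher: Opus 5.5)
Your proof is correct and follows the paper's argument: induction on $m$, splitting $C(m)$ into the tuples that contain $m$ and those that do not. You use the same ordering bookkeeping, $\vec{B}_{(\sigma,m)}=B_m\vec{B}_{\sigma}$ and $\vec{B}_{((\sigma^c,m))^t}=\vec{B}_{(\sigma^c)^t}B_m$, and for the block form, which the paper states without proof, your pairing is right. One small slip there: it is the $j$-th block of $\tilde{\Psi}$ counted from the top, not bottom to top, that equals $\vec{B}_{(\tilde{\sigma}_j^c)^t}$, and that is exactly what your final pairing with $\Psi$ uses.
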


\begin{proof}
    The result is clear for $m=1$, so, if we assume by induction that 
    it is also true for some $m$, then, 
    for $\vec{B} = (B_1,...,B_{m+1})$ we have that 
\begin{align*}
        \vec{T}_{\vec{B}}\vec{f}(x) &= 
        B_{m+1}(x) \sum_{j=0}^{m} (-1)^{m-j}\sum_{\sigma\in C_j(m)}\vec{B}_{\sigma}(x) \vec{T}(\vec{B}_{(\sigma^c)^t}\vec{f})(x)
    - \sum_{j=0}^{m} (-1)^{m-j}\sum_{\sigma\in C_j(m)}\vec{B}_{\sigma}(x) \vec{T}(\vec{B}_{(\sigma^c)^t}B_{m+1}(x)\vec{f})(x)
   \\ & = \sum_{j=0}^{m} (-1)^{m-j}\sum_{\underrel{\sigma(j+1) = m+1}{\sigma\in C_{j+1}(m+1)}}\vec{B}_{\sigma}(x) \vec{T}(\vec{B}_{(\sigma^c)^t}\vec{f})(x)
   - \sum_{j=0}^{m} (-1)^{m-j}\sum_{\underrel{\sigma(j)\neq m+1}{\sigma\in C_j(m+1)}}\vec{B}_{\sigma}(x) \vec{T}(\vec{B}_{(\sigma^c)^t}\vec{f})(x)
   \\ & =\sum_{j=1}^{m+1} (-1)^{m+1-j}\sum_{\underrel{\sigma(j) = m+1}{\sigma\in C_{j}(m+1)}}\vec{B}_{\sigma}(x) \vec{T}(\vec{B}_{(\sigma^c)^t}\vec{f})(x)
   + \sum_{j=0}^{m} (-1)^{m+1-j}\sum_{\underrel{\sigma(j)\neq m+1}{\sigma\in C_j(m+1)}}\vec{B}_{\sigma}(x) \vec{T}(\vec{B}_{(\sigma^c)^t}\vec{f})(x)
   \\ & =\sum_{j=0}^{m+1} (-1)^{m+1-j}\sum_{{\sigma\in C_{j}(m+1)}}\vec{B}_{\sigma}(x) \vec{T}(\vec{B}_{(\sigma^c)^t}\vec{f})(x).
\end{align*}
\end{proof}


The generalization of Theorem \ref{th:tocho} that we are going to prove is the following.

\begin{theorem}\label{th:tochogen}
    Let $\Omega$ be a measurable subset of some measure space $(X,\mathcal{M},\mu)$, 
     $\mathbb{F}=\mathbb{R}$ or $\mathbb{C}$, and
    $r\in [1,\infty]$. 
    Also consider $\{E_k\}_{k=1}^\infty$ a collection of spaces such that $E_k\subset L^r((\Omega, \mu),\mathbb{F}^k)$ 
    and $E_{k_1}\times E_{k_2} \subset E_{k_1+k_2}$ whenever $k,k_1,k_2\in \mathbb{N}$,
    and
    assume $\vec{T}=(T_1,...,T_n)$ is a {linear} operator 
    with the following property:
    \begin{itemize}
        \item[$(P_1)$] For every $\vec{f}\in E_{2^mn}$,
        there exist $C>0$,
        a collection of subsets of $\Omega$, $\mathcal{G}$, coefficients $\{a_Q\}_{Q\in \mathcal{G}}$, and
        functions $K_Q:Q\times Q\rightarrow \mathcal{M}_n(\mathbb{F})$, for $Q\in \mathcal{G}$, 
        with 
        $\|K_Q(x,\cdot)\|_{L^{r'}\left(\frac{d \mu}{|Q|}\right)}\leq 1$, for a.e $x\in Q$
        such that, for the $2^mn\times 2^m n$ matrix function, given by the $n\times n$ blocks
        \begin{equation*}
        \begin{split}
            \overline{K}_Q = 
\begin{pNiceArray}{cc|cc|cc|cc}
  \quad K_Q &&  {{0}} && \dots && {0}\\
  \hline
  {{0}} && K_Q&& \dots && {0}\\
  \hline
  \vdots && \vdots && \ddots     && \vdots\\
  \hline
  0 && 0 && \dots &&K_Q
\end{pNiceArray}
        \end{split}
        \end{equation*} 
        and the operator $\overline{T}$ given by 
            \begin{equation}\label{Tbarraa}
        \begin{split}
            \overline{T}(\vec{g}) = \begin{pmatrix}
            \vec{T}(\vec{g}^1)\\\vec{T}(\vec{g}^2)\\\vdots\\\vec{T}(\vec{g}^{2^m})
            \end{pmatrix},
        \end{split}
        \end{equation}
        for an appropriate $\vec{g} = (g_1,g_2,...,g_{2^mn})$
        ($\vec{g}^k=(g_{(k-1)n+1},g_{(k-1)n+2},...,g_{(k-1)n+n})$),
        the equality
        \begin{equation}\label{eq:pconvTT}
        \begin{split}
            \overline{T}\vec{f}(x) = C \sum_{Q\in \mathcal{G}}a_Q\chi_Q(x)\int_Q{\overline{K}_Q(x,y)\vec{f}(y)dy}
        \end{split}
        \end{equation}
        is satisfied a.e. on $\Omega$.
    \end{itemize}
    
    Then, given any function $\vec{f}\in E_n$,  
    and $\vec{B}=(B_1,...,B_m)$, a vector with $m$ $n\times n$ matrix functions defined in $\Omega$, 
    such that $\vec{B}_{\sigma^t}\vec{f}\in E_n$
    for all $\sigma\in C(m)$,
     there exist $C>0$,
    a collection of finite measure subsets of $\Omega$, $\mathcal{G}$, 
    coefficients $\{a_Q\}_{Q\in \mathcal{G}}$,
    and functions $K_Q:Q\times Q\rightarrow \mathcal{M}_n(\mathbb{F})$
    with $\|K_Q(x,\cdot)\|_{L^{r'}\left(\frac{d \mu}{|Q|}\right)}\leq 1$ a.e. $x\in Q$, for $Q\in \mathcal{G}$ (all provided by 
    $(P_1)$ applied to a function dependent on $\vec{f}$ and $\vec{B}$), such that
    \begin{equation}\label{paux333}
    \begin{split}
        \vec{T}_{\vec{B}}\vec{f}(x)=C \sum_{Q\in \mathcal{G}}a_Q\chi_Q(x)\sum_{\sigma\in C(m)}(-1)^{m-|\sigma|}  \vec{B}_{\sigma}(x)\int_Q{K_Q(x,y)\vec{B}_{(\sigma^c)^t}(y)\vec{f}(y)dy}
    \end{split}
    \end{equation}
    for a.e. $x\in \Omega$.
\end{theorem}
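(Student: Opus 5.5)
The plan is to reduce everything to Lemma \ref{lemma:comm_exp}, which gives the factorization $\vec{T}_{\vec{B}}\vec{f}(x)=\Psi(x)\overline{T}(\tilde{\Psi}\vec{f})(x)$. We apply $(P_1)$ not to $\vec{f}$ itself but to the auxiliary $2^mn$-dimensional function $\vec{F}:=\tilde{\Psi}\vec{f}$. Its $j$-th block of $n$ components is $\vec{F}^j=\vec{B}_{(\tilde{\sigma}_j^c)^t}\vec{f}$. Each such block lies in $E_n$ by hypothesis, since $(\tilde\sigma_j^c)^t=\sigma^t$ for $\sigma=\tilde\sigma_j^c\in C(m)$. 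Using the compatibility $E_{k_1}\times E_{k_2}\subset E_{k_1+k_2}$ repeatedly, we then get $\vec{F}\in E_{2^mn}$, so $(P_1)$ is applicable to $\vec F$. This yields $C$, $\mathcal{G}$, coefficients $\{a_Q\}$ and kernels $K_Q$ with the stated normalization such that
\[
\overline{T}\vec{F}(x)=C\sum_{Q\in\mathcal{G}}a_Q\chi_Q(x)\int_Q\overline{K}_Q(x,y)\vec{F}(y)\,dy
\]
for a.e. $x\in\Omega$. These are exactly the objects the theorem asserts are ``provided by $(P_1)$ applied to a function dependent on $\vec f$ and $\vec B$'', so any property of them independent of the input function is inherited automatically.

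Next, we multiply on the left by $\Psi(x)$ and use the block structure. Because $\overline{K}_Q$ is block diagonal with copies of $K_Q$, the $j$-th block of $\int_Q\overline{K}_Q(x,y)\vec F(y)\,dy$ is $\int_Q K_Q(x,y)\vec{B}_{(\tilde\sigma_j^c)^t}(y)\vec f(y)\,dy$. The $j$-th block column of $\Psi(x)$ is $(-1)^{m-|\tilde\sigma_j|}\vec B_{\tilde\sigma_j}(x)$; for $j=1$ this is $(-1)^mI_n=(-1)^{m-|\varnothing|}\vec B_\varnothing$, consistent with the convention $\vec B_\varnothing=I_n$. Multiplying the $1\times 2^m$ block row by the block column therefore gives, for each $Q$,
\[
\sum_{j=1}^{2^m}(-1)^{m-|\tilde\sigma_j|}\vec B_{\tilde\sigma_j}(x)\int_QK_Q(x,y)\vec B_{(\tilde\sigma_j^c)^t}(y)\vec f(y)\,dy .
\]
Since $\{\tilde\sigma_j\}$ enumerates $C(m)$, this is exactly the inner sum in \eqref{paux333}. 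We then move $\Psi(x)$, which is a fixed matrix at each point $x$, inside the sum over $Q\in\mathcal G$ and inside the integrals, and the identity follows a.e.

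The main obstacle is justifying this interchange. The sum over $\mathcal G$ might converge only in the a.e. pointwise sense in which \eqref{eq:pconvTT} holds. Multiplying a convergent series of vectors by a fixed finite matrix $\Psi(x)$ preserves convergence and commutes with the summation, so at each point $x$ where \eqref{eq:pconvTT} holds and $\Psi(x)$ is finite (a.e. $x$), the manipulation is legitimate; it amounts to finite linear combinations of the $n$-blocks. A second point to check is that Lemma \ref{lemma:comm_exp} is stated ``formally'': we must verify that each term $\vec T(\vec B_{(\sigma^c)^t}\vec f)$ is well defined. This holds because $\vec B_{(\sigma^c)^t}\vec f\in E_n$, and the linearity of $\vec T$ makes the inductive expansion in the proof of that lemma rigorous under this membership.

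Finally, the theorem asks that $\mathcal G$ consist of finite-measure sets. We inherit this from $(P_1)$, implicitly from the normalization $\|K_Q(x,\cdot)\|_{L^{r'}(d\mu/|Q|)}\le1$, which requires $0<|Q|<\infty$. Theorem \ref{th:tocho} is then the special case with $E_k=L^r_c$ on $\mathbb R^d$ and $a_Q=1/|Q|$.
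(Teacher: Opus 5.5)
Your proposal is correct and follows the paper's proof: you factor $\vec{T}_{\vec{B}}\vec{f}=\Psi\,\overline{T}(\tilde{\Psi}\vec{f})$ via Lemma~\ref{lemma:comm_exp}, get $\tilde{\Psi}\vec{f}\in E_{2^mn}$ from the membership hypotheses and the product property of the $E_k$, apply $(P_1)$ to $\tilde{\Psi}\vec{f}$, and expand the block product with $\Psi(x)$. Your extra remarks — pulling the pointwise matrix $\Psi(x)$ through the a.e.-convergent sum and the $y$-integrals, and why the sets in $\mathcal{G}$ have finite measure — spell out details the paper leaves implicit.
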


\begin{proof}
    The proof will be both an extension and a simplification of \cite[Theorem 4]{SA1EfVVO} and \cite[Theorem 1.8]{Cit2SaMWS}
by Isralowitz, Pott and Treil. 
    By Lemma \ref{lemma:comm_exp}, we know that 
    \begin{equation*}
        \begin{split}
            \vec{T}_{\vec{B}}\vec{f}(x) = \Psi(x)\overline{T}(\tilde{\Psi}\vec{f})(x),
        \end{split}
        \end{equation*}
        and by the hypotheses, it is also clear that 
        $\tilde{\Psi}\vec{f}\in E_{2^mn}$, so, by $(P_1)$, 
        we know there exist $C>0$, a collection $\mathcal{G}$, coefficients $\{a_Q\}_{Q\in \mathcal{G}}$, and 
        functions $K_Q:Q\times Q\rightarrow \mathcal{M}_n(\mathbb{F})$ with $\|K_Q(x,\cdot)\|_{L^{r'}\left(\frac{d \mu}{|Q|}\right)}\leq 1$ a.e. $x\in Q$, for $Q\in \mathcal{G}$, such that 
        \begin{equation*}
        \begin{split}
            \overline{T}(\tilde{\Psi}\vec{f})(x) = C \sum_{Q\in \mathcal{G}}a_Q\chi_Q(x)\begin{pmatrix}
            \int_Q{K_Q(x,y)\vec{B}_{\tilde{\sigma}_{2^m}^t}(y)\vec{f}(y)dy}\\
            \vdots\\
            \int_Q{K_Q(x,y)\vec{B}_{(\tilde{\sigma}_{i}^c)^t}(y)\vec{f}(y)dy}  \\
                \vdots\\
                \int_Q{K_Q(x,y)\vec{f}(y)dy}
            \end{pmatrix}.
        \end{split}
        \end{equation*}
        As a result,
        \begin{equation*}
        \begin{split}
            \vec{T}_{\vec{B}}\vec{f}(x) = \Psi(x)\overline{T}(\tilde{\Psi}\vec{f})(x)
            &= C \sum_{Q\in \mathcal{G}}a_Q\chi_Q(x) \Psi(x) \begin{pmatrix}
                \int_Q{K_Q(x,y)\vec{B}_{\tilde{\sigma}_{2^m}^t}(y)\vec{f}(y)dy}\\
                \vdots\\
                \int_Q{K_Q(x,y)\vec{B}_{(\tilde{\sigma}_{i}^c)^t}(y)\vec{f}(y)dy}  \\
                    \vdots\\
                    \int_Q{K_Q(x,y)\vec{f}(y)dy}
                \end{pmatrix}
                \\ & =  C\sum_{Q\in \mathcal{G}}a_Q\chi_Q(x) 
                \sum_{k_0=0}^m (-1)^{m-k_0}\sum_{\sigma\in C_{k_0}(m)} \vec{B}_{\sigma}(x)\int_Q{K_Q(x,y)\vec{B}_{({\sigma}^c)^t}(y)\vec{f}(y)dy},
        \end{split}
        \end{equation*}
        concluding the proof.
\end{proof}


\begin{theorem}\label{th:tocho3gen}
    Let $\Omega$ be a measurable subset of some measure space $(X,\mathcal{M},\mu)$,
     $\mathbb{F}=\mathbb{R}$ or $\mathbb{C}$, and
    $r,s\in[1,\infty]$.
    Also consider $\{E_k\}_{k=1}^\infty$, $\{F_k\}_{k=1}^\infty$ collections of $k-$dimensional function spaces
   such that 
    $E_{k_1}\times E_{k_2} \subset E_{k_1+k_2}$ and $F_{k_1}\times F_{k_2} \subset F_{k_1+k_2}$ whenever $k_1,k_2\in \mathbb{N}$,
    and
    assume ${T}$ is a scalar linear operator with the following property:
    \begin{itemize}
        \item[$(P_2)$] For every $\vec{f}\in E_{2^mn}$
         and $\vec{g}\in F_{2^mn}$,
        there exist $C>0$ and
        a collection of subsets of $\Omega$, $\mathcal{G}$, such that
        $E_{2^mn}\subset L^r((Q,\mu),\mathbb{F}^{2^mn})$ and $F_{2^mn}\subset L^s((Q,\mu),\mathbb{F}^{2^mn})$
        for all $Q\in \mathcal{G}$, and the inequality
        \begin{equation*}
        \begin{split}
            \int_{\mathbb{R}^d} \left\lvert \esc{{T\otimes I_{2^mn}}\vec{f}}{\vec{g}}\right\rvert 
        &\leq C \sum_{Q\in \mathcal{G}}|Q| 
        \iinteg{\vec{f}}_{r,Q}\iinteg{\vec{g}}_{s,Q}
        \end{split}
        \end{equation*}
        is satisfied.
    \end{itemize}
   
    Then, given functions $\vec{f}\in E_n$, $\vec{g}\in F_n$ 
    and $\vec{B}=(B_1,...,B_m)$, a vector with $m$ $n\times n$ matrix functions defined in $\Omega$, 
    such that $\vec{B}_{\sigma^t}\vec{f}\in E_n$ and 
    $\vec{B}^*_{\sigma}\vec{g}\in F_n$ for all $\sigma\in \bigcup_{j=0}^n C_j(m)$,
    there exist $C>0$,
    a collection of subsets of $\Omega$, $\mathcal{G}$, 
    (all provided by 
    $(P_2)$ applied to a function dependent on $\vec{f}$, $\vec{g}$ and $\vec{B}$), such that
    \begin{equation*}
    \begin{split}
        \int_{\Omega}\left\lvert \esc{{(T\otimes I_n)}_{\vec{B}}\vec{f}(x)}{\vec{g}(x)}\right\rvert dx
    \leq C\sum_{Q\in \mathcal{G}}|Q| 
    \iinteg{\tilde{\Psi}\vec{f}}_{r,Q}\iinteg{\Psi^*\vec{g}}_{s,Q}.
    \end{split}
    \end{equation*}
\end{theorem}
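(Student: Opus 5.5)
The plan mirrors the proof of Theorem \ref{th:tochogen}: everything reduces to the factorization of Lemma \ref{lemma:comm_exp}. Since $T\otimes I_n$ acts componentwise, the operator $\overline{T}$ of \eqref{Tbarraa} built from $\vec{T}=T\otimes I_n$ is exactly $T\otimes I_{2^mn}$. Lemma \ref{lemma:comm_exp} then gives, pointwise a.e.,
\begin{equation*}
(T\otimes I_n)_{\vec{B}}\vec{f}(x)=\Psi(x)\,(T\otimes I_{2^mn})(\tilde{\Psi}\vec{f})(x).
\end{equation*}
Pairing with $\vec{g}(x)$ and moving the matrix $\Psi(x)$ to the other slot of the inner product yields
\begin{equation*}
\esc{(T\otimes I_n)_{\vec{B}}\vec{f}(x)}{\vec{g}(x)}=\esc{(T\otimes I_{2^mn})(\tilde{\Psi}\vec{f})(x)}{\Psi^*(x)\vec{g}(x)}.
\end{equation*}
Integrating the absolute value over $\Omega$ therefore bounds the left-hand side of the claim by $\int|\esc{(T\otimes I_{2^mn})\vec{F}}{\vec{G}}|$, with $\vec{F}=\tilde{\Psi}\vec{f}$ and $\vec{G}=\Psi^*\vec{g}$.

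Next I check that $(P_2)$ can be applied to the pair $(\vec{F},\vec{G})$. By \eqref{tildepsi}, the $2^m$ blocks of $\vec{F}$ are the vectors $\vec{B}_{(\tilde\sigma_j^c)^t}\vec{f}$, each of which lies in $E_n$ by hypothesis. The block structure $E_{k_1}\times E_{k_2}\subset E_{k_1+k_2}$, iterated, then gives $\vec{F}\in E_{2^mn}$. Likewise, $\Psi^*$ is the column of blocks $(-1)^{m-|\sigma|}\vec{B}_\sigma^*$. Each block $\vec{B}^*_\sigma\vec{g}$ belongs to $F_n$; the sign is harmless because these are linear spaces, or one absorbs it. Hence $\vec{G}\in F_{2^mn}$. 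Applying $(P_2)$ provides $C$ and $\mathcal{G}$ depending on $\vec F,\vec G$, hence on $\vec f,\vec g,\vec B$, with
\begin{equation*}
\int|\esc{(T\otimes I_{2^mn})\vec{F}}{\vec{G}}|\le C\sum_{Q\in\mathcal{G}}|Q|\iinteg{\tilde\Psi\vec f}_{r,Q}\iinteg{\Psi^*\vec g}_{s,Q},
\end{equation*}
which is exactly the claim.

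The main point needing care is the identification of the adjoint. Concretely, $\Psi(x)$ is an $n\times 2^mn$ matrix, so $\esc{\Psi(x)\vec{v}}{\vec{w}}=\esc{\vec v}{\Psi(x)^*\vec w}$. Its blocks are $\Psi^*=((-1)^mI_n,\dots,(-1)^{m-|\sigma|}\vec B_\sigma^*,\dots)^T$, and $\vec{B}_\sigma^*$ is the product $B_{\sigma(1)}^*\cdots B_{\sigma(|\sigma|)}^*$. This is why the hypothesis is stated as $\vec B^*_\sigma\vec g\in F_n$, and it should be matched with the notation $(\vec{B}^*)_{\sigma^t}$ if the convention demands it. The other point is making the formal identity of Lemma \ref{lemma:comm_exp} rigorous, namely that each $T(\vec B_{(\sigma^c)^t}\vec f)$ is defined. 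This follows from membership in $E_n$ and the linearity of $T$. No further estimates are needed.
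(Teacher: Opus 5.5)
Your proposal is correct and is essentially the paper's own proof. You factor the commutator as $\Psi\,(T\otimes I_{2^mn})(\tilde{\Psi}\vec{f})$ via Lemma \ref{lemma:comm_exp}, move $\Psi$ to the other slot of the inner product, check blockwise that $\tilde{\Psi}\vec{f}\in E_{2^mn}$ and $\Psi^*\vec{g}\in F_{2^mn}$, and apply $(P_2)$. Your remarks on the ordering of the adjoint products and on the signs in $\Psi^*$ are slightly more careful than the paper, which passes over these points in a single sentence.
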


\begin{proof}
    By Lemma \ref{lemma:comm_exp}, we know that, using the same notation, 
\begin{equation*}
\begin{split}
    {(T\otimes I_n)}_{\vec{B}}\vec{f}(x) = \Psi(x){T\otimes I_{2^mn}}(\tilde{\Psi}\vec{f})(x),
\end{split}
\end{equation*}
so
\begin{equation*}
\begin{split}
    \int_{\Omega}\left\lvert \esc{{(T\otimes I_n)}_{\vec{B}}\vec{f}}{\vec{g}}\right\rvert
    &=  \int_{\Omega}\left\lvert \esc{\Psi{T\otimes I_{2^mn}}(\tilde{\Psi}\vec{f})}{\vec{g}}\right\rvert
    = \int_{\Omega}\left\lvert \esc{{T\otimes I_{2^mn}}(\tilde{\Psi}\vec{f})}{\Psi^*\vec{g}}\right\rvert.
\end{split}
\end{equation*}
Now, observe that 
\begin{equation*}
\begin{split}
    \tilde{\Psi}(x)\vec{f}(x) = \begin{pmatrix}
     \vec{B}_{\tilde{\sigma}_{2^m}^t }(x)\vec{f}(x)\\
    \hline\\
    \vdots\\
    \hline\\
    \vec{B}_{(\tilde{\sigma}_{j}^c)^t}(x)\vec{f}(x)\\
    \hline\\
    \vdots\\
    \hline\\
    \vec{f}(x)
    \end{pmatrix},\quad
    \Psi^*(x)\vec{g}(x) = \begin{pmatrix}
    (-1)^m\vec{g}(x)\\
    \hline\\
    \vdots\\
    \hline\\
    (-1)^{m-k_0(j)}\vec{B}^*_{\tilde{\sigma}_{j}}(x)\vec{g}(x)\\
    \hline\\
    \vdots\\
    \hline\\
    \vec{B}^*_{\tilde{\sigma}_{2^m}}(x)\vec{g}(x)
    \end{pmatrix},
\end{split}
\end{equation*}
so both functions satisfy the hypotheses of property $(P_2)$, and as a result 
\begin{equation*}
\begin{split}
    \int_{\Omega}\left\lvert \esc{{T\otimes I_{2^mn}}(\tilde{\Psi}\vec{f})}{\Psi^*(x)\vec{g}}\right\rvert 
    &\leq C\sum_{Q\in \mathcal{G}}|Q| 
        \iinteg{\tilde{\Psi}\vec{f}}_{r,Q}\iinteg{\Psi^*\vec{g}}_{s,Q}.
\end{split}
\end{equation*}
\end{proof}

\subsection{Proof of the strong-type estimates}\label{sect:str}

We will start this section by obtaining two direct corollaries of 
Lemma \ref{lemma:perm2}. The first one will allow us to rewrite the sum of operators 
provided by Corollary \ref{cor:Tochodiago} adequately, and the second one will do the same 
for Theorem \ref{th:tocho3}.


\begin{corollary}\label{cor:a}
    Let $Q$ be a set of finite Lebesgue measure, and
    $\vec{B} = (B_{1},...,B_{m})$, where 
    each $B_j\in M_{n}(\mathbb{F})$ is locally integrable. Then
    \begin{equation*}
    \begin{split}
        { \sum_{\sigma\in C(m)}}
        &{ \esc{\sum_{\beta\in C(\sigma^c)}\fint_Q (-1)^{m-|\beta|} (m_Q\vec{B})_{\beta}\vec{B}_{(\sigma^c-\beta)^t}(y)A_1(y)dy}{\sum_{\alpha\in C(\sigma)}\fint_Q (-1)^{m-|\alpha|}(m_Q\vec{B})_{(\sigma-\alpha)^t}^*\vec{B}_{\alpha}^*(x)A_2(x)dx}}
        \\ & { =(-1)^m\esc{\fint_Q  \tilde{\Psi}(y)A_1(y)dy}
        {\fint_Q \Psi^*(x)A_2(x)dx},}
    \end{split}
    \end{equation*}
    provided that $A_1$ and $A_2$ are functions for which the integrals make sense.
\end{corollary}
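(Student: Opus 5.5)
The plan is to reduce the identity to the second (``in particular'') formula of Lemma \ref{lemma:perm2} with $\theta=(1,2,\dots,m)$, so that $C(\theta)=C(m)$ and $\sigma^c=\theta-\sigma$. That formula is applied pointwise for $x,y\in Q$, and then everything is integrated against $A_1(y)$ and $A_2(x)$.

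\emph{Conventions.} Throughout, $\esc{u}{v}=v^*u$, and $\vec{B}^*_{\alpha}$ and $(m_Q\vec{B})^*_{(\sigma-\alpha)^t}$ are read as the adjoints of the corresponding products $\vec{B}_\alpha$ and $(m_Q\vec{B})_{(\sigma-\alpha)^t}$. This is the same convention used for the blocks of $\Psi^*$ in \eqref{psi}.

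\emph{Right-hand side.} Write out the block structure of $\tilde{\Psi}$ and $\Psi^*$ from \eqref{psi} and \eqref{tildepsi}. The block of $\tilde{\Psi}$ in position $j$ is $\vec{B}_{(\tilde\sigma_j^c)^t}$, and the block of $\Psi^*$ in the same position is $(-1)^{m-|\tilde\sigma_j|}\vec{B}^*_{\tilde\sigma_j}$. The inner product of the two averaged vectors is therefore a sum over $\sigma\in C(m)$. Using $(\vec{B}^*_\sigma)^*=\vec{B}_\sigma$ and Fubini, it equals
\[
\fint_Q\fint_Q A_2(x)^*\Big(\sum_{\sigma\in C(m)}(-1)^{m-|\sigma|}\vec{B}_{\sigma}(x)\vec{B}_{(\sigma^c)^t}(y)\Big)A_1(y)\,dy\,dx .
\]

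\emph{Left-hand side.} The same manipulation applies: take the adjoint of the second entry, and pull the constant matrices $(m_Q\vec{B})_{\cdot}$ outside the integrals by linearity. The left side then becomes
\[
\fint_Q\fint_Q A_2(x)^*\sum_{\sigma\in C(m)}\Big(\sum_{\alpha\in C(\sigma)}(-1)^{m-|\alpha|}\vec{B}_{\alpha}(x)(m_Q\vec{B})_{(\sigma-\alpha)^t}\Big)\Big(\sum_{\beta\in C(\sigma^c)}(-1)^{m-|\beta|}(m_Q\vec{B})_{\beta}\vec{B}_{(\sigma^c-\beta)^t}(y)\Big)A_1(y)\,dy\,dx .
\]

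\emph{Matching the two.} The bracketed matrix in the left-hand side is exactly the right-hand side of the second formula in Lemma \ref{lemma:perm2}. That lemma is a purely algebraic identity, valid pointwise for fixed $x,y$, with $m_Q\vec{B}$ treated as a constant vector of matrices. It therefore equals $(-1)^{m}\sum_{\sigma}(-1)^{m-|\sigma|}\vec{B}_\sigma(x)\vec{B}_{(\sigma^c)^t}(y)$. Substituting this and comparing with the expression for the right-hand side gives the factor $(-1)^m$ claimed in the statement.

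The main obstacle is bookkeeping rather than analysis. One has to check that taking adjoints reverses the products in a way that exactly recovers $\vec{B}_\alpha(x)(m_Q\vec{B})_{(\sigma-\alpha)^t}$ in the right order, with no stray transposition of the tuples. The other integrability points are covered by the proviso that the integrals make sense, which is what justifies Fubini.
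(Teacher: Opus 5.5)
Your proposal is correct and is essentially the paper's argument. The paper also rewrites the pairing so that $\Psi$ acts next to $\tilde\Psi$, then invokes Lemma \ref{lemma:perm2} with $\theta=(1,\dots,m)$; the only cosmetic difference is that it uses the operator form of the lemma with $Tf=\fint_Q f$, while you use the pointwise (``in particular'') form and integrate in $x$ and $y$ afterwards, with your adjoint-of-products reading of $\vec{B}^*_\alpha$ and $(m_Q\vec{B})^*_{(\sigma-\alpha)^t}$ being exactly what is needed to match the blocks of $\Psi^*$.
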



\begin{corollary}\label{cor:b}
    Let $Q$ be a set of finite Lebesgue measure, $k_Q:Q\times Q\rightarrow \mathbb{F}$, and
    $\vec{B} = (B_{1},...,B_{m})$, where 
    each $B_j\in M_{n}(\mathbb{F})$ is locally integrable. Then
    \begin{equation*}
    \begin{split}
        { \sum_{\sigma\in C(m)}}
        &{ \left(\sum_{\alpha\in C(\sigma)} (-1)^{m-|\alpha|}\vec{B}_{\alpha}(x)(m_Q\vec{B})_{(\sigma-\alpha)^t}\right){\fint_Q \sum_{\beta\in C(\sigma^c)}(-1)^{m-|\beta|} (m_Q\vec{B})_{\beta}\vec{B}_{(\sigma^c-\beta)^t}(y)A_1(y) k_Q(x,y)dy}}
        \\ &  =(-1)^m \sum_{\sigma\in C(m)}(-1)^{m-|\sigma|} \vec{B}_{\sigma}(x)\fint_Q\vec{B}_{(\sigma^c)^t}(y)A_1(y)k_Q(x,y)dy
    \end{split}
    \end{equation*}
    provided that $A_1$ is a function for which the integrals make sense.
\end{corollary}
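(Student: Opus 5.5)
This is a direct consequence of Lemma \ref{lemma:perm2} with $\theta=(1,\dots,m)$, so $|\theta|=m$ and $C(\theta)=C(m)$. The plan is to apply the lemma pointwise in $(x,y)$ and then integrate against the scalar kernel, rather than to compute anything new.

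\textbf{Step 1 (the operator).} Take $T$ in Lemma \ref{lemma:perm2} to be the identity acting on $n\times n$ matrix-valued functions, or equivalently apply it column by column. Then both $\vec{B}_\sigma(x)$ and $\vec{B}_{(\sigma^c)^t}(y)$ appear only as matrix factors, and the lemma gives, for every $x,y\in Q$, the pointwise matrix identity
\begin{align*}
(-1)^{m}\sum_{\sigma\in C(m)}(-1)^{m-|\sigma|}\vec{B}_{\sigma}(x)\vec{B}_{(\sigma^c)^t}(y)
&=\sum_{\sigma\in C(m)}\Big(\sum_{\alpha\in C(\sigma)}(-1)^{m-|\alpha|}\vec{B}_{\alpha}(x)(m_Q\vec{B})_{(\sigma-\alpha)^t}\Big)\\
&\quad\times\Big(\sum_{\beta\in C(\sigma^c)}(-1)^{m-|\beta|}(m_Q\vec{B})_{\beta}\vec{B}_{(\sigma^c-\beta)^t}(y)\Big).
\end{align*}
This is exactly the second display of Lemma \ref{lemma:perm2}. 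Its proof is purely algebraic: it reindexes by $\beta'=\sigma^c-\beta$ and applies Lemma \ref{lemma:tralalero} to the constant vector $m_Q\vec{B}$. So it holds for each fixed $x,y$, with $m_Q\vec{B}$ treated as a constant matrix vector.

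\textbf{Step 2 (integration).} Multiply the identity on the right by $A_1(y)k_Q(x,y)$. Since $k_Q$ is scalar, it commutes with every matrix factor. Then take $\fint_Q\,dy$. On the right-hand side, the factor depending only on $x$ is pulled out of the $y$-integral by linearity, which produces exactly the left-hand side of Corollary \ref{cor:b}. On the left-hand side, $\vec{B}_\sigma(x)$ comes out of the integral, which gives $(-1)^m\sum_{\sigma}(-1)^{m-|\sigma|}\vec{B}_\sigma(x)\fint_Q\vec{B}_{(\sigma^c)^t}(y)A_1(y)k_Q(x,y)\,dy$.

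\textbf{Main obstacle.} The only delicate point is justifying the interchange of the finite sums with the integral. Every sum is finite, so this reduces to integrability of each product $\vec{B}_{(\sigma^c-\beta)^t}(y)A_1(y)k_Q(x,y)$ over $Q$, which is exactly the hypothesis "provided the integrals make sense." One should also check the sign bookkeeping: Lemma \ref{lemma:perm2} already carries the factors $(-1)^{|\theta|-|\alpha|}$ and $(-1)^{|\theta|-|\beta|}$ with $|\theta|=m$, and these match the corollary verbatim, so no further adjustment is needed.
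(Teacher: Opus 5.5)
Your proposal is correct and is essentially the paper's argument. The paper applies Lemma \ref{lemma:perm2} directly with the integral operator $Tf(x)=\fint_Q k_Q(x,y)f(y)\,dy$ and $f=A_1$, whereas you take the pointwise case ($T$ the identity, $f\equiv 1$), multiply by $A_1(y)k_Q(x,y)$ and average in $y$; this is the same step, since the lemma only uses linearity of $T$ over constant matrices, and your sign bookkeeping with $|\theta|=m$ and your handling of the finite sums are fine.
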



{In order to prove the first of these corollaries, it suffices to put the integral in $x$ outside of the product, move $\Psi$ to the other side of the product (without the $*$), and apply Lemma \ref{lemma:perm2} with the operator $Tf=\fint_Qf$. The second one is a more immediate consequence of the aforementioned lemma, considering $Tf(x)=\fint_Qk(x,y)f(y)dy$}.
Corollary \ref{cor:a} will be employed to obtain the following result, which sets the stage 
for a boundedness proof. It is based on
Lemma 3 in \cite{muller-rr}.

\begin{lemma}\label{lemma:a}
    If $p,r,s\geq 1$, $U,V$ are weights, 
    $\vec{B}$ is a vector of locally integrable matrices,
    $\mathcal{S}$ is an
    $\eta-$sparse family, and
    $\mathcal{P}=\left\{ P_Q\right\}, \mathcal{R}=\left\{ R_Q\right\}$ are 
    families of positive definite matrices indexed in $\mathcal{S}$,
    \begin{equation*}
    \begin{split} & \sum_{Q\in\mathcal{S}}\iinteg{\tilde{\Psi}U^{\frac{-1}{p}}\vec{f}}_{r,Q}\iinteg{\Psi^{*}V^{\frac{1}{p}}\vec{g}}_{s,Q}|Q|\\
 & \leq\frac{1}{\eta}\sup_{Q}\|R_{Q}P_{Q}\|\sum_{\sigma\in C(m)}\|M_{\mathcal{P},\vec{B},\sigma^{c},U^{\frac{-1}{p}},r}\|_{p}\|M_{\mathcal{R},\vec{B},\sigma,V^{\frac{1}{p}},s,*}\|_{{p'}}\|\vec{f}\|_{L^{p}}\|\vec{g}\|_{L^{p'}},
\end{split}
    \end{equation*}
    where 
    \begin{equation*}
    \begin{split}
        M_{\mathcal{P},\vec{B},\sigma^c,U^{\frac{-1}{p}},r}f(z) &= \sup_{Q\in \mathcal{S},z\in Q}\left(\fint_Q \left\lvert  (P_Q)^{-1}\sum_{\beta\in C(\sigma^c)}(-1)^{m-|\beta|} (m_Q\vec{B})_{\beta}\vec{B}_{(\sigma^c-\beta)^t}(y)U^{\frac{-1}{p}}(y)\vec{f}(y)\right\rvert^r dy\right)^{\frac{1}{r}},
        \\
        M_{\mathcal{R},\vec{B},\sigma,V^{\frac{1}{p}},s,*}g(z) &=\sup_{Q\in \mathcal{S},z\in Q}\left({\fint_Q \left\lvert  (R_Q)^{-1}\sum_{\alpha\in C(\sigma)}(-1)^{m-|\alpha|}(m_Q\vec{B})_{(\sigma-\alpha)^t}^*\vec{B}_{\alpha}^*(x)V^{\frac{1}{p}}(x)\vec{g}(x)\right\rvert^s dx}\right)^{\frac{1}{s}},
    \end{split}
    \end{equation*}
    and $\Psi, \tilde{\Psi}$ are defined as in Lemma \ref{lemma:comm_exp}.
\end{lemma}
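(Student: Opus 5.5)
The plan is to unfold the pairing $\iinteg{\cdot}_{r,Q}\iinteg{\cdot}_{s,Q}$ into an explicit inner product of two averages, split it into $\sigma$-pieces with Corollary \ref{cor:a}, decouple each piece with the matrices $P_Q,R_Q$, and then use sparseness. This follows the scheme of Lemma 3 in \cite{muller-rr}.

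\textbf{Step 1: unfolding the pairing.} As in \cite{muller-rr}, we read the product of two convex bodies as
\[
\sup\bigl\{|\esc{a}{b}| : a\in\iinteg{\tilde{\Psi}U^{\frac{-1}{p}}\vec{f}}_{r,Q},\ b\in\iinteg{\Psi^{*}V^{\frac{1}{p}}\vec{g}}_{s,Q}\bigr\}.
\]
So for a fixed $Q\in\mathcal{S}$ we fix scalar functions $\phi,\psi$ with $\|\phi\|_{L^{r'}(dx/|Q|)}\le 1$ and $\|\psi\|_{L^{s'}(dx/|Q|)}\le1$. We must then bound
\[
\Bigl|\esc{\fint_Q\tilde{\Psi}(y)\phi(y)U^{\frac{-1}{p}}(y)\vec{f}(y)\,dy}{\fint_Q\Psi^{*}(x)\psi(x)V^{\frac{1}{p}}(x)\vec{g}(x)\,dx}\Bigr|
\]
uniformly in $\phi,\psi$.

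\textbf{Step 2: splitting into $\sigma$-pieces.} We apply Corollary \ref{cor:a} with $A_1=\phi U^{-1/p}\vec{f}$ and $A_2=\psi V^{1/p}\vec{g}$; the sign $(-1)^m$ is harmless. This rewrites the quantity as
\[
\Bigl|\sum_{\sigma\in C(m)}\esc{X_\sigma}{Y_\sigma}\Bigr|,
\]
where
\[
X_\sigma=\fint_Q\sum_{\beta\in C(\sigma^c)}(-1)^{m-|\beta|}(m_Q\vec{B})_{\beta}\vec{B}_{(\sigma^c-\beta)^t}(y)\phi(y) U^{\frac{-1}{p}}(y)\vec{f}(y)\,dy,
\]
and $Y_\sigma$ is the analogous average built from $\sum_{\alpha\in C(\sigma)}(-1)^{m-|\alpha|}(m_Q\vec{B})^*_{(\sigma-\alpha)^t}\vec{B}^*_\alpha(x)\psi(x)V^{1/p}(x)\vec{g}(x)$.

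\textbf{Step 3: decoupling with $P_Q$ and $R_Q$.} Since $P_Q$ is self-adjoint and invertible,
\[
\esc{X_\sigma}{Y_\sigma}=\esc{P_Q^{-1}X_\sigma}{P_QR_Q\,R_Q^{-1}Y_\sigma},
\]
hence
\[
|\esc{X_\sigma}{Y_\sigma}|\le\|P_QR_Q\|\,|P_Q^{-1}X_\sigma|\,|R_Q^{-1}Y_\sigma|,
\]
and $\|P_QR_Q\|=\|R_QP_Q\|\le\sup_Q\|R_QP_Q\|$. Moving $P_Q^{-1}$ inside the average and applying Hölder with the bound on $\|\phi\|_{L^{r'}}$ gives
\[
|P_Q^{-1}X_\sigma|\le\Bigl(\fint_Q|P_Q^{-1}\textstyle\sum_\beta\cdots|^r\Bigr)^{1/r}\le \inf_{z\in Q}M_{\mathcal{P},\vec{B},\sigma^c,U^{\frac{-1}{p}},r}f(z).
\]
Likewise $|R_Q^{-1}Y_\sigma|\le\inf_{z\in Q}M_{\mathcal{R},\vec{B},\sigma,V^{\frac1p},s,*}g(z)$. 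Taking the supremum over $\phi,\psi$ bounds each summand of the sparse sum by $\sup_Q\|R_QP_Q\|$ times $\sum_\sigma|Q|$ times the product of these two infima.

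\textbf{Step 4: sparseness and Hölder.} By $\eta$-sparseness there are pairwise disjoint $E_Q\subset Q$ with $|Q|\le\eta^{-1}|E_Q|$. Replacing each product of infima by the value of the two maximal functions at points of $E_Q$ gives
\[
\sum_{Q}|Q|\inf_Q M_1\inf_Q M_2\le\frac1\eta\sum_Q\int_{E_Q}M_1M_2\le\frac1\eta\int_{\mathbb{R}^d} M_1M_2 .
\]
Hölder in $L^p\times L^{p'}$, followed by the operator norms $\|M_{\mathcal{P},\dots}\|_{p}$ and $\|M_{\mathcal{R},\dots,*}\|_{p'}$, then yields the stated inequality after summing over $\sigma$.

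The only delicate point is Step 2: one has to check that Corollary \ref{cor:a} applies with the test functions $\phi,\psi$ absorbed into $A_1,A_2$. This holds because they are scalar and sit inside the averages in $y$ and $x$ respectively, so the matrix combinatorics of Lemma \ref{lemma:perm2} are unaffected. Everything else is routine.
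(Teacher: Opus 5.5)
Your proposal is correct and follows the paper's proof essentially step by step. You apply Corollary \ref{cor:a} with the scalar test functions $\phi,\psi$ absorbed into $A_1,A_2$, insert $P_QP_Q^{-1}$ and $R_QR_Q^{-1}$ to extract $\|R_QP_Q\|$, and use H\"older to pass to the infima over $Q$ of the two sparse maximal functions. You then finish with the standard $E_Q$ sparseness argument and H\"older in $L^p\times L^{p'}$.
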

\begin{proof}
    By Corollary \ref{cor:a}, we know that, if $\phi\in B_{L^{r'}}, \psi\in B_{L^{s'}}$,
    \begin{align*}
        &{ \left\lvert  \esc{\fint_Q  \tilde{\Psi}(y)U^{\frac{-1}{p}}(y)\vec{f}(y) \phi(y)dy}
        {\fint_Q \Psi^*(x)V^{\frac{1}{p}}(x)\vec{g}(x) \psi(x)dx}\right\rvert}
        \\ &  = \left\lvert  \sum_{\sigma\in C(m)}
        \Big\langle \sum_{\beta\in C(\sigma^c)}\fint_Q (-1)^{m-|\beta|}( m_Q\vec{B})_{\beta}\vec{B}_{(\sigma^c-\beta)^t}(y)U^{\frac{-1}{p}}(y)\vec{f}(y) \phi(y)dy
        ,\right.\\ & 
        \left.\sum_{\alpha\in C(\sigma)}\fint_Q (-1)^{m-|\alpha|}(m_Q\vec{B})_{(\sigma-\alpha)^t}^*\vec{B}_{\alpha}^*(x)V^{\frac{1}{p}}(x)\vec{g}(x) \psi(x)dx
        \Big\rangle
        \right\rvert.
    \end{align*}
    If we now observe that $R_Q$ and $P_Q$ are positive definite matrices, 
    \begin{align*}
        &\left\lvert  \sum_{\sigma\in C(m)}
        \Big\langle{\fint_Q \sum_{\beta\in C(\sigma^c)}(-1)^{m-|\beta|} (m_Q\vec{B})_{\beta}\vec{B}_{(\sigma^c-\beta)^t}(y)U^{\frac{-1}{p}}(y)\vec{f}(y) \phi(y)dy},
        \right. \\ & \;\;\;\;\left.{\fint_Q \sum_{\alpha\in C(\sigma)}(-1)^{m-|\alpha|}(m_Q\vec{B})_{(\sigma-\alpha)^t}^*\vec{B}_{\alpha}^*(x)V^{\frac{1}{p}}(x)\vec{g}(x) \psi(x)dx}\Big\rangle
        \right\rvert
        \\ &  =\left\lvert  \sum_{\sigma\in C(m)}
        \Big\langle{\fint_Q P_Q(P_Q)^{-1}\sum_{\beta\in C(\sigma^c)}(-1)^{m-|\beta|} (m_Q\vec{B})_{\beta}\vec{B}_{(\sigma^c-\beta)^t}(y)U^{\frac{-1}{p}}(y)\vec{f}(y) \phi(y)dy},
        \right. \\ &  \;\;\;\; \left.{\fint_Q R_Q(R_Q)^{-1}\sum_{\alpha\in C(\sigma)}(-1)^{m-|\alpha|}(m_Q\vec{B})_{(\sigma-\alpha)^t}^*\vec{B}_{\alpha}^*(x)V^{\frac{1}{p}}(x)\vec{g}(x) \psi(x)dx}\Big\rangle
        \right\rvert
        \\ &  =\left\lvert  \sum_{\sigma\in C(m)}
        \Big\langle{\fint_Q R_QP_Q(P_Q)^{-1}\sum_{\beta\in C(\sigma^c)}(-1)^{m-|\beta|} (m_Q\vec{B})_{\beta}\vec{B}_{(\sigma^c-\beta)^t}(y)U^{\frac{-1}{p}}(y)\vec{f}(y) \phi(y)dy},
        \right. \\ & \;\;\;\; \left.{\fint_Q (R_Q)^{-1}\sum_{\alpha\in C(\sigma)}(-1)^{m-|\alpha|}(m_Q\vec{B})_{(\sigma-\alpha)^t}^*\vec{B}_{\alpha}^*(x)V^{\frac{1}{p}}(x)\vec{g}(x) \psi(x)dx}\Big\rangle
        \right\rvert
        \\ &  \leq  \sum_{\sigma\in C(m)}
        \left({\fint_Q \left\lvert R_QP_Q(P_Q)^{-1}\sum_{\beta\in C(\sigma^c)}(-1)^{m-|\beta|} (m_Q\vec{B})_{\beta}\vec{B}_{(\sigma^c-\beta)^t}(y)U^{\frac{-1}{p}}(y)\vec{f}(y) \phi(y)\right\rvert dy}\right)
        \\ & \;\;\;\;\left({\fint_Q \left\lvert  (R_Q)^{-1}\sum_{\alpha\in C(\sigma)}(-1)^{m-|\alpha|}(m_Q\vec{B})_{(\sigma-\alpha)^t}^*\vec{B}_{\alpha}^*(x)V^{\frac{1}{p}}(x)\vec{g}(x) \psi(x)\right\rvert dx}\right)
        \\ &  \leq \sup_{Q}\|R_QP_Q\| \sum_{\sigma\in C(m)}
        \left({\fint_Q \left\lvert (P_Q)^{-1}\sum_{\beta\in C(\sigma^c)}(-1)^{m-|\beta|} (m_Q\vec{B})_{\beta}\vec{B}_{(\sigma^c-\beta)^t}(y)U^{\frac{-1}{p}}(y)\vec{f}(y) \phi(y)\right\rvert dy}\right)
        \\ & \;\;\;\;\left({\fint_Q \left\lvert  (R_Q)^{-1}\sum_{\alpha\in C(\sigma)}(-1)^{m-|\alpha|}(m_Q\vec{B})_{(\sigma-\alpha)^t}^*\vec{B}_{\alpha}^*(x)V^{\frac{1}{p}}(x)\vec{g}(x) \psi(x)\right\rvert dx}\right)
        \\ &  \leq \sup_{Q}\|R_QP_Q\| \sum_{\sigma\in C(m)}
        \left({\fint_Q \left\lvert (P_Q)^{-1}\sum_{\beta\in C(\sigma^c)}(-1)^{m-|\beta|} (m_Q\vec{B})_{\beta}\vec{B}_{(\sigma^c-\beta)^t}(y)U^{\frac{-1}{p}}(y)\vec{f}(y)\right\rvert^r dy}\right)^{\frac{1}{r}}
        \\ & \;\;\;\;\left({\fint_Q \left\lvert  (R_Q)^{-1}\sum_{\alpha\in C(\sigma)}(-1)^{m-|\alpha|}(m_Q\vec{B})_{(\sigma-\alpha)^t}^*\vec{B}_{\alpha}^*(x)V^{\frac{1}{p}}(x)\vec{g}(x)\right\rvert^s dx}\right)^{\frac{1}{s}}
        \\ &  \leq \sup_{Q}\|R_Q P_Q\| \sum_{\sigma\in C(m)}
        \inf_{z\in Q}M_{\mathcal{P},\vec{B},\sigma^c,U^{\frac{-1}{p}},r}\vec{f}(z)
        \inf_{z\in Q}M_{\mathcal{R},\vec{B},\sigma,V^{\frac{1}{p}},s,*}\vec{g}(z).
    \end{align*}
    Thus,
    \begin{align*}
        { \sum_{Q\in \mathcal{S}}}&{\iinteg{\tilde{\Psi}U^{\frac{-1}{p}}\vec{f}}_{r,Q}\iinteg{\Psi^* V^{\frac{1}{p}}\vec{g}}_{s,Q}|Q|
        }\\ & 
        { \leq \frac{1}{\eta}\sum_{Q\in \mathcal{S}}\sup_{Q}\|R_QP_Q\| \sum_{\sigma\in C(m)}
        \inf_{z\in Q}M_{\mathcal{P},\vec{B},\sigma^c,U^{\frac{-1}{p}},r}\vec{f}(z)
        \inf_{z\in Q}M_{\mathcal{R},\vec{B},\sigma,V^{\frac{1}{p}},s,*}\vec{g}(z)|E_Q|}
    \\ & { \leq  \frac{1}{\eta}\sum_{Q\in \mathcal{S}}\sup_{Q}\|R_QP_Q\|\sum_{\sigma\in C(m)}
    \int_{E_Q}  M_{\mathcal{P},\vec{B},\sigma^c,U^{\frac{-1}{p}},r}\vec{f}
    M_{\mathcal{R},\vec{B},\sigma,V^{\frac{1}{p}},s,*}\vec{g} }
    \\ & { \leq   \frac{1}{\eta}\sup_{Q}\|R_QP_Q\|\sum_{\sigma\in C(m)}
     \|M_{\mathcal{P},\vec{B},\sigma^c,U^{\frac{-1}{p}},r}\vec{f}\|_{p}
   \| M_{\mathcal{R},\vec{B},\sigma,V^{\frac{1}{p}},s,*}\vec{g}\|_{{p'}}}.
    \end{align*}
\end{proof}

Likewise, a similar reasoning and Corollary \ref{cor:b} can be applied to arrive at the following result.
\begin{lemma}\label{lemma:rodolfo}
    If $p,r\geq 1$, $U,V$ are weights,
    $\vec{B}$ is a vector of locally integrable matrices,
     $\mathcal{S}$ is an
    $\eta-$sparse family, and
    $\mathcal{P}=\left\{ P_Q\right\}, \mathcal{R}=\left\{ R_Q\right\}$ are 
    families of positive definite matrices indexed in $\mathcal{S}$,
    \begin{equation*}
    \begin{split}
        \int_{\Omega}&\left\lvert  \esc{\sum_{Q\in \mathcal{G} }\chi_Q(x)\sum_{\sigma\in C(m)}(-1)^{m-|\sigma|} \vec{B}_{\sigma}(x)\fint_Q \vec{B}_{(\sigma^c)^t}(y)U^{\frac{-1}{p}}(y)\vec{f}(y)k_Q(x,y)dy}
        {V^{\frac{1}{p}}(x)\vec{g}(x)}
        \right\rvert
        \\ & \leq \frac{1}{\eta}\sup_{Q}\|R_QP_Q\|\sum_{\sigma\in C(m)}
        \|M_{\mathcal{P},\vec{B},\sigma^c,U^{\frac{-1}{p}},r}\|_{p}
      \| M_{\mathcal{R},\vec{B},\sigma,V^{\frac{1}{p}},1,*}\|_{{p'}}\|\vec{f}\|_{L^p}\|\vec{g}\|_{L^{p'}}.
    \end{split}
    \end{equation*}
\end{lemma}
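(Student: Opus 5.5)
We follow the proof of Lemma \ref{lemma:a}, using Corollary \ref{cor:b} in place of Corollary \ref{cor:a}. The sum over $Q\in\mathcal{G}$ is understood as a sum over the $\eta$-sparse family $\mathcal{S}$. The plan is to estimate the contribution of each cube separately, turn each local term into a product of two maximal-type averages, and then use sparseness.

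\textbf{Step 1: reduction to a single cube.} Move the absolute value inside the sum over $Q$ and use the triangle inequality. The left-hand side is then at most
\[
\sum_{Q}\int_Q\Big|\Big\langle \sum_{\sigma\in C(m)}(-1)^{m-|\sigma|}\vec{B}_{\sigma}(x)\fint_Q\vec{B}_{(\sigma^c)^t}(y)U^{-\frac1p}(y)\vec f(y)k_Q(x,y)\,dy,\;V^{\frac1p}(x)\vec g(x)\Big\rangle\Big|\,dx .
\]

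\textbf{Step 2: rewriting the inner vector.} For fixed $x\in Q$, apply Corollary \ref{cor:b} with $A_1=U^{-1/p}\vec f$. Up to the global sign $(-1)^m$, the inner vector becomes
\[
\sum_{\sigma}\Big(\sum_{\alpha\in C(\sigma)}(-1)^{m-|\alpha|}\vec B_\alpha(x)(m_Q\vec B)_{(\sigma-\alpha)^t}\Big)\fint_Q\Big(\sum_{\beta\in C(\sigma^c)}(-1)^{m-|\beta|}(m_Q\vec B)_\beta\vec B_{(\sigma^c-\beta)^t}(y)\Big)U^{-\frac1p}(y)\vec f(y)k_Q(x,y)\,dy .
\]
Insert $I=P_QP_Q^{-1}$ in front of the $\beta$-sum and $I=R_QR_Q^{-1}$ in the pairing. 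The first factor (the $\alpha$-sum) is moved to the other side of the inner product as its adjoint, with $R_Q^{-1}$ attached. After these insertions the constant matrices $R_QP_Q$ appear between the two pieces; bound their norm by $\sup_Q\|R_QP_Q\|$.

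\textbf{Step 3: applying Hölder's inequality.} Use Hölder in $y$ with $\|k_Q(x,\cdot)\|_{L^{r'}(dy/|Q|)}\le1$. For every $x$, the $y$-average is then bounded by
\[
\Big(\fint_Q\big|P_Q^{-1}\textstyle\sum_\beta\cdots U^{-1/p}\vec f\big|^r\Big)^{1/r}\le \inf_{z\in Q}M_{\mathcal P,\vec B,\sigma^c,U^{-1/p},r}\vec f(z).
\]
This bound does not depend on $x$, so it can be pulled out of the $x$-integral. What remains in $x$ is
\[
\fint_Q\Big|R_Q^{-1}\textstyle\sum_\alpha(-1)^{m-|\alpha|}(m_Q\vec B)^*_{(\sigma-\alpha)^t}\vec B^*_\alpha(x)V^{1/p}(x)\vec g(x)\Big|\,dx\le\inf_{z\in Q}M_{\mathcal R,\vec B,\sigma,V^{1/p},1,*}\vec g(z).
\]
The cube therefore contributes at most $|Q|\sup\|R_QP_Q\|\sum_\sigma\inf_Q M\vec f\,\inf_Q M\vec g$. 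This is why the exponent on the $\vec g$ side is $1$: the kernel is absorbed entirely on the $\vec f$ side.

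\textbf{Step 4: sparseness and the main obstacle.} Conclude exactly as in Lemma \ref{lemma:a}. Use $|Q|\le\eta^{-1}|E_Q|$ with the sets $E_Q$ pairwise disjoint, bound each product of infima by the integral over $E_Q$, and apply Hölder's inequality in $L^p\times L^{p'}$. The operator norms of the two maximal operators then give the stated bound. The main obstacle is the bookkeeping in Step 2: the kernel $k_Q(x,y)$ depends on $x$, so it has to be carried inside the $y$-integral before the $x$-dependent matrices are factored out and the matrix products are regrouped. Corollary \ref{cor:b} is designed to allow precisely this, since $T f(x)=\fint_Q k_Q(x,y)f(y)\,dy$ is linear over constant matrices.
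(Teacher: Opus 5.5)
Your proposal is correct and follows essentially the same route as the paper: rewrite the integrand via Corollary \ref{cor:b}, insert $P_QP_Q^{-1}$ and $R_QR_Q^{-1}$ to extract $\sup_Q\|R_QP_Q\|$, absorb $k_Q(x,\cdot)$ by H\"older in $y$ (which is why the $\vec g$ side only needs exponent $1$), and finish with the sparse argument from Lemma \ref{lemma:a}. The only cosmetic difference is that you apply the triangle inequality over the cubes before invoking Corollary \ref{cor:b}, whereas the paper rewrites first; this changes nothing.
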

\begin{proof}
    By Corollary \ref{cor:b},
    \begin{equation*}
        \begin{split}
             { (-1)^m} &{ \esc{\sum_{Q\in \mathcal{G} }\chi_Q(x)\sum_{\sigma\in C(m)}(-1)^{m-|\sigma|} \vec{B}_{\sigma}(x)\fint_Q \vec{B}_{(\sigma^c)^t}(y)U^{\frac{-1}{p}}(y)\vec{f}(y)k_Q(x,y)dy}
                {V^{\frac{1}{p}}(x)\vec{g}(x)}}
                \\ & 
                =  \sum_{Q\in \mathcal{G} }\chi_Q(x)\sum_{\sigma\in C(m)}
                 \Big\langle{\sum_{\beta\in C(\sigma^c)}\fint_Q (-1)^{m-|\beta|} (m_Q\vec{B})_{\beta}\vec{B}_{(\sigma^c-\beta)^t}(y)U^{\frac{-1}{p}}(y)\vec{f}(y) k_Q(x,y)dy},
                 \\ & \;\;\;\;{\sum_{\alpha\in C(\sigma)} (-1)^{m-|\alpha|}(m_Q\vec{B})_{(\sigma-\alpha)^t}^*\vec{B}_{\alpha}^*(x)V^{\frac{1}{p}}(x)\vec{g}(x)}\Big\rangle.
        \end{split}
        \end{equation*}
    Thus, following the ideas of the previous proof,
    \begin{align*}
        \int_{\Omega}&\left\lvert  \esc{\sum_{Q\in \mathcal{G} }\chi_Q(x)\sum_{\sigma\in C(m)}(-1)^{m-|\sigma|} \vec{B}_{\sigma}(x)\fint_Q \vec{B}_{(\sigma^c)^t}(y)U^{\frac{-1}{p}}(y)\vec{f}(y)k_Q(x,y)dy}
        {V^{\frac{1}{p}}(x)\vec{g}(x)}
        \right\rvert dx
        \\ &  \leq \int_{\Omega}\sum_{Q\in \mathcal{G} }\chi_Q(x)\sum_{\sigma\in C(m)}\left\lvert  \Big\langle{\sum_{\beta\in C(\sigma^c)}\fint_Q (-1)^{m-|\beta|} (m_Q\vec{B})_{\beta}\vec{B}_{(\sigma^c-\beta)^t}(y)U^{\frac{-1}{p}}(y)\vec{f}(y) k_Q(x,y)dy},\right.
        \\ & \left. \;\;\;{\sum_{\alpha\in C(\sigma)} (-1)^{m-|\alpha|}(m_Q\vec{B})_{(\sigma-\alpha)^t}^*\vec{B}_{\alpha}^*(x)V^{\frac{1}{p}}(x)\vec{g}(x)}\Big\rangle
        \right\rvert dx
        \\ &  \leq \sup_{Q}\|R_QP_Q\|\\ & \;\;\;\; \times \;\sum_{Q\in \mathcal{G} }\int_{\Omega}\chi_Q(x)\sum_{\sigma\in C(m)}
        \left({\fint_Q \left\lvert (P_Q)^{-1}\sum_{\beta\in C(\sigma^c)}(-1)^{m-|\beta|} (m_Q\vec{B})_{\beta}\vec{B}_{(\sigma^c-\beta)^t}(y)U^{\frac{-1}{p}}(y)\vec{f}(y) k_Q(x,y)\right\rvert dy}\right)
        \\ & \;\;\;\;\times \;{ \left\lvert  (R_Q)^{-1}\sum_{\alpha\in C(\sigma)}(-1)^{m-|\alpha|}(m_Q\vec{B})_{(\sigma-\alpha)^t}^*\vec{B}_{\alpha}^*(x)V^{\frac{1}{p}}(x)\vec{g}(x)\right\rvert dx}
        \\ &  \leq \sup_{Q}\|R_QP_Q\| \sum_{Q\in \mathcal{G} }\sum_{\sigma\in C(m)}
        \left({\fint_Q \left\lvert (P_Q)^{-1}\sum_{\beta\in C(\sigma^c)}(-1)^{m-|\beta|} (m_Q\vec{B})_{\beta}\vec{B}_{(\sigma^c-\beta)^t}(y)U^{\frac{-1}{p}}(y)\vec{f}(y)\right\rvert^r dy}\right)^{\frac{1}{r}}
        \\ & \;\;\;\; \times \;\left({\fint_Q \left\lvert  (R_Q)^{-1}\sum_{\alpha\in C(\sigma)}(-1)^{m-|\alpha|}(m_Q\vec{B})_{(\sigma-\alpha)^t}^*\vec{B}_{\alpha}^*(x)V^{\frac{1}{p}}(x)\vec{g}(x)\right\rvert dx}\right)|Q| .
    \end{align*}
    The rest follows from the proof of Lemma \ref{lemma:a}.
\end{proof}

Now we are going to prove Theorem \ref{th:strTocho3}.

\begin{proof}[Proof of Theorem \ref{th:strTocho3}]
Applying Theorem 9 in \cite{muller-rr}, Theorem \ref{th:tocho3} and Lemma \ref{lemma:a}, we know that
\begin{equation}\label{auxpal}
\begin{split}
    \int_{\Omega}&\left\lvert \esc{V^{\frac{1}{p}}(x){(T\otimes I_n)}_{\vec{B}}(U^{\frac{-1}{p}}\vec{h})(x)}{\vec{g}(x)}\right\rvert
    = \int_{\Omega}\left\lvert \esc{{(T\otimes I_n)}_{\vec{B}}(U^{\frac{-1}{p}}\vec{h})(x)}{V^{\frac{1}{p}}(x)\vec{g}(x)}\right\rvert
    \\ & \leq C\sum_{Q\in \mathcal{G}}|Q| 
    \iinteg{\tilde{\Psi}U^{\frac{-1}{p}}\vec{h}}_{1,Q}\iinteg{\Psi^*V^{\frac{1}{p}}\vec{g}}_{s,Q}
    \\ & \leq  C \frac{1}{\eta}\sup_{Q}\|R_QP_Q\|\sum_{\sigma\in C(m)}
    \|M_{\mathcal{P},\vec{B},\sigma^c,U^{\frac{-1}{p}},1}\vec{h}\|_{p}
  \| M_{\mathcal{R},\vec{B},\sigma,V^{\frac{1}{p}},s,*}\vec{g}\|_{p'},
\end{split}
\end{equation}
where $C=c_{n,d}\|\Omega\|_{L^{\infty}(\mathbb{S}^{d-1})}s'$.
Let us choose $P_Q$ such that 
\begin{equation*}
\begin{split}
    |P_Q \vec{e}|\simeq \left(\fint_Q \left\lvert V^{\frac{-1}{p}}(x)\vec{e}\right\rvert^{rp'} \right)^{\frac{1}{rp'}},
\end{split}
\end{equation*}
that is, $P_Q = \mathcal{R}_{Q,{(rp')'},V^{\frac{(rp')'}{p}}}'$,
where $r= 1+\frac{1}{2^{d+11}t_1[V^{\frac{-p'}{p}}]_{A^{sc}_{p',\infty}}}$, $t_1\geq 1$,
and 
$R_Q$ satisfying that
\begin{equation*}
\begin{split}
    |R_Q \vec{e}|\simeq \left(\fint_Q \left\lvert U^{\frac{1}{p}}(x)\vec{e}\right\rvert^{s \gamma p} \right)^{\frac{1}{s \gamma p}},
\end{split}
\end{equation*}
that is, $R_Q = \mathcal{R}_{Q,s \gamma p, U^{s \gamma}}$
where 
$\gamma = 1+\frac{1}{\left(\frac{p'+1}{2}\right)2^{d+11}t_2[U]_{A^{sc}_{p,\infty}}}$
and $s = \left(\frac{p'+1}{2}\frac{1+2^{d+11}t_2[U]_{A^{sc}_{p,\infty}}}{1+\left(\frac{p'+1}{2}\right)2^{d+11}t_2[U]_{A^{sc}_{p,\infty}}}\right)$,
$t_2\geq1$,so that 
$\gamma s = 1 + \frac{1}{2^{d+11}t_2[U]_{A^{sc}_{p,\infty}}}$.
For these choices we have that
\begin{equation*}
\begin{split}
    s' = \left(\frac{p'+1}{2}\right)'(1+ 2^{d+11}t_2 [U]_{A_p,\infty}^{sc})
    \lesssim pt_2[U]_{A^{sc}_{p,\infty}},
\end{split}
\end{equation*}
and that
\begin{equation*}
\begin{split}
    \|P_QR_Q\| \simeq \left(\fint_Q \| V^{\frac{-1}{p}}(x)R_Q\|^{rp'} \right)^{\frac{1}{rp'}}
    \lesssim \left(\fint_Q\| V^{\frac{-1}{p}}(x)R_Q\|^{p'} \right)^{\frac{1}{p'}}
    \simeq \|\mathcal{R}_{Q,p,V}'R_Q\|= \|R_Q\mathcal{R}_{Q,p,V}'\|,
\end{split}
\end{equation*}
where in the first inequality we applied the Reverse Hölder Inequality. 
As a result, 
\begin{equation*}
\begin{split}
    \|R_Q\mathcal{R}_{Q,p,V}'\|
    \simeq \left(\fint_Q \| U^{\frac{1}{p}}(x)\mathcal{R}_{Q,p,V}'\|^{s \gamma p} \right)^{\frac{1}{s \gamma p}}
    \lesssim \left(\fint_Q \| U^{\frac{1}{p}}(x)\mathcal{R}_{Q,p,V}'\|^{ p} \right)^{\frac{1}{p}}
    \simeq \|\mathcal{R}_{Q,p,U}\mathcal{R}_{Q,p,V}'\|,
\end{split}
\end{equation*}
so 
$\|P_QR_Q\|  \lesssim \|\mathcal{R}_{Q,p,U}\mathcal{R}_{Q,p,V}'\| \lesssim [U,V]_{A_p}^{\frac{1}{p}}$.

In order to control the first maximal in the last term of \eqref{auxpal}, observe that, for each $\sigma$, 
\begin{equation*}
\begin{split}
    \fint_Q &\left\lvert  P_Q^{-1}\left(\sum_{\beta\in C(\sigma^c)}(-1)^{m-|\beta|}( m_Q\vec{B})_{\beta}\vec{B}_{(\sigma^c-\beta)^t}(y)\right)U^{\frac{-1}{p}}(y)\vec{h}(y)\right\rvert dy
    \\ & \leq \left(\fint_Q \| P_Q^{-1}\left(\sum_{\beta\in C(\sigma^c)}(-1)^{m-|\beta|}( m_Q\vec{B})_{\beta}\vec{B}_{(\sigma^c-\beta)^t}(y)\right)U^{\frac{-1}{p}}(y)\|^{rp'} dy\right)^{\frac{1}{rp'}}\left( \fint_Q\left\lvert\vec{h}(y)\right\rvert^{(rp')'} dy\right)^{\frac{1}{(rp')'}}
    \\ & =\left(\fint_Q \| U^{\frac{-1}{p}}(y)\left(\sum_{\beta\in C(\sigma^c)}(-1)^{m-|\beta|}( m_Q\vec{B})_{\beta}\vec{B}_{(\sigma^c-\beta)^t}(y)\right)^* P_Q^{-1}\|^{rp'} dy\right)^{\frac{1}{rp'}}\left( \fint_Q\left\lvert\vec{h}(y)\right\rvert^{(rp')'} dy\right)^{\frac{1}{(rp')'}}
    \\ & \lesssim \|\vec{B}\|_{BMO^{(rp')',*}_{V^{\frac{(rp')'}{p}},U^{\frac{(rp')'}{p}},\sigma^c,2}}\left( \fint_Q\left\lvert\vec{h}(y)\right\rvert^{(rp')'} dy\right)^{\frac{1}{(rp')'}}.
\end{split}
\end{equation*}
As a result, we deduce that
\begin{equation*}
\begin{split}
    \|M_{\mathcal{P},\vec{B},\sigma^c,U^{\frac{-1}{p}},1}\|_{p} 
    \lesssim \|\vec{B}\|_{BMO^{(rp')',*}_{V^{\frac{(rp')'}{p}},U^{\frac{(rp')'}{p}},\sigma^c,2}}
    \|M_{(rp')'}\|_{p}
    \lesssim \|\vec{B}\|_{BMO^{(rp')',*}_{V^{\frac{(rp')'}{p}},U^{\frac{(rp')'}{p}},\sigma^c,2}}
    \left(\left(\frac{p}{(rp')'}\right)'\right)^{\frac{1}{(rp')'}},
\end{split}
\end{equation*}
where the rightmost term can be controlled by means of Lemma 6 in \cite{muller-rr},
so that 
\begin{equation*}
\begin{split}
    \left(\left(\frac{p}{(rp')'}\right)'\right)^{\frac{1}{(rp')'}}
    \lesssim t_1^{\frac{1}{p}}[V^{\frac{-p'}{p}}]^{\frac{1}{p}}_{A^{sc}_{\infty,p'}},
\end{split}
\end{equation*}
and, as a result,
\begin{equation*}
\begin{split}
    \|M_{\mathcal{P},\vec{B},\sigma^c,U^{\frac{-1}{p}},r}\|_{L^p}
    \lesssim \|\vec{B}\|_{BMO^{(rp')',*}_{V^{\frac{(rp')'}{p}},U^{\frac{(rp')'}{p}},\sigma^c,2}}
    t_1^{\frac{1}{p}}[V^{\frac{-p'}{p}}]^{\frac{1}{p}}_{A^{sc}_{\infty,p'}}.
\end{split}
\end{equation*}
Likewise, the second maximal in the last term of \eqref{auxpal} can be controlled by observing that
\begin{equation*}
\begin{split}
    &\left(\fint_Q \left\lvert  (R_Q)^{-1}\left(\sum_{\alpha\in C(\sigma)}(-1)^{m-|\alpha|}( m_Q\vec{B})_{(\sigma-\alpha)^t}^*\vec{B}^*_{\alpha}(x)\right)V^{\frac{1}{p}}(x)\vec{g}(x)\right\rvert^s dx\right)^{\frac{1}{s}}
    \\ & \leq \left(\fint_Q \|  (R_Q)^{-1}\left(\sum_{\alpha\in C(\sigma)}(-1)^{m-|\alpha|}( m_Q\vec{B})_{(\sigma-\alpha)^t}^*\vec{B}^*_{\alpha}(x)\right)V^{\frac{1}{p}}(x)\|^{s \gamma p} dx\right)^{\frac{1}{s \gamma p }}\left(\fint_Q |\vec{g}|^{s(\gamma p)'}\right)^{\frac{1}{s (\gamma p)'}}
    \\ & = \left(\fint_Q \|  V^{\frac{1}{p}}(x)\left(\sum_{\alpha\in C(\sigma)}(-1)^{m-|\alpha|}(\vec{B}_{\alpha}(x) m_Q\vec{B})_{(\sigma-\alpha)^t}\right)(R_Q)^{-1}\|^{s \gamma p} dx\right)^{\frac{1}{s \gamma p }}\left(\fint_Q |\vec{g}|^{s(\gamma p)'}\right)^{\frac{1}{s (\gamma p)'}},
    \\ & \lesssim  \|\vec{B}\|_{BMO^{s \gamma p}_{V^{s \gamma},U^{s \gamma},\sigma,1}}   \left(\fint_Q |\vec{g}|^{s(\gamma p)'}\right)^{\frac{1}{s (\gamma p)'}},
\end{split}
\end{equation*}
so that 
\begin{equation*}
\begin{split}
    \| M_{\mathcal{R},\vec{B},\sigma,V^{\frac{1}{p}},s,*}\|_{p'}
    \lesssim \|\vec{B}\|_{BMO^{s \gamma p}_{V^{s \gamma},U^{s \gamma},\sigma,1}} 
    \|M_{s(\gamma p)'}\|_{p'}  
    \lesssim \|\vec{B}\|_{BMO^{s \gamma p}_{V^{s \gamma},U^{s \gamma},\sigma,1}} 
    \left(\left(\frac{p'}{s(\gamma p)'}\right)'\right)^{\frac{1}{s(\gamma p)'}}.
\end{split}
\end{equation*}
An application of Lemmas 6 and 7 in \cite{muller-rr} yields that 
\begin{equation*}
\begin{split}
    \left(\left(\frac{p'}{s(\gamma p)'}\right)'\right)^{\frac{1}{s(\gamma p)'}}
    \lesssim p t_2^{\frac{1}{p'}} [U]_{A^{sc}_{p,\infty}}^{\frac{1}{p'}},
\end{split}
\end{equation*} 
and as a result we deduce that
\begin{equation*}
\begin{split}
    \|M_{\mathcal{R},\vec{B},\sigma,V^{\frac{1}{p}},s,*}\|_{L^p}\lesssim \|\vec{B}\|_{BMO^{s \gamma p}_{V^{s \gamma},U^{s \gamma},\sigma,1}} pt_2^{\frac{1}{p'}}[U]^{\frac{1}{p'}}_{A^{sc}_{\infty,p}}.
\end{split}
\end{equation*}

Putting everything together, we obtain the desired result with 
the first element on the minimum. The other inequality can be 
obtained easily by repeating this argument but with 
the adjoint operator $T^*$ (which is also a rough singular integral), and exchanging the roles of
$\vec{h}$ and $\vec{g}$. Note that 
\begin{equation*}
\begin{split}
    \int_{\Omega}\left\lvert \esc{{(T\otimes I_n)}_{\vec{B}}(U^{\frac{-1}{p}}\vec{h})(x)}{V^{\frac{1}{p}}(x)\vec{g}(x)}\right\rvert
    &= 
    \int_{\Omega}\left\lvert \esc{\Psi(x){(T\otimes I_n)}(\tilde{\Psi}U^{\frac{-1}{p}}\vec{h})(x)}{V^{\frac{1}{p}}(x)\vec{g}(x)}\right\rvert
    \\ & = 
    \int_{\Omega}\left\lvert \esc{\tilde{\Psi}(x)U^{\frac{-1}{p}}(x)\vec{h}(x)}{{(T^*\otimes I_n)}(\Psi^*V^{\frac{1}{p}}\vec{g})(x)}\right\rvert,
\end{split}
\end{equation*}
so Theorem \ref{th:tocho3} cannot be applied directly. However, an identical argument 
shows that 
\begin{equation*}
\begin{split}
    \int_{\Omega}\left\lvert \esc{\tilde{\Psi}(x)U^{\frac{-1}{p}}(x)\vec{h}(x)}{{(T^*\otimes I_n)}(\Psi^*V^{\frac{1}{p}}\vec{g})(x)}\right\rvert
    \leq c_{n,d}\|\Omega\|_{L^\infty(\mathbb{S}^{d-1})}{s}'
    \sum_{Q\in \mathcal{G}}|Q|\iinteg{\Psi^*V^{\frac{1}{p}}\vec{g}}_{1,Q}\iinteg{\tilde{\Psi}U^{\frac{-1}{p}}\vec{h}}_{s,Q}
\end{split}
\end{equation*}
for some sparse family $\mathcal{G}$. The rest follows more or less analogously, 
performing some simple changes. The bound one obtains is
\begin{equation*}
\begin{split}
    { \int_{\Omega}}&{\left\lvert \esc{\tilde{\Psi}(x)U^{\frac{-1}{p}}(x)\vec{h}(x)}{{(T^*\otimes I_n)}(\Psi^*V^{\frac{1}{p}}\vec{g})(x)}\right\rvert}
    \\ & { \lesssim c_{n,d}\|\Omega\|_{L^\infty(\mathbb{S}^{d-1})}t_1 [V^{-\frac{p'}{p}}]_{A_{p',\infty}^{sc}}[U,V]_{A_p}
    t_1^{\frac{1}{p}}[V^{-\frac{p'}{p}}]^{\frac{1}{p}}_{A_{p',\infty}^{sc}}t_2^{\frac{1}{p'}}[U]_{A_{p,\infty}^{sc}}^{\frac{1}{p'}}
    }\\ & \;\;\;\;{\times \;\sum_{\sigma\in C(m)}
    \|\vec{B}\|_{BMO_{V^{s \gamma},U^{s \gamma},\sigma,1}^{s \gamma p}}
    \|\vec{B}\|_{BMO^{(rp')',*}_{V^{\frac{(rp')'}{p}},U^{\frac{(rp')'}{p}},\sigma^c,2}}
    \|h\|_{p}\|g\|_{p'}}.
\end{split}
\end{equation*}


\end{proof}

We continue providing the proof of Theorem \ref{th:strTocho}
\begin{proof}[Proof of Theorem \ref{th:strTocho}]
    We shall assume first that \[\beta_1=\max\left\{\max_{\sigma\in C(m),\sigma\not=\varnothing,\tilde{\sigma}_{2^m}} \|\vec{B}\|_{\widetilde{BMO}_{V,V,\sigma}^{p}}^{\frac{1}{|\sigma|}},\max_{\sigma\in C(m),\sigma\not=\varnothing}\|\vec{B}\|_{\widetilde{BMO}_{V,U,\sigma}^{p}}^{\frac{1}{|\sigma|}}\right\}\leq1.\] 
    
     We will use the block matrix
    \begin{equation*}
    \begin{split}
        \Phi = \begin{pmatrix}
        \Phi_{i,j}
        \end{pmatrix}_{i,j=1}^{2^m}
    \end{split}
    \end{equation*}
    where 
    \begin{equation}\label{eq:matrix_def}
    \begin{split}
        \Phi_{i,j} = \begin{cases}
            W_i^{\frac{1}{p}}
            \vec{B}_{\tilde{\sigma}_j-\tilde{\sigma}_i} & \text{if }\tilde{\sigma}_i \leq \tilde{\sigma}_j, 1\leq i\leq j\leq2^m\\
            0 & \text{otherwise}
        \end{cases}
    \end{split}
    \end{equation}
    and $W_1=W_2=...=W_{2^m-1}=V$, $W_{2^m}=U$.
    It is easy to see that 
    \begin{equation*}
    \begin{split}
        \Phi^{-1} = \begin{pmatrix}
        \widetilde{\Phi}_{i,j}
        \end{pmatrix}_{i,j=1}^{2^m},
    \end{split}
    \end{equation*}
    where 
    \begin{equation*}
    \begin{split}
        \widetilde{\Phi}_{i,j} = 
        \begin{cases}
           (-1)^{|\tilde{\sigma}_j-\tilde{\sigma}_i|}\vec{B}_{(\tilde{\sigma}_j-\tilde{\sigma}_i)^t}W_j^{\frac{-1}{p}}
             & \text{if }\tilde{\sigma}_i \leq \tilde{\sigma}_j , 1\leq i\leq j\leq 2^m\\
            0 & \text{otherwise}
        \end{cases}
    \end{split}
    \end{equation*}
    and that, in particular, 
    \begin{equation*}
    \begin{split}
        \Phi(x)\Phi^{-1}(y) = 
        \begin{pmatrix}
        \widetilde{\widetilde{\Phi}}_{i,j}(x,y)
        \end{pmatrix}_{i,j=1}^{2^m},
    \end{split}
    \end{equation*}
    where 
    \begin{equation*}
    \begin{split}
        \widetilde{\widetilde{\Phi}}_{i,j}(x,y)=
        \begin{cases}
            W_i^{\frac{1}{p}}(x)\left( \sum_{\sigma \in  C(\tilde{\sigma}_j-\tilde{\sigma}_i)}(-1)^{
                |\tilde{\sigma}_j-\tilde{\sigma}_i|-
                |\sigma|}\vec{B}_{\sigma}(x)\vec{B}_{((\tilde{\sigma}_j-\tilde{\sigma}_i)-\sigma)^t}(y)\right)W_j^{\frac{-1}{p}}(y)
             & \text{if } \tilde{\sigma}_i \leq \tilde{\sigma}_j \\
            0 & \text{otherwise}
        \end{cases}
    \end{split}
    \end{equation*}
 (the fact that the matrices are inverses is a consequence of Lemma \ref{lemma:tralalero} and the previous expression).
    Notice that, when $\tilde{\sigma}_i \leq \tilde{\sigma}_j$, we made a change of variable to express the 
    corresponding matrix entries appropriately. Namely, we did $\sigma = \tilde{\sigma}-\tilde{\sigma}_i$, where 
    $\tilde{\sigma}$ is the variable obtained by multiplying the matrices, which satisfies 
    $\tilde{\sigma}_i\leq \tilde{\sigma} \leq \tilde{\sigma}_j$.
    Observe as well that the top right block of 
    $\Phi (T\otimes I_{2^mn})\Phi^{-1}$
    coincides with $V^{\frac{1}{p}} (T\otimes I_n)_{\vec{B}} U^{\frac{-1}{p}}$, since it 
    is, in the notation of Lemma \ref{lemma:comm_exp}, $V^{\frac{1}{p}}\Psi\overline{T}(\tilde{\Psi})U^{\frac{-1}{p}}$.

    If we now consider $W = (\Phi^* \Phi)^{\frac{p}{2}}$, and apply polar decomposition, we deduce that 
    $\Phi = \mathcal{U} W^{\frac{1}{p}}$, where $\mathcal{U}$ is some unitary-valued matrix function almost everywhere.
    Observe that, using that for a block matrix $A=(A_{i,j})_{i,j=1,...,n}$, $\|A\|\leq \sum_{i,j=1}^n\|A_{i,j}\|$,
    and applying brute-force to count the blocks,
    \begin{align*}
        { \fint_Q }&{\left(\fint_Q \|W^{\frac{1}{p}}(x)W^{\frac{-1}{p}}(y)\|^{p'}dy\right)^{\frac{p}{p'}}dx
        =  \fint_Q \left(\fint_Q \|\Phi(x)\Phi^{-1}(y)\|^{p'}dy\right)^{\frac{p}{p'}}dx}
        \\ &{\leq 4^{{mp}{}}\left((2^{m}-1)[V]_{A_p}+[U]_{A_p}   
        +{4^m\sum_{\underrel{\sigma\neq \varnothing,\tilde{\sigma}_{2^m}}{\sigma\in C(m)}}\|\vec{B}\|_{\widetilde{BMO}^p_{V,V,\sigma}}}    + \sum_{\underrel{\sigma\neq \varnothing}{\sigma\in C(m)}}\|\vec{B}\|_{\widetilde{BMO}^p_{V,U,\sigma}}\right)}
        \\ &\leq\left((2^{m}-1)[U]_{A_{p}}+[V]_{A_{p}}+4^{m}\sum_{\underrel{\sigma\neq\varnothing,\tilde{\sigma}_{2^{m}}}{\sigma\in C(m)}}1+\sum_{\underrel{\sigma\neq \varnothing}{\sigma\in C(m)}}1\right) \leq c_{m,p}\left([U]_{A_{p}}+[V]_{A_{p}}\right),
        \end{align*}
    so that 

    \[[W]_{A_p}\leq c_{m,p}\left([U]_{A_{p}}+[V]_{A_{p}}\right).\]



    As a result, we conclude that 
    \begin{align}\label{eq:conj_estimate}
        \nonumber\|(T\otimes I_n)_{\vec{B}}\|_{L^p(U)\rightarrow L^p(V)}
        &= \|V^{\frac{1}{p}}(T\otimes I_n)_{\vec{B}}U^{\frac{-1}{p}}\|_{L^p\rightarrow L^p}
        \leq \|\Phi(T\otimes I_{2^mn})\Phi^{{-1}}\|_{L^p\rightarrow L^p}
    \\ & = \|W^{\frac{1}{p}}(T\otimes I_{2^mn})W^{\frac{-1}{p}}\|_{L^p\rightarrow L^p}
    \leq \phi([W]_{A_p})
    \leq\phi\left(c_{m,p}\left([U]_{A_{p}}+[V]_{A_{p}}\right)\right).
    \end{align}
    Now if we call $\vec{B}/\beta_1^\frac{1}{p}=\frac{1}{\beta_1^\frac{1}{p}}(B_{1},\dots,B_{m})$ then
we have that $\|\vec{B}/\beta_1^\frac{1}{p}\|_{\widetilde{BMO}_{V,U,\sigma}^{p}}=\frac{1}{\beta_1^{|\sigma|}}\|\vec{B}\|_{\widetilde{BMO}_{V,U,\sigma}^{p}}\leq1$
by the definition of $\beta_1${, and the same applies to the other possible choices of weights}. Consequently,
\begin{align*}
\|(T\otimes I_{n})_{\vec{B}}\vec{f}\|_{L^{p}(V)} & =\|(T\otimes I_{n})_{\vec{B}/\beta_1^\frac{1}{p}}(\beta_1^{\frac{m}{p}}\vec{f})\|_{L^{p}(V)}\leq\phi(c_{m,p}([U]_{A_{p}}+[V]_{A_{p}}))\|\beta_1^{\frac{m}{p}}\vec{f}\|_{L^{p}(U)}\\
 & =\beta_1^{\frac{m}{p}}\phi(c_{m,p}([U]_{A_{p}}+[V]_{A_{p}}))\|\vec{f}\|_{L^{p}(U)}.
\end{align*}
This proves the first part of the desired estimate.

The other part can be obtained easily by replacing the blocks in \eqref{eq:matrix_def} by the ones given by
\begin{align*}
    \Phi_{i,j} = \begin{cases}
            \widetilde{W}_i^{\frac{1}{p}}
            \vec{B}_{\tilde{\sigma}_j-\tilde{\sigma}_i} & \text{if }\tilde{\sigma}_i \leq \tilde{\sigma}_j, 1\leq i\leq j\leq2^m\\
            0 & \text{otherwise},
            \end{cases}
\end{align*}
where $\widetilde{W}_1=V$ and $\widetilde{W}_k=U$ for $k=2,...,2^m$,
and following an identical procedure.
\end{proof}

\begin{remark}\label{rem:allcomm}
    Observe that, in the notation of the proof, considering $\Phi$ as in 
    \eqref{eq:matrix_def}, the block that takes position $(i,j)$ in
    $\Phi (T\otimes I_{2^mn})\Phi^{-1}$ is $W_i^{\frac{1}{p}}(T\otimes I_{n})_{{\vec{B}^{\tilde{\sigma}_j-\tilde{\sigma}_i}}}W_j^{\frac{-1}{p}}$ (where $\vec{B}^\sigma=(\vec{B}_{\sigma(1)},...,\vec{B}_{\sigma(|\sigma|)})$), assuming $\tilde{\sigma}_i \leq \tilde{\sigma}_j$. As a result, the proof provides boundedness for every possible commutator with respect to $U$ and $V$, and for every commutator of order smaller that $m$ with respect to $V$, since our previous observation allows us to write $\|(T\otimes I_{n})_{{\vec{B}^{\tilde{\sigma}_j-\tilde{\sigma}_i}}}\|_{L^p(W_j)\rightarrow L^p(W_i)}$ in place of $ \|(T\otimes I_n)_{\vec{B}}\|_{L^p(U)\rightarrow L^p(V)}$ in \eqref{eq:conj_estimate}. This provides a worse constant compared to applying the result to these commutators directly, 
    but illustrates that, by asking for all the possible $BMO$ norms to be bounded with respect to $V$, and $U$ and $V$, we actually get boundedness for all the lower order commutators with respect to the corresponding weights.
    The same conclusion can be drawn by changing $\Phi$ as in the end of the proof, but in terms of $U$.
\end{remark}

Finally in this section we prove Theorem \ref{HWMatrixThm}

\begin{proof}[Proof of Theorem \ref{HWMatrixThm}]
The proof is similar to the proof of Theorem \ref{th:strTocho} in conjunction with Theorem \ref{StrongHigherJN}.  Similar to \cite{PottTalk},  define the $(m+1) \times (m+1)$ matrix $\Phi$ with $n \times n$ blocks by

 \begin{equation*}
    \begin{split}
        {\Phi}_{i,j}(x)=
        \begin{cases}
         \binom{m+1-i}{j-i}{b^{j-i} (x)} W_i ^\frac{1}{p} (x) 
            &\text{if } j \geq i \\            
            0 & \text{otherwise}
        \end{cases}
    \end{split}
    \end{equation*}
where $W_1 = W_2= \cdots = W_{m} = V$ and $W_{m+1} = U.$

Then 
 \begin{equation*}
    \begin{split}
        {\Phi}_{i,j} ^{-1} (y)=
        \begin{cases}
          \binom{m+1-i}{j-i}(-1)^{j-i} {b^{j-i} (y)} W_j ^{-\frac{1}{p}} (y) 
            &\text{if } j \geq i \\            
            0 & \text{otherwise}
        \end{cases}
    \end{split}
    \end{equation*}

\noindent and the binomial theorem tells us that $$[\Phi(x) \Phi^{-1}(y)]_{ij} = \binom{m+1-i}{j-i} W_i ^{\frac{1}{p}} (x) W_j ^{-\frac{1}{p}} (y) (b(x) - b(y))^{j-i}.$$ Moreover, again observe that the top right block of 
    $\Phi (T\otimes I_{(m+1)n})\Phi^{-1}$
    coincides with $V^{\frac{1}{p}} (T\otimes I_n)_{b} ^m U^{\frac{-1}{p}}$.  Also again  consider $W = (\Phi^* \Phi)^{\frac{p}{2}}$, and applying polar decomposition, we deduce that 
    $\Phi = \mathcal{U} W^{\frac{1}{p}}$, where $\mathcal{U}$ is some unitary-valued matrix function almost everywhere.   Applying brute-force to count the blocks,
    \begin{align*}
        { \fint_Q }&{\left(\fint_Q \|W^{\frac{1}{p}}(x)W^{\frac{-1}{p}}(y)\|^{p'}dy\right)^{\frac{p}{p'}}dx
        =  \fint_Q \left(\fint_Q \|\Phi(x)\Phi^{-1}(y)\|^{p'}dy\right)^{\frac{p}{p'}}dx}
        \\ &  \lesssim  [U]_{\text{A}_p} +  [V]_{\text{A}_p} + \sum_{i < j < m } \|b\|_{\widetilde{BMO}_{V, V, (j-i)}^p}  + \sum_{i = 0}^{m-1} \|b\|_{\widetilde{BMO}_{V, U, (m-i)}^p}  \end{align*}

        However, if $U = V$ then by Theorem \ref{StrongHigherJN} we have for any $j \in \mathbb{N}$ that $ \|b\|_{\widetilde{BMO}_{V, V, j}^p} \approx \|b\|_{{BMO}_{V, V, j}^p} ^{jp} = \|b\|_{BMO} ^{jp}$. Rescalling $b \mapsto b/\|b\|_{\text{BMO}}$ similar to the proof of Theorem \ref{th:strTocho} tells us that $$ \|(T\otimes I_n)_{b} ^m  \|_{L^p(U) \rightarrow L^p(U)} \leq C \|b\|_{\text{BMO}}^m. $$   
        

        Finally,  as in the proof of Lemma $1.3$ in \cite{Cit2SaMWS},  define the $2 \times 2$ block matrix function $\Phi$ by \begin{equation}  \widetilde{\Phi} = \left(\begin{array}{cc} V^\frac{1}{p} & V^\frac{1}{p}  B \\ 0 & U^\frac{1}{p} \end{array} \right) \label{Phi} \end{equation}  so that  \begin{equation*} \widetilde{\Phi}^{-1} =  \left(\begin{array}{cc} V^{-\frac{1}{p}} & - B U^{-\frac{1}{p}}  \\ 0 & U^{-\frac{1}{p}}. \end{array} \right) \end{equation*} and 
        
        \begin{align*} \widetilde{\Phi} \left((T\otimes I_n)_{b} ^m\right)  \widetilde{\Phi}^{-1} & = \widetilde{\Phi}  \left( \begin{array}{cc}\left((T\otimes I_n)_{b} ^m\right) & 0 \\ 0 & \left((T\otimes I_n)_{b} ^m\right) \end{array}\right) \widetilde{\Phi}^{-1} 
        \\ & = \left(\begin{array}{cc} V^\frac{1}{p} \left((T\otimes I_n)_{b} ^m\right) V^{-\frac{1}{p}} & V^\frac{1}{p}  \left((T\otimes I_n)_{b} ^{m+1}\right) U^{-\frac{1}{p}}   \\ 0 & U^{\frac{1}{p}} \left((T\otimes I_n)_{b} ^m\right) U^{-\frac{1}{p}} \end{array} \right). \end{align*}

Let   $\widetilde{W} = (\widetilde{\Phi} ^* \widetilde{\Phi})^\frac{p}{2} $.  Then using the polar decomposition, we can write \begin{equation*} \widetilde{\Phi} =  \widetilde{\mathcal{U}} \widetilde{W}^\frac{1}{p} \end{equation*} where $\mathcal{U}$ is unitary valued a.e.  Thus, we have

\begin{align*} \|(T\otimes I_{2n})_{b} ^{m+1}\|_{L^p(U) \rightarrow L^p(V)} & = \|V^\frac{1}{p}  (T\otimes I_n)_{b} ^m U^{-\frac{1}{p}} \|_{L^p \rightarrow L^p}
\\ & \leq \|\widetilde{\Phi} \left(T\otimes I_n)_{b} ^m\right)  \widetilde{\Phi}^{-1}\|_{L^p \rightarrow L^p}
\\ & = \|\widetilde{W}^\frac{1}{p} \left(T \otimes {I}_{2m}\right)  \widetilde{W}^{-\frac{1}{p}}\|_{L^p \rightarrow L^p}
\\ & = \| (T\otimes I_n)_{b} ^m  \|_{L^p(\widetilde{W}) \rightarrow L^p(\widetilde{W})}
\\ & \leq \|b\|_{\text{BMO}}^m \phi([\widetilde{W}]_{\text{A}_p}) 
\\ & \leq \|b\|_{\text{BMO}}^m \phi([U]_{\text{A}_p} + [V]_{\text{A}_p} + \|b\|_{\widetilde{\text{BMO}}_{U, V, 1}^p}).  
\end{align*}
But by  Theorem \ref{StrongHigherJN} (or Corollary $4.7$ in \cite{Cit2SaMWS}) we have $\|b\|_{\widetilde{\text{BMO}}_{U, V, 1}^p} \approx \|b\|_{{\text{BMO}}_{U, V, 1}^p} ^p$ so rescalling $b \mapsto b / \|b\|_{{\text{BMO}}_{U, V, 1}^p}$ completes the proof. 

\end{proof}
\nocite{MR805957}

\begin{remark} \label{LOR19Rem} Note that Theorem \ref{HWMatrixThm} could also be proved by an induction argument and the results in \cite{Cit2SaMWS}.  However, it is hoped that the proof above can be modified to  prove \eqref{LOR19Conj} via a clever choice of rescalling weights and symbols.  As was stated in the introduction, we are currently unable to do this.

\end{remark} 



\section{An estimate in terms of bumped weighted $BMO$ spaces}\label{sec:Bumped}

In this section we will prove a boundedness result in terms of Orlicz norms for operators that 
satisfy Theorem \ref{cor:Tochodiago}. We recall that $\Phi:[0,\infty)\rightarrow[0,\infty)$ is a Young function if it is a continuous, nonnegative, strictly increasing and convex function such that $\lim_{t\rightarrow\infty}\Phi(t)=\infty$ and $\Phi(0)=0$. The associated 
Orlicz norm in a cube $Q$ is given by
\begin{align*}
    \|f\|_{\Phi(L)(Q)} = \inf\left\{\lambda>0: \fint_{Q} \Phi\left(\frac{|f|}{\lambda}\right)\leq 1\right\}\cup \{\infty\}.
\end{align*}
It is well known that if $\Phi$ and $\Psi$ are Young functions such that there exists $\kappa>0$ satisfying that \[\Phi^{-1}(t)\Psi^{-1}(t)\leq \kappa t\]for every $t>0$, then the following version of Hölder's inequality holds
\begin{align*}
    \fint_Q |fg|\leq 2\kappa\|f\|_{\Phi(L)(Q)}\|g\|_{{\Psi}(L)(Q)}.
\end{align*}
Additionally, $B_p$ will denote the class of Young functions such that 
\begin{align*}
    \alpha_p(\Phi)=\left(\int_{1}^\infty \frac{\Phi(t)}{t^p} \frac{dt}{t}\right)<\infty.
\end{align*}
It is known (\cite{cposc,HPLlogeps}) that, for a Young function
$\Phi$, the maximal operator given by 
\begin{align*}
    M_{\Phi}f(x) = \sup_{Q\ni x}\|f\|_{\Phi(L)(Q)}
\end{align*}
is bounded in $L^p$ if and only if $\Phi\in B_p$. Furthermore
\[\|M_\Phi f\|_{L^p}\lesssim\alpha_p(\Phi)\|f\|_{L^p}.\]
\nocite{MR4734976}
\nocite{MR4050113}
\nocite{raoren}

Now we introduce and prove the aforementioned boundedness result.
\begin{theorem}\label{th:strTochoMejor}
    Consider ${T}$, $\vec{B}$ and $\vec{f}$ satisfying the hypotheses of Corollary 
    \ref{cor:Tochodiago} with $r=1$, and a measurable function $\vec{g}$. Then, if $C,\overline{C},D$ and $\overline{D}$ are 
    Young functions such that $\overline{C}\in B_p$ and $\overline{D}\in B_{p'}$, $p\in (1,\infty)$, and there exists $\kappa>0$ such that
    \[C^{-1}(t)\overline{C}^{-1}(t)\leq \kappa t\qquad D^{-1}(t)\overline{D}^{-1}(t)\leq \kappa t\]
    for every $t>0$,  then 
    \begin{equation*}
    \begin{split}
        \left\lvert \int_{\mathbb{R}^n} \left\langle (T\otimes I_n)_{\vec{B}}\vec{f}(x),\vec{g}(x) \right\rangle dx\right\rvert
        \lesssim \alpha\min\left\{\|\vec{B}\|_{\widetilde{BMO}_{V,U,m}^{C,D}},\|\vec{B}\|_{\widetilde{BMO}_{V,U,m}^{D,C}}\right\}
        \|\vec{f}\|_{L^p(U)} \|\vec{g}\|_{L^{p'}(V^{\frac{-p'}{p}})}  ,
    \end{split}
    \end{equation*}
    where
    \begin{align*}
    \|\vec{B}\|_{\widetilde{BMO}_{V,U,m}^{C,D}} &= \sup_Q\left\|\left\|\left\|V^{\frac{1}{p}}(x)\left(\sum_{\sigma\in C(m)}(-1)^{m-|\sigma|}  \vec{B}_{\sigma}(x)\vec{B}_{(\sigma^c)^t}(y)\right)U^{\frac{-1}{p}}(y)\right\|\right\|_{{C_y}(L)\left(Q\right)}\right\|_{D_x(L)(Q)},
\\  \|\vec{B}\|_{\widetilde{BMO}_{V,U,m}^{D,C}} &= \sup_Q\left\|\left\|\left\|V^{\frac{1}{p}}(x)\left(\sum_{\sigma\in C(m)}(-1)^{m-|\sigma|}  \vec{B}_{\sigma}(x)\vec{B}_{(\sigma^c)^t}(y)\right)U^{\frac{-1}{p}}(y)\right\|\right\|_{{D_x}(L)\left(Q\right)}\right\|_{C_y(L)(Q)}
\end{align*}
and $\alpha=\alpha_p(\overline{C})\alpha_{p'}(\overline{D})$.
\end{theorem}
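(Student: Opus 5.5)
Start from Corollary \ref{cor:Tochodiago} with $r=1$. It gives $C_0>0$, a (sparse, in the applications) family $\mathcal{G}$ and scalar kernels $k_Q$ with $\|k_Q(x,\cdot)\|_{L^\infty}\le 1$. With these,
\[
(T\otimes I_n)_{\vec{B}}\vec{f}(x)=C_0\sum_{Q\in\mathcal{G}}\chi_Q(x)\fint_Q k_Q(x,y)\Big(\sum_{\sigma\in C(m)}(-1)^{m-|\sigma|}\vec{B}_\sigma(x)\vec{B}_{(\sigma^c)^t}(y)\Big)\vec{f}(y)\,dy .
\]
Write $\mathcal{K}_{\vec{B}}(x,y)=\sum_{\sigma}(-1)^{m-|\sigma|}\vec{B}_\sigma(x)\vec{B}_{(\sigma^c)^t}(y)$. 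Since $k_Q$ is scalar, it can be moved past the matrix products.

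Pair with $\vec{g}$ and insert the weights: set $\vec{h}=U^{\frac1p}\vec{f}$ and $\vec{k}=V^{-\frac1p}\vec{g}$. Bounding $|k_Q|\le1$ gives
\[
\Big|\int\langle (T\otimes I_n)_{\vec{B}}\vec{f},\vec{g}\rangle\Big|\le C_0\sum_{Q\in\mathcal{G}}|Q|\fint_Q\fint_Q \big\|V^{\frac1p}(x)\mathcal{K}_{\vec{B}}(x,y)U^{-\frac1p}(y)\big\|\,|\vec{h}(y)|\,|\vec{k}(x)|\,dy\,dx.
\]

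For the first bound, apply the Orlicz Hölder inequality in $y$ with the pair $(C,\overline{C})$. The inner integral is then at most $2\kappa\,\|\|V^{\frac1p}(x)\mathcal{K}_{\vec{B}}(x,\cdot)U^{-\frac1p}(\cdot)\|\|_{C_y(L)(Q)}\|\vec{h}\|_{\overline{C}(L)(Q)}$. Next apply the Orlicz Hölder inequality in $x$ with $(D,\overline{D})$, which produces $\|\vec{B}\|_{\widetilde{BMO}^{C,D}_{V,U,m}}\,\|\vec{h}\|_{\overline{C}(L)(Q)}\|\vec{k}\|_{\overline{D}(L)(Q)}$. For the second bound, do the two Hölder steps in the opposite order using Fubini: $x$ first with $D$, then $y$ with $C$. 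This gives the $\widetilde{BMO}^{D,C}$ term, and taking the better of the two yields the minimum.

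The remaining sum is handled by sparseness. Using $|Q|\le\eta^{-1}|E_Q|$ with disjoint $E_Q$,
\[
\sum_Q|Q|\,\|\vec{h}\|_{\overline{C}(L)(Q)}\|\vec{k}\|_{\overline{D}(L)(Q)}\le \eta^{-1}\int M_{\overline{C}}|\vec{h}|\,M_{\overline{D}}|\vec{k}|\le\eta^{-1}\|M_{\overline{C}}\|_{L^p}\|M_{\overline{D}}\|_{L^{p'}}\|\vec{h}\|_{L^p}\|\vec{k}\|_{L^{p'}} .
\]
By the $B_p$ result quoted above, $\|M_{\overline{C}}\|_{L^p}\|M_{\overline{D}}\|_{L^{p'}}\lesssim\alpha_p(\overline{C})\alpha_{p'}(\overline{D})=\alpha$. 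Finally, $\|\vec{h}\|_{L^p}=\|\vec{f}\|_{L^p(U)}$ and $\|\vec{k}\|_{L^{p'}}=\|\vec{g}\|_{L^{p'}(V^{-p'/p})}$.

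The main obstacle is that Corollary \ref{cor:Tochodiago} is stated for a general collection $\mathcal{G}$ and a constant that may depend on $\vec{f}$. One must either assume the sparse, uniform-constant version (as in the Calder\'on--Zygmund applications, where $\mathcal{G}$ is $\eta$-sparse and $C_0$ depends only on $T$) or carry the constant along. A related point is that $\vec{B}_\sigma(x)$ and $\vec{B}_{(\sigma^c)^t}(y)$ must stay inside one norm, so that the bilinear kernel $\mathcal{K}_{\vec{B}}$ appears as a whole. This works because $k_Q$ is scalar, which is exactly why the diagonal corollary is used rather than Theorem \ref{th:tocho}. Measurability and density issues, such as taking $\vec{f}$ compactly supported and then extending, are routine.
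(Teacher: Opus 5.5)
Your proposal is correct and follows the paper's proof essentially step by step. You expand via Corollary \ref{cor:Tochodiago} using the scalar kernel $k_Q$ and bound $|k_Q|\le 1$ after inserting $U^{\pm 1/p}$ and $V^{\pm 1/p}$; you then apply the Orlicz--H\"older inequality in $y$ and then in $x$ (in the reverse order for the $\widetilde{BMO}^{D,C}$ term) and finish with the sparse argument via $M_{\overline{C}}$ and $M_{\overline{D}}$. Your caveat that the family must be sparse, with a constant independent of $\vec f$, is also what the paper implicitly assumes: it writes the sum over a sparse $\mathcal{S}$ and uses the disjoint sets $E_Q$.
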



\begin{proof}
    Applying Corollary \ref{cor:Tochodiago} in the first equality, we obtain that
    \begin{align*}
        \left\lvert \int_{\mathbb{R}^n} \right. & \left\langle (T\otimes I_n)_{\vec{B}}\vec{f}(x),\left.\vec{g}(x) \vphantom{\int_{\mathbb{R}^n}} \right\rangle dx\right\rvert
        \\ &   =  \left\lvert \int_{\mathbb{R}^n}\left\langle C\sum_{Q\in \mathcal{S}}\chi_Q(x) \fint_Q k_Q(x,y)\left(\sum_{\sigma\in C(m)}(-1)^{m-|\sigma|}  \vec{B}_{\sigma}(x)\vec{B}_{(\sigma^c)^t}(y)\right)\vec{f}(y)dy ,\vec{g}(x) \right\rangle dx\right\rvert
        \\ & \lesssim  \sum_{Q\in \mathcal{S}}\int_{Q}\fint_Q\left\lvert\left\langle  k_Q(x,y)\left(\sum_{\sigma\in C(m)}(-1)^{m-|\sigma|}  \vec{B}_{\sigma}(x)\vec{B}_{(\sigma^c)^t}(y)\right)\vec{f}(y) ,\vec{g}(x) \right\rangle\right\rvert dydx
        \\ & = \sum_{Q\in \mathcal{S}}\int_{Q}\fint_Q\left\lvert\left\langle  k_Q(x,y)V^{\frac{1}{p}}(x)\left(\sum_{\sigma\in C(m)}(-1)^{m-|\sigma|}  \vec{B}_{\sigma}(x)\vec{B}_{(\sigma^c)^t}(y)\right)U^{\frac{-1}{p}}(y)U^{\frac{1}{p}}(y)\vec{f}(y), \right.\right.
        \\ & \;\;\;\; \left. \left. \vphantom{\left(\sum_{\sigma\in C(m)}\right)} V^{\frac{-1}{p}}(x)\vec{g}(x) \right\rangle\right\rvert dydx
        \\ &  \leq \sum_{Q\in \mathcal{S}}\int_{Q}\fint_Q\left\lvert  k_Q(x,y)V^{\frac{1}{p}}(x)\left(\sum_{\sigma\in C(m)}(-1)^{m-|\sigma|}  \vec{B}_{\sigma}(x)\vec{B}_{(\sigma^c)^t}(y)\right)U^{\frac{-1}{p}}(y)U^{\frac{1}{p}}(y)\vec{f}(y)\right\rvert
        \\ & \;\;\;\; \times  \left\lvert V^{\frac{-1}{p}}(x)\vec{g}(x) \right\rvert dydx
        \\ & \leq \sum_{Q\in \mathcal{S}}\int_{Q}\fint_Q\|V^{\frac{1}{p}}(x)\left(\sum_{\sigma\in C(m)}(-1)^{m-|\sigma|}  \vec{B}_{\sigma}(x)\vec{B}_{(\sigma^c)^t}(y)\right)U^{\frac{-1}{p}}(y)\|\left\lvert  U^{\frac{1}{p}}(y)\vec{f}(y)\right\rvert dy
        \\ & \;\;\;\; \times \left\lvert V^{\frac{-1}{p}}(x)\vec{g}(x) \right\rvert dx.
    \end{align*}
    Applying now Hölder's Inequality twice, we obtain that
    \begin{align*}
        \left\lvert \int_{\mathbb{R}^n} \right. & \left\langle (T\otimes I_n)_{\vec{B}}\vec{f}(x),\left.\vec{g}(x) \vphantom{\int_{\mathbb{R}^n}} \right\rangle dx\right\rvert
        \\ &  \lesssim \sum_{Q\in \mathcal{S}}   \fint_{Q}\|\|V^{\frac{1}{p}}(x)\left(\sum_{\sigma\in C(m)}(-1)^{m-|\sigma|}  \vec{B}_{\sigma}(x)\vec{B}_{(\sigma^c)^t}(y)\right)U^{\frac{-1}{p}}(y)\|\|_{{C_y}(L)\left(Q\right)}\left\lvert V^{\frac{-1}{p}}(x)\vec{g}(x) \right\rvert dx
        \\ & \;\;\;\; \times \|U^{\frac{1}{p}}\vec{f}\|_{\overline{C}(L)\left(Q\right)}|Q|
                \\ & \lesssim \sum_{Q\in \mathcal{S}}   \|\|\|V^{\frac{1}{p}}(x)\left(\sum_{\sigma\in C(m)}(-1)^{m-|\sigma|}  \vec{B}_{\sigma}(x)\vec{B}_{(\sigma^c)^t}(y)\right)U^{\frac{-1}{p}}(y)\|\|_{{C_y}(L)\left(Q\right)}\|_{D_x(L)(Q)}\|V^{\frac{-1}{p}}\vec{g}\|_{\overline{D}(L)(Q)}
        \\ & \;\;\;\; \times \|U^{\frac{1}{p}}\vec{f}\|_{\overline{C}(L)\left(Q\right)}|Q|
                        \\ & \leq  \|\vec{B}\|_{\widetilde{BMO}_{V,U,m}^{C,D}}\sum_{Q\in \mathcal{S}}  \|V^{\frac{-1}{p}}\vec{g}\|_{\overline{D}(L)(Q)} \|U^{\frac{1}{p}}\vec{f}\|_{\overline{C}(L)\left(Q\right)}|Q|  .    
        \end{align*}
        A standard procedure now yields that 
                \begin{align*}
                    \left\lvert \int_{\mathbb{R}^n} \right.\left\langle (T\otimes I_n)_{\vec{B}}\vec{f}(x),\left.\vec{g}(x) \vphantom{\int_{\mathbb{R}^n}} \right\rangle dx\right\rvert
                         & \lesssim  \|\vec{B}\|_{\widetilde{BMO}_{V,U,m}^{C,D}}\sum_{Q\in \mathcal{S}}   \inf_Q M_{\overline{D}}(V^{\frac{-1}{p}}\vec{g}) \inf_Q M_{\overline{C}}(U^{\frac{1}{p}}\vec{f})|E_Q|
          \\ &  \leq \|\vec{B}\|_{\widetilde{BMO}_{V,U,m}^{C,D}}   \int_{\rn} M_{\overline{D}}(V^{\frac{-1}{p}}\vec{g})  M_{\overline{C}}(U^{\frac{1}{p}}\vec{f})
        \\ & \leq \|\vec{B}\|_{\widetilde{BMO}_{V,U,m}^{C,D}}   \|M_{\overline{D}}(V^{\frac{-1}{p}}\vec{g})\|_{p'}  \|M_{\overline{C}}(U^{\frac{1}{p}}\vec{f})\|_p,
    \end{align*}
    so, using that $\overline{C}\in B_p, \overline{D}\in B_{p'}$, we conclude that 
    \begin{align*}
       \left\lvert \int_{\mathbb{R}^n} \right.\left\langle (T\otimes I_n)_{\vec{B}}\vec{f}(x),\left.\vec{g}(x) \vphantom{\int_{\mathbb{R}^n}} \right\rangle dx\right\rvert
        \leq\|M_{\overline{D}}\|_{p'}\|M_{\overline{C}}\|_{p}\|\vec{B}\|_{\widetilde{BMO}_{V,U,m}^{C,D}}   \|\vec{g}\|_{L^{p'}(V^{\frac{-p'}{p}})}  \|\vec{f}\|_{L^p(U)}.
    \end{align*}
If right before applying Hölder's inequality we switch the order on integration, and apply Hölder's Inequality
with respect to $D$ and $x$, and then with respect to $C$ and $y$, an identical procedure yields the other part of the desired inequality.
\end{proof}

Notice that the norm in which the symbols are contained can be seen as a $\widetilde{BMO}$ norm, 
but with Orlicz norms instead of a $p-$th norm. This justifies the notation employed.

As it happens with the other estimates we have proven in this section, it is very easy to obtain a 
strong-type estimate from this result, employing duality and choosing the functions adequately.



\section{Applications of Theorem \ref{th:tocho}}\label{sect:apps}

In this last section, we would like to show that condition $(P_1)$ that 
appears in Theorem \ref{th:tocho} is reasonable and 
is satisfied by some classes of operators. In particular, we will provide an
example of operators for which $(P_1)$ is satisfied for $r=1$.
First, we introduce the following lemma. Recall that 

%

\begin{equation*}
\begin{split}
    {M}_Tf(x) = \sup_{Q\ni x} \sup_{y\in Q}|T(f \chi_{\rn\setminus 3Q})(y)|.
\end{split}
\end{equation*}


\begin{lemma}\label{lemma:auxiliar_apps}
    Let $T$ be a linear operator, defined for functions taking values in $\mathbb{R}^d$,
     such that, $T$ and $M_{T}$ are of weak type $(1,1)$.
     Then, for any cube $Q_0$, $\varepsilon\in(0,1)$, and $\vec{f}\in L^1(3Q_0,\mathbb{R}^{n})$, there exist 
     disjoint cubes $\left\{ Q_k'\right\}_k\subset \mathcal{D}(Q_0)$ such that 
     \begin{equation*}
     \begin{split}
        \sum_{k}|Q_k'|\leq \varepsilon |Q_0|
     \end{split}
     \end{equation*}
     and, if $\left\{ Q_k\right\}_k\subset \mathcal{D}(Q_0)$ is a family of disjoint cubes such that 
     $\bigcup_k Q_k'\subset \bigcup_k Q_k$, then
     \begin{equation}\label{aux10}
     \begin{split}
        \chi_{Q_0}(x){(T\otimes I_n)}\left(\chi_{3Q_0}\vec{f}\right)(x)-\sum_{k}\chi_{Q_k}(x){(T\otimes I_n)}\left(\chi_{3Q_k}\vec{f}\right)(x) \in \chi_{Q_0}(x) \frac{C_{d,n} C_{T}}{\varepsilon}\iinteg{\vec{f}}_{3Q_0},
     \end{split}
     \end{equation}
     is satisfied a.e. on $\mathbb{R}^d$,
    where $C_{T} = \|T\|_{L^1\rightarrow L^{1,\infty}} + \|M_{T}\|_{L^1\rightarrow L^{1,\infty}}$.
\end{lemma}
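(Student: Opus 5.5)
This is the one-step recursion from the proof of \cite[Theorem 3.4]{cbdwczo} (Lerner's argument), with convex bodies replacing scalar averages. The plan is to select the exceptional cubes from a level set on which the weak type bounds of $T$ and $M_T$ fail, and then show that off this set the difference in \eqref{aux10} lies in a fixed multiple of $\iinteg{\vec{f}}_{3Q_0}$.

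\textbf{Step 1: reduction to scalar functions.} The key device is the John ellipsoid. Since $\iinteg{\vec{f}}_{3Q_0}$ is symmetric, convex and compact, there is an ellipsoid $E$ centred at the origin with $E\subset \iinteg{\vec{f}}_{3Q_0}\subset \sqrt{n}E$. After a linear change of coordinates (allowed because $T\otimes I_n$ commutes with constant matrices), we may take $E$ to be the unit ball of a subspace, with principal directions $\vec{e}_1,\dots,\vec{e}_n$. Set $f_\ell=\langle \vec{f},\vec{e}_\ell\rangle$. Testing the definition of $\iinteg{\vec{f}}_{3Q_0}$ with $\phi=\operatorname{sgn}\overline{f_\ell}$ shows $\fint_{3Q_0}|f_\ell|\lesssim_n 1$ in these coordinates.

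\textbf{Step 2: the exceptional set.} Define
\[
\Omega=\bigcup_{\ell=1}^n\Big\{x\in Q_0:\ \max\{|T(\chi_{3Q_0}f_\ell)(x)|,\ M_T(\chi_{3Q_0}f_\ell)(x)\}>\tfrac{C_{d,n}C_T}{\varepsilon}\fint_{3Q_0}|f_\ell|\Big\}.
\]
By the weak $(1,1)$ bounds, $|\Omega|\le \tfrac{\varepsilon}{2^{d+1}}|Q_0|$ once $C_{d,n}$ is large enough. Replace $\Omega$ by $\widetilde\Omega=\{M(\chi_\Omega)>2^{-d-1}\}\cap Q_0$; its measure is still $\lesssim\varepsilon|Q_0|$ after adjusting constants. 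Let $\{Q_k'\}$ be the maximal dyadic subcubes of $Q_0$ contained in $\widetilde\Omega$. Then $\sum_k|Q_k'|\le\varepsilon|Q_0|$, and each $Q_k'$ meets $Q_0\setminus\Omega$ in positive measure.

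\textbf{Step 3: pointwise estimate.} Take disjoint $\{Q_k\}\subset\mathcal{D}(Q_0)$ whose union contains $\bigcup Q_k'$. There are two cases.

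If $x\in Q_0\setminus\bigcup_k Q_k$, then $x\notin\Omega$ up to a null set, so each component $|T(\chi_{3Q_0}f_\ell)(x)|$ is controlled by $\tfrac{C}{\varepsilon}\fint_{3Q_0}|f_\ell|$.

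If $x\in Q_k$, the difference is $(T\otimes I_n)(\chi_{3Q_0\setminus 3Q_k}\vec f)(x)$. The definition of $M_T$ with the cube $Q_k$ bounds this in each coordinate by $\inf_{Q_k}M_T(\chi_{3Q_0}f_\ell)$, which needs a point of $Q_k$ outside $\Omega$. This is the main obstacle: $Q_k$ may be a small cube lying inside some $Q_j'$, and then it has no such point. The fix I would try first is to require the $Q_k$ to be subcubes of, or supercubes of, the $Q_j'$, as the usage in the sparse iteration allows. In the supercube case $Q_k$ contains a point outside $\Omega$ by maximality. In the subcube case, write $\chi_{3Q_0\setminus3Q_k}=\chi_{3Q_0\setminus 3Q_j'}+\chi_{3Q_j'\setminus 3Q_k}$. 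If this does not suffice, I would state the lemma in the form actually used, where the $Q_k$ are exactly the $Q_k'$.

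\textbf{Step 4: conversion back to a convex body.} Having componentwise bounds $|\langle\cdot,\vec{e}_\ell\rangle|\lesssim\varepsilon^{-1}C_T$, the resulting vector lies in $\tfrac{C\sqrt n}{\varepsilon}C_T\,E\subset \tfrac{C_{d,n}C_T}{\varepsilon}\iinteg{\vec{f}}_{3Q_0}$. This gives \eqref{aux10}.
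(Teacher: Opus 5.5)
Your Steps 1, 2 and 4 are the John-ellipsoid version of Lerner's stopping argument that the paper points to (Lemma 3.2 of \cite{cbdwczo}, Lemma 2.3.14 of \cite{hytonotas}), and they are fine. The gap is where you located it, in Step 3, and your proposed repair does not close it. For $Q_k\subsetneq Q_j'$ you write $\chi_{3Q_0\setminus 3Q_k}=\chi_{3Q_0\setminus 3Q_j'}+\chi_{3Q_j'\setminus 3Q_k}$. The second piece, $(T\otimes I_n)(\chi_{3Q_j'\setminus 3Q_k}\vec f)$ on $Q_k$, is a truncated singular integral of $\vec f$ at scale $\ell(Q_k)$, and nothing in the construction bounds it by $\iinteg{\vec f}_{3Q_0}$.

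In fact no argument can handle that case: for arbitrary disjoint families with $\bigcup_k Q_k'\subset\bigcup_k Q_k$, the claim is false. Here is a counterexample.
\begin{itemize}
\item Take $d=n=1$ and $T=H$, the Hilbert transform, which satisfies the hypotheses.
\item Take $Q_0=[0,1)$, $\delta=2^{-N}$ with $N\geq 2$, and $f=\delta^{-1}\chi_{[1/2,1/2+\delta)}$.
\item Let $\{Q_k\}$ be all dyadic intervals of length $\delta$. This family covers $Q_0$, hence covers whatever $Q_k'$ were selected.
\item For $I=[1/2-2\delta,1/2-\delta)$ we have $3I=[1/2-3\delta,1/2)$, so $\chi_{3I}f=0$. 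Hence the left-hand side of \eqref{aux10} equals $Hf(x)$ on $I$, and there $|Hf(x)|\geq \log(3/2)/(\pi\delta)$.
\item Every element of $\frac{C_{d,n}C_T}{\varepsilon}\iinteg{f}_{3Q_0}$ has modulus at most $\frac{C_{d,n}C_T}{3\varepsilon}$, so \eqref{aux10} fails on $I$ once $\delta$ is small.
\end{itemize}

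What is true, and what the paper actually uses, is the version in which every $Q_j'$ is contained in some $Q_k$. By disjointness, this means no $Q_k$ is a proper subcube of a $Q_j'$. This is the situation in the proof of the next lemma, where $\{Q_l\}$ is the family of maximal cubes among all the $Q^{i,j}_l$: for each fixed $(i,j)$, every $Q^{i,j}_{l'}$ lies inside some $Q_l$.

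Under this hypothesis your argument goes through, once you add the case your subcube/supercube dichotomy omits: a $Q_k$ disjoint from every $Q_j'$. Then $|Q_k\cap\Omega|=0$, because $\Omega\subset\bigcup_j Q_j'$ up to a null set. So almost every point of $Q_k$ is a good point for $M_T$, exactly as in your supercube case.

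Your fallback of taking $Q_k=Q_k'$ would be too weak for that application. The family of maximal cubes of $n^2$ different selections generally coincides with none of them.
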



This lemma is a subtle extension of Lemmas 3.2 in \cite{cbdwczo} and  2.3.14 in \cite{hytonotas}, which in turn are adaptations 
of the proof of Theorem 3.1 in \cite{Opeiso}, for which 
\eqref{aux10} is valid for more families of cubes than just the one provided at the beginning of the statement.
The proof can be done very easily by extending the one provided for the aforementioned lemma in \cite{hytonotas},
which is based on the well-known John Ellipsoid Theorem.
We will use Lemma \ref{lemma:auxiliar_apps} to show the following result.

\begin{lemma}
    Let $m\in \mathbb{N}$, $\{\hat{e}_j\}$ be a basis in $\mathbb{R}^n$, and $\vec{T}=(T_1,...,T_n)$ be a linear operator, defined for functions taking values in $\mathbb{R}^d$,
     such that, for each $i,j$, $T_{i,j}(f):=T_i(\hat{e}_jf)$ and $M_{T_{i,j}}$ are of weak type $(1,1)$
     (which implies this is satisfied for any basis $\{e_j\}$ of $\mathbb{R}^n$).
     Then, for any cube $Q_0$, $\varepsilon\in(0,1)$, and $\vec{f}\in L^1(3Q_0,\mathbb{R}^{mn})$, there exist 
     disjoint cubes $\left\{ Q_k\right\}_k\subset \mathcal{D}(Q_0)$ such that 
     \begin{equation*}
     \begin{split}
        \sum_{k}|Q_k|\leq \varepsilon |Q_0|
     \end{split}
     \end{equation*}
     and there is a function $K_{Q_0}:Q_0\times 3Q_0\rightarrow \mathcal{M}_n(\mathbb{R})$, with 
     $\|K_{Q_0}\|_{L^\infty}\leq 1$, satisfying that, for the $mn\times mn$ matrix function
     \begin{equation*}
        \begin{split}
            \overline{K}_Q = 
\begin{pNiceArray}{cc|cc|cc|cc}
  \quad K_Q &&  {{0}} && \dots && {0}\\
  \hline
  {{0}} && K_Q&& \dots && {0}\\
  \hline
  \vdots && \vdots && \ddots     && \vdots\\
  \hline
  0 && 0 && \dots &&K_Q
\end{pNiceArray},
        \end{split}
        \end{equation*} 
        and
        \begin{equation*}
        \begin{split}
            \overline{T}(\vec{g}) = \begin{pmatrix}
            \vec{T}(\vec{g}^1)\\\vec{T}(\vec{g}^2)\\\vdots\\\vec{T}(\vec{g}^{m})
            \end{pmatrix},
        \end{split}
        \end{equation*}
        for an appropriate $\vec{g} = (g_1,g_2,...,g_{mn})$
        ($\vec{g}^k=(g_{(k-1)n+1},g_{(k-1)n+2},...,g_{(k-1)n+n})$),
     the identity
     \begin{equation*}
     \begin{split}
        \chi_{Q_0}(x)\overline{T}\left(\chi_{3Q_0}\vec{f}\right)(x)-\sum_{k}\chi_{Q_k}(x)\overline{T}\left(\chi_{3Q_k}\vec{f}\right)(x)= \chi_{Q_0}(x) \frac{C_{d,n} C_{T}}{\varepsilon}\integ{\overline{K}_{Q_0}(x,\cdot)\vec{f}}_{3Q_0}
     \end{split}
     \end{equation*}
     is verified a.e. on $\mathbb{R}^d$,
    where $C_{T} = \sum_{i,j=1}^n\|T_{i,j}\|_{L^1\rightarrow L^{1,\infty}} + \|M_{T_{i,j}}\|_{L^1\rightarrow L^{1,\infty}}$.
\end{lemma}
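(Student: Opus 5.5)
The natural approach is to reduce everything to Lemma \ref{lemma:auxiliar_apps}, applied to a single scalar-type operator that sees all the blocks of $\overline{T}$ at once. First fix the basis $\{\hat e_j\}$ and write each component $T_i$ acting on a vector function $\vec h$ as $T_i\vec h=\sum_j T_{i,j}(h_j)$, where $h_j$ is the $j$-th coordinate of $\vec h$ in this basis. The $(k,i)$ coordinate of $\overline{T}\vec f$ is then $\sum_j T_{i,j}(f_{(k-1)n+j})$. The operators involved are the $n^2$ scalar operators $T_{i,j}$, all of weak type $(1,1)$ with $M_{T_{i,j}}$ also of weak type $(1,1)$. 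The parenthetical claim about arbitrary bases follows from linearity: a change of basis writes each new $T_{i,j}$ as a finite linear combination of the old ones, and the same holds for the grand maximal truncations, up to a constant.

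Next I would apply Lemma \ref{lemma:auxiliar_apps} once for each pair $(i,j)$. The operator is $T_{i,j}\otimes I_{m}$, acting on the $m$-component vector $\vec f^{(j)}:=(f_{j},f_{n+j},\dots,f_{(m-1)n+j})$. This produces an exceptional family $\{Q'^{(i,j)}_k\}$ with total measure at most $\frac{\varepsilon}{n^2}|Q_0|$. I would take $\{Q_k\}$ to be the maximal dyadic cubes among the union of all these families. These cubes are disjoint, their total measure is at most $\varepsilon|Q_0|$, and their union contains every $\bigcup_k Q'^{(i,j)}_k$. This is exactly where the strengthened form of Lemma \ref{lemma:auxiliar_apps} is used: the inclusion \eqref{aux10} holds for any disjoint covering family, not only the one the lemma produces. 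Summing over $j$ for fixed $i$ gives that the $(k,i)$ coordinate of the left-hand side lies in the Minkowski sum $\sum_j \frac{C n^2C_T}{\varepsilon}\iinteg{\vec f^{(j)}}_{3Q_0}$.

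The main obstacle is to convert these set inclusions into a single kernel $K_{Q_0}$ that is the same for every block $k=1,\dots,m$. That common kernel is what the block-diagonal structure $\overline{K}_{Q_0}$ requires. The key observation is that membership $\vec v(x)\in\iinteg{\vec f^{(j)}}_{3Q_0}$ of the whole $m$-vector, with $r=1$, gives through Lemma \ref{lemma:cbdexprbonita} a single scalar function $k_{i,j}(x,\cdot)$ with $\|k_{i,j}(x,\cdot)\|_{L^\infty}\le 1$. This function reproduces all $m$ coordinates simultaneously: $v_k(x)=\integ{k_{i,j}(x,\cdot)f_{(k-1)n+j}}_{3Q_0}$ for every $k$. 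Setting $K_{Q_0}(x,y)$ to be the matrix with entries $(K_{Q_0})_{i,j}=k_{i,j}/n$, expressed in the basis $\{\hat e_j\}$, then gives the identity. The factor $1/n$ and the change of basis cost a dimensional constant, which is absorbed into $C_{d,n}$, so that $\|K_{Q_0}\|\le 1$. I would also check the measurability of the selection in $x$, which Lemma \ref{lemma:cbdexprbonita} supplies, and confirm that the constant $C_T$ accumulates as the stated sum over $i,j$.
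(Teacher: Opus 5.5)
Your proposal is correct and is essentially the paper's proof. Like the paper, you apply Lemma \ref{lemma:auxiliar_apps} to each $T_{i,j}\otimes I_m$ acting on $\vec f^{(j)}$, pass to the maximal cubes of the union of the $n^2$ exceptional families (legitimate because \eqref{aux10} holds for every disjoint covering family), and apply Lemma \ref{lemma:cbdexprbonita} to the whole $m$-vector, so a single scalar kernel per pair $(i,j)$ serves all $m$ blocks of $\overline{K}_{Q_0}$.

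The differences are cosmetic. You rescale $\varepsilon$ by $n^{-2}$ at the start rather than at the end. Your normalization by $1/n$ gives a genuine operator-norm bound $\|K_{Q_0}\|\le 1$, whereas the matrix the paper assembles only has entries bounded by $1$.
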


\begin{proof}
    First, let us consider,
if $\vec{f} = (f_1,...,f_{mn})$ (where the coordinates are in terms of $\{\hat{e}_j\}$), 
$\vec{f}^j = (f_j,f_{n+j},...,f_{n(m-1)+j})$ and
$\vec{T}^{i,j}=T_{i,j}\otimes I_m$, $i,j=1,2,...,n$.
The condition
    $\vec{f}\in L^1(3Q_0,\mathbb{R}^{mn})$ implies that 
    $\vec{f}^j\in L^1(3Q_0,\mathbb{R}^m)$ for all $j=1,2,...,n$, while,
    if we fix $i,j=1,...,n$, the conditions asked to
    $T_{i,j}$ imply that 
    the previous lemma
    is applicable to $\vec{T}^{i,j}$ and 
    $\vec{f^j}$.
     As a result,
    there exists a family of disjoint cubes 
    $\{Q_{l}^{i,j}\}_l\subset \mathcal{D}(Q_0)$ such that 
    \begin{equation*}
    \begin{split}
        \sum_{k}|Q_{l}^{i,j}|\leq \varepsilon |Q_0|
    \end{split}
    \end{equation*} 
    and satisfying that if $\{Q_l\}_l\subset \mathcal{D}(Q_0)$
    verifies that $\bigcup_l Q_l^{i,j}\subset \bigcup_l Q_l$, 
    then 
    \begin{equation}\label{aux9}
    \begin{split}
        \chi_{Q_0}\vec{T}^{i,j}\left(\chi_{3Q_0}{\vec{f}^j}\right)-\sum_{l}\chi_{Q_l}\vec{T}^{i,j}\left(\chi_{3Q_l}{\vec{f}^j}\right)
        \in \chi_{Q_0} \frac{C_{d,n} C_{T}}{\varepsilon}\iinteg{{\vec{f}^j}}_{3Q_0}
    \end{split}
    \end{equation}
    a.e. on $\mathbb{R}^d$ (we also applied that $C_{T_{i,j}}\leq C_T$). 

    If we take $\left\{ Q_l\right\}_l$ the family of all the 
    maximal cubes in $\left\{ Q_l^{i,j}\right\}_{i,j,l}$,
    then it is clear that 
    \begin{equation*}
    \begin{split}
        \sum_{l}|Q_{l}|\leq \sum_{i,j=1}^n\sum_{l}|Q_{l}^{i,j}|\leq n^2\varepsilon |Q_0|
    \end{split}
    \end{equation*}
    and that \eqref{aux9} is satisfied a.e. on $\mathbb{R}^d$ for all $i,j$.

    We may now apply Lemma \ref{lemma:cbdexprbonita} and ensure there exists a function 
    $K_Q^{i,j}:\mathbb{R}^d\times Q\rightarrow \mathbb{R}$ with $\|K_Q^{i,j}\|_{L^\infty}\leq1 $
    such that 
    \begin{equation*}
    \begin{split}
        \chi_{Q_0}(x)\vec{T}^{i,j}\left(\chi_{3Q_0}{\vec{f}^j}\right)(x)-\sum_{l}\chi_{Q_l}(x)\vec{T}^{i,j}\left(\chi_{3Q_l}{\vec{f}^j}\right)(x)=\chi_{Q_0}(x) \frac{C_{d,n} C_{T}}{\varepsilon}\integ{K_Q^{i,j}(x,\cdot){\vec{f}^j}}_{3Q_0},
    \end{split}
    \end{equation*}
    so we obtain that, for all $i,j=1,...,n$, and $k=0,...,m-1$,
    \begin{equation*}
        \begin{split}
            \chi_{Q_0}(x){T}_{i}\left(\chi_{3Q_0}\hat{e}_j{{f}_{nk+j}}\right)(x)
            -\sum_{l}\chi_{Q_k}(x){T}_{i}\left(\chi_{3Q_k}\hat{e}_j{{f}_{nk+j}}\right)(x)
            = \chi_{Q_0}(x) \frac{C_{d,n} C_{T}}{\varepsilon}\integ{K_Q^{i,j}(x,\cdot){f}_{nk+j}}_{3Q_0}
        \end{split}
        \end{equation*}
        a.e. on $\mathbb{R}^d$.
    If we define the $n\times n$ matrix function $K_Q$ that has $K_Q^{i,j}$ in position $(i,j)$, 
    and
    \begin{equation*}
    \begin{split}
        \overline{K}_Q = 
\begin{pNiceArray}{cc|cc|cc|cc}
  \quad K_Q &&  {{0}} && \dots && {0}\\
  \hline
  {{0}} && K_Q&& \dots && {0}\\
  \hline
  \vdots && \vdots && \ddots     && \vdots\\
  \hline
  0 && 0 && \dots &&K_Q
\end{pNiceArray}
    \end{split}
    \end{equation*}
    we obtain that
    \begin{equation*}
        \begin{split}
           \chi_{Q_0}(x)\overline{T}\left(\chi_{3Q_0}\vec{f}\right)(x)-\sum_{k}\chi_{Q_k}(x)\overline{T}\left(\chi_{3Q_k}\vec{f}\right)(x)= \chi_{Q_0}(x) \frac{C_{d,n} C_{T}}{\varepsilon}\integ{\overline{K}_{Q_0}(x,\cdot)\vec{f}}_{3Q_0},
        \end{split}
        \end{equation*}
    a.e. on $\mathbb{R}^d$,
    which concludes the proof if we take $\varepsilon=\frac{\varepsilon}{n^2}$ and $C_{d,n} = n^2 C_{d,n}$.
\end{proof}

With this lemma in hand, it is very simple to show the desired result.

\begin{theorem}
    Let $m\in \mathbb{N}$ and $\vec{T}=(T_1,...,T_n)$ be a linear operator, defined for functions taking values in $\mathbb{R}^d$,
     such that, for each $i,j$, $T_{i,j}(f):=T_i(\hat{e}_jf)$ and $M_{T_{i,j}}$ are of weak type $(1,1)$.
     Then, given $\vec{f}\in L_c^1(\mathbb{R}^d,\mathbb{R}^{mn})$ and $\varepsilon\in(0,1)$, there exist a 
     $(1-\varepsilon)-$sparse family $\mathcal{Q}$ of dyadic cubes
     and functions $K_Q:Q\times 3Q\rightarrow \mathcal{M}_n(\mathbb{R})$ with $\|K_Q\|_{L^\infty}\leq 1$
     satisfying that,
     for the $mn\times mn$ matrix function
     \begin{equation*}
        \begin{split}
            \overline{K}_Q = 
\begin{pNiceArray}{cc|cc|cc|cc}
  \quad K_Q &&  {{0}} && \dots && {0}\\
  \hline
  {{0}} && K_Q&& \dots && {0}\\
  \hline
  \vdots && \vdots && \ddots     && \vdots\\
  \hline
  0 && 0 && \dots &&K_Q
\end{pNiceArray},
        \end{split}
        \end{equation*} 
        and 
        \begin{equation*}
            \begin{split}
                \overline{T} = \left.\begin{pmatrix}
                \vec{T}\\\vec{T}\\\vdots\\\vec{T}
                \end{pmatrix}\right\}m\text{ times},
            \end{split}
            \end{equation*} 
            the identity
     \begin{equation*}
     \begin{split}
        \overline{T}\vec{f}(x) = \frac{C_{d,n}C_T}{\varepsilon}\sum_{Q\in \mathcal{Q}}\chi_Q(x)\integ{\overline{K}_Q(x,\cdot)\vec{f}}_{3Q}
     \end{split}
     \end{equation*}
     is satisfied a.e. on $\mathbb{R}^d$.

     In addition, it is possible to find $3^d$ dyadic lattices $\mathcal{D}_j$, $3^d$ $3^{-d}(1-\varepsilon)-$sparse
      families $\mathcal{Q}_j\subset D_j$ and functions $K_Q:Q\times Q\rightarrow \mathcal{M}_n(\mathbb{R})$
      with  $\|K_Q\|_{L^\infty}\leq 1$ such that, for 
      \begin{equation*}
        \begin{split}
            \overline{K}_Q = 
\begin{pNiceArray}{cc|cc|cc|cc}
  \quad K_Q &&  {{0}} && \dots && {0}\\
  \hline
  {{0}} && K_Q&& \dots && {0}\\
  \hline
  \vdots && \vdots && \ddots     && \vdots\\
  \hline
  0 && 0 && \dots &&K_Q
\end{pNiceArray},
        \end{split}
        \end{equation*} 
    we have the identity
      \begin{equation*}
        \begin{split}
           \overline{T}\vec{f}(x) = \frac{C_{d,n}C_T}{\varepsilon}\sum_{j=1}^{3^d}\sum_{Q\in \mathcal{Q}_j}\chi_Q(x)\integ{\overline{K}_Q(x,\cdot)\vec{f}}_{Q}
        \end{split}
        \end{equation*}
      a.e. on $\mathbb{R}^d$.
\end{theorem}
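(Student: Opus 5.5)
We will follow the standard iterative sparse-domination construction of \cite{Opeiso,cbdwczo,hytonotas}, replacing Lemma \ref{lemma:auxiliar_apps} by the matrix-kernel version just proved. First fix $\vec{f}\in L_c^1(\mathbb{R}^d,\mathbb{R}^{mn})$. We partition $\mathbb{R}^d$ into (at most $2^d$) dyadic cubes $Q_0^{(i)}$, each with $\operatorname{supp}\vec{f}\subset 3Q_0^{(i)}$; more precisely, we take a large cube containing the support and its $2^d$ translates tiling space, so that $\overline{T}\vec{f}=\sum_i\chi_{Q_0^{(i)}}\overline{T}(\chi_{3Q_0^{(i)}}\vec{f})$. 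It then suffices to treat a single $Q_0$.

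The core step applies the previous lemma to $Q_0$ and $\vec{f}\chi_{3Q_0}$. This yields disjoint cubes $\{Q_k^1\}\subset\mathcal{D}(Q_0)$ with total measure at most $\varepsilon|Q_0|$, together with $K_{Q_0}$ satisfying
\[
\chi_{Q_0}\overline{T}(\chi_{3Q_0}\vec{f})=\frac{C_{d,n}C_T}{\varepsilon}\chi_{Q_0}\integ{\overline{K}_{Q_0}(x,\cdot)\vec{f}}_{3Q_0}+\sum_k\chi_{Q_k^1}\overline{T}(\chi_{3Q_k^1}\vec{f}).
\]
We then iterate on each $Q_k^1$ with the same lemma, and so on, obtaining generations $\mathcal{Q}^j$ with $|\bigcup\mathcal{Q}^{j}|\le\varepsilon^j|Q_0|$. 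Setting $\mathcal{Q}=\bigcup_j\mathcal{Q}^j$ and $E_Q=Q\setminus\bigcup\{\text{children of }Q\text{ in }\mathcal{Q}\}$ gives $|E_Q|\ge(1-\varepsilon)|Q|$, so the family is $(1-\varepsilon)$-sparse. The remainder after $N$ steps is supported on a set of measure at most $\varepsilon^N|Q_0|\to0$, so the identity holds a.e.

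The point that needs care is that at each cube the matrix kernel $K_Q$ must be the same in every block of $\overline{K}_Q$. The previous lemma guarantees exactly this, since it was proved by applying the scalar lemma to $T_{i,j}\otimes I_m$ and taking maximal cubes. The bound $\|K_Q\|_{L^\infty}\le1$ transfers directly, and defining $K_Q(x,y)$ as zero for $x\notin Q$ is harmless.

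For the second statement we use the three-lattice trick. There are $3^d$ dyadic lattices $\mathcal{D}_j$ such that for every cube $Q$ there is some $j$ and some $R\in\mathcal{D}_j$ with $3Q\subset R$ and $|R|\le 6^d|Q|$. For $Q\in\mathcal{Q}$ we choose such an $R=R(Q)$ and write
\[
\chi_Q\integ{\overline{K}_Q(x,\cdot)\vec{f}}_{3Q}=\chi_R(x)\integ{\overline{K}_R'(x,\cdot)\vec{f}}_R,
\]
where
\[
K_R'(x,y)=\frac{|R|}{|3Q|}\chi_Q(x)\chi_{3Q}(y)K_Q(x,y).
\]
This gives $\|K_R'\|_\infty\le 2^d$, and the constant $2^d$ is absorbed into $C_{d,n}$.

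The sets $\mathcal{Q}_j=\{R(Q)\}$ remain sparse with constant $3^{-d}(1-\varepsilon)$, up to harmless dimensional factors, because $|E_Q|\ge(1-\varepsilon)|Q|\gtrsim|R|$. Two issues need attention here. First, different $Q$ may produce the same $R$, although only boundedly many can do so, since $|Q|\simeq|R|$ and the $E_Q$ are disjoint. Second, summing kernels over repeated $R$ must still yield a single kernel; we renormalize by the number of repetitions, which is dimensionally bounded.

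Tracking the exact sparsity constant through this step is the main technical obstacle. To obtain the stated constant we would choose $E_R=E_Q$ directly; this is valid when the map $Q\mapsto R$ is injective, and otherwise we would accept a dimensional loss absorbed into $C_{d,n}$.
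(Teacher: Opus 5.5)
Your first half follows the paper's route: apply the preceding matrix-kernel lemma on $Q_0$, recurse on the stopping cubes, take $E_Q=Q\setminus\bigcup\{\text{children}\}$, and note that the remainder is supported on a set of measure at most $\varepsilon^N|Q_0|$. The paper just defers to this Lerner-type iteration, and that part is fine.

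The initial reduction is wrong as written. Finitely many bounded cubes cannot tile $\mathbb{R}^d$, so there is no partition into $2^d$ translates with $\operatorname{supp}\vec f\subset 3Q_0^{(i)}$; you need an infinite partition $\{R_i\}$ with $\operatorname{supp}\vec f\subset 3R_i$. The convenient choice here is the following:
\begin{itemize}
\item take a dyadic lattice $\mathcal{D}$ in the sense of Lerner--Nazarov, i.e.\ every compact set lies in some cube of $\mathcal{D}$;
\item take $Q_0\in\mathcal{D}$ containing the support;
\item use $Q_0$ together with the $2^d-1$ siblings of each of its ancestors.
\end{itemize}
The ancestors exhaust $\mathbb{R}^d$, and each sibling $S$ satisfies $Q_0\subset 3S$. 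Because the $R_i$ are disjoint, the union of the families built in each $R_i$ is still $(1-\varepsilon)$-sparse. This choice also puts all of $\mathcal{Q}$ inside the single lattice $\mathcal{D}$, which you need below.

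The genuine gap is in the second half: your argument does not reach the stated sparseness $3^{-d}(1-\varepsilon)$. Your claim that $E_R=E_Q$ recovers it in the injective case is false. Since $R\supseteq 3Q$, the ratio $|E_Q|/|R|\ge(1-\varepsilon)|Q|/|R|$ equals $3^{-d}(1-\varepsilon)$ only when $R=3Q$. The enclosing property actually gives $\ell(R)\le 6\ell(3Q)$, so $|R|$ can be as large as $18^d|Q|$, and you only get $18^{-d}(1-\varepsilon)$. (Your $|R|\le 6^d|Q|$ should read $|R|\le 6^d|3Q|$, and likewise $\|K'_R\|_\infty\le 6^d$, not $2^d$.) A loss in the sparseness parameter also cannot be absorbed into $C_{d,n}$, which only multiplies the sum.

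The missing ingredient is that you can take $R=3Q$ itself. By the Lerner--Nazarov three lattice theorem, for a dyadic lattice $\mathcal{D}$ the family $\{3Q:Q\in\mathcal{D}\}$ is itself the union of $3^d$ dyadic lattices $\mathcal{D}^{(1)},\dots,\mathcal{D}^{(3^d)}$. So set
\begin{itemize}
\item $\mathcal{Q}_j=\{3Q: Q\in\mathcal{Q},\ 3Q\in\mathcal{D}^{(j)}\}$,
\item $E_{3Q}=E_Q$,
\item $K'_{3Q}(x,y)=\chi_Q(x)K_Q(x,y)$ on $3Q\times 3Q$.
\end{itemize}
Then $\chi_Q(x)\integ{\overline{K}_Q(x,\cdot)\vec f}_{3Q}=\chi_{3Q}(x)\integ{\overline{K}'_{3Q}(x,\cdot)\vec f}_{3Q}$, with $\|K'_{3Q}\|_{L^\infty}\le 1$ and identical diagonal blocks. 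The map $Q\mapsto 3Q$ is injective, so your repetition and renormalization issues disappear. Finally $|E_{3Q}|\ge(1-\varepsilon)|Q|=3^{-d}(1-\varepsilon)|3Q|$, which is exactly the stated constant, with the same factor $C_{d,n}C_T/\varepsilon$.
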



The proof for this result follows the typical structure of a sparse
 domination proof that makes use of Lerner's maximal operator $M_T$. 
The original proof can be found in \cite{Opeiso} (Theorem 3.1), but adapted versions for
operators of the form $T\otimes I_n$ are present in \cite{cbdwczo} (Theorem 3.4) and \cite{hytonotas}(Theorem 2.3.2 and Corollary 2.3.18).



\bibliographystyle{amsplain}
\bibliography{ref}

\end{document}